\newcommand{\leqnomode}{\tagsleft@true\let\veqno\@@leqno}
\newcommand{\reqnomode}{\tagsleft@false\let\veqno\@@eqno}
\setlist[enumerate]{leftmargin=*,align=left,labelindent=\parindent}
\newcommand{\myitem}[1][]{%
\item[#1]\protected@edef\@currentlabel{#1}\ignorespaces%
}
\newtheorem{theorem}{Theorem}[section]
\newtheorem{proposition}[theorem]{Proposition}
\newtheorem{lemma}[theorem]{Lemma}
\newtheorem{corollary}[theorem]{Corollary}
\theoremstyle{definition}
\newtheorem{definition}[theorem]{Definition}
\theoremstyle{remark}
\newtheorem{remark}[theorem]{Remark}
\newtheorem*{notation*}{Notation}
\newcommand{\ha}{\mathsf{HA}}
\newcommand{\pa}{\mathsf{PA}}
\newcommand{\T}{{T}}
\newcommand{\FDNS}{\mathrm{DNS}}
\newcommand{\DNE}[1]{{#1}\text{-}\mathrm{DNE}}
\newcommand{\DNEC}[1]{{#1}\text{-}\undertilde{\mathrm{DNE}}}
\newcommand{\DNSC}[1]{{#1}\text{-}\undertilde{\mathrm{DNS}}}
\newcommand{\LEM}[1]{{#1}\text{-}\mathrm{LEM}}
\newcommand{\CD}[1]{{#1}\text{-}\mathrm{CD}}
\newcommand{\DNS}[1]{{#1}\text{-}\mathrm{DNS}}
\newcommand{\DNER}[1]{{#1}\text{-}\mathrm{DNE}\text{-}\mathrm{R}}
\newcommand{\CDR}[1]{{#1}\text{-}\mathrm{CD}\text{-}\mathrm{R}}
\newcommand{\DMLR}[1]{{#1}\text{-}\mathrm{DML}\text{-}\mathrm{R}}
\newcommand{\DMLDR}[1]{{#1}\text{-}\mathrm{DML^{\perp}}\text{-}\mathrm{R}}
\newcommand{\Prf}[2]{\mathrm{Pf}\left({#1} , \left\ulcorner {#2} \right\urcorner\right)}
\newcommand{\ph}{\$}
\newcommand{\PH}{*}
\newcommand{\F}{\mathrm{F}}
\newcommand{\B}{\mathrm{B}}
\newcommand{\U}{\mathrm{U}}
\newcommand{\E}{\mathrm{E}}
\newcommand{\FV}[1]{\mathrm{ FV} \left({#1}\right)}
\newcommand{\R}{\mathcal{R}}
\newcommand{\J}{\mathcal{J}}
\newcommand{\Q}{\mathcal{Q}}
\newcommand{\V}{\mathcal{V}}
\newcommand{\EP}{\mathrm{E}\Pi}
\newcommand{\ES}{\mathrm{E}\Sigma}
\newcommand\ang[1]{{\langle #1 \rangle} }
\newcommand{\lr}{\leftrightarrow}
\newcommand{\LR}{\Leftrightarrow}
\newcommand{\llr}{\longleftrightarrow}
\newcommand{\LLR}{\Longleftrightarrow}
\newcommand{\lra}{\longrightarrow}
\newcommand{\vp}{\varphi}
\newcommand{\QF}[1]{{#1}_{\mathrm{qf}}}
\newcommand{\wt}[1]{\widetilde{#1}}
\newcommand{\ut}[1]{\undertilde{#1}}
\newcommand{\degree}[1]{\mathit{deg}(#1)}
\newcommand{\alt}[1]{\mathit{Alt}(#1)}
\DeclareSymbolFont{largesymbol}{OMX}{yhex}{m}{n}
\DeclareMathAccent{\Widehat}{\mathord}{largesymbol}{"62}
\title{Conservation theorems on semi-classical arithmetic}
\author{Makoto Fujiwara\footnote{Email: makotofujiwara@meiji.ac.jp}
\footnote{School of Science and Technology, Meiji University, 1-1-1 Higashi-Mita, Tama-ku, Kawasaki-shi, Kanagawa 214-8571, Japan.}
and Taishi Kurahashi\footnote{Email: kurahashi@people.kobe-u.ac.jp}
\footnote{Graduate School of System Informatics,
Kobe University,
1-1 Rokkodai, Nada, Kobe 657-8501, Japan.}}
\date{\today}
\begin{document}
\maketitle

\begin{abstract}
We systematically study conservation theorems on theories of semi-classical arithmetic, which lie in-between classical arithmetic $\pa$ and intuitionistic arithmetic $\ha$.
Using a generalized negative translation, we first provide a new structured proof of the fact that $\pa$ is $\Pi_{k+2}$-conservative over  $\ha + \LEM{\Sigma_k}$ where $\LEM{\Sigma_k}$ is the axiom scheme of the law-of-excluded-middle restricted to formulas in $\Sigma_k$.
In addition, we show that this conservation theorem is optimal in the sense that for any semi-classical arithmetic $T$, if $\pa$ is $\Pi_{k+2}$-conservative over $T$,  then $\T$ proves $\LEM{\Sigma_k}$.
In the same manner, we also characterize conservation theorems for other well-studied classes of formulas by fragments of classical axioms or rules.
This reveals the entire structure of conservation theorems with respect to the arithmetical hierarchy of classical principles.
\end{abstract}

\section{Introduction}
It is well-known that classical first-order arithmetic $\pa$ is $\Pi_2$-conservative over  intuitionistic first-order arithmetic $\ha$.
There are several approaches to prove this fundamental fact.
One simple and well-known approach is to apply the negative (or double negation) translation followed by the Friedman A-translation \cite{Fri78}.
Another possible approach is to apply a generalized negative translation developed systematically by Ishihara \cite{Ishi00, Ishi12}.
In fact, the latter is a combination of Gentzen's negative translation and the Friedman A-translation (cf. \cite[Section 4]{Ishi12}).
In \cite[Theorem 6.14]{FK20-1}, the authors showed a conservation result which generalizes the aforementioned conservation result on $\pa$ and $\ha$ in the context of semi-classical arithmetic (which lies between classical and intuitionistic arithmetic).
In fact, the following is an immediate corollary of \cite[Theorem 6.14]{FK20-1}:
\begin{proposition}
\label{prop: PA is Pi_k+2-cons. over HA + Sigma_k-LEM}
$\pa$ is $\Pi_{k+2}$-conservative over $\ha +\LEM{\Sigma_{k}}$ where $\LEM{\Sigma_{k}}$ is the axiom scheme of the law-of-excluded-middle restricted to formulas in $\Sigma_k$.
\end{proposition}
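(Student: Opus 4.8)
The plan is to combine a negative translation with the Friedman--Dragalin $A$-translation, in the spirit of Ishihara's generalized negative translation. Write the given $\Pi_{k+2}$ formula as $\forall x\,\exists y\,\theta(x,y)$ with $\theta\in\Pi_k$, and suppose $\pa\vdash\forall x\exists y\,\theta$. First I would invoke the soundness of a negative translation $(\cdot)^N$ (the G\"odel--Gentzen or Kuroda translation), so that $\ha\vdash(\forall x\exists y\,\theta)^N$, i.e. $\ha\vdash\forall x\,\neg\forall y\,\neg\,\theta^N$.

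The first key step is a matrix-recovery lemma: $\ha+\LEM{\Sigma_k}\vdash\theta^N\leftrightarrow\theta$ for every $\theta\in\Pi_k\cup\Sigma_k$. I would prove this by a simultaneous induction on the $\Sigma$- and $\Pi$-levels $\le k$, the only nontrivial step being the existential one, which is discharged by $\DNE{\Sigma_j}$; and since $\LEM{\Sigma_k}$ implies $\LEM{\Sigma_j}$ and hence $\DNE{\Sigma_j}$ for every $j\le k$, all required instances are available, while the universal and conjunctive steps commute trivially. Combined with the negative translation this yields $\ha+\LEM{\Sigma_k}\vdash\forall x\,\neg\neg\exists y\,\theta$, so that only the leading double negation in front of the $\Sigma_{k+1}$ kernel $\exists y\,\theta$ remains to be removed.

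The second key step removes that double negation by the $A$-translation with $A:=\exists y\,\theta(x,y)$. Crucially, over $\ha+\LEM{\Sigma_k}$ the matrix $\theta\in\Pi_k$ enjoys the law of excluded middle (via $\LEM{\Pi_k}$, which is interderivable with $\LEM{\Sigma_k}$) and is therefore stable and effectively decidable in this theory; treating it as atomic, its $A$-translation (write $\theta^*$ for the $A$-translation of $\theta^N$) collapses to $\theta\lor A$, exactly as a $\Delta_0$ matrix does in the classical $\Pi_2$ case. Applying the $A$-translation to the $\ha$-proof of the negative translation then yields a provable implication whose antecedent $\forall y(\theta^*\to A)$ I can establish in $\ha+\LEM{\Sigma_k}$: for each $y$, $\theta^*$ gives $\theta\lor A$, and either disjunct gives $A$ (the left one by taking $y$ itself as a witness). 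Discharging the antecedent produces $A=\exists y\,\theta$, and universal generalization on $x$ gives $\forall x\exists y\,\theta$, as required.

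The main obstacle is precisely the interface between the $A$-translation and the inner quantifier alternations of $\theta$. If one applies the negative and $A$-translations naively, letting the latter recurse through the universal quantifiers inside $\theta$, then recovering the translated matrix forces the elimination of a double negation on $A$ itself --- a $\Sigma_{k+1}$ formula --- which is not available from $\LEM{\Sigma_k}$; already for $k=1$ this would demand a Markov- or $\DNE{\Sigma_2}$-type principle that fails in $\ha+\LEM{\Sigma_1}$. The remedy, and the technical heart of the argument, is to keep the $\Pi_k$ matrix atomic for the purposes of the translation and to exploit that $\LEM{\Sigma_k}$ makes it behave decidably, so that no principle stronger than $\LEM{\Sigma_k}$ is ever invoked. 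Verifying this compatibility carefully --- equivalently, setting up the generalized negative translation so that the $A$-parameter interacts only with the leading existential block while the matrix is recovered purely by the level-$\le k$ fragment of classical logic --- is where the real work lies.
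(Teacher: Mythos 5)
Your proof is correct and is essentially the route the paper itself takes in its appendix (Section \ref{section: Soundness of A-Translation}): a negative translation, the collapse $\theta^{*}\leftrightarrow\theta\lor A$ for $\Pi_k$ matrices over $\ha+\LEM{\Sigma_k}$ (the paper's Lemma \ref{lem: Kurahashi Original Lemma: A* <-> A v *}), the relativized soundness of the Friedman A-translation (Theorem \ref{A relativized soundness theorem of *-translation}), and finally the substitution $A:=\exists y\,\theta$. The main text instead obtains the proposition as a corollary of the more general Theorem \ref{thm: conservativity on semi-classical arithmetic} via Ishihara's ${\ph}$-translation, but that translation is precisely the composite of the two translations you use, and your collapse lemma is its Lemma \ref{Kurahashi Lemma: A* <-> A v *}.
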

The proof of \cite[Theorem 6.14]{FK20-1} in that paper is similar to the former approach in the sense of using the Friedman A-translation.
However, the proof has somewhat intricate structure in dealing with the Friedman A-translation of the inner part of Kuroda's negative translation.
In Section \ref{sec: Revativised Ishihara's result}, by extending the latter approach from \cite{Ishi00, Ishi12} in the context of semi-classical arithmetic, we provide a much more structured proof of \cite[Theorem 6.14]{FK20-1}.
As an advantage of the structured proof, we obtain an extended conservation result for much larger classes of formulas (see Theorem \ref{thm: conservativity on semi-classical arithmetic} and Remark \ref{rem: our conservation result is an extension of the previous one}).

In Section \ref{sec: formula-classes}, we relate the classes used in Section \ref{sec: Revativised Ishihara's result} (which are based on the classes introduced in \cite{Ishi00}) to the classes $\U_k$ and $\E_k$ introduced in Akama et. al \cite{ABHK04} for studying the hierarchy of the constructively-meaningful fragments of classical axioms (including the law-of-excluded-middle and the double-negation-elimination).
The classes $\E_k$ and $\U_k$ correspond to classical $\Sigma_k$ and $\Pi_k$ respectively in the sense that every formula in $\E_k$ (resp. $\U_k$) is equivalent over $\pa$ to some formula in $\Sigma_k$ (resp. $\Pi_k$) and vice versa.
This investigation reveals that our extended conservation theorem for $\ha+\LEM{\Sigma_k}$ is applicable to all formulas in $\E_{k+1}$ (see Corollary \ref{cor: PA is Ek+1-cons. over HA+SkLEM}).

In Sections \ref{sec: CONS for classes of formulas}, \ref{sec: CONS for classes of sentences} and \ref{sec: Summary}, we investigate the entire structure of conservation theorems in the arithmetical hierarchy of classical principles which was systematically studied first in Akama et. al \cite{ABHK04} and further extended by the authors recently in \cite{FK20-2}.
The first motivation of this investigation comes from the observation that for any semi-classical arithmetic $\T$ such that $\pa$ is $\Pi_{k+2}$-conservative over $\T$, $T$ proves $\LEM{\Sigma_{k}}$ (cf. Lemma \ref{lem: SkvPik-cons->Sk-LEM}).
This means that Proposition \ref{prop: PA is Pi_k+2-cons. over HA + Sigma_k-LEM} is optimal in the sense that one cannot replace $\ha+\LEM{\Sigma_k}$ by any semi-classical arithmetic which does not prove $\LEM{\Sigma_k}$.
Another motivating fact is that for any semi-classical arithmetic $\T$, $\pa$ is $\Pi_{2}$-conservative over $\T$ if and only if $\T$ is closed under Markov's rule for primitive recursive predicate (cf. \cite[Section 3.5.1]{ConstMathI}).
Thus the $\Pi_{2}$-conservativity is also characterized by the $\Sigma_1$-fragment of the double-negation-elimination rule.
Then it is natural to ask whether this can be relativized in the context of semi-classical arithmetic. 
Motivated by these facts, in Sections \ref{sec: CONS for classes of formulas} and \ref{sec: CONS for classes of sentences}, we study the conservation theorems for the well-studied classes (including $\Pi_k$, $\Sigma_k$, the classes in \cite{ABHK04} and their closed variants) and characterize them by fragments of classical axioms or rules.
The conservativity for a class of formulas is equivalent to that restricted only to sentences if the class is closed under taking a universal closure.
Then the strength of the conservativity for e.g. $\Pi_k$ does not vary even if we restrict them only to sentences.
On the other hand, since $\Sigma_{k}$ etc. are not closed under taking a universal closure, this is not the case for such classes.
We investigate the conservation theorems for classes of formulas in Section \ref{sec: CONS for classes of formulas} and those for sentences in Section \ref{sec: CONS for classes of sentences}.
Through a lot of delicate arguments in semi-classical arithmetic, we reveal the detailed structure consisting of the conservation theorems and some fragment of logical principles, which are summarized in Section \ref{sec: Summary}.
This exhaustive investigation shed light on the close connection between the notion of conservativity and classical axioms and rules in semi-classical arithmetic.
For the purpose of future use, we present our characterization results in a generalized form with adding a set $X$ of sentences into the theories in question.

In the end of this paper, as an appendix, we show the relativized soundness theorem of the Friedman A-translation for $\ha +\LEM{\Sigma_k}$.
By this relativized soundness theorem, one may obtain a simple proof of Proposition \ref{prop: PA is Pi_k+2-cons. over HA + Sigma_k-LEM} just by imitating the aforementioned Friedman's approach.

\section{Framework}
We work with a standard formulation of intuitionistic arithmetic $\ha$ described e.g.~in \cite[Section 1.3]{Tro73}, which has function symbols for all primitive recursive functions.
Our language contains all the logical constants $\forall, \exists, \to, \land, \lor$ and $ \perp$.
In our proofs, when we use some principle (including induction hypothesis [I.H.]) which is not available in $\ha$, it will be exhibited explicitly.
As regards basic reasoning over intuitionistic first-order logic, we refer the reader to \cite[Section 6.2]{vD13}.

Throughout this paper, let $k$ be a natural number (possibly $0$).
The classes $\Sigma_k$ and $\Pi_k$ of $\ha$-formulas are defined as follows:
\begin{itemize}
	\item Let $\Sigma_0 $ and $ \Pi_0$ be the set of all quantifier-free formulas; 
	\item $\Sigma_{k+1} : = \{\exists x_1, \dots,  x_n \, \varphi \mid \varphi \in \Pi_k\}$;
	\item $\Pi_{k+1} : = \{\forall x_1, \dots, x_n\, \varphi \mid \varphi \in \Sigma_k\}$.
\end{itemize}
Let $\FV{\varphi}$ denote the set of all free variables in $\varphi$. 
Note that every formula $\varphi$ in $\Sigma_{k+1}$ (resp.~$\Pi_{k+1}$) is equivalent over $\ha$ to some formula $\psi$ in $\Sigma_{k+1}$ (resp.~$\Pi_{k+1}$) such that $\FV{\varphi} = \FV{\psi}$ and $\psi$ is of the form $\exists x \psi'$ (resp.~$\forall x \psi'$) where $\psi'$ is $\Pi_k$ (resp.~$\Sigma_k$).
For convenience, we assume that $\Sigma_m$ and $\Pi_m$ denote the empty set for negative integers $m$.

The classical variant $\pa$ of $\ha$ is defined as $\ha + {\rm LEM}$ or $\ha + {\rm DNE}$, where ${\rm LEM}$ is the axiom scheme of the law-of-excluded-middle ${\vp \lor \neg \vp}$ and  ${\rm DNE}$ is that of the double-negation-elimination ${\neg \neg \vp \to \vp}$.
Recall that $\LEM{\Sigma_k}$ and $\DNE{\Sigma_k}$ are ${\rm LEM}$ and ${\rm DNE}$ restricted to formulas in $\Sigma_k$ (possibly containing free variables) respectively.
Similarly, $\LEM{\Pi_k}$ and $\DNE{\Pi_k}$ are defined for $\Pi_k$.
We call a theory $\T$ such that $\ha \subseteq \T \subseteq \pa$ {\bf semi-classical arithmetic}.

Unless otherwise stated, the inclusion
between classes
of $\ha$-formulas is to be understood modulo equivalences over $\ha$.
That is, for classes $\Gamma$ and $\Gamma'$ of $\ha$-formulas, $\Gamma \subseteq \Gamma'$ denotes that for all $\vp\in \Gamma$, there exists $\vp'\in \Gamma'$ such that $\FV{\vp}=\FV{\vp'}$ and $\ha \vdash \vp'\lr \vp$, and $\Gamma = \Gamma'$ denotes $\Gamma \subseteq \Gamma'$ and  $\Gamma' \subseteq \Gamma$.
In this sense, one may think of $\Sigma_k$ and $\Pi_k$ as sub-classes of $\Sigma_{k'}$ and $\Pi_{k'}$ for all $k'>k$ (see \cite[Remark 2.5]{FK20-1}).

\section{A relativization of Ishihara's conservation result in semi-classical arithmetic}
\label{sec: Revativised Ishihara's result}
In this section, we simulate Ishihara's proof of \cite[Theorem 10]{Ishi00} in the specific context of semi-classical arithmetic studied in \cite{ABHK04, FK20-1} with some additional arguments.
We first recall the translation studied in \cite{Ishi00}.
In the context of the translation, without otherwise stated,  we work in the language with an additional predicate symbol ${\ph}$ of arity $0$, which behaves as ``place holder'' (See \cite{Ishi00, Ishi12} for more information).
Let $\ha^{\ph}$ denote $\ha$ in that language.
On the other hand, $\ha^{\ph} + \LEM{\Sigma_k}$ denotes $\ha^{\ph}$ augmented with
$\LEM{\Sigma_k}$ for ``$\ha$''-formulas.
\begin{definition}[cf. {\cite[Definition 3]{Ishi00}}]
Let $\neg_{\ph} \vp$ denote $\vp \to {\ph}$.
For each formula $\vp$, its ${\ph}$-translation $\vp^{\ph}$ is defined inductively by the following clauses:
\begin{itemize}
    \item 
For $P$ prime such that $P \not \equiv \, \perp$, $P^{\ph}:\equiv \neg_{\ph} \neg_{\ph} P$;
\item
$\perp^{\ph} :\equiv {\ph}$;
\item
$(\vp_1 \circ \vp_2)^{\ph} :\equiv \vp_1^{\ph} \circ \vp_2^{\ph} $\,  for $\circ \in \{ \land, \to\}$;
\item
$(\vp_1 \lor \vp_2)^{\ph} :\equiv \neg_{\ph} \neg_{\ph}\left( \vp_1^{\ph} \lor \vp_2^{\ph}\right) $;
\item
$\left( \forall x \vp \right)^{\ph} :\equiv \forall x \vp^{\ph}$;
\item
$\left( \exists x \vp \right)^{\ph} :\equiv \neg_{\ph} \neg_{\ph} \exists x \vp^{\ph}$.
\end{itemize}
It is straightforward to see $\FV{\vp} = \FV{\vp^{\ph}}$.
\end{definition}

\begin{proposition}[cf. {\cite[Proposition 4]{Ishi00} and \cite[Section 4]{Ishi12}}]
\label{Soundness of *-translation}
\noindent
\begin{enumerate}
    \item 
\label{item: HA |- N*N*A* <-> A*}
For any $\ha$-formula $\vp$, $\ha^{\ph}  \vdash \neg_{\ph}\neg_{\ph} \vp^{\ph} \lr \vp^{\ph} ;$
\item
\label{item: PA |- A => HA |- A*}
For any $\ha$-formula $\vp$ and any set $X$ of $\ha$-sentences, if $\pa +X \vdash \vp$, then $\ha^{\ph} +X^{\ph} \vdash \vp^{\ph}$, where $X^{\ph}:= \{\psi^{\ph} \mid \psi \in X\}$.
\end{enumerate}
\end{proposition}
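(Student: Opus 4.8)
The plan is to prove both parts by structural induction on the formula, exploiting the fact that the $\ph$-translation has been engineered precisely so that double-$\neg_{\ph}$-stability holds at every formula class, which is the content of part (\ref{item: HA |- N*N*A* <-> A*}) and which in turn drives the soundness in part (\ref{item: PA |- A => HA |- A*}).

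For part (\ref{item: HA |- N*N*A* <-> A*}), I would argue by induction on the structure of $\vp$. The direction $\vp^{\ph} \to \neg_{\ph}\neg_{\ph}\vp^{\ph}$ is intuitionistically trivial for every formula, so the work is the converse $\neg_{\ph}\neg_{\ph}\vp^{\ph} \to \vp^{\ph}$. Observe that $\neg_{\ph}$ is just implication into the place-holder atom ${\ph}$, so all the standard intuitionistic facts about $\neg\neg$ relative to a fixed target hold for $\neg_{\ph}\neg_{\ph}$; in particular $\neg_{\ph}\neg_{\ph}$ commutes with $\to$ in the sense that $\neg_{\ph}\neg_{\ph}(A \to B)$ follows from $A^{\ph} \to \neg_{\ph}\neg_{\ph} B^{\ph}$ type manipulations. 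The base cases are immediate: for a prime $P \not\equiv \perp$ we have $P^{\ph} \equiv \neg_{\ph}\neg_{\ph}P$, which is already triply-negated-stable since $\neg_{\ph}\neg_{\ph}\neg_{\ph} Q \lr \neg_{\ph} Q$ in intuitionistic logic; and $\perp^{\ph} \equiv {\ph}$ is stable because $\neg_{\ph}\neg_{\ph}{\ph} \equiv (({\ph}\to{\ph})\to{\ph}) \to{\ph}$ reduces to ${\ph}$. For the inductive cases, the clauses for $\lor$ and $\exists$ build in a $\neg_{\ph}\neg_{\ph}$ outright, so stability there is the triple-negation fact again; the clause for $\to$ uses that $(\vp_1 \to \vp_2)^{\ph} \equiv \vp_1^{\ph} \to \vp_2^{\ph}$ together with the I.H. that $\vp_2^{\ph}$ is stable, giving stability of an implication into a stable formula; the clause for $\land$ distributes $\neg_{\ph}\neg_{\ph}$ over conjunction and applies the I.H. to each conjunct; and the clause for $\forall$ uses that $\neg_{\ph}\neg_{\ph}\forall x\, \vp^{\ph} \to \forall x\, \neg_{\ph}\neg_{\ph}\vp^{\ph}$ intuitionistically, followed by the I.H. under the binder.

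For part (\ref{item: PA |- A => HA |- A*}), I would proceed by induction on a derivation of $\vp$ in $\pa + X$, i.e.~over a natural-deduction or Hilbert-style calculus for classical logic plus the nonlogical axioms. The intuitionistic logical rules and the axioms of $\ha$ translate straightforwardly: since the $\ph$-translation commutes with $\land$ and $\to$ and is a $\neg_{\ph}\neg_{\ph}$-closure on the other connectives, each intuitionistic inference is mirrored by a valid $\ha^{\ph}$-inference, using part (\ref{item: HA |- N*N*A* <-> A*}) wherever a stability step is needed to strip a double negation. The genuinely classical ingredient is the law-of-excluded-middle (or equivalently $\mathrm{DNE}$); here the point is that $\left(\vp \lor \neg\vp\right)^{\ph} \equiv \neg_{\ph}\neg_{\ph}\left(\vp^{\ph} \lor (\vp^{\ph}\to{\ph})\right)$, and $\neg_{\ph}\neg_{\ph}(A \lor (A\to{\ph}))$ is a theorem of intuitionistic logic, so the translated instance is provable in $\ha^{\ph}$ with no extra assumptions. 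The nonlogical axioms in $X$ are handled by the very hypothesis that we add $X^{\ph}$ to the intuitionistic side, and the primitive-recursive defining equations and induction axioms of $\ha$ translate to $\ha^{\ph}$-provable statements after unwinding the clauses (induction in particular requires checking that the translated induction formula is again an admissible induction instance, which it is since $\ph$-translation preserves being an $\ha^{\ph}$-formula).

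The main obstacle I expect is the induction scheme in part (\ref{item: PA |- A => HA |- A*}): one must verify that $\left(\mathrm{Ind}_\vp\right)^{\ph}$, the translation of the induction axiom for $\vp$, is derivable in $\ha^{\ph}$ from the induction axiom for $\vp^{\ph}$, which requires pushing the outer $\neg_{\ph}\neg_{\ph}$'s through the quantifiers and the implication in the induction statement and invoking the stability lemma from part (\ref{item: HA |- N*N*A* <-> A*}) at the right spots. A secondary delicate point is bookkeeping the place-holder atom ${\ph}$ as a genuine nonlogical constant throughout, ensuring it is never accidentally instantiated and that all the intuitionistic $\neg_{\ph}\neg_{\ph}$-manipulations are valid uniformly in ${\ph}$; since ${\ph}$ is a fixed $0$-ary predicate this is routine but must be stated carefully.
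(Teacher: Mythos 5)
Your proof is correct and follows essentially the same route as the paper, which simply records that part~(1) goes by induction on the structure of $\vp$ and part~(2) by induction on the length of the derivation in $\pa+X$; your case analysis supplies exactly the routine details the paper omits (and your anticipated obstacle with induction is in fact trivial, since the translation commutes with $\land$, $\to$, $\forall$ and term substitution, so the translated induction axiom is literally the induction axiom for $\vp^{\ph}$). One small slip: $\neg_{\ph}\neg_{\ph}{\ph}$ expands to $({\ph}\to{\ph})\to{\ph}$ rather than the expression you display, though the conclusion that it reduces to ${\ph}$ is still right.
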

\begin{proof}
The proofs are routine:
One can show \eqref{item: HA |- N*N*A* <-> A*} by induction on the structure of formulas, and \eqref{item: PA |- A => HA |- A*}
by induction on the length of the proof of $\vp$ in $\pa +X$.
\end{proof}

\begin{corollary}
\label{cor: PA |- A <-> B => HA |- A* <-> B*}
For any $\ha$-formulas $\vp_1$ and $\vp_2$, if $\pa \vdash \vp_1 \lr \vp_2$, then $\ha^{\ph} \vdash \vp_1^{\ph} \lr \vp_2^{\ph}$.
\end{corollary}
\begin{proof}
If $\pa $ proves $\vp_1 \lr \vp_2$, by Proposition \ref{Soundness of *-translation}.\eqref{item: PA |- A => HA |- A*}, we have that $\ha^{\ph}$ proves $\left( \vp_1 \lr \vp_2\right)^{\ph}$, which is in fact $\vp_1^{\ph} \lr \vp_2^{\ph}$.
\end{proof}

\begin{lemma}
\label{lem: HA|- Aqf* <-> Aqf v * }
For a quantifier-free formula $\QF{\vp}$ of $\ha$,
$\ha^{\ph} $ proves $\QF{\vp}^{\ph} \lr \QF{\vp} \lor {\ph}$.
\end{lemma}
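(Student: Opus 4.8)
The plan is to prove the equivalence by induction on the structure of the quantifier-free formula $\QF{\vp}$, working entirely inside intuitionistic logic augmented by the placeholder $\ph$ (so that no instance of $\LEM{}$ is consumed). The one extra ingredient I will rely on is the decidability of quantifier-free formulas in $\ha$: for every quantifier-free $\psi$ one has $\ha \vdash \psi \lor \neg \psi$, and a fortiori the same holds over $\ha^{\ph}$. From decidability I first extract a reusable observation: whenever $\ha \vdash \psi \lor \neg\psi$, one has $\ha^{\ph} \vdash \neg_{\ph}\neg_{\ph}\psi \lr \psi \lor \ph$. The direction from right to left is pure intuitionistic logic (indeed $\psi \to \neg_{\ph}\neg_{\ph}\psi$ always holds, and from $\ph$ one trivially derives $\neg_{\ph}\neg_{\ph}\psi$); for the converse one performs the case distinction $\psi \lor \neg\psi$, the case $\psi$ giving $\psi\lor\ph$ at once, and the case $\neg\psi$ yielding $\psi \to \ph$ via $\perp$ and hence $\ph$ from the hypothesis $\neg_{\ph}\neg_{\ph}\psi$. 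This observation will dispatch exactly the clauses of the translation carrying an outer $\neg_{\ph}\neg_{\ph}$.

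For the base of the induction there are two cases. If $\QF{\vp} \equiv {\perp}$, then $\QF{\vp}^{\ph} \equiv \ph$, and $\ha^{\ph} \vdash \ph \lr {\perp}\lor\ph$ is immediate. If $\QF{\vp} \equiv P$ is a prime formula other than $\perp$, then $\QF{\vp}^{\ph} \equiv \neg_{\ph}\neg_{\ph}P$, and since $P$ is decidable the reusable observation gives $\ha^{\ph} \vdash \neg_{\ph}\neg_{\ph}P \lr P \lor \ph$, as required.

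For the inductive step I treat the three connectives. For $\QF{\vp} \equiv \vp_1 \land \vp_2$, the I.H.\ turns $\vp_1^{\ph}\land\vp_2^{\ph}$ into $(\vp_1\lor\ph)\land(\vp_2\lor\ph)$, which is intuitionistically equivalent to $(\vp_1\land\vp_2)\lor\ph$ by a straightforward manipulation (distribution of $\land$ over $\lor$); no decidability is needed here. For $\QF{\vp} \equiv \vp_1\lor\vp_2$, the I.H.\ rewrites the inner disjunction $\vp_1^{\ph}\lor\vp_2^{\ph}$ as $(\vp_1\lor\vp_2)\lor\ph$; writing $\chi$ for the decidable formula $\vp_1\lor\vp_2$, it then suffices to check $\ha^{\ph}\vdash\neg_{\ph}\neg_{\ph}(\chi\lor\ph)\lr\chi\lor\ph$, which follows from the reusable observation once one notes that $\neg_{\ph}(\chi\lor\ph)$ and $\neg_{\ph}\chi$ are intuitionistically interderivable. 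The delicate case, and the one I expect to be the main obstacle, is the implication $\QF{\vp}\equiv\vp_1\to\vp_2$: here the translation $\vp_1^{\ph}\to\vp_2^{\ph}$ carries no protective double negation, yet after applying the I.H.\ one must still establish $\ha^{\ph}\vdash[(\vp_1\lor\ph)\to(\vp_2\lor\ph)]\lr[(\vp_1\to\vp_2)\lor\ph]$. The left-to-right direction genuinely uses the decidability of the antecedent $\vp_1$: under $\vp_1$ the hypothesis forces $\vp_2\lor\ph$ and hence $(\vp_1\to\vp_2)\lor\ph$, while under $\neg\vp_1$ the implication $\vp_1\to\vp_2$ holds vacuously; the right-to-left direction is a routine case distinction on $(\vp_1\to\vp_2)\lor\ph$. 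Assembling these cases completes the induction.
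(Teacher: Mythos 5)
Your proof is correct and follows essentially the same route as the paper's: induction on the structure of the quantifier-free formula, with the decidability of quantifier-free formulas over $\ha$ doing the work in the prime, disjunction, and implication cases. Your ``reusable observation'' that $\neg_{\ph}\neg_{\ph}\psi \lr \psi \lor \ph$ for decidable $\psi$ is just a tidy packaging of the case analysis the paper performs inline, so the two arguments coincide in substance.
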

\begin{proof}
By induction on the structure of quantifier-free formulas of $\ha$.
   
    The case of $ \perp$:
    Since $\perp^{\ph} \equiv {\ph}$, we have trivially $\ha^{\ph} \vdash \perp^{\ph} \lr \perp \lor {\ph}$.
    
The case of that $\QF{\vp}$ is a prime formula but $\perp$:
It is trivial that $\ha^{\ph}$ proves $\QF{\vp} \lor {\ph} \to  \neg_{\ph} \neg_{\ph} \QF{\vp}$.
On the other hand, since $\ha^{\ph} $ proves  $\QF{\vp} \lor \neg \QF{\vp}$ and $\neg_{\ph} \neg_{\ph} \QF{\vp} \land \neg \QF{\vp} \to {\ph}$, we also have that $\ha^{\ph}$ proves $\neg_{\ph} \neg_{\ph} \QF{\vp} \to \QF{\vp} \lor {\ph}$.

The case of $\QF{\vp}\equiv \vp_1 \land \vp_2$:
We have that $\ha^{\ph}$ proves
$$
(\vp_1 \land \vp_2)^{\ph} \lr \vp_1^{\ph} \land \vp_2^{\ph} \underset{\text{I.H.}}{\llr} (\vp_1 \lor {\ph}) \land (\vp_2 \lor {\ph}) \lr (\vp_1 \land \vp_2)\lor {\ph}.
$$

The case of $\QF{\vp}\equiv \vp_1 \lor \vp_2$:
Since $\vp_1$ and $\vp_2$ are decidable in $\ha$ (note that they are quantifier-free), we have that $\ha^{\ph}$ proves $(\vp_1 \lor \vp_2) \lor (\neg \vp_1 \land \neg \vp_2)$.
In the latter case of the disjunction, we have $\neg_{\ph}(\vp_1 \lor \vp_2 \lor {\ph})$.
Thus  $\ha^{\ph}$ proves
$$ \neg_{\ph} \neg_{\ph}(\vp_1 \lor \vp_2 \lor {\ph}) \to (\vp_1 \lor \vp_2) \lor {\ph} .$$
On the other hand, $\ha^{\ph}$ also proves
$$
(\vp_1 \lor \vp_2) \lor {\ph} \to \neg_{\ph} \neg_{\ph} (\vp_1 \lor \vp_2 \lor {\ph})
.
$$
Thus $\ha^{\ph}$ proves
$$
(\vp_1 \lor \vp_2)^{\ph} \equiv \neg_{\ph} \neg_{\ph} (\vp_1^{\ph} \lor \vp_2^{\ph})  \underset{\text{I.H.}}{\llr}  \neg_{\ph} \neg_{\ph}(\vp_1 \lor \vp_2 \lor {\ph}) \leftrightarrow (\vp_1 \lor \vp_2) \lor {\ph} .
$$

The case of $\QF{\vp}\equiv \vp_1 \to \vp_2$:
Assume $\vp_1 \lor {\ph} \to  \vp_2 \lor {\ph} $.
Then we have
\begin{equation}
    \label{eq: vp1 -> vp1 v *}
\vp_1 \to  \vp_2 \lor {\ph} .
\end{equation}
Since $\vp_1$ and $\vp_2$ are decidable in $\ha$ (note that they are quantifier-free), we have that $\ha^{\ph}$ proves $(\vp_2 \lor \neg \vp_1) \lor (\vp_1 \land \neg \vp_2)$.
In the former case, we have $\vp_1 \to \vp_2$.
In the latter case, by \eqref{eq: vp1 -> vp1 v *}, we have ${\ph}$.
Thus $\ha^{\ph}$ proves $$
\left(\vp_1 \lor {\ph} \to  \vp_2 \lor {\ph} \right) \to  \left( \vp_1\to \vp_2 \right) \lor {\ph}.$$
On the other hand, $\ha^{\ph}$ also proves
$$
  \left( \vp_1\to \vp_2 \right) \lor {\ph}
  \to \left(\vp_1 \lor {\ph} \to  \vp_2 \lor {\ph} \right)
  .$$
Thus $\ha^{\ph}$ proves
$$
(\vp_1 \to \vp_1) \lor {\ph} \leftrightarrow  \left( \vp_1 \lor {\ph} \to  \vp_2 \lor {\ph} \right)
 \underset{\text{I.H.}}{\llr}
\left(  \vp_1^{\ph} \to \vp_2^{\ph} \right) \equiv \left( \vp_1 \to \vp_2\right)^{\ph}.$$
\end{proof}

The following lemma is the key for our generalized conservation result:
\begin{lemma}
\label{Kurahashi Lemma: A* <-> A v *}
For a formula $\vp$ of $\ha$,
the following hold$:$
\begin{enumerate}
    \item
    \label{item: A v * for Pi_{k+1}}
    If $\vp\in \Pi_{k}$, $\ha^{\ph} + \LEM{\Sigma_{k}} \vdash \vp^{\ph} \lr \vp \lor {\ph} ;$
    \item
    \label{item: A v * for Sigma_{k+1}}
    If $\vp\in \Sigma_{k}$, $\ha^{\ph} + \LEM{\Sigma_{k}} \vdash \vp^{\ph} \lr \vp \lor {\ph}.$
\end{enumerate}
Note that $\LEM{\Sigma_k}$ is an axiom scheme in the language of $\ha$ (which does not contain $\ph$).
\end{lemma}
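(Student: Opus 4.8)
The plan is to prove both items simultaneously by induction on $k$, exploiting the prenex normal forms noted in Section~2: every $\Pi_{k}$-formula is, over $\ha$, of the form $\forall x\,\psi$ with $\psi\in\Sigma_{k-1}$, and every $\Sigma_{k}$-formula of the form $\exists x\,\psi$ with $\psi\in\Pi_{k-1}$. For the base case $k=0$ both classes consist of quantifier-free formulas and $\LEM{\Sigma_0}$ is already provable in $\ha$ (quantifier-free formulas are decidable), so the claim is exactly Lemma~\ref{lem: HA|- Aqf* <-> Aqf v * }. In the induction step I may freely use $\LEM{\Sigma_{k-1}}$, and hence the induction hypothesis at level $k-1$ for both items, since $\Sigma_{k-1}\subseteq\Sigma_{k}$ and so the scheme $\LEM{\Sigma_{k}}$ includes $\LEM{\Sigma_{k-1}}$.

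The $\Sigma_{k}$-case (item~\ref{item: A v * for Sigma_{k+1}}) is the easier one. Writing $\vp\equiv\exists x\,\psi$ with $\psi\in\Pi_{k-1}$, the induction hypothesis for $\Pi_{k-1}$ gives $\psi^{\ph}\lr\psi\lor\ph$, and pushing $\lor\,\ph$ through the (intuitionistically harmless) existential yields $\exists x\,\psi^{\ph}\lr(\exists x\,\psi)\lor\ph\equiv\vp\lor\ph$. Since $\neg_{\ph}(\vp\lor\ph)\lr\neg_{\ph}\vp$, it follows that $\vp^{\ph}\equiv\neg_{\ph}\neg_{\ph}\exists x\,\psi^{\ph}\lr\neg_{\ph}\neg_{\ph}\vp$. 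Finally I would collapse the double $\neg_{\ph}$ by applying $\LEM{\Sigma_{k}}$ to $\vp$ itself: from $\vp\lor\neg\vp$ one checks directly that $\neg_{\ph}\neg_{\ph}\vp\lr\vp\lor\ph$ (in the case $\neg\vp$ one has $\neg_{\ph}\vp$, which together with $\neg_{\ph}\neg_{\ph}\vp$ produces $\ph$). Combining these equivalences gives $\vp^{\ph}\lr\vp\lor\ph$.

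For the $\Pi_{k}$-case (item~\ref{item: A v * for Pi_{k+1}}) write $\vp\equiv\forall x\,\psi$ with $\psi\in\Sigma_{k-1}$, so that $\vp^{\ph}\equiv\forall x\,\psi^{\ph}$ and, by the induction hypothesis for $\Sigma_{k-1}$, $\vp^{\ph}\lr\forall x(\psi\lor\ph)$. The implication $(\forall x\,\psi)\lor\ph\to\forall x(\psi\lor\ph)$ is intuitionistic, so the backward direction is immediate. The forward direction, namely $\forall x(\psi\lor\ph)\to(\forall x\,\psi)\lor\ph$, is the main obstacle: it amounts to pulling $\ph$ out of a universal quantifier, a constant-domain-type principle that is \emph{not} available in $\ha^{\ph}$ (it fails in suitable Kripke models with growing domains, even for decidable $\psi$).

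To overcome this I would use $\LEM{\Sigma_{k}}$ to decide whether $\psi$ fails somewhere. The point is that, although $\exists x\,\neg\psi$ is not syntactically $\Sigma_{k}$, over $\ha+\LEM{\Sigma_{k-1}}$ the negation $\neg\psi$ of a $\Sigma_{k-1}$-formula is equivalent to a $\Pi_{k-1}$-formula; this normal-form fact is proved by a side induction whose only nontrivial step uses $\DNE{\Sigma_{k-1}}$ (a consequence of $\LEM{\Sigma_{k-1}}$) to move a negation past an inner universal quantifier. Hence $\exists x\,\neg\psi$ is, over the theory, equivalent to a genuine $\Sigma_{k}$-formula, so $\LEM{\Sigma_{k}}$ supplies $(\exists x\,\neg\psi)\lor\neg\exists x\,\neg\psi$. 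In the first case a witness $x_0$ with $\neg\psi(x_0)$, together with the hypothesis $\psi(x_0)\lor\ph$, yields $\ph$; in the second case $\neg\exists x\,\neg\psi$ gives $\forall x\,\neg\neg\psi$, whence $\forall x\,\psi$ by $\DNE{\Sigma_{k-1}}$. Either way $(\forall x\,\psi)\lor\ph$ follows, completing the forward direction and the induction. The delicate point to get right is precisely this reclassification of $\neg\psi$ as $\Pi_{k-1}$ over $\LEM{\Sigma_{k-1}}$, on which the entire $\Pi_{k}$-case rests.
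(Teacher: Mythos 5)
Your proof is correct and follows essentially the same route as the paper's: a simultaneous induction on $k$ with the base case given by Lemma~\ref{lem: HA|- Aqf* <-> Aqf v * }, the existential case handled by pushing $\lor\,\ph$ through $\exists$ and collapsing $\neg_{\ph}\neg_{\ph}$ via $\LEM{\Sigma_k}$, and the universal case handled by using $\LEM{\Sigma_k}$ on $\exists x\,\neg\psi$ (after reclassifying $\neg\psi$ as $\Pi_{k-1}$ via the dual, available under $\DNE{\Sigma_{k-2}}$) to extract $\ph$ from $\forall x(\psi\lor\ph)$. The only difference is cosmetic indexing of the induction step.
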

\begin{proof}
By simultaneous induction on $k$.
The base case is by Lemma \ref{lem: HA|- Aqf* <-> Aqf v * }.
Assume items \ref{item: A v * for Pi_{k+1}} and \ref{item: A v * for Sigma_{k+1}} for $k$ to show those for $k+1$.
First, for the first item, let $\vp:\equiv \forall x \vp_1$ where $\vp_1 \in \Sigma_k$.
By the induction hypothesis, we have $\ha^{\ph} + \LEM{\Sigma_k} \vdash  \vp_1^{\ph} \leftrightarrow \vp_1 \lor \ph $.
Note that $\ha^{\ph}$ proves $\forall x \vp_1 \lor \ph  \to \forall x (\vp_1 \lor \ph) $.
In the following, we show the converse $\forall x (\vp_1 \lor \ph) 
\to \forall x \vp_1 \lor \ph $ inside $\ha^{\ph} + \LEM{\Sigma_{k+1}}$.
Since $\neg \vp_1$ has some equivalent formula in $\Pi_k$ in the presence of $\DNE{\Sigma_{k-1}}$ (cf. Remark \ref{rem: duals} below), by $\LEM{\Sigma_{k+1}}$, we have now $\exists x \neg \vp_1 \lor \neg \exists x \neg \vp_1$.
In the former case, we have $\ph$ by using our assumption $\forall x (\vp_1 \lor \ph) $.
In the latter case, we have $\forall x \vp_1$ since $\neg \exists x \neg \vp_1 \leftrightarrow \forall x \neg \neg \vp_1$ and $\LEM{\Sigma_{k+1}}$ implies $\DNE{\Sigma_{k+1}}$.
Thus  $\ha^{\ph} + \LEM{\Sigma_{k+1}} $ proves $\forall x (\vp_1 \lor \ph) 
\to \forall x \vp_1 \lor \ph $.
Then we have that $\ha^{\ph} + \LEM{\Sigma_{k+1}}$ proves
$$
\vp^\ph \equiv 
\forall x \vp_1^{\ph} \underset{\text{[I.H.] }\LEM{\Sigma_{k}}}{\llr} 
\forall x (\vp_1 \lor \ph)
\underset{\LEM{\Sigma_{k+1}}}{\llr} 
\forall x \vp_1 \lor \ph .
$$

Next, for the second item, let $\vp:\equiv \exists x \vp_1$ where $\vp_1 \in \Pi_k$.
Note that $\vp^{\ph}$ is $\neg_{\ph} \neg_{\ph} \exists x \vp_1^{\ph}$.
By the induction hypothesis, we have $\ha^{\ph} + \LEM{\Sigma_k}$ proves $\vp_1^{\ph} \leftrightarrow \vp_1 \lor \ph $, and hence, $ \vp^{\ph} \leftrightarrow \neg_{\ph} \neg_{\ph} \exists x \vp_1$.
Then it is trivial that $\ha^{\ph} + \LEM{\Sigma_k} $ proves $\exists x \vp_1 \lor \ph \to \vp^{\ph}$.
In the following, we show the converse direction inside $\ha^{\ph} + \LEM{\Sigma_{k+1}}$.
By $\LEM{\Sigma_{k+1}}$, we have now $
\exists x \vp_1 \lor \neg \exists x \vp_1 $.
Then it suffices to show $\neg \exists x \vp_1 \land \neg_{\ph} \neg_{\ph} \exists x \vp_1  \to  \ph$, which is trivial since $\neg \exists x \vp_1 \to \neg_{\ph} \exists x \vp_1$.
\end{proof}

\begin{corollary}
\label{cor: A v * for Sigma_k}
For
a formula $\vp$ of $\ha$, if $\vp \equiv \exists x \vp_1 $ with $\vp_1 \in \Pi_{k}$, then $\ha^{\ph} + \LEM{\Sigma_{k}} \vdash \exists x \left({\vp_1}^{\ph} \right) \lr \vp \lor {\ph}.$
\end{corollary}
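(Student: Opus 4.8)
The plan is to reduce this directly to Lemma~\ref{Kurahashi Lemma: A* <-> A v *}.\eqref{item: A v * for Pi_{k+1}}, carefully avoiding the outer double negation that occurs in the full translation $(\exists x \vp_1)^{\ph} \equiv \neg_{\ph}\neg_{\ph}\exists x \vp_1^{\ph}$. The crucial observation is that the corollary only asserts something about the inner formula $\exists x(\vp_1^{\ph})$, so that the statement for $\Pi_k$ already suffices and we never need to climb up to $\LEM{\Sigma_{k+1}}$.

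First I would apply Lemma~\ref{Kurahashi Lemma: A* <-> A v *}.\eqref{item: A v * for Pi_{k+1}} to $\vp_1\in\Pi_k$, which gives
$$\ha^{\ph}+\LEM{\Sigma_k}\vdash \vp_1^{\ph}\lr \vp_1\lor{\ph}.$$
As this equivalence is provable with $x$ free, I would generalize and distribute $\exists x$ across the biconditional to obtain
$$\ha^{\ph}+\LEM{\Sigma_k}\vdash \exists x\,(\vp_1^{\ph})\lr \exists x\,(\vp_1\lor{\ph}).$$

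It then remains to record the purely intuitionistic equivalence $\exists x\,(\vp_1\lor{\ph})\lr \exists x\,\vp_1\lor{\ph}$, which holds already in $\ha^{\ph}$ because ${\ph}$ is nullary and hence $x\notin\FV{\ph}$: the left-to-right direction eliminates the existential witness and splits the disjunction, while the converse is immediate. Since $\vp\equiv\exists x\,\vp_1$, chaining the three equivalences yields $\ha^{\ph}+\LEM{\Sigma_k}\vdash \exists x\,(\vp_1^{\ph})\lr\vp\lor{\ph}$, as required. I do not expect any genuine obstacle here; the only subtlety worth flagging is that one must \emph{not} route the argument through Lemma~\ref{Kurahashi Lemma: A* <-> A v *}.\eqref{item: A v * for Sigma_{k+1}} applied to $\vp\in\Sigma_{k+1}$, since that would require the stronger scheme $\LEM{\Sigma_{k+1}}$; keeping everything at the level of the matrix $\exists x(\vp_1^{\ph})$ confines the proof to $\LEM{\Sigma_k}$.
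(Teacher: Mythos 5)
Your proof is correct and is essentially the paper's own argument: the paper likewise applies Lemma \ref{Kurahashi Lemma: A* <-> A v *}.\eqref{item: A v * for Pi_{k+1}} to $\vp_1\in\Pi_k$ and then uses the intuitionistic equivalence $\exists x\,\vp_1\lor{\ph}\lr\exists x(\vp_1\lor{\ph})$, which is valid in $\ha^{\ph}$ since the domain is inhabited. Your remark about deliberately staying at the level of the matrix so as to avoid $\LEM{\Sigma_{k+1}}$ correctly identifies the point of stating the corollary this way.
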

\begin{proof}
Since $\exists x \vp_1 \lor \ph \leftrightarrow \exists x (\vp_1 \lor \ph)$, the corollary is trivial by Lemma \ref{Kurahashi Lemma: A* <-> A v *}.\eqref{item: A v * for Pi_{k+1}}.
\end{proof}

In the context of intuitionistic/semi-classical arithmetic, a formula does not have an equivalent formula of the prenex normal form (namely, formula in $\Sigma_k$ or $\Pi_k$) while it does in classical arithmetic.
Because of this fact, the conservation theorem only for prenex formulas is not applicable in many practical cases.
On the other hand, Akama et. al \cite{ABHK04} introduced the classes $\U_k $ and $\E_k$ of formulas which correspond to classical $\Pi_k$ and $\Sigma_k$ respectively in the sense that every formula in $\U_k$ (resp. $\E_k$) is equivalent over $\pa$ to some formula in $\Pi_k$ (resp. $\Sigma_k$) and vice versa.
In addition, the authors introduced in \cite{FK20-1} the classes $\U_k^+ $ and $\E_k^+$, which are cumulative versions of $\U_k $ and $\E_k$.
For obtaining the conservation results for the classes as large as possible, 
we introduce classes $\R_k$ and $\J_k$ (see Definition \ref{def: Rk and Jk}), which relativize $\R$ and $\J$ in \cite{Ishi00} respectively with regard to the formulas of degree $\leq k$ in the sense of \cite{ABHK04, FK20-1}.

To make the definitions absolutely clear, we recall some notions in \cite{ABHK04, FK20-1}:
An {\bf alternation path} is a finite sequence of $+$ and $-$ in which $+$ and $-$ appear alternatively.
For an alternation path $s$, let $i(s)$ denote the first symbol of $s$ if $s \not \equiv \ang{\, }$ (empty sequence); $ \times$ if $s \equiv \ang{\, }$.
Let $s^{\perp}$ denote the alternation path which is obtained by switching $+$ and $-$ in $s$, and let $l(s) $ denote the length of $s$.
For a formula $\vp$, the set of alternation paths $\alt{\vp}$ of $\vp$ is defined as follows:
\begin{itemize}
    \item 
    If $\vp$ is quantifier-free, then $\alt{\vp} := \{ \ang{\, } \}$;
    \item
    Otherwise, $\alt{\vp}$ is defined inductively by the following clauses:
    \begin{itemize}
        \item
        If $\vp \equiv \vp_1 \land \vp_2$ or $\vp \equiv \vp_1 \lor \vp_2$, then $\alt{\vp} := \alt{\vp_1} \cup \alt{\vp_2}$;
        \item
        If $\vp \equiv \vp_1 \to \vp_2$, then $\alt{\vp} := \{ s^{\perp} \mid s \in \alt{\vp_1}\} \cup \alt{\vp_2}$;
\item
If $\vp \equiv \forall x \vp_1 $, then $\alt{\vp} :=\{s \mid s\in \alt{\vp_1} \text{ and } i(s)\equiv -\} \cup \{-s \mid s\in \alt{\vp_1} \text{ and } i(s)\not \equiv - \} $;
\item
If $\vp \equiv \exists x \vp_1 $, then $\alt{\vp} :=\{s \mid s\in \alt{\vp_1} \text{ and } i(s)\equiv + \} \cup \{+s \mid s\in \alt{\vp_1} \text{ and } i(s)\not \equiv + \} $.
    \end{itemize}
\end{itemize}
In addition, for a formula $\vp$, the degree $\degree{\vp}$ of $\vp$ is defined as 
$$\degree{\vp} := \max \{l(s) \mid s \in \alt{\vp}  \} .$$

\begin{definition}[cf. {\cite[Definition 2.4]{ABHK04} and \cite[Definition 2.11]{FK20-1}}]
\label{def: Classes}
The classes $\F_k, \U_k, \E_k $, $\F_k^+$, $\U_k^+ $ and $ \E_k^+ $ of $\ha$-formulas are defined as follows:
\begin{itemize}
    \item
    $\F_k := \{ \vp \mid \degree{\vp}=k  \}    ;\,\, \F_k^+ := \{ \vp \mid \degree{\vp}\leq k  \}   ;$
    \item
    $\U_0:=\E_0:=\F_0 \, (=\Sigma_0 =\Pi_0)$;
    \item
$\U_{k+1} := \{ \vp \in \F_{k+1} \mid i(s) \equiv - \text{ for all }s\in \alt{\vp} \text{ such that }l(s) =k+1 \}$;
\item
$\E_{k+1} := \{ \vp \in \F_{k+1} \mid i(s) \equiv + \text{ for all }s\in \alt{\vp} \text{ such that }l(s) =k+1  \}$;
    \item
$\begin{displaystyle}
\U_k^+ := \U_k \cup \bigcup_{i<k} \F_i;\, \, \E_k^+ := \E_k \cup \bigcup_{i<k} \F_i .
\end{displaystyle}
$ 
\end{itemize}
\end{definition}

\begin{remark}
\label{rem: On the non-commutativity of Ek, Uk and Fk}
As shown in \cite[Proposition 4.6]{FK20-1}, for any $\vp \in \U_k^+$ and $\psi\in \E_k^+$, there exist $\vp'\in \U_k$ and $\psi' \in \E_k$ such that $\FV{\vp}=\FV{\vp'}$, $\FV{\psi}=\FV{\psi'}$, $\ha \vdash \vp \lr \vp'$ and $\ha \vdash \psi \lr \psi'$.
Then it also follows that for any $\vp\in \F_k^+$, there exists $\vp'\in \F_k$ such that $\FV{\vp}=\FV{\vp'}$ and $\ha \vdash \vp \lr \vp'$.
Thus one may identify $\E_k^+, \U_k^+$ and $\F_k^+$ with $\E_k, \U_k$ and $\F_k$ respectively without loss of generality.
\end{remark}

Then the authors showed the following prenex normal form theorem:
\begin{theorem}[cf. {\cite[Theorem 5.3]{FK20-1} which corrects \cite[Theorem 2.7]{ABHK04}}]
\label{thm: PNFT}
For a $\ha$-formula $\varphi$,
the following hold$:$
\begin{enumerate}
        \item 
        \label{eq: item for Sigma_k in PNFT}
    If $\varphi \in \E_k^+$, then
        there exists $\varphi' \in \Sigma_k$  such that $\FV{\vp}=\FV{\vp'}$ and
    $$
    \ha  + \DNE{\Sigma_k} +\DNS{\U_k} \vdash \varphi \leftrightarrow \varphi';
    $$
     \item 
    \label{eq: item for Pi_k in PNFT}
    If $\varphi \in \U_k^+$, then there exists $\varphi' \in \Pi_k$ such that $\FV{\vp}=\FV{\vp'}$ and
    $$
    \ha + \DNE{(\Pi_k\lor \Pi_k)}  \vdash \varphi \leftrightarrow \varphi' ;
    $$
    \end{enumerate}
where $\DNS{\U_k}$ is the axiom scheme of the double-negation-shift restricted to formulas in $\U_k$ and  $\DNE{(\Pi_k\lor \Pi_k)}$ is ${\rm DNE}$  restricted to formulas of the form $\vp \lor \psi$ with $\vp, \psi \in \Pi_k$.
\end{theorem}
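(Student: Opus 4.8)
The plan is to prove parts~\ref{eq: item for Sigma_k in PNFT} and~\ref{eq: item for Pi_k in PNFT} by a single mutual induction run by induction on the structure of $\varphi$, invoking Remark~\ref{rem: On the non-commutativity of Ek, Uk and Fk} to pass freely between the cumulative classes $\E_k^+,\U_k^+,\F_k^+$ and the plain classes $\E_k,\U_k,\F_k$. The base case, where $\varphi$ is quantifier-free, is immediate: $\varphi$ already lies in $\Sigma_0=\Pi_0$ and the cited schemes are vacuous. For the inductive step I would split on the outermost connective of $\varphi$ and read off from the defining clauses of $\alt{\varphi}$ exactly which classes the immediate subformulas must inhabit.

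The constructively unproblematic cases should go through in $\ha$ alone. A leading existential quantifier matches part~\ref{eq: item for Sigma_k in PNFT} and a leading universal quantifier matches part~\ref{eq: item for Pi_k in PNFT}: after applying the induction hypothesis to the body one simply re-absorbs the quantifier, since an $\exists$ in front of a $\Sigma_k$-formula stays in $\Sigma_k$ and a $\forall$ in front of a $\Pi_k$-formula stays in $\Pi_k$. Conjunctions, and the moves $\exists$-over-$\lor$ and $\forall$-over-$\land$, are handled by the standard intuitionistic prenexing equivalences.

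The substantive cases are disjunction and implication, where the required quantifier shifts fail intuitionistically. Because the implication clause of $\alt{\cdot}$ flips each antecedent path $s$ to $s^{\perp}$, an implication $\varphi_1\to\varphi_2$ in $\E_k$ forces $\varphi_1\in\U_k^+$ and $\varphi_2\in\E_k^+$, and dually for $\U_k$; this polarity switch is precisely why the two parts must be run together, so that the induction hypothesis applies to $\varphi_1$ via part~\ref{eq: item for Pi_k in PNFT} and to $\varphi_2$ via part~\ref{eq: item for Sigma_k in PNFT}. After the subformulas are prenexed, part~\ref{eq: item for Sigma_k in PNFT} reduces to linearising $\forall\vec x\,P_1\to\exists\vec y\,P_2$ (with $P_1\in\Sigma_{k-1}$, $P_2\in\Pi_{k-1}$) into a $\Sigma_k$-formula: this is where $\DNE{\Sigma_k}$ removes the double negations created in pulling the existential out of the consequent and $\DNS{\U_k}$ carries out the needed $\forall\neg\neg\to\neg\neg\forall$ interchange, the $\Sigma/\Pi$ duality under $\DNE{\Sigma_{k-1}}$ recorded in Remark~\ref{rem: duals} keeping the auxiliary negated formulas inside the scope of these schemes. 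For part~\ref{eq: item for Pi_k in PNFT} the only genuinely new step at level $k$ is turning a disjunction $\psi_1\lor\psi_2$ of two $\Pi_k$-formulas into a single $\Pi_k$-formula: since $\neg\neg(\psi_1\lor\psi_2)$ is equivalent, using only double-negation-eliminations below level $k$, to a $\Pi_k$-formula, the instance $\DNE{(\Pi_k\lor\Pi_k)}$ delivers $\psi_1\lor\psi_2\leftrightarrow\neg\neg(\psi_1\lor\psi_2)$ and hence the desired prenex form.

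The main obstacle I anticipate is the \emph{economy} of principles, namely certifying that no scheme stronger than those advertised is ever needed. The observation that makes part~\ref{eq: item for Pi_k in PNFT} work with the weaker principle is that $\DNE{(\Pi_k\lor\Pi_k)}$ already subsumes the lower-level schemes entering the $\U_k$ recursion: it proves $\DNE{\Pi_k}$, hence $\DNE{\Sigma_{k-1}}$, and in turn the restricted shift $\DNS{\U_{k-1}}$, so the whole level-$(k-1)$ $\E$-prenexing is available for free, yet crucially it does \emph{not} yield $\DNE{\Sigma_k}$. For part~\ref{eq: item for Sigma_k in PNFT} one must instead check that $\DNE{\Sigma_k}$ together with $\DNS{\U_k}$ suffice for every implication and disjunction met, the extra shift compensating for the absence of $\DNE{\Pi_k}$. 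Keeping track, at each intermediate formula, of its membership in $\Sigma_k$, $\Pi_k$, $\U_k$ or $\Pi_k\lor\Pi_k$, so that the restricted schemes stay applicable and the induction hypotheses are called only on subformulas in the correct dual class at the correct level, is the delicate and laborious part of the argument.
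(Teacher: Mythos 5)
First, a remark on the comparison you ask for: the paper itself contains no proof of Theorem \ref{thm: PNFT} — it is imported verbatim from \cite{FK20-1} (Theorem 5.3 there), so your argument can only be judged against the mathematics, not against an in-paper proof.

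Judged on its own terms, your proposal has a genuine gap, and it sits exactly where you locate the ``substantive cases'': the implication clause. Your plan is to prenex the antecedent $\varphi_1$ of $\varphi_1\to\varphi_2$ by invoking the \emph{other} part's induction hypothesis at the same level $k$. But the two parts are established over different base theories, and the equivalence delivered by the other part is not available in the theory you are currently working in. You yourself record that $\DNE{(\Pi_k\lor\Pi_k)}$ does not yield $\DNE{\Sigma_k}$; yet in part \ref{eq: item for Pi_k in PNFT} the antecedent of an implication lies in $\E_k^+$, and the $\Sigma_k$-prenexation you want to apply to it is (by part \ref{eq: item for Sigma_k in PNFT}) an equivalence over $\ha+\DNE{\Sigma_k}+\DNS{\U_k}$ — not over $\ha+\DNE{(\Pi_k\lor\Pi_k)}$. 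So the induction step for part \ref{eq: item for Pi_k in PNFT} cannot be completed as described. The situation for part \ref{eq: item for Sigma_k in PNFT} is symmetric: its antecedents lie in $\U_k^+$, and already for a formula such as $\forall x\exists y\,A\lor\forall x'\exists y'\,B\in\U_2$ the passage to a $\Pi_2$ form is itself an instance of $\DNE{(\Pi_2\lor\Pi_2)}$, which is not among the principles $\DNE{\Sigma_2}+\DNS{\U_2}$ you are allowed to use there. What your argument actually establishes is the theorem over the union $\ha+\DNE{\Sigma_k}+\DNS{\U_k}+\DNE{(\Pi_k\lor\Pi_k)}$, which is strictly more than claimed; the whole point of the corrected statement is the economy you flag as ``delicate and laborious'' but treat as bookkeeping.

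The missing idea is that the antecedent of an implication is never prenexed at all. Instead, after prenexing the consequent, one uses $\DNE{}$ on that prenex consequent to rewrite $\varphi_1\to\varphi_2$ as a negation $\neg(\varphi_1\land\cdots)$, and then proves, by a separate simultaneous structural induction, that for $\psi\in\U_k^+$ the formula $\neg\psi$ (and for $\psi\in\E_k^+$ the formula $\neg\neg\psi$) is equivalent to a $\Sigma_k$ formula over $\ha+\DNE{\Sigma_k}+\DNS{\U_k}$, with the dual statements over $\ha+\DNE{(\Pi_k\lor\Pi_k)}$ for part \ref{eq: item for Pi_k in PNFT}. This succeeds where direct prenexation fails because negation dissolves the obstructions: $\neg(\psi_1\lor\psi_2)$ becomes a conjunction of negations, so the problematic $\Pi_k\lor\Pi_k$ disjunction never has to be linearised inside part \ref{eq: item for Sigma_k in PNFT}; and it is in the clause $\neg\forall x\,\psi$ of this auxiliary induction — reducing $\neg\forall x\,\psi\to\exists x\,\psi^\perp$ to $\forall x\,\neg\neg\psi\to\neg\neg\forall x\,\psi$ followed by $\DNE{\Sigma_k}$ — that $\DNS{\U_k}$ is actually consumed. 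Without this detour through negated classes, the mutual induction you describe does not close.
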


\begin{remark}
\label{rem: Sk-LEM -> Sk-DNE, Uk-DNS and PkvPk-DNE}
$\ha + \LEM{\Sigma_k}$ proves $\DNE{\Sigma_k}$, $\DNS{\U_k}$ and $\DNE{(\Pi_k \lor \Pi_k)}$.
Then the prenex normal form theorems for $\E_k^+$ and $\U_k^+$ are available in $\ha+\LEM{\Sigma_k}$.
\end{remark}

\begin{definition}[cf. {\cite[Definition 6]{Ishi00}}]
\label{def: Rk and Jk}
Define $\R_0:=\J_0:=\Sigma_0 \, (=\Pi_0)$.
In addition, we define simultaneously classes $\R_{k+1}$ and $\J_{k+1}$ as follows:
Let
$F$
range over formulas in
$\F_k^+$,
$R$ and $R'$ over those in $\R_{k+1}$, and $J$ and $J'$ over those in $\J_{k+1}$ respectively.
Then $\R_{k+1}$ and $\J_{k+1}$ are inductively generated by the clauses
\begin{enumerate}
    \item
    $F, R\land R', R \lor R', \forall x R, J \to R\in \R_{k+1}$;
        \item
    $F, J\land J', J \lor J', \exists x J, R \to J\in \J_{k+1}$.
\end{enumerate}
\end{definition}

\begin{lemma}[A relativized version of {\cite[Proposition 7.(2, 3)]{Ishi00}}]
\label{lem: A technical lemma for Rk and Jk in semi-callsical systems}
For a $\ha$-formula $\varphi$, the following hold$:$
\begin{enumerate}
    \item
    \label{item: property for R_{k+1}}
    If $\vp\in \R_{k+1}$, then $\ha^{\ph} + \LEM{\Sigma_{k}}$ proves $\neg_{\ph}\neg \vp \to \vp^{\ph};$
        \item
            \label{item: property for J_{k+1}}
    If $\vp\in \J_{k+1}$, then $\ha^{\ph} + \LEM{\Sigma_{k}}$ proves
    $\vp^{\ph} \to \neg_{\ph} \neg_{\ph} \vp$.
\end{enumerate}
\end{lemma}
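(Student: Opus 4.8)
The plan is to prove both parts simultaneously by induction on the generation of $\R_{k+1}$ and $\J_{k+1}$ given in Definition \ref{def: Rk and Jk}, treating the common base clause $F \in \F_k^+$ as the semi-classical heart of the argument and the remaining clauses ($\land, \lor, \forall, J\to R$ for $\R_{k+1}$, and $\land,\lor,\exists, R\to J$ for $\J_{k+1}$) as essentially intuitionistic manipulations of the place-holder negation $\neg_{\ph}$, mirroring \cite{Ishi00}.

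For the base case I would first upgrade Lemma \ref{Kurahashi Lemma: A* <-> A v *} from $\Sigma_k\cup\Pi_k$ to all of $\F_k^+$, proving $\ha^{\ph} + \LEM{\Sigma_k} \vdash F^{\ph} \llr F \lor \ph$ for every $F \in \F_k^+$, by induction on the structure of $F$. The quantifier-free case is Lemma \ref{lem: HA|- Aqf* <-> Aqf v * }. If $F \equiv \forall x\, G$, the degree bookkeeping of $\alt{\cdot}$ forces $G$, hence $F$, into $\U_k^+$, so Theorem \ref{thm: PNFT}.\eqref{eq: item for Pi_k in PNFT} (available under $\LEM{\Sigma_k}$ by Remark \ref{rem: Sk-LEM -> Sk-DNE, Uk-DNS and PkvPk-DNE}) converts it to a $\Pi_k$ formula, whence Corollary \ref{cor: PA |- A <-> B => HA |- A* <-> B*} and Lemma \ref{Kurahashi Lemma: A* <-> A v *}.\eqref{item: A v * for Pi_{k+1}} give the claim; the case $F\equiv\exists x\, G$ is dual via $\E_k^+$, Theorem \ref{thm: PNFT}.\eqref{eq: item for Sigma_k in PNFT} and Lemma \ref{Kurahashi Lemma: A* <-> A v *}.\eqref{item: A v * for Sigma_{k+1}}; and the $\land,\lor,\to$ cases reproduce the corresponding clauses of Lemma \ref{lem: HA|- Aqf* <-> Aqf v * } verbatim by recursion, the only new ingredient being that the disjunction and implication steps need decidability of the immediate subformulas, i.e. $\ha+\LEM{\Sigma_k}\vdash G\lor\neg G$ for $G\in\F_k^+$. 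Granting $F^{\ph}\llr F\lor\ph$, part \eqref{item: property for J_{k+1}} for the base is immediate since $F\lor\ph \to \neg_{\ph}\neg_{\ph}F$ is intuitionistically valid, while part \eqref{item: property for R_{k+1}} follows by deciding $F$: in the case $\neg F$ the hypothesis $\neg_{\ph}\neg F$ yields $\ph$, and in the case $F$ we already have $F\lor\ph \equiv F^{\ph}$.

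For the inductive clauses I would argue entirely within the logic of $\neg_{\ph}$, never converting a place-holder negation back into an ordinary $\perp$-negation, using the stability $\ha^{\ph}\vdash \neg_{\ph}\neg_{\ph}\vp^{\ph}\llr\vp^{\ph}$ from Proposition \ref{Soundness of *-translation}.\eqref{item: HA |- N*N*A* <-> A*} together with the intuitionistic collapse $\neg_{\ph}\neg_{\ph}\neg_{\ph}\psi\to\neg_{\ph}\psi$ and the monotonicity of $\neg_{\ph}\neg_{\ph}$. The conjunction and the $\forall/\exists$ cases are routine. The representative cases are the disjunction clause of $\R_{k+1}$ and the two implication clauses: for $R_1\lor R_2$, assuming $\neg_{\ph}\neg(R_1\lor R_2)$ and $\neg_{\ph}(R_1^{\ph}\lor R_2^{\ph})$, each $R_i^{\ph}\to\ph$ combines with the induction hypothesis $\neg_{\ph}\neg R_i\to R_i^{\ph}$ to give $\neg_{\ph}\neg_{\ph}\neg R_i$, hence $\neg_{\ph}\neg_{\ph}(\neg R_1\land\neg R_2)$, which against $\neg_{\ph}\neg(R_1\lor R_2)$, i.e. $\neg_{\ph}(\neg R_1\land\neg R_2)$, produces $\ph$; the clauses $J\to R$ and $R\to J$ are handled by the same bookkeeping, exploiting that $\neg R$ with $J$ yields $\neg(J\to R)$ in the $\R$-clause, and that $\neg R$ alone vacuously yields $R\to J$ in the $\J$-clause.

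The main obstacle I expect is the base case, and specifically keeping the proof inside $\LEM{\Sigma_k}$ rather than $\LEM{\Sigma_{k+1}}$: a formula in $\F_k^+$ is in general only classically $\Delta_{k+1}$, not $\Sigma_k$ or $\Pi_k$, so it cannot be prenexified as a whole at level $k$, and one must instead descend to its maximal quantified subformulas (which do fall into $\U_k^+$ or $\E_k^+$) and glue the pieces together propositionally. This gluing rests on the decidability of $\F_k^+$ under $\LEM{\Sigma_k}$, and establishing the auxiliary fact $\ha+\LEM{\Sigma_k}\vdash G\lor\neg G$ for $G\in\F_k^+$ --- via the fact that $\LEM{\Sigma_k}$ proves $\LEM{\Pi_k}$ together with the prenex theorem --- is the one prerequisite that must be in place before the induction can close.
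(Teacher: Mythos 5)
Your proposal is correct, and its skeleton --- a simultaneous induction in the style of Ishihara's Proposition 7, with all the semi-classical content concentrated in the clause $F\in\F_k^+$ and discharged by prenexation under $\LEM{\Sigma_k}$ --- matches the paper's. The difference is in how that base clause is packaged. The paper runs a structural induction on formulas whose base case is only the prime formulas; the only cases genuinely beyond Ishihara are $\forall x\,\vp_1\in\J_{k+1}$ and $\exists x\,\vp_1\in\R_{k+1}$, which by Definition \ref{def: Rk and Jk} can only arise from the $F$-clause and hence automatically lie in $\U_k^+$ resp.\ $\E_k^+$; there the existing Lemma \ref{Kurahashi Lemma: A* <-> A v *} and Corollary \ref{cor: A v * for Sigma_k} apply after prenexation, so the paper never needs the equivalence $F^{\ph}\lr F\lor\ph$ for arbitrary $F\in\F_k^+$, nor the decidability of all of $\F_k^+$. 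You instead take the whole of $\F_k^+$ as the base case and first establish the stronger equivalence $F^{\ph}\lr F\lor\ph$ for every $F\in\F_k^+$. That statement is true --- the paper records exactly this extension as an unnumbered remark in Section \ref{sec: CONS for classes of sentences}, derived from Proposition \ref{prop: B_k+=F_k+} and Theorem \ref{thm: PNFT} --- and the prerequisite you correctly flag, $\ha+\LEM{\Sigma_k}\vdash G\lor\neg G$ for $G\in\F_k^+$, is Corollary \ref{cor: SkLEM<->Fk+LEM}, whose proof is independent of the present lemma, so there is no circularity. Your route front-loads more work but yields a reusable intermediate equivalence and reduces every generating clause to purely intuitionistic bookkeeping with $\neg_{\ph}$ (your treatments of $R\lor R'$, $J\to R$ and $R\to J$ check out); the paper's route is leaner precisely because the structural induction lets the two quantifier-mismatch cases land in the prenexable classes $\U_k^+$ and $\E_k^+$ rather than in general $\F_k^+$.
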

\begin{proof}
We show items \ref{item: property for R_{k+1}} and \ref{item: property for J_{k+1}} simultaneously by induction on the structure of formulas.

Let $\vp $ be prime.
Since $\vp $ is in $\F_0$, we have $\vp \in \R_{k+1} \cap \J_{k+1}$.
Since $\ha \vdash \vp \lor \neg \vp$, we have $\ha^{\ph} \vdash \neg_{\ph} \neg \vp \to \vp\lor {\ph}$.
Then we have item \ref{item: property for R_{k+1}} by Lemma \ref{lem: HA|- Aqf* <-> Aqf v * }.
Item \ref{item: property for J_{k+1}} is trivial.

The induction step is the same as that for \cite[Proposition 7]{Ishi00} in addition with the cases of $\vp:\equiv \forall x \vp_1 \in \J_{k+1}$ and $\vp:\equiv \exists x \vp_1 \in \R_{k+1}$:

If $\vp:\equiv \forall x \vp_1 \in \J_{k+1}$, then we have $\vp \in \F_k^+$, and hence, $\vp \in \U_k^+$.
By Remark \ref{rem: Sk-LEM -> Sk-DNE, Uk-DNS and PkvPk-DNE},
one may assume $\vp\in \Pi_k$.
By Lemma \ref{Kurahashi Lemma: A* <-> A v *}.\eqref{item: A v * for Pi_{k+1}}, we have $\ha^{\ph} + \LEM{\Sigma_k} \vdash \vp^{\ph} \lr \vp \lor {\ph}$.
Since $\vp \lor {\ph}$ implies $ \neg_{\ph} \neg_{\ph} \vp$, we have $\ha^{\ph} + \LEM{\Sigma_k} \vdash \vp^{\ph} \to \neg_{\ph} \neg_{\ph} \vp$.

If $\vp:\equiv \exists x \vp_1 \in \R_{k+1}$, then we have $\vp \in \F_k^+$, and hence,
$\vp \in \E_k^+$ (and $k>0$).
By Remark \ref{rem: Sk-LEM -> Sk-DNE, Uk-DNS and PkvPk-DNE},
one may assume $ \vp_1\in \Pi_{k-1}$.
Reason in $\ha^{\ph}+\LEM{\Sigma_k}$.
Now we have $\exists x \vp_1 \lor \neg \exists x \vp_1$.
In the latter case, we have ${\ph}$ in the presence of $\neg_{\ph} \neg \exists x \vp_1$.
Thus we have $\neg_{\ph} \neg \exists x \vp_1 \to \exists x \vp_1 \lor \ph$.
By Corollary \ref{cor: A v * for Sigma_k}, we have that $\neg_{\ph} \neg \exists x \vp_1$ implies $\exists x \left( {\vp_1}^{\ph} \right)$, and hence, $\left(\exists x \vp_1 \right)^{\ph}$.
\end{proof}

\begin{definition}[cf. {\cite[Definition 6]{Ishi00}}]
\label{def: Qk}
Define  $\Q_0:=\Sigma_0\, (=\Pi_0)$.
In addition, we define a class $\Q_{k+1}$ as follows.
Let $P$ range over prime formulas,
$Q$ and $Q'$ over formulas in $\Q_{k+1}$, and $J$ over those in $\J_{k+1}$.
Then $\Q_{k+1}$ is inductively generated by the clause
  $$ P, Q\land Q', Q \lor Q', \forall x Q, \exists x Q,  J \to Q\in \Q_{k+1} .$$
\end{definition}

\begin{lemma}[A relativized version of {\cite[Proposition 7.(1)]{Ishi00}}]
\label{lem: For A in Qk+1, HA + SigmakLEM |- A -> A*}
For a $\ha$-formula $\varphi$, 
if $\vp\in \Q_{k+1}$, then $\ha^{\ph} + \LEM{\Sigma_k} \vdash \vp \to \vp^{\ph}$.
\end{lemma}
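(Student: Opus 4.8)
The plan is to argue by induction on the generation of $\varphi$ as a member of $\Q_{k+1}$, following the inductive clauses of Definition \ref{def: Qk}. For each construction of $\varphi$ I would unfold the matching clause of the ${\ph}$-translation and reduce the goal $\vp \to \vp^{\ph}$ to the induction hypotheses, reasoning throughout inside $\ha^{\ph} + \LEM{\Sigma_k}$.

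For the base case I would take $\vp$ prime. If $\vp \equiv \perp$, then $\vp^{\ph} \equiv {\ph}$ and $\perp \to {\ph}$ is immediate; if $\vp$ is prime but not $\perp$, then $\vp^{\ph} \equiv \neg_{\ph}\neg_{\ph}\vp$ and the implication $\vp \to \neg_{\ph}\neg_{\ph}\vp$ holds by pure intuitionistic logic. The conjunction, disjunction, universal, and existential clauses are then routine: the translation either commutes with the connective ($\land$, $\forall$) or prefixes an outer $\neg_{\ph}\neg_{\ph}$ ($\lor$, $\exists$), and the induction hypothesis $Q \to Q^{\ph}$ — together with the trivial $\psi \to \neg_{\ph}\neg_{\ph}\psi$ in the latter two cases, and universal generalization on the free variable $x$ in the $\forall$ case — closes the goal directly.

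The main obstacle is the implication clause $\vp \equiv J \to Q$ with $J \in \J_{k+1}$ and $Q \in \Q_{k+1}$, since here the antecedent $J$ lies in $\J_{k+1}$ rather than $\Q_{k+1}$, so no induction hypothesis of the form $J \to J^{\ph}$ is available. Instead I would combine three ingredients. Assuming $J \to Q$ and $J^{\ph}$, I would first invoke Lemma \ref{lem: A technical lemma for Rk and Jk in semi-callsical systems}.\eqref{item: property for J_{k+1}} to pass from $J^{\ph}$ to $\neg_{\ph}\neg_{\ph} J$. Next, composing the hypothesis $J \to Q$ with the induction hypothesis $Q \to Q^{\ph}$ gives $J \to Q^{\ph}$; applying $\neg_{\ph}$-contraposition twice converts this into $\neg_{\ph}\neg_{\ph} J \to \neg_{\ph}\neg_{\ph} Q^{\ph}$, hence yields $\neg_{\ph}\neg_{\ph} Q^{\ph}$. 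Finally, Proposition \ref{Soundness of *-translation}.\eqref{item: HA |- N*N*A* <-> A*} collapses $\neg_{\ph}\neg_{\ph} Q^{\ph}$ back to $Q^{\ph}$. This establishes $J^{\ph} \to Q^{\ph}$, which is precisely $(J \to Q)^{\ph}$, under the assumption $J \to Q$.

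The appeal to $\LEM{\Sigma_k}$ in the background theory is confined to this last clause, where it is exactly what licenses Lemma \ref{lem: A technical lemma for Rk and Jk in semi-callsical systems}.\eqref{item: property for J_{k+1}}; the remaining clauses go through over $\ha^{\ph}$ alone. I expect no difficulty beyond bookkeeping elsewhere, so the crux is the stability step $\neg_{\ph}\neg_{\ph} Q^{\ph} \to Q^{\ph}$ feeding the double contraposition in the implication case.
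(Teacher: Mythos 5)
Your proposal is correct and follows essentially the same route as the paper: induction on the generating clauses of $\Q_{k+1}$, with the implication case handled by combining the induction hypothesis for the consequent, Lemma \ref{lem: A technical lemma for Rk and Jk in semi-callsical systems}.\eqref{item: property for J_{k+1}} for the antecedent, double $\neg_{\ph}$-contraposition, and the stability of $Q^{\ph}$ from Proposition \ref{Soundness of *-translation}.\eqref{item: HA |- N*N*A* <-> A*}. This is precisely the chain of implications displayed in the paper's proof.
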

\begin{proof}
By induction on the structure of formulas, we show that for any $\ha$-formula $\vp$,  if $\vp\in \Q_{k+1}$, then $\ha^{\ph} + \LEM{\Sigma_k} \vdash \vp \to \vp^{\ph}$.

If $\vp$ is prime, then we have $\ha^{\ph} \vdash \vp \to \vp^{\ph}$ trivially by the definition of $\vp^{\ph}$.
If $\vp:\equiv \vp_1 \land \vp_2$, $\vp:\equiv \vp_1 \lor \vp_2$, $\vp:\equiv \forall x \vp_1$ or $\vp:\equiv \exists x \vp_1$, we have $\ha^{\ph} + \LEM{\Sigma_k} \vdash \vp \to \vp^{\ph}$ in a straightforward way by using the induction hypothesis (as for \cite[Proposition 7.(1)]{Ishi00}).

Assume $\vp :\equiv  \vp_1 \to \vp_2\in \Q_{k+1}$.
Then we have $\vp_1 \in \J_{k+1}$ and $\vp_2 \in \Q_{k+1}$.
By the induction hypothesis, we have $\ha^{\ph} + \LEM{\Sigma_k} \vdash \vp_2 \to \vp_2^{\ph}$.
On the other hand, by Lemma \ref{lem: A technical lemma for Rk and Jk in semi-callsical systems}.\eqref{item: property for J_{k+1}}, we have $\ha^{\ph} + \LEM{\Sigma_k} \vdash \vp_1^{\ph} \to \neg_{\ph} \neg_{\ph} \vp_1$.
Since $\ha^{\ph} \vdash \neg_{\ph}\neg_{\ph} \vp_2^{\ph} \lr  \vp_2^{\ph}$ by Proposition \ref{Soundness of *-translation}.\eqref{item: HA |- N*N*A* <-> A*}, we have that $\ha^{\ph} + \LEM{\Sigma_k}$ proves
$$
\begin{array}{rcl}
(\vp_1 \to \vp_2)& \underset{\text{[I.H.] }\LEM{\Sigma_k}}{\lra} & (\vp_1 \to \vp_2^{\ph})\\
&\longrightarrow& 
(\neg_{\ph} \neg_{\ph} \vp_1 \to  \neg_{\ph} \neg_{\ph} \vp_2^{\ph})\\[5pt]
&\underset{\LEM{\Sigma_k}}{\lra}& (\vp_1^{\ph} \to \neg_{\ph} \neg_{\ph} \vp_2^{\ph})\\
&\llr& (\vp_1^{\ph} \to \vp_2^{\ph}).
\end{array}
$$
\end{proof}

Now we define a class $\V_k$ of $\ha$-formulas by using the class $\J_k$ in Definitions \ref{def: Rk and Jk}.
\begin{definition}
\label{def: Vk}
Let $J$ range over formulas in $\J_k$, $V$ and $V'$ over those in $\V_{k}$.
Then $\V_{k}$ is inductively generated by the clause
  $$ J, V\land V', \forall x V \in \V_{k} .$$
\end{definition}

For our conservation result, we use the following fact on substitution.
\begin{lemma}[cf. {\cite[Theorem 6.2.4]{vD13} and \cite[Lemma 6.10]{FK20-1}}]
\label{lem: Substitution}
Let $X$ be a set of $\ha$-sentences and $\vp$ be a $\ha^{\ph}$-formula.
If $\ha^{\ph} + X \vdash \vp$, then $\ha + X \vdash \vp[\psi/{\ph}]$ for any $\ha$-formula $\psi$ such that the free variables of $\psi$ are not bounded in $\vp$, where $\vp[\psi/{\ph}]$ is the $\ha$-formula obtained from $\vp$ by replacing all the occurrences of ${\ph}$ in $\vp$ with $\psi$.
\end{lemma}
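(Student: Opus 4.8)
The plan is to argue by induction on the derivation of $\vp$ in $\ha^{\ph} + X$, exploiting the fact that $\ph$ is merely a $0$-ary predicate symbol, so that the substitution operation $(-)[\psi/\ph]$ of a fixed $\ha$-formula $\psi$ for $\ph$ commutes with every axiom and inference rule of intuitionistic first-order logic. The key conceptual point is that, because $\ph$ takes no arguments, substituting $\psi$ for it never interacts with predicate-argument positions; it is a uniform (propositional-style) substitution, and the only arithmetical content needed is that $\ha$-axioms and the sentences of $X$ survive the substitution.

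First I would record the base cases. The nonlogical axioms of $\ha$ and the sentences of $X$ belong to the language \emph{without} $\ph$, so $\ph$ does not occur in them and $(-)[\psi/\ph]$ leaves each of them syntactically unchanged; hence every such axiom used in the derivation maps to itself and remains available in $\ha + X$. The logical axioms (identity, the schemes for $\land, \lor, \to, \perp$, and the quantifier axioms) are closed under substitution of an arbitrary formula for the $0$-ary predicate $\ph$, so their images under $(-)[\psi/\ph]$ are again logical axioms of $\ha + X$.

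For the induction step I would verify that each inference rule is preserved. For the propositional rules (modus ponens and the introduction/elimination rules for $\land, \lor, \to, \perp$) this is immediate, since $(-)[\psi/\ph]$ distributes over the connectives; for instance $(\chi_1 \to \chi_2)[\psi/\ph] \equiv \chi_1[\psi/\ph] \to \chi_2[\psi/\ph]$, so an application of the rule to the images of the premises is again a legitimate application whose conclusion is the image of the original conclusion, and the images of the premises are derivable in $\ha + X$ by the induction hypothesis.

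The one delicate point, and the main obstacle, is the treatment of the quantifier rules, where the eigenvariable conditions must be respected and variable capture must be avoided: since $\ph$ is $0$-ary while $\psi$ may carry free variables, substituting $\psi$ beneath a quantifier $\forall y$ or $\exists y$ could capture a free variable $y \in \FV{\psi}$ or clash with an eigenvariable. To circumvent this I would first pass to an $\alpha$-equivalent derivation in which all bound variables and eigenvariables occurring \emph{above} the conclusion are renamed to fresh variables disjoint from $\FV{\psi}$; such renaming preserves derivability and does not alter the endformula $\vp$, for which no renaming is needed because by hypothesis no free variable of $\psi$ is bound in $\vp$. After this normalization $(-)[\psi/\ph]$ genuinely commutes with the quantifiers, i.e.\ $(\forall y\, \chi)[\psi/\ph] \equiv \forall y\,(\chi[\psi/\ph])$ with $y \notin \FV{\psi}$, and the eigenvariable side conditions are untouched since the eigenvariables are now disjoint from $\FV{\psi}$. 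The induction then goes through and yields $\ha + X \vdash \vp[\psi/\ph]$, which is precisely the cited substitution theorem for first-order logic specialized to the $0$-ary predicate $\ph$.
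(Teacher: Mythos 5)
Your proof is correct. The paper itself gives no argument for this lemma---it simply cites the standard substitution theorem (\cite[Theorem 6.2.4]{vD13} and \cite[Lemma 6.10]{FK20-1})---and your induction on derivations, with the renaming of bound variables and eigenvariables to avoid capture by the free variables of $\psi$, is exactly the standard proof of that cited result specialized to the $0$-ary predicate ${\ph}$.
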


\begin{theorem}
\label{thm: conservativity on semi-classical arithmetic}
For any $\ha$-formulas $\vp\in \V_{k+1}$ and $\psi \in \Q_{k+1}$, if $ \pa \vdash \psi \to \vp $, then $\ha +\LEM{\Sigma_k} \vdash \psi \to \vp$.
\end{theorem}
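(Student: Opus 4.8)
The plan is to argue by induction on the build-up of $\vp \in \V_{k+1}$ from $\J_{k+1}$, peeling off the outer $\land$ and $\forall$ by purely classical manipulations and reducing everything to the base case $\vp \in \J_{k+1}$, where a Friedman-style substitution into the ${\ph}$-translation does the work. Throughout, $\psi \in \Q_{k+1}$ stays fixed, so the hypothesis $\psi \in \Q_{k+1}$ is consumed only in the base case. We may also assume $\LEM{\Sigma_k}$ is presented by the universal closures of its instances, so that it is a set of $\ha$-sentences and Lemma \ref{lem: Substitution} applies.

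For the induction step, suppose first $\vp \equiv V_1 \land V_2$ with $V_1, V_2 \in \V_{k+1}$. From $\pa \vdash \psi \to V_1 \land V_2$ we get $\pa \vdash \psi \to V_i$ for $i=1,2$, and the induction hypothesis yields $\ha + \LEM{\Sigma_k} \vdash \psi \to V_i$; conjoining gives the claim. Suppose next $\vp \equiv \forall x V$ with $V \in \V_{k+1}$. After renaming we may assume $x \notin \FV{\psi}$ and that $x$ is free in none of the sentences $\LEM{\Sigma_k}$, so $\pa \vdash \psi \to \forall x V$ gives $\pa \vdash \psi \to V$. The induction hypothesis then gives $\ha + \LEM{\Sigma_k} \vdash \psi \to V$, and since $x$ is free in no hypothesis we may generalize and re-import the quantifier to obtain $\ha + \LEM{\Sigma_k} \vdash \psi \to \forall x V$.

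It remains to treat the base case $\vp \equiv J \in \J_{k+1}$. First, Proposition \ref{Soundness of *-translation}.\eqref{item: PA |- A => HA |- A*} applied to $\pa \vdash \psi \to J$ gives $\ha^{\ph} \vdash \psi^{\ph} \to J^{\ph}$. Since $\psi \in \Q_{k+1}$, Lemma \ref{lem: For A in Qk+1, HA + SigmakLEM |- A -> A*} gives $\ha^{\ph} + \LEM{\Sigma_k} \vdash \psi \to \psi^{\ph}$, and since $J \in \J_{k+1}$, Lemma \ref{lem: A technical lemma for Rk and Jk in semi-callsical systems}.\eqref{item: property for J_{k+1}} gives $\ha^{\ph} + \LEM{\Sigma_k} \vdash J^{\ph} \to \neg_{\ph}\neg_{\ph} J$. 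Chaining these three implications yields
$$
\ha^{\ph} + \LEM{\Sigma_k} \vdash \psi \to \left( (J \to {\ph}) \to {\ph} \right).
$$
Now substitute ${\ph} :\equiv J$ via Lemma \ref{lem: Substitution}, renaming bound variables so that the free variables of $J$ are not captured; since $\psi$ and $J$ do not contain ${\ph}$, this yields $\ha + \LEM{\Sigma_k} \vdash \psi \to ((J \to J) \to J)$. As $\ha \vdash J \to J$, the consequent collapses to $J$, and we conclude $\ha + \LEM{\Sigma_k} \vdash \psi \to J$, completing the induction.

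The one genuine obstacle is the universal-quantifier case: one cannot hope to prove $\vp^{\ph} \to \neg_{\ph}\neg_{\ph}\vp$ directly for $\vp \in \V_{k+1}$, because commuting $\neg_{\ph}\neg_{\ph}$ past $\forall$ is exactly a double-negation-shift, which is not available in $\ha + \LEM{\Sigma_k}$. The device that avoids it is precisely the structural reduction above: the $\forall$ and $\land$ layers are stripped by classical reasoning on the \emph{hypothesis} $\pa \vdash \psi \to \vp$ before any translation is applied, leaving only the genuinely existential and implicational content of $\J_{k+1}$, where Lemma \ref{lem: A technical lemma for Rk and Jk in semi-callsical systems}.\eqref{item: property for J_{k+1}} together with the substitution ${\ph} :\equiv J$ suffices.
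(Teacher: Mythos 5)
Your proof is correct and follows essentially the same route as the paper's: induction on the build-up of $\vp\in\V_{k+1}$, with the $\land$ and $\forall$ layers stripped classically and the base case $\vp\in\J_{k+1}$ handled by chaining Proposition \ref{Soundness of *-translation}, Lemma \ref{lem: For A in Qk+1, HA + SigmakLEM |- A -> A*} and Lemma \ref{lem: A technical lemma for Rk and Jk in semi-callsical systems}, followed by the substitution ${\ph}:\equiv\vp$. Your added remarks on presenting $\LEM{\Sigma_k}$ by universal closures for Lemma \ref{lem: Substitution} and on why the double-negation-shift obstacle is avoided are accurate but do not change the argument.
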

\begin{proof}
Since one can freely replace the bounded variables, it suffices to show that for any $\ha$-formulas $\vp\in \V_{k+1}$ and $\psi \in \Q_{k+1}$ such that the free variables of $\vp$ are not bounded in $\psi$, if $ \pa \vdash \psi \to \vp $, then $\ha +\LEM{\Sigma_k} \vdash \psi \to \vp$.
We show this assertion by induction on the structure of formulas in $\V_{k+1}$.

Case of $\vp \in \J_{k+1}$:
Fix $\psi \in \Q_{k+1}$ such that the free variables of $\vp$ are not bounded in $\psi$.
Suppose $\pa \vdash \psi \to \vp$.
Then, by Proposition \ref{Soundness of *-translation}.\eqref{item: PA |- A => HA |- A*}, we have $\ha^{\ph} \vdash \psi^{\ph} \to \vp^{\ph} $.
By Lemma \ref{lem: For A in Qk+1, HA + SigmakLEM |- A -> A*} and Lemma \ref{lem: A technical lemma for Rk and Jk in semi-callsical systems}.\eqref{item: property for J_{k+1}}, we have $\ha^{\ph} +\LEM{\Sigma_k} \vdash \psi \to \neg_{\ph} \neg_{\ph} \vp$.
By Lemma \ref{lem: Substitution}, we have that  $\ha +\LEM{\Sigma_k}$ proves $\psi \to ((\vp \to \vp) \to \vp)$, equivalently, $\psi \to \vp$.

Case of $\vp:\equiv \vp_1 \land \vp_2 \in \V_{k+1}$:
Then $\vp_1 , \vp_2 \in \V_{k+1}$.
Fix $\psi \in \Q_{k+1}$ such that the free variables of $\vp_1 \land \vp_2$ are not bounded in $\psi$.
Suppose $\pa \vdash \psi \to \vp_1 \land \vp_2$.
Then $\pa \vdash \psi \to \vp_1 $ and $\pa \vdash \psi \to \vp_2$.
By the induction hypothesis, we have $\ha +\LEM{\Sigma_k} \vdash  \psi \to \vp_1 $ and  $\ha +\LEM{\Sigma_k} \vdash  \psi \to \vp_2 $, and hence,  $\ha +\LEM{\Sigma_k} \vdash \psi \to \vp_1 \land \vp_2 $.

Case of $\vp: \equiv \forall x \vp_1  \in \V_{k+1}$:
Then $ \vp_1 \in \V_{k+1}$.
Fix $\psi \in \Q_{k+1}$ such that the free variables of $\forall x \vp_1$ are not bounded in $\psi$.
In addition, assume that $x$ does not appear in $\psi$ without loss of generality.
Suppose $\pa \vdash \psi \to \forall x \vp_1$.
Then $\pa \vdash \psi \to \vp_1$.
By the induction hypothesis, we have that $\ha +\LEM{\Sigma_k}$ proves  $\psi \to   \vp_1$.
Since $x \notin \FV{\psi}$, we have $\ha +\LEM{\Sigma_k} \vdash  \psi \to \forall x \vp_1$.
\end{proof}

\begin{remark}
\label{rem: our conservation result is an extension of the previous one}
Since $\Pi_{k+2} $ is a sub-class of $\V_{k+1}$ and $\Q_{k+1}$ contains all prenex formulas, we have \cite[Theorem 6.14]{FK20-1} (and a-fortiori Proposition \ref{prop: PA is Pi_k+2-cons. over HA + Sigma_k-LEM}) as a corollary of Theorem \ref{thm: conservativity on semi-classical arithmetic}. 
\end{remark}

\begin{corollary}
\label{cor: Conservation result with X}
Let
$X$ be a set of $\ha$-sentences in $\Q_{k+1}$.
For any $\ha$-formulas $\vp\in \V_{k+1}$ and $\psi \in \Q_{k+1}$, if $ \pa + X \vdash \psi \to \vp $, then $\ha +X +\LEM{\Sigma_k} \vdash \psi \to \vp$.
\end{corollary}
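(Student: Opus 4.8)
The plan is to derive this corollary from Theorem \ref{thm: conservativity on semi-classical arithmetic} by absorbing the finitely many members of $X$ that are actually used in the given derivation into the antecedent $\psi$. The point is that $X$ consists of \emph{sentences} lying in $\Q_{k+1}$, and $\Q_{k+1}$ is closed under conjunction (the generating clause $Q \land Q' \in \Q_{k+1}$ in Definition \ref{def: Qk}), so such a conjunction stays inside the hypothesis class required by the theorem.

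First I would invoke the finiteness of derivations: if $\pa + X \vdash \psi \to \vp$, then there are finitely many $\chi_1, \dots, \chi_n \in X$ with $\pa + \{\chi_1, \dots, \chi_n\} \vdash \psi \to \vp$. Applying the deduction theorem over classical first-order logic and rearranging antecedents yields $\pa \vdash (\chi_1 \land \cdots \land \chi_n \land \psi) \to \vp$. Since each $\chi_i$ is a sentence, moving it across the implication cannot capture any free variable of $\vp$ or $\psi$; and since each $\chi_i \in \Q_{k+1}$ together with $\psi \in \Q_{k+1}$, the conjunction $\psi' :\equiv \chi_1 \land \cdots \land \chi_n \land \psi$ again lies in $\Q_{k+1}$.

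Next I would apply Theorem \ref{thm: conservativity on semi-classical arithmetic} to $\psi' \in \Q_{k+1}$ and $\vp \in \V_{k+1}$, obtaining $\ha + \LEM{\Sigma_k} \vdash \psi' \to \vp$, that is, $\ha + \LEM{\Sigma_k} \vdash (\chi_1 \land \cdots \land \chi_n \land \psi) \to \vp$. Finally, since $\chi_1, \dots, \chi_n \in X$ are available as axioms in $\ha + X + \LEM{\Sigma_k}$, discharging them intuitionistically gives $\ha + X + \LEM{\Sigma_k} \vdash \psi \to \vp$, as required.

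There is essentially no hard step: the only things to verify are the closure of $\Q_{k+1}$ under finite conjunction, which follows immediately by induction from the single binary clause in Definition \ref{def: Qk}, and the observation that the members of $X$ being sentences rules out any variable capture when they are shifted across the implication. Everything else is routine bookkeeping with the deduction theorem, so the principal content of the corollary already resides in Theorem \ref{thm: conservativity on semi-classical arithmetic} itself.
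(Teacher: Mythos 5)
Your proposal is correct and follows essentially the same route as the paper: extract a finite subset of $X$, use the deduction theorem in $\pa$ to fold those sentences into the antecedent, note that the resulting conjunction stays in $\Q_{k+1}$, apply Theorem \ref{thm: conservativity on semi-classical arithmetic}, and then discharge the conjuncts using the axioms of $X$ in $\ha + X + \LEM{\Sigma_k}$. The extra remarks about variable capture being impossible for sentences are a harmless refinement of the same argument.
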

\begin{proof}
Assume $\pa +X \vdash \psi \to \vp $.
Then there exists a finite number of sentences $\psi_0, \dots, \psi_m \in X $ such that $\pa + \psi_0+ \dots + \psi_m  \vdash  \psi \to \vp$.
Since $\pa$ satisfies the deduction theorem, we have $\pa \vdash \psi_0 \land \dots \land \psi_m \land \psi \to \vp$.
Since $\psi_0 \land \dots \land \psi_m \land \psi \in \Q_{k+1}$, by  Theorem \ref{thm: conservativity on semi-classical arithmetic}, we have
 $\ha +\LEM{\Sigma_k} \vdash \psi_0 \land \dots \land \psi_m \land \psi \to \vp$, and hence,
 $\ha + X +\LEM{\Sigma_k} \vdash \psi \to \vp$.
\end{proof}

\section{The relation of the classes $\R_k$ and $\J_k$ with the existing classes $\U_k$ and $\E_k$}
\label{sec: formula-classes}
In the following, we show that our classes $\R_k$ and $\J_k$ in Definition \ref{def: Rk and Jk} are in fact equivalent over $\ha$ to $\U_k $ and $\E_k$ (see Definition \ref{def: Classes}) respectively.

\begin{proposition}
\label{prop: Ukp=Rk, Ekp=Jk}
$\U_k^+=\R_k$ and $\E_k^+ = \J_k$.
\end{proposition}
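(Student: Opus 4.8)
The plan is to prove both equalities simultaneously by induction on $k$, since the definitions of $\R_{k+1}$ and $\J_{k+1}$ are mutually recursive and each refers to $\F_k^+$, which in turn relates to $\U_k^+$ and $\E_k^+$. Recall that by definition $\R_0 = \J_0 = \Sigma_0 = \U_0 = \E_0 = \F_0$, so the base case is immediate. For the inductive step, I would establish the two equalities $\U_{k+1}^+ = \R_{k+1}$ and $\E_{k+1}^+ = \J_{k+1}$ as mutual set inclusions (modulo $\ha$-equivalence, as stipulated in the Framework section), assuming both equalities hold at level $k$.

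The key observation driving the argument is that the inductive clauses generating $\R_{k+1}$ and $\J_{k+1}$ in Definition \ref{def: Rk and Jk} mirror exactly the combinatorial behavior of alternation paths under the connectives and quantifiers. For the inclusions $\R_{k+1} \subseteq \U_{k+1}^+$ and $\J_{k+1} \subseteq \E_{k+1}^+$, I would proceed by induction on the generation of $\R_{k+1}$ (resp.~$\J_{k+1}$), checking each clause against the definition of $\degree{\cdot}$ and $\alt{\cdot}$. For instance, if $R, R' \in \R_{k+1}$ have all maximal-length alternation paths starting with $-$, then so does $R \land R'$ and $R \lor R'$ (since $\alt{\vp_1 \circ \vp_2} = \alt{\vp_1} \cup \alt{\vp_2}$), and $\forall x R$ keeps the degree bounded by $k+1$ while preserving the leading-$-$ property; the crucial mixed clause $J \to R$ uses that $\alt{J \to R} = \{s^\perp \mid s \in \alt{J}\} \cup \alt{R}$, so that the switched paths from $J \in \J_{k+1} \subseteq \E_{k+1}^+$ (leading $+$ at length $k+1$) become leading-$-$ paths, matching the $\U_{k+1}$ requirement. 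The base-case generators $F \in \F_k^+$ sit inside $\U_{k+1}^+$ by the cumulative clause $\U_{k+1}^+ = \U_{k+1} \cup \bigcup_{i<k+1}\F_i$.

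For the reverse inclusions $\U_{k+1}^+ \subseteq \R_{k+1}$ and $\E_{k+1}^+ \subseteq \J_{k+1}$, I would induct on the structure of an arbitrary formula $\vp \in \F_{k+1}^+$ satisfying the leading-symbol constraint, decomposing $\vp$ by its outermost connective and applying the induction hypothesis at level $k$ whenever a subformula drops to degree $\leq k$ (landing in $\F_k^+$, hence usable as a generator $F$). The subtle point is handling subformulas that remain of degree $k+1$: one must verify they still satisfy the appropriate leading-symbol condition so that the correct $\R_{k+1}$- or $\J_{k+1}$-clause applies. Here the implication case $\vp_1 \to \vp_2$ is the most delicate, since the antecedent's paths are dualized, requiring that $\vp_1$ lie in the \emph{opposite} class; this is exactly what the generators $J \to R$ and $R \to J$ encode.

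The main obstacle I anticipate is the implication and quantifier cases in the reverse inclusion, where the leading-symbol bookkeeping must be tracked carefully across the path-switching operation $s \mapsto s^\perp$ and the prepending operations $s \mapsto -s$, $s \mapsto +s$ in the definition of $\alt{\cdot}$. One must argue that a formula in $\U_{k+1}$ (all maximal paths leading with $-$) whose outermost operator is, say, $\exists x$ forces the body to already have degree $\leq k$ (otherwise prepending $+$ would create a leading-$+$ maximal path, contradicting membership in $\U_{k+1}$), so the body falls under the induction hypothesis as an $\F_k^+$ generator. Carefully enumerating which decompositions are admissible under each leading-symbol constraint — and confirming they coincide exactly with the generating clauses of Definition \ref{def: Rk and Jk} — is the technical heart of the proof, but it is essentially a finite case analysis once the correspondence between alternation paths and the generating grammar is made precise.
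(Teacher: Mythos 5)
Your proposal is correct and follows essentially the same route as the paper: an outer induction on $k$ whose inductive step is a simultaneous structural induction matching each generating clause of $\R_{k+1}$ and $\J_{k+1}$ against the corresponding decomposition of $\U_{k+1}^+$ and $\E_{k+1}^+$. The only real difference is that the paper does not redo the alternation-path bookkeeping: it cites a lemma from the earlier paper (FK20-1, Lemma 4.5) that already packages the decomposition equivalences you derive by hand, e.g.\ $\vp_1 \to \vp_2 \in \U_{k+1}^+$ iff $\vp_1 \in \E_{k+1}^+$ and $\vp_2 \in \U_{k+1}^+$, and $\exists x\, \vp_1 \in \U_{k+1}^+$ iff $\exists x\, \vp_1 \in \F_k^+$. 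One small correction to your quantifier case: since every path in $\alt{\exists x\, \vp_1}$ begins with $+$, membership of $\exists x\, \vp_1$ in $\U_{k+1}^+$ forces the \emph{whole formula} $\exists x\, \vp_1$ (not merely its body) to have degree $\leq k$, and it is this formula that then enters $\R_{k+1}$ via the $F$-clause --- there is no clause $\exists x\, R \in \R_{k+1}$, so concluding only that the body lies in $\F_k^+$ would not close this case.
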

\begin{proof}
By induction on $k$.
The base case is trivial.
For the induction step, assume $\U_k^+=\R_k$ and $\E_k^+ = \J_k$.
We show 
\begin{enumerate}
    \item 
    \label{item: Rk+1=Uk+1p}
    $\vp \in  \U_{k+1}^+$ if and only if $\vp \in \R_{k+1}$
    \item 
    \label{item: Jk+1=Ek+1p}
    $\vp \in  \E_{k+1}^+$ if and only if $\vp \in \J_{k+1}$
\end{enumerate}
simultaneously by induction on the structure of formulas.
If $\vp$ is prime, since $\vp \in \F_0$, we are done.
Assume that items \ref{item: Rk+1=Uk+1p} and \ref{item: Jk+1=Ek+1p} hold for $\vp_1$ and $\vp_2$.
Using \cite[Lemma 4.5.(1)]{FK20-1},
we have
$$\vp_1 \land \vp_2 \in \U_{k+1}^+ \LR \vp_1, \vp_2 \in \U_{k+1}^+ \underset{\text{I.H.}}{\LLR}  \vp_1, \vp_2 \in \R_{k+1} \LR \vp_1 \land \vp_2 \in \R_{k+1} .$$
In the same manner, we also have
$\vp_1 \land \vp_2 \in \E_{k+1}^+ \LR \vp_1 \land \vp_2\in \J_{k+1}$, $\vp_1 \lor \vp_2 \in \U_{k+1}^+ \LR \vp_1 \lor \vp_2\in \R_{k+1}$,
$\vp_1 \lor \vp_2 \in \E_{k+1}^+ \LR \vp_1 \lor \vp_2\in \J_{k+1}$.
For $\vp_1 \to \vp_2$,
using \cite[Lemma 4.5.(3)]{FK20-1}
we have
$$
\begin{array}{cl}
     & \vp_1 \to \vp_2 \in \U_{k+1}^+  \\[2pt]
     \LLR & \vp_1 \in \E_{k+1}^+ \text{ and } \vp_2 \in \U_{k+1}^+\\[2pt]
   \underset{\text{I.H.}}{\LLR} & \vp_1 \in J_{k+1} \text{ and } \vp_2 \in \R_{k+1}  \\
   \LLR & \vp_1 \to \vp_2 \in \R_{k+1}.
\end{array}
$$
In the same manner, we also have
$\vp_1 \to \vp_2 \in \E_{k+1}^+ \LR \vp_1 \to \vp_2\in \J_{k+1}$.
For $\forall x \vp_1$,
using \cite[Lemma 4.5.(4,6)]{FK20-1},
we have
$$
\forall x \vp_1 \in \U_{k+1}^+ \LR \vp_1\in \U_{k+1}^+ \underset{\text{I.H.}}{\LLR} \vp_1\in \R_{k+1} \LR \forall x \vp_1\in \R_{k+1},
$$
and
$$
\forall x \vp_1 \in \E_{k+1}^+ \LR \forall x \vp_1\in \U_{k}^+ \LR \forall x \vp_1\in  \F_k^+ \LR \forall x \vp_1\in \J_{k+1}.
$$
In the same manner, we also have $\exists x \vp_1 \in \U_{k+1}^+ \LR \exists x \vp_1 \in \R_{k+1}$ and $\exists x \vp_1 \in \E_{k+1}^+ \LR \exists x \vp_1 \in \J_{k+1}$.
\end{proof}

\begin{corollary}
\label{cor: Uk=Rk, Ek=Jk}
$\U_k=\R_k$ and $\E_k = \J_k$.
\end{corollary}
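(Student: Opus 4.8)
The plan is to obtain this immediately from Proposition \ref{prop: Ukp=Rk, Ekp=Jk} together with Remark \ref{rem: On the non-commutativity of Ek, Uk and Fk}. Recall that throughout the paper inclusions and equalities between classes of $\ha$-formulas are understood modulo provable equivalence over $\ha$ (with matching free variables). Thus it suffices to show that $\U_k$ and $\U_k^+$ coincide in this sense, and likewise $\E_k$ and $\E_k^+$, and then substitute into the identities $\U_k^+ = \R_k$ and $\E_k^+ = \J_k$ already established.

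First I would dispatch the trivial direction: by the definition $\U_k^+ := \U_k \cup \bigcup_{i<k}\F_i$ we have $\U_k \subseteq \U_k^+$ (and likewise $\E_k \subseteq \E_k^+$) as literal set inclusions, a fortiori modulo $\ha$-equivalence. For the converse, Remark \ref{rem: On the non-commutativity of Ek, Uk and Fk} provides, for every $\vp \in \U_k^+$, a formula $\vp' \in \U_k$ with $\FV{\vp}=\FV{\vp'}$ and $\ha \vdash \vp \lr \vp'$; this is exactly the statement $\U_k^+ \subseteq \U_k$ in the intended sense, and the same remark yields $\E_k^+ \subseteq \E_k$. Hence $\U_k = \U_k^+$ and $\E_k = \E_k^+$.

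Combining these equalities with Proposition \ref{prop: Ukp=Rk, Ekp=Jk} gives $\U_k = \U_k^+ = \R_k$ and $\E_k = \E_k^+ = \J_k$, as desired. There is essentially no obstacle: the corollary is a direct consequence of the preceding proposition and remark. The only point warranting a moment's care is the bookkeeping that every inclusion invoked preserves free variables, which is precisely what Remark \ref{rem: On the non-commutativity of Ek, Uk and Fk} guarantees, so the equalities of classes hold in the strict sense fixed in the framework.
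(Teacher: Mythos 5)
Your proof is correct and follows exactly the paper's own route: the paper derives this corollary as ``immediate by Proposition \ref{prop: Ukp=Rk, Ekp=Jk} and Remark \ref{rem: On the non-commutativity of Ek, Uk and Fk}.'' Your additional spelling-out of the two inclusions $\U_k \subseteq \U_k^+$ and $\U_k^+ \subseteq \U_k$ (and likewise for $\E_k$) is just a more explicit rendering of the same argument.
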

\begin{proof}
Immediate by Proposition \ref{prop: Ukp=Rk, Ekp=Jk} and Remark \ref{rem: On the non-commutativity of Ek, Uk and Fk}.
\end{proof}

\begin{corollary}
\label{cor: PA is Ek+1-cons. over HA+SkLEM}
For a set $X$ of $\ha$-sentences in $\Q_{k+1}$,
$\pa +X$ is $\E_{k+1}$-conservative over $\ha +X + \LEM{\Sigma_k}$.
\end{corollary}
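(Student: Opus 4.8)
The plan is to deduce this as an essentially immediate consequence of Corollary \ref{cor: Conservation result with X} by instantiating its antecedent with a trivially provable formula. Unwinding the definition, $\E_{k+1}$-conservativity of $\pa + X$ over $\ha + X + \LEM{\Sigma_k}$ means precisely that for every $\ha$-formula $\vp \in \E_{k+1}$, if $\pa + X \vdash \vp$, then $\ha + X + \LEM{\Sigma_k} \vdash \vp$. So I would fix such a $\vp$ with $\pa + X \vdash \vp$ and aim to bring it into the implicational shape $\psi \to \vp$ demanded by the earlier conservation result.

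First I would replace $\vp$ by a member of $\J_{k+1}$. By Corollary \ref{cor: Uk=Rk, Ek=Jk} we have $\E_{k+1} = \J_{k+1}$ modulo $\ha$-equivalence preserving free variables, so there is a $\vp' \in \J_{k+1}$ with $\FV{\vp} = \FV{\vp'}$ and $\ha \vdash \vp \lr \vp'$; since $\ha \subseteq \pa$ this gives $\pa + X \vdash \vp'$, and it suffices to obtain $\ha + X + \LEM{\Sigma_k} \vdash \vp'$. Next I would put this into implicational form. By the base clause of Definition \ref{def: Vk}, every formula in $\J_{k+1}$ already lies in $\V_{k+1}$, so $\vp' \in \V_{k+1}$. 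Taking $\psi :\equiv (0 = 0)$, which is a prime sentence and hence lies in $\Q_{k+1}$, the hypothesis $\pa + X \vdash \vp'$ trivially yields $\pa + X \vdash \psi \to \vp'$. Then Corollary \ref{cor: Conservation result with X}, applied to the pair $\psi \in \Q_{k+1}$ and $\vp' \in \V_{k+1}$, delivers $\ha + X + \LEM{\Sigma_k} \vdash \psi \to \vp'$; since $\ha \vdash \psi$, we conclude $\ha + X + \LEM{\Sigma_k} \vdash \vp'$, and therefore $\ha + X + \LEM{\Sigma_k} \vdash \vp$ via $\ha \vdash \vp \lr \vp'$.

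There is no computational difficulty here, as the entire argument is bookkeeping over the machinery already developed. The only points requiring genuine care are at the interface of the three ingredients: one must use exactly the free-variable-preserving version of the identity $\E_{k+1} = \J_{k+1}$ from Corollary \ref{cor: Uk=Rk, Ek=Jk} so that the passage between $\vp$ and $\vp'$ is an honest $\ha$-equivalence; one must observe that $\J_{k+1} \subseteq \V_{k+1}$ is forced by the very generation clause of $\V_{k+1}$; and one must note that Corollary \ref{cor: Conservation result with X} constrains only the antecedent to lie in $\Q_{k+1}$, which is what makes the trivial choice $\psi :\equiv (0=0)$ admissible and reduces the single-formula conservation claim to the $\psi \equiv (0=0)$ instance of the implicational conservation theorem.
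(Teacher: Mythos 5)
Your proposal is correct and follows exactly the route of the paper's own (one-line) proof: identify $\E_{k+1}$ with $\J_{k+1}$ via Corollary \ref{cor: Uk=Rk, Ek=Jk}, observe $\J_{k+1}\subseteq\V_{k+1}$, and apply Corollary \ref{cor: Conservation result with X} with a trivially provable antecedent in $\Q_{k+1}$. You have merely spelled out the bookkeeping that the paper leaves implicit.
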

\begin{proof}
Immediate from
Corollaries \ref{cor: Conservation result with X} and
\ref{cor: Uk=Rk, Ek=Jk}
since $\J_{k+1}\subseteq \V_{k+1}$.
\end{proof}

\begin{remark}
Corollary \ref{cor: PA is Ek+1-cons. over HA+SkLEM} deals with the conservativity of the class of formulas in $\E_{k+1}$, which
seems to be strictly stronger than that for sentences in $\E_{k+1}$ (cf. Section \ref{sec: Cons. theorems for utS_k and utEk}).
\end{remark}

\begin{remark}
\label{rem: Rk' and Jk'}
Similar to Definition \ref{def: Rk and Jk}, define the classes $\R'_k$ and $\J'_k$ as follows.
Define  $\R'_0:=\J'_0:=\Sigma_0 \, (=\Pi_0)$ and $\R'_{k+1}$ and $\J'_{k+1}$  simultaneously as follows:
Let
$E$
range over formulas in $\E_k^+$,
$U$
over those in $\U_k^+$,
$R$ and $R'$ over those in $\R'_{k+1}$, and $J$ and $J'$ over those in $\J'_{k+1}$ respectively.
Then $\R'_{k+1}$ and $\J'_{k+1}$ are inductively generated by the clauses
\begin{enumerate}
    \item
    $E, R\land R', R \lor R', \forall x R, J \to R\in \R'_{k+1}$;
        \item
    $U, J\land J', J \lor J', \exists x J, R \to J\in \J'_{k+1}$.
\end{enumerate}
Then the proof of Proposition \ref{prop: Ukp=Rk, Ekp=Jk} shows that $\U_k^+=\R'_k$ and $\E_k^+ = \J'_k$.
Hence $\R_k=\R'_k$ and $\J_k = \J'_k$.
\end{remark}

\begin{remark}
\label{rem: Rk'' and Jk''}
Define $\R_{k+1}''$ and $\J_{k+1}''$ as for $\R_{k+1}'$ and $\J_{k+1}'$ in Remark \ref{rem: Rk' and Jk'} with replacing $\E_k^+$ and $\U_k^+$ by $\Sigma_k$ and $\Pi_k$.
Then, as in the proof of Proposition \ref{prop: Ukp=Rk, Ekp=Jk} with using the prenex normal form theorems in $\ha +\LEM{\Sigma_k}$ (cf. Remark \ref{rem: Sk-LEM -> Sk-DNE, Uk-DNS and PkvPk-DNE}), one can show $\U_{k+1}^+=\R''_{k+1}$ and $\E_{k+1}^+ = \J''_{k+1}$ over $\ha+\LEM{\Sigma_k}$.
\end{remark}

As described in Definition \ref{def: Classes}, the classes $\E_k$ and  $\U_k$ are originally defined by using the notion of alternation path.
On the other hand,
Remark \ref{rem: Rk'' and Jk''} reveals that one can define these classes (via Remark \ref{rem: On the non-commutativity of Ek, Uk and Fk}) inductively without using the notion of alternation path.
A technical advantage of this usual way of defining classes is that one can prove properties of these classes by induction on the structure of formulas in those classes.

\section{Conservation theorems for the classes of formulas}
\label{sec: CONS for classes of formulas}
In this section, we explore the notion that $\pa $ is $\Gamma$-conservative over $\T$ for semi-classical arithmetic $\T$ and a class $\Gamma$ of formulas (especially, $\Pi_k, \Sigma_k, \U_k, \E_k, \F_k$ etc.).
\begin{definition}
For classes of $\ha$-formulas $\Gamma$ and $\Gamma'$,
$\Gamma \lor \Gamma'$ is the class of formulas of form $\vp \lor \psi$ where $\vp\in \Gamma $ and $\psi \in \Gamma'$.
\end{definition}

We recall the notion of duals for prenex formulas from \cite{ABHK04, FK20-2}.
\begin{definition}[cf.~{\cite[Definition 3.2]{FK20-2}}]
\label{def: duals of prenex formulas}
For any formula $\varphi$ in prenex normal form, we define the dual $\varphi^\perp$ of $\varphi$ inductively as follows: 
\begin{enumerate}
	\item $\varphi^\perp : \equiv \neg \varphi$ if $\varphi$ is quantifier-free; 
	\item $(\forall x \varphi)^\perp : \equiv \exists x (\varphi)^\perp$; 
	\item $(\exists x \varphi)^\perp : \equiv \forall x (\varphi)^\perp$. 
\end{enumerate}
\end{definition}
\begin{remark}
\label{rem: duals}
For $\vp$ in $\Sigma_k$ (resp. $\Pi_k$), $\vp^\perp$ is in $\Pi_k$ (resp. $\Sigma_k$), $\FV{\vp^{\perp}}=\FV{\vp}$ and $\left( \vp^\perp \right)^\perp$ is equivalent to $\vp$ over $\ha$.
For each prenex formula $\vp$, $\vp^\perp$ implies $\neg \vp$ intuitionistically.
On the other hand, the converse direction for formulas in $\Sigma_k$ (resp. $\Pi_k$) is equivalent to $\DNE{\Sigma_{k-1}}$ (resp. $\DNE{\Sigma_{k}}$).
Then it follows that for $\vp \in \Sigma_k$ there exists $\vp'\in \Pi_k$ such that $\FV{\vp'}=\FV{\vp}$ and $\ha +\DNE{\Sigma_{k-1}} \vdash \vp' \lr \neg \vp$ (cf. \cite[Lemma 4.8.(2)]{FK20-1}).
In addition,  $\neg \vp^\perp$ implies $\neg \neg  \vp$ in the presence of $\DNE{\Sigma_{k-1}}$ for the both cases of $\vp\in \Sigma_k$ and $\vp\in \Pi_k$.
Note also that $\pa$ proves $\vp \lor \vp^\perp$ for each prenex formula $\vp$.
We refer the reader to \cite[Section 3]{FK20-2} for more information about the dual principles for prenex formulas in semi-classical arithmetic.
\end{remark}

\subsection{Conservation theorems for $\Pi_{k}, \Sigma_k, \E_k$ and $\F_k$}
\label{subsec: Ek+1 and Fk cons.}
\begin{definition}
Let $\T$ be a theory in the language of $\ha$ and $\Gamma$ be a class of $\ha$-formulas.
\begin{itemize}
    \item
    $\T$ is closed under $\DNER{\Gamma}$ if $\T \vdash \neg\neg \vp $ implies $\T \vdash \vp$ for all $\vp\in \Gamma$.
    \item
    $\T$ is closed under $\CDR{\Gamma}$ if $\T \vdash \forall x (\vp \lor \psi) $ implies $\T \vdash\vp \lor \forall x \psi$ for all  $\vp,\psi \in \Gamma$ such that $x \notin \FV{\vp}$.
    \item
    $\T$ is closed under $\DMLR{\Gamma}$ (resp. $\DMLDR{\Gamma}$) if $\T \vdash \neg(\vp \land \psi) $ implies $\T \vdash \neg \vp \lor \neg \psi$ (resp. $\T \vdash \vp^\perp \lor \psi^\perp$) for all $\vp, \psi\in \Gamma$.
\end{itemize}
Note that $\vp$ and $\psi$ in the above may contain free variables.
\end{definition}

As mentioned in \cite[Section 3.5.1]{ConstMathI}, $\DNER{\Sigma_1}$ is known as Markov's rule (for primitive recursive predicates).
The fact that $\pa$ is $\Sigma_1$-conservative (equivalently, $\Pi_2$-conservative) over $\ha$ implies that $\ha$ is closed under Markov's rule ($\DNER{\Sigma_1}$), and vice versa.
The generalization $\DNER{\Sigma_k}$ of Markov's rule is already mentioned in \cite[Section 4.4]{HN02}.
It is easy to see that for semi-classical arithmetic $\T$, if $\pa$ is $\Sigma_k$-conservative over $\T$, then $\T$ is closed under $\DNER{\Sigma_k}$.
Then it is natural to ask about the converse.
As we show in Theorem \ref{thm: conservation theorems equivalent to SkLEM} below, this is also the case (note that the case for $k=2$ is essentially shown in the proof of \cite[Proposition 3.3]{KS14}).

The following are our ``reversal'' results.
\begin{lemma}
\label{lem: SkvPik-cons->Sk-LEM}
Let $\T $ be a
theory containing $\ha$.
If $\pa$ is $\left(\Sigma_k\lor \Pi_k\right)$-conservative over $\T$,
then $T \vdash \LEM{\Sigma_{k}}$.
\end{lemma}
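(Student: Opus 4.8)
The plan is to exploit the duals of prenex formulas together with a single instance of the $(\Sigma_k\lor\Pi_k)$-conservativity hypothesis. Fix an arbitrary $\vp\in\Sigma_k$; the goal is to derive $\T\vdash\vp\lor\neg\vp$, and since $\vp$ is arbitrary this yields $\T\vdash\LEM{\Sigma_k}$. The key observation is that the disjunction witnessing excluded middle can be replaced by a disjunction lying syntactically in $\Sigma_k\lor\Pi_k$, to which the conservativity hypothesis applies verbatim.

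First I would form the dual $\vp^\perp$. By Remark \ref{rem: duals}, since $\vp\in\Sigma_k$ we have $\vp^\perp\in\Pi_k$ with $\FV{\vp^\perp}=\FV{\vp}$, so the formula $\vp\lor\vp^\perp$ belongs to the class $\Sigma_k\lor\Pi_k$. Moreover, again by Remark \ref{rem: duals}, $\pa$ proves $\vp\lor\vp^\perp$ for every prenex formula $\vp$. Applying the assumed $(\Sigma_k\lor\Pi_k)$-conservativity of $\pa$ over $\T$ to this provable instance, I obtain $\T\vdash\vp\lor\vp^\perp$.

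Finally I would pass from $\vp^\perp$ back to $\neg\vp$. By Remark \ref{rem: duals}, $\vp^\perp$ implies $\neg\vp$ intuitionistically, and since $\ha\subseteq\T$ this implication is available in $\T$. Combining it with $\T\vdash\vp\lor\vp^\perp$ gives $\T\vdash\vp\lor\neg\vp$, as required.

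There is essentially no deep obstacle here: the argument is a one-line application of conservativity once the correct witnessing formula is identified. The only points demanding care are bookkeeping ones, namely verifying that $\vp^\perp$ genuinely lands in $\Pi_k$ (so that $\vp\lor\vp^\perp$ indeed lies in $\Sigma_k\lor\Pi_k$) and that free variables cause no difficulty; the latter is handled by reading both the conservativity and the scheme $\LEM{\Sigma_k}$ as ranging over formulas possibly containing free variables, as stipulated in the framework.
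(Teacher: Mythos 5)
Your proposal is correct and coincides with the paper's own proof: both fix $\vp\in\Sigma_k$, use Remark \ref{rem: duals} to place $\vp\lor\vp^\perp$ in $\Sigma_k\lor\Pi_k$ with $\pa\vdash\vp\lor\vp^\perp$, apply the conservativity hypothesis, and then weaken $\vp^\perp$ to $\neg\vp$ intuitionistically. No further comment is needed.
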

\begin{proof}
Fix $\xi \in \Sigma_{k}$.
Let $\xi^\perp\in \Pi_{k}$ be the dual of $\xi$.
Since $\pa \vdash \xi \lor \xi^\perp$, by our assumption, we have $\T \vdash \xi \lor \xi^\perp$, and hence, $\T \vdash \xi \lor \neg \xi$.
\end{proof}

\begin{lemma}
\label{lem: Sk+1-DNER => Sk-LEM}
Let $\T$ be a theory containing $\ha$.
If $\T$ is closed under $\DNER{\Sigma_{k+1}}$, then $T$ proves $\LEM{\Sigma_k}$.
\end{lemma}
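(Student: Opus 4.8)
The plan is to reduce $\LEM{\Sigma_k}$ to the claim that $\T$ proves $\xi\lor\xi^{\perp}$ for every $\xi\in\Sigma_k$, where $\xi^{\perp}\in\Pi_k$ is the dual: since $\xi^{\perp}\to\neg\xi$ over $\ha$ (Remark \ref{rem: duals}), $\T\vdash\xi\lor\xi^{\perp}$ immediately gives $\T\vdash\xi\lor\neg\xi$. So fix $\xi\equiv\exists x\,\psi$ with $\psi\in\Pi_{k-1}$, so that $\xi^{\perp}\equiv\forall x\,\psi^{\perp}$ with $\psi^{\perp}\in\Sigma_{k-1}$. The idea is to exhibit a prenex $\Sigma_{k+1}$ formula that is $\ha$-equivalent to $\xi\lor\xi^{\perp}$, to verify $\T\vdash\neg\neg(\xi\lor\xi^{\perp})$, and then to apply the closure of $\T$ under $\DNER{\Sigma_{k+1}}$ to conclude $\T\vdash\xi\lor\xi^{\perp}$.

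I would set this up by induction on $k$. The base case $k=0$ is trivial because $\LEM{\Sigma_0}$ already holds in $\ha$. For the inductive step, note that $\Sigma_k\subseteq\Sigma_{k+1}$, so closure under $\DNER{\Sigma_{k+1}}$ entails closure under $\DNER{\Sigma_k}$, and hence by the induction hypothesis $\T\vdash\LEM{\Sigma_{k-1}}$, so in particular $\T\vdash\DNE{\Sigma_{k-1}}$. This is exactly the ingredient needed for $\T\vdash\neg\neg(\xi\lor\xi^{\perp})$: assuming $\neg(\xi\lor\xi^{\perp})$ yields $\neg\xi$ and $\neg\xi^{\perp}$; from $\neg\xi$ we get $\forall x\,\neg\psi$, and since $\neg\psi\to\psi^{\perp}$ for $\psi\in\Pi_{k-1}$ is an instance of $\DNE{\Sigma_{k-1}}$ (Remark \ref{rem: duals}), this gives $\forall x\,\psi^{\perp}\equiv\xi^{\perp}$, contradicting $\neg\xi^{\perp}$. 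I expect this double-negation step to be the conceptual crux, since over plain $\ha$ the statement $\neg\neg(\xi\lor\xi^{\perp})$ would amount to a double-negation-shift and fails for $k\geq 2$; it is precisely to defeat this obstruction that the induction hypothesis is invoked.

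It remains to see that $\xi\lor\xi^{\perp}$ lies in $\Sigma_{k+1}$, after which $\DNER{\Sigma_{k+1}}$ converts $\T\vdash\neg\neg(\xi\lor\xi^{\perp})$ into $\T\vdash\xi\lor\xi^{\perp}$ and the induction closes. The delicate point is that one must not try to keep the disjunction inside $\Pi_k$: reducing $\Pi_k\lor\Pi_k$ to $\Pi_k$ would require $\DNE{(\Pi_k\lor\Pi_k)}$ (cf. Theorem \ref{thm: PNFT}), which is not available at this level. Instead I would move one level up, using that $\xi\lor\xi^{\perp}$ is $\ha$-equivalent to the selector-coded formula $\exists c\,\big((c=0\to\xi)\land(c\neq0\to\xi^{\perp})\big)$; here $c=0\to\xi\in\Sigma_k$ and $c\neq0\to\xi^{\perp}\in\Pi_k$ (implications out of decidable premises preserve these classes), so both conjuncts lie in $\Sigma_{k+1}$, and the whole formula lies in $\Sigma_{k+1}$ because $\Sigma_{k+1}$ is closed under $\land$ and $\exists$ over $\ha$. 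Thus the remaining steps are routine, and the main work is concentrated in the double-negation argument of the inductive step together with this observation that the forbidden $\Pi_k\lor\Pi_k$ can be bypassed by passing to a genuine $\Sigma_{k+1}$ formula.
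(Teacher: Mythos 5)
Your proof is correct and follows essentially the same route as the paper's: both bootstrap $\LEM{\Sigma_{k-1}}$ by induction (yours an outer induction on $k$, the paper's an inner induction on $m\leq k$), use $\DNE{\Sigma_{k-1}}$ from that level to get $\T\vdash\neg\neg(\xi\lor\xi^{\perp})$, observe that $\xi\lor\xi^{\perp}$ is $\ha$-equivalent to a $\Sigma_{k+1}$ formula, and apply $\DNER{\Sigma_{k+1}}$. The only cosmetic difference is that you prove the $\Sigma_{k+1}$ membership of the disjunction via an explicit selector coding, where the paper cites a closure lemma from earlier work.
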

\begin{proof}
We show that for all $m\leq k$, $\T$ proves $\LEM{\Sigma_m}$, by induction on $m$.
Since $\T$ contains $\ha$, the base case is trivial.
Assume $m+1\leq k$ and $\T \vdash \LEM{\Sigma_m}$.
Let $\vp\in \Sigma_{m+1}$.
Since $\ha \vdash \neg \neg (\vp \lor \neg \vp)$, by Remark \ref{rem: duals} and the fact that $\LEM{\Sigma_m}$ implies $\DNE{\Sigma_m}$, we have $\T \vdash \neg \neg (\vp \lor \vp^\perp)$ where $\vp^\perp \in \Pi_{m+1}$.
Since $\vp \lor \vp^\perp$ is equivalent over $\ha$ to some formula in $\Sigma_{m+2}$ (cf. \cite[Lemma 4.4]{FK20-1}), by $\DNER{\Sigma_{k+1}}$, we have
$\T \vdash \vp \lor \vp^\perp$,
and hence, $\vp \lor \neg \vp$.
Thus we have shown $\T \vdash \LEM{\Sigma_{m+1}}$.
\end{proof}

\begin{lemma}\label{lem: Pk+1-CDR => Sk-LEM}
Let 
$\T$ be a theory containing $\ha$.
If $\T$ is closed under $\CDR{\Sigma_{k}}$, then $T$ proves $\LEM{\Sigma_k}$.
\end{lemma}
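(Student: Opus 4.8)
The plan is to follow the pattern of Lemma~\ref{lem: Sk+1-DNER => Sk-LEM} and prove by induction on $m$ that $\T \vdash \LEM{\Sigma_m}$ for every $m \le k$; the case $m = k$ is the assertion. The base case $m = 0$ already holds in $\ha \subseteq \T$ because $\Sigma_0$-formulas are decidable. The point that lets $\CDR{\Sigma_k}$ act at each intermediate level is that, under the standing convention that inclusions of formula classes are taken modulo $\ha$-equivalence, $\Sigma_m \subseteq \Sigma_k$ whenever $m \le k$; consequently closure under $\CDR{\Sigma_k}$ entails that the constant-domain rule may be applied to any $\vp,\psi \in \Sigma_m$ (and $\Sigma_{m+1}$) as long as the relevant index is $\le k$.

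For the inductive step, suppose $\T \vdash \LEM{\Sigma_m}$ with $m+1 \le k$, and fix $\xi \in \Sigma_{m+1}$. Since every $\Sigma_{m+1}$-formula is $\ha$-equivalent, with the same free variables, to one of the form $\exists x\,\eta$ with $\eta \in \Pi_m$, I may take $\xi \equiv \exists x\,\eta$, so that $x \notin \FV{\xi}$, and I write $\eta^\perp \in \Sigma_m$ for the dual of $\eta$ (Remark~\ref{rem: duals}). The heart of the argument is to establish in $\T$ the sentence $\forall x (\xi \lor \eta^\perp)$. Granting this, $\CDR{\Sigma_k}$ applies — since $\xi \in \Sigma_{m+1} \subseteq \Sigma_k$, $\eta^\perp \in \Sigma_m \subseteq \Sigma_k$ and $x \notin \FV{\xi}$ — and yields $\T \vdash \xi \lor \forall x\, \eta^\perp$. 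Because $\eta^\perp$ intuitionistically implies $\neg\eta$ (Remark~\ref{rem: duals}), we have $\forall x\, \eta^\perp \to \forall x\, \neg\eta \lr \neg\exists x\, \eta \equiv \neg\xi$, and therefore $\T \vdash \xi \lor \neg\xi$, which is the desired instance of $\LEM{\Sigma_{m+1}}$.

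The remaining and, I expect, most delicate task is to derive the premise $\forall x(\xi \lor \eta^\perp)$ inside $\T$, and this is where the induction hypothesis is consumed. Working in $\T$ and fixing $x$, I apply $\LEM{\Sigma_m}$ to $\eta^\perp \in \Sigma_m$: if $\eta^\perp$ holds, the right disjunct is immediate; if $\neg\eta^\perp$ holds, Remark~\ref{rem: duals} gives $\neg\neg\eta$ (in the presence of $\DNE{\Sigma_{m-1}}$), and $\DNE{\Pi_m}$ then yields $\eta$, hence $\exists x\,\eta \equiv \xi$. Both auxiliary principles flow from the hypothesis: $\LEM{\Sigma_m}$ gives $\LEM{\Sigma_{m-1}}$ (again by $\Sigma_{m-1} \subseteq \Sigma_m$) and hence $\DNE{\Sigma_{m-1}}$ (Remark~\ref{rem: Sk-LEM -> Sk-DNE, Uk-DNS and PkvPk-DNE}), and $\DNE{\Sigma_{m-1}}$ upgrades to $\DNE{\Pi_m}$ by the routine implications $\neg\neg\forall z\,\rho \to \forall z\,\neg\neg\rho$ followed by pointwise $\DNE{\Sigma_{m-1}}$ applied to $\rho \in \Sigma_{m-1}$. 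The genuine care needed is thus twofold: keeping the quantifier-level bookkeeping correct so that every formula handed to $\CDR{\Sigma_k}$ really is (equivalent to) a $\Sigma_k$-formula, and extracting precisely $\DNE{\Pi_m}$ from the hypothesis so that the premise is provable without assuming any unavailable decidability of $\eta$ itself.
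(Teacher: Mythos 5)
Your proposal is correct and follows essentially the same route as the paper's proof: induction on $m\leq k$, establishing $\forall x(\exists x\,\eta\lor\eta^\perp)$ from $\LEM{\Sigma_m}$ via Remark \ref{rem: duals}, and then applying $\CDR{\Sigma_k}$ to peel off the universal quantifier. The only (inessential) difference is that you apply $\LEM{\Sigma_m}$ to $\eta^\perp$ and recover $\eta$ through $\DNE{\Pi_m}$, whereas the paper applies $\LEM{\Pi_m}$ to $\eta$ and passes from $\neg\eta$ to $\eta^\perp$ using $\DNE{\Sigma_m}$.
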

\begin{proof}
We show that for all $m\leq k$, $\T$ proves $\LEM{\Sigma_m}$, by induction on $m$.
Since $\T$ contains $\ha$, the base case is trivial.
Assume $m+1\leq k$ and $\T \vdash \LEM{\Sigma_m}$.
Let $\vp :\equiv \exists x \vp_1$ where $\vp_1\in \Pi_m$.
Since $\T$ proves $\LEM{\Pi_m}$ and $\DNE{\Sigma_m}$, we have $\T \vdash \vp_1 \lor \neg \vp_1$, and hence, $\T\vdash \vp_1 \lor \vp_1^\perp$ (cf. Remark \ref{rem: duals}).
Then $\T \vdash \forall x(\exists x \vp_1 \lor \vp_1^\perp)$ follows.
Since $\exists x \vp_1 ,\vp_1^\perp \in \Sigma_{m+1}$,
by $\CDR{\Sigma_{k}}$,
we have $\T \vdash \exists x \vp_1 \lor \forall x \vp_1^\perp$, and hence, $\T \vdash \exists x \vp_1 \lor \neg \exists x \vp_1$.
Thus we have shown $\T \vdash \LEM{\Sigma_{m+1}}$.
\end{proof}

\begin{lemma}
\label{lem: Pk-DMLDR <=> Sk-DNER}
Let  $\T$ be a theory containing $\ha$.
Then  $\T$ is closed under $\DMLDR{\Pi_{k}}$ if and only if $\T$ is closed under $\DNER{\Sigma_{k}}$.
\end{lemma}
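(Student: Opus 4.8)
The plan is to prove the two implications separately, reducing each to the dual manipulations collected in Remark \ref{rem: duals} together with the fact that $\T$ is closed under $\ha$-provable equivalence (since $\ha\subseteq\T$). The case $k=0$ is degenerate, because quantifier-free formulas are decidable in $\ha$: both $\DNER{\Sigma_0}$ and $\DMLDR{\Pi_0}$ then hold in every $\T\supseteq\ha$, so I assume $k\geq 1$ throughout.

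For the implication from $\DMLDR{\Pi_k}$ to $\DNER{\Sigma_k}$, I would start from $\vp\in\Sigma_k$ with $\T\vdash\neg\neg\vp$. Since $\vp^\perp\to\neg\vp$ holds intuitionistically (Remark \ref{rem: duals}), its intuitionistic contrapositive gives $\T\vdash\neg\vp^\perp$, i.e. $\T\vdash\neg(\vp^\perp\land\vp^\perp)$ with $\vp^\perp\in\Pi_k$. Applying $\DMLDR{\Pi_k}$ with both formulas equal to $\vp^\perp$ yields $\T\vdash\left(\vp^\perp\right)^\perp\lor\left(\vp^\perp\right)^\perp$, hence $\T\vdash\left(\vp^\perp\right)^\perp$; as $\left(\vp^\perp\right)^\perp$ is $\ha$-equivalent to $\vp$ (Remark \ref{rem: duals}), we get $\T\vdash\vp$. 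This direction uses no classical axioms beyond the given rule.

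For the converse I assume $\T$ is closed under $\DNER{\Sigma_k}$. First, by Lemma \ref{lem: Sk+1-DNER => Sk-LEM} applied at index $k-1$, we obtain $\T\vdash\LEM{\Sigma_{k-1}}$, hence $\T\vdash\DNE{\Sigma_{k-1}}$ and $\T\vdash\DNE{(\Pi_{k-1}\lor\Pi_{k-1})}$ by Remark \ref{rem: Sk-LEM -> Sk-DNE, Uk-DNS and PkvPk-DNE}. Now fix $\vp,\psi\in\Pi_k$ with $\T\vdash\neg(\vp\land\psi)$. Reasoning in $\T$, assume $\neg(\vp^\perp\lor\psi^\perp)$; then $\neg\vp^\perp$ and $\neg\psi^\perp$, so Remark \ref{rem: duals} (using $\DNE{\Sigma_{k-1}}$) gives $\neg\neg\vp$ and $\neg\neg\psi$, whence $\neg\neg(\vp\land\psi)$, contradicting $\neg(\vp\land\psi)$. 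Thus $\T\vdash\neg\neg(\vp^\perp\lor\psi^\perp)$, and it remains only to present $\vp^\perp\lor\psi^\perp$ as a $\Sigma_k$ formula so that $\DNER{\Sigma_k}$ can be invoked.

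This last step is the main obstacle, since $\Sigma_k$ is not intuitionistically closed under disjunction once $k\geq 2$. Writing $\vp^\perp\equiv\exists\vec{x}\,A$ and $\psi^\perp\equiv\exists\vec{y}\,B$ with $A,B\in\Pi_{k-1}$ and disjoint quantifier blocks, one has $\vp^\perp\lor\psi^\perp\lr\exists\vec{x}\,\vec{y}\,(A\lor B)$ over $\ha$. The matrix $A\lor B$ lies in $\U_{k-1}^+$, so Theorem \ref{thm: PNFT}.\eqref{eq: item for Pi_k in PNFT} together with $\DNE{(\Pi_{k-1}\lor\Pi_{k-1})}$ produces a $\Pi_{k-1}$ formula equivalent to $A\lor B$ over $\ha+\LEM{\Sigma_{k-1}}$; prefixing the existential block then gives a genuine $\Sigma_k$ formula $\chi$ with $\T\vdash\chi\lr(\vp^\perp\lor\psi^\perp)$. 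Since $\T\vdash\neg\neg\chi$, closure under $\DNER{\Sigma_k}$ yields $\T\vdash\chi$, hence $\T\vdash\vp^\perp\lor\psi^\perp$, which establishes $\DMLDR{\Pi_k}$. (For $k=1$ this is immediate, as $A\lor B$ is already quantifier-free.) The delicate point I expect to emphasize is that one must get by with the $\LEM{\Sigma_{k-1}}$-level principle delivered by Lemma \ref{lem: Sk+1-DNER => Sk-LEM}, rather than the $\LEM{\Sigma_k}$ that prenexing a general $\E_k$ formula would demand; this works precisely because $\vp^\perp\lor\psi^\perp$ is merely a disjunction of two $\Sigma_k$ formulas, whose matrix after combining the existential quantifiers falls in $\U_{k-1}^+$.
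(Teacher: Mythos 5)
Your proof is correct, and both directions follow the paper's own argument almost step for step: the forward direction via $\neg\neg\vp \vdash \neg(\vp^\perp\land\vp^\perp)$, and the converse via Lemma \ref{lem: Sk+1-DNER => Sk-LEM} (at index $k-1$) to secure $\LEM{\Sigma_{k-1}}$, dualization, and an application of $\DNER{\Sigma_k}$ to $\vp^\perp\lor\psi^\perp$. The one place you diverge is the final step, and the reason you give for the detour is wrong: $\Sigma_k$ \emph{is} closed under disjunction modulo $\ha$-provable equivalence for every $k$ — this is exactly \cite[Lemma 4.4]{FK20-1}, which the paper invokes repeatedly (e.g.\ in Lemma \ref{lem: Sk+1-DNER => Sk-LEM} and Theorem \ref{thm: equivalents of (Pk+1 v Pk+1)-cons}); the point is that over $\ha$ one can write $\exists\vec{x}A\lor\exists\vec{y}B$ as $\exists i,\vec{x},\vec{y}\,((i=0\land A)\lor(i\neq 0\land B))$ and push the decidable flag $i=0$ through the remaining quantifiers, which needs no classical principle at all. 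The paper therefore applies $\DNER{\Sigma_k}$ to $\vp^\perp\lor\psi^\perp$ directly, under its standing convention that classes are understood modulo $\ha$-equivalence. Your substitute route — prenexing the matrix $A\lor B\in\U_{k-1}^+$ via Theorem \ref{thm: PNFT}.\eqref{eq: item for Pi_k in PNFT} using $\DNE{(\Pi_{k-1}\lor\Pi_{k-1})}$, which is available from $\LEM{\Sigma_{k-1}}$ by Remark \ref{rem: Sk-LEM -> Sk-DNE, Uk-DNS and PkvPk-DNE} — is nonetheless sound, and has the mild virtue of relying only on results stated in this paper rather than on the external closure lemma; but it buys this at the cost of invoking the prenex normal form theorem and a semi-classical principle where a purely intuitionistic syntactic manipulation suffices. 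I would delete the claim about non-closure of $\Sigma_k$ under $\lor$ and either cite \cite[Lemma 4.4]{FK20-1} or keep your PNFT argument purely as an alternative justification.
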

\begin{proof}
We first show the ``only if'' direction.
Assume that $\T$ is closed under $\DMLDR{\Pi_{k}}$ and  $\T  \vdash \neg \neg \vp$ where $\vp\in \Sigma_k$.
Since $\neg\neg \vp$ is equivalent over $\ha$ to $\neg (\neg \vp \land \neg \vp)$, by Remark \ref{rem: duals}, we have
$$
\T \vdash \neg (\vp^\perp \land \vp^\perp).
$$
Since $\vp^\perp \in \Pi_k$, by $\DMLDR{\Pi_{k}}$, we have
$\T \vdash \left(\vp^\perp\right)^\perp \lor \left( \vp^\perp \right)^\perp$, and hence, $\T \vdash  \vp$ (cf. Remark \ref{rem: duals}). 

For the converse direction, assume that $\T$ is closed under 
 $\DNER{\Sigma_{k}}$ and $\T \vdash \neg (\vp \land \psi)$ where $\vp, \psi\in \Pi_k$.
Since $\neg(\vp \land \psi)$ is intuitionistically equivalent to $\neg(\neg\neg \vp \land \neg\neg \psi)$, by Lemma \ref{lem: Sk+1-DNER => Sk-LEM} and Remark \ref{rem: duals} (note that $\LEM{\Sigma_{k-1}}$ implies $\DNE{\Sigma_{k-1}}$), we have $\T \vdash \neg \left(\neg \vp^\perp \land \neg \psi^\perp \right)$ where $\vp^\perp , \psi^\perp \in \Sigma_k$.
Then $\T \vdash \neg \neg \left(\vp^\perp \lor \psi^\perp \right)$ follows.
By $\DNER{\Sigma_{k}}$, we have $\T \vdash \vp^\perp \lor \psi^\perp$.
\end{proof}

\begin{theorem}
\label{thm: conservation theorems equivalent to SkLEM}
Let
$\T$ be semi-classical arithmetic and $X$ be a set of $\ha$-sentences in $\Q_{k+1}$.
The following are pairwise equivalent$:$
\begin{enumerate}
    \item 
    \label{item: CONS(Vk+1)}
    $\pa +X$ is $\V_{k+1}$-conservative over $\T +X ;$
    \item
        \label{item: CONS(Pik+2)}
        $\pa +X$ is $\Pi_{k+2}$-conservative over $\T+X ;$
        \item
  \label{item: CONS(Sk+1)}
        $\pa +X$ is $\Sigma_{k+1}$-conservative over $\T+X ;$
\item
\label{item: Sk+1DNER}
$\T+X$ is closed under $\DNER{\Sigma_{k+1}};$
\item
\label{item: Pk+1DMLDR}
$\T+X$ is closed under $\DMLDR{\Pi_{k+1}};$
\item
    \label{item: CONS(Ek+1p)}
    $\pa +X$ is $\E_{k+1}$-conservative over $\T+X ;$
\item
    \label{item: CONS(Fkp)}
    $\pa +X$ is $\F_{k}$-conservative over $\T+X ;$
    \item
    \label{item: CONS(SkvPk)}
    $\pa +X$ is $\left(\Sigma_k\lor \Pi_k\right)$-conservative over $\T+X ;$
    \item
        \label{item: T|-SkLEM}
    $\T +X \vdash \LEM{\Sigma_k};$
        \item
        \label{item: T|-SkCD}
    $\T +X \vdash \CD{\Sigma_k};$
    \item
        \label{item: T|-SkCDR}
    $\T +X $ is closed under $\CDR{\Sigma_k};$
\end{enumerate}
where $\CD{\Sigma_k}$ is the scheme
$\forall x (\vp \lor \psi) \to \vp \lor \forall x \psi$
with $\vp,\psi \in \Sigma_k$ such that $x \notin \FV{\vp}$ (cf. \cite[Section 7]{FK20-2}).
\end{theorem}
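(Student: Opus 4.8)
The plan is to prove all eleven equivalences at once by exhibiting a family of implications whose transitive closure makes every item reachable from every other; concretely, I would route everything through item~\eqref{item: T|-SkLEM} ($\T+X\vdash\LEM{\Sigma_k}$), which sits at the centre of the web. The backbone consists of the ``free'' implications obtained from inclusions of the conservation classes: since $\Pi_{k+2}\subseteq\V_{k+1}$ (Remark~\ref{rem: our conservation result is an extension of the previous one}) and, by Corollary~\ref{cor: Uk=Rk, Ek=Jk} together with Definition~\ref{def: Vk}, $\E_{k+1}=\J_{k+1}\subseteq\V_{k+1}$, while $\Sigma_k\lor\Pi_k\subseteq\F_k\subseteq\E_{k+1}$ (a disjunction of a $\Sigma_k$- and a $\Pi_k$-formula has degree $\le k$, and every formula of degree $\le k$ lies in $\E_{k+1}^+=\E_{k+1}$ by Remark~\ref{rem: On the non-commutativity of Ek, Uk and Fk}), conservativity for a larger class trivially gives it for any smaller one. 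This yields $\eqref{item: CONS(Vk+1)}\Rightarrow\eqref{item: CONS(Ek+1p)}\Rightarrow\eqref{item: CONS(Fkp)}\Rightarrow\eqref{item: CONS(SkvPk)}$ and $\eqref{item: CONS(Vk+1)}\Rightarrow\eqref{item: CONS(Pik+2)}$.

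For the reversals I would close the first loop by $\eqref{item: CONS(SkvPk)}\Rightarrow\eqref{item: T|-SkLEM}$, applying Lemma~\ref{lem: SkvPik-cons->Sk-LEM} to the theory $\T+X$: for $\xi\in\Sigma_k$ one has $\pa+X\vdash\xi\lor\xi^\perp$, so $(\Sigma_k\lor\Pi_k)$-conservativity delivers $\T+X\vdash\xi\lor\xi^\perp$, hence $\LEM{\Sigma_k}$ via Remark~\ref{rem: duals}. The return arrow $\eqref{item: T|-SkLEM}\Rightarrow\eqref{item: CONS(Vk+1)}$ is exactly Corollary~\ref{cor: Conservation result with X}: given $\pa+X\vdash\vp$ with $\vp\in\V_{k+1}$, take $\psi:\equiv 0{=}0\in\Q_{k+1}$ to obtain $\ha+X+\LEM{\Sigma_k}\vdash\vp$, and since $\T+X\supseteq\ha+X+\LEM{\Sigma_k}$ by hypothesis, $\T+X\vdash\vp$. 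Items~\eqref{item: CONS(Vk+1)}, \eqref{item: CONS(Ek+1p)}, \eqref{item: CONS(Fkp)}, \eqref{item: CONS(SkvPk)}, \eqref{item: T|-SkLEM} are then pairwise equivalent.

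To absorb the $\Pi_{k+2}$/$\Sigma_{k+1}$/rule items, the one genuinely new (but elementary) step is $\eqref{item: CONS(Pik+2)}\Rightarrow\eqref{item: CONS(Sk+1)}$, which uses that the universal closure of a $\Sigma_{k+1}$-formula is a $\Pi_{k+2}$-formula: for $\vp\in\Sigma_{k+1}$ with $\pa+X\vdash\vp$, generalizing over $\FV{\vp}$ (legitimate because $X$ consists of sentences) gives $\pa+X\vdash\forall\vec x\,\vp$ with $\forall\vec x\,\vp\in\Pi_{k+2}$, whence $\T+X\vdash\forall\vec x\,\vp$ and so $\T+X\vdash\vp$. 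Then $\eqref{item: CONS(Sk+1)}\Rightarrow\eqref{item: Sk+1DNER}$ is the easy direction noted just before Theorem~\ref{thm: conservation theorems equivalent to SkLEM} (if $\T+X\vdash\neg\neg\vp$ then $\pa+X\vdash\vp$ by classical $\mathrm{DNE}$, and $\Sigma_{k+1}$-conservativity returns $\vp$), Lemma~\ref{lem: Sk+1-DNER => Sk-LEM} gives $\eqref{item: Sk+1DNER}\Rightarrow\eqref{item: T|-SkLEM}$, and Lemma~\ref{lem: Pk-DMLDR <=> Sk-DNER} with $k$ replaced by $k+1$ gives $\eqref{item: Sk+1DNER}\Leftrightarrow\eqref{item: Pk+1DMLDR}$. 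For the constant-domain items I would prove $\eqref{item: T|-SkLEM}\Rightarrow\eqref{item: T|-SkCD}$ directly (case-split on $\vp$ via $\LEM{\Sigma_k}$ inside $\forall x(\vp\lor\psi)\to\vp\lor\forall x\psi$, using $x\notin\FV{\vp}$), observe $\eqref{item: T|-SkCD}\Rightarrow\eqref{item: T|-SkCDR}$ since a provable scheme subsumes its rule, and invoke Lemma~\ref{lem: Pk+1-CDR => Sk-LEM} for $\eqref{item: T|-SkCDR}\Rightarrow\eqref{item: T|-SkLEM}$.

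The content here is mostly bookkeeping rather than a single hard core; the point demanding the most care is to carry the parameter set $X$ correctly through each cited result. The rule-closure lemmas apply verbatim to $\T+X$ because it contains $\ha$, but the decisive return arrow $\eqref{item: T|-SkLEM}\Rightarrow\eqref{item: CONS(Vk+1)}$ depends on Corollary~\ref{cor: Conservation result with X} and hence on the standing hypothesis $X\subseteq\Q_{k+1}$; this is precisely why the theorem restricts $X$ to $\Q_{k+1}$-sentences. I expect the subtlest verification to be exactly $\eqref{item: CONS(Pik+2)}\Rightarrow\eqref{item: CONS(Sk+1)}$, where one must confirm both that the universal closure stays within $\Pi_{k+2}$ and that generalization is sound, the latter again guaranteed by $X$ being a set of sentences.
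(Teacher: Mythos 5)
Your proposal is correct and follows essentially the same route as the paper: the same two chains of trivial inclusion-based implications $\eqref{item: CONS(Vk+1)}\Rightarrow\eqref{item: CONS(Ek+1p)}\Rightarrow\eqref{item: CONS(Fkp)}\Rightarrow\eqref{item: CONS(SkvPk)}$ and $\eqref{item: CONS(Vk+1)}\Rightarrow\eqref{item: CONS(Pik+2)}\Rightarrow\eqref{item: CONS(Sk+1)}\Rightarrow\eqref{item: Sk+1DNER}$, the same reversals via Lemmata \ref{lem: SkvPik-cons->Sk-LEM}, \ref{lem: Sk+1-DNER => Sk-LEM}, \ref{lem: Pk+1-CDR => Sk-LEM} and Corollary \ref{cor: Conservation result with X}, and the same use of Lemma \ref{lem: Pk-DMLDR <=> Sk-DNER} for item \eqref{item: Pk+1DMLDR}. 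You merely spell out details (the class inclusions, the $\LEM{\Sigma_k}\Rightarrow\CD{\Sigma_k}$ case split, the universal-closure step) that the paper leaves as ``trivial.''
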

\begin{proof}
The implications $\eqref{item: CONS(Vk+1)} \to \eqref{item: CONS(Ek+1p)} \to \eqref{item: CONS(Fkp)}\to \eqref{item: CONS(SkvPk)}$, $\eqref{item: CONS(Vk+1)} \to \eqref{item: CONS(Pik+2)} \to \eqref{item: CONS(Sk+1)}\to \eqref{item: Sk+1DNER}$ and
$\eqref{item: T|-SkLEM} \to \eqref{item: T|-SkCD} \to \eqref{item: T|-SkCDR}$
are trivial (cf. Corollary \ref{cor: PA is Ek+1-cons. over HA+SkLEM} and Remark \ref{rem: our conservation result is an extension of the previous one}).
The implications $\eqref{item: CONS(SkvPk)}\to \eqref{item: T|-SkLEM}$,
$\eqref{item: Sk+1DNER} \to \eqref{item: T|-SkLEM}$, $\eqref{item: T|-SkCDR} \to \eqref{item: T|-SkLEM}$ and $\eqref{item: T|-SkLEM} \to \eqref{item: CONS(Vk+1)}$ are by Lemmata \ref{lem: SkvPik-cons->Sk-LEM}, \ref{lem: Sk+1-DNER => Sk-LEM}, \ref{lem: Pk+1-CDR => Sk-LEM} and
Corollary \ref{cor: Conservation result with X} respectively.
The equivalence $\eqref{item: Sk+1DNER} \lr \eqref{item: Pk+1DMLDR}$ is by Lemma \ref{lem: Pk-DMLDR <=> Sk-DNER}.
\end{proof}

\subsection{Conservation theorem for $\U_k$}
\label{subsec: Uk+1 cons.}
In contrast to the fact that $\E_{k+1}$-conservativity and $\F_{k}$-conservativity are characterized by $\LEM{\Sigma_k}$ (see Theorem \ref{thm: conservation theorems equivalent to SkLEM}), 
$\U_{k+1}$-conservativity requires more than $\LEM{\Sigma_k}$:

\begin{proposition}
\label{prop: PA is not Pi1vPi1-conservative over HA}
$\pa $ is not $(\Pi_1\lor \Pi_1)$-conservative over $\ha$.
\end{proposition}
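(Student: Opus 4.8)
The plan is to exhibit a concrete formula $\vp(e) \lor \psi(e)$ with $\vp,\psi \in \Pi_1$ and a free variable $e$ that is provable in $\pa$ but refuted as a theorem of $\ha$ by a recursion-theoretic argument. I would build it from a pair of recursively inseparable r.e.\ sets. Concretely, let $\theta(e,y)$ be the primitive recursive predicate ``the computation $\{e\}(e)$ halts within $y$ steps with output $0$'' and $\rho(e,y)$ the analogue with output $1$; since $\ha$ contains symbols for all primitive recursive functions these are quantifier-free. Put $\vp(e) :\equiv \forall y\,\neg\theta(e,y)$ and $\psi(e):\equiv \forall y\,\neg\rho(e,y)$, both in $\Pi_1$, so that $\vp(e) \lor \psi(e)$ lies in $\Pi_1 \lor \Pi_1$. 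The sets $A = \{e : \exists y\,\theta(e,y)\}$ and $B = \{e:\exists y\,\rho(e,y)\}$ are disjoint and recursively inseparable.

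First I would check the positive half, $\pa \vdash \vp(e) \lor \psi(e)$. A single halting computation has a unique output, so $\ha$ proves the disjointness statement $\forall y\forall z\,\neg(\theta(e,y) \land \rho(e,z))$, which is intuitionistically equivalent to $\neg(\exists y\,\theta(e,y) \land \exists y\,\rho(e,y))$. Applying classical De Morgan, available in $\pa$, yields $\neg\exists y\,\theta(e,y) \lor \neg\exists y\,\rho(e,y)$, that is, $\vp(e) \lor \psi(e)$.

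The heart of the argument is the negative half, $\ha \nvdash \vp(e) \lor \psi(e)$. Suppose otherwise. Then $\ha \vdash \forall e\,(\vp(e) \lor \psi(e))$, and by the soundness of Kleene realizability for $\ha$ there is a total recursive function producing, uniformly in $e$, a realizer of $\vp(e) \lor \psi(e)$; its first component gives a recursive function $g$ with $g(e) = 0$ whenever the realizer selects $\vp(e)$ and $g(e) = 1$ whenever it selects $\psi(e)$. Since a realizer of a disjunction realizes the indicated disjunct, and a realizer of a $\Pi_1$ formula $\forall y\,\neg\theta(e,y)$ exists only when that formula is true in $\mathbb{N}$, we obtain $g(e) = 0 \Rightarrow e \notin A$ and $g(e) = 1 \Rightarrow e \notin B$. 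Hence the recursive set $\{e : g(e) = 1\}$ contains $A$ and is disjoint from $B$, separating the two sets and contradicting their recursive inseparability.

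I expect the main obstacle to be making the transition in the last paragraph fully precise: one must invoke the correct metatheorem for $\ha$-theorems with a free variable, namely uniform recursive realizability via the soundness theorem rather than the plain disjunction property, which applies only to sentences, and verify the clause that the presence of a realizer for a $\Pi_1$ disjunct forces its actual truth in the standard model. Once this realizability bookkeeping is handled, the separation contradiction is immediate.
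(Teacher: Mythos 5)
Your proof is correct, and the witnessing formula is essentially the one the paper uses: the paper takes $\Phi(x) :\equiv \forall u\,\neg(\mathrm{T}(x,x,u)\land \mathrm{U}(u)=0)\ \lor\ \forall u\,\neg(\mathrm{T}(x,x,u)\land \mathrm{U}(u)\neq 0)$, which differs from your $\vp(e)\lor\psi(e)$ only in splitting on ``output $0$ versus output $\neq 0$'' rather than ``output $0$ versus output $1$''; the positive half ($\ha$ proves disjointness, $\pa$ supplies De Morgan) is identical. Where you diverge is the negative half. The paper derives $\ha\vdash\forall x\,\Phi(x)$ from the assumed conservativity and then contradicts the consistency of $\ha+\CTz$, citing the fact that $\neg\forall x\,\Phi(x)$ is provable from Church's thesis; you instead apply the soundness theorem for Kleene realizability directly to $\forall e\,(\vp(e)\lor\psi(e))$, extract a total recursive selection function from the disjunction clause, use the fact that a realizer of a $\Pi_1$ (negative) formula forces its truth in $\mathbb{N}$, and contradict the recursive inseparability of $A$ and $B$. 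These are two packagings of the same underlying phenomenon --- the consistency of $\ha+\CTz$ is itself established by realizability, and the refutation of $\forall x\,\Phi(x)$ under $\CTz$ internalizes exactly your inseparability argument --- but your version is self-contained modulo the realizability soundness metatheorem, whereas the paper's is shorter by outsourcing both facts to the literature. Your closing caveats are the right ones and are handled correctly: you invoke uniform realizability for a universally closed sentence rather than the disjunction property, and the truth-from-realizability step for $\Pi_1$ formulas is exactly the clause that must be checked.
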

\begin{proof}
We use the same argument as in \cite[Section 3]{FK20-1}.
Suppose that $\pa $ is conservative over $\ha$ for all formulas $\vp \lor \psi$ with $\vp, \psi \in \Pi_1$.
Let $\Phi(x)$ be the following formula:
\begin{equation}
\label{eq: Psi(x)}
\forall u \neg ({\rm T}(x,x,u) \land {\rm U}(u)=0) \lor  \forall u \neg ({\rm T}(x,x,u) \land {\rm U}(u)\neq 0),
\end{equation}
where ${\rm T}$ and ${\rm U}$ are the standard primitive recursive predicate and the function from the Kleene normal form theorem.
Since
$$\neg \left( \exists u ({\rm T}(x,x,u) \land {\rm U}(u)=0) \land \exists u ({\rm T}(x,x,u) \land {\rm U}(u)\neq 0) \right)$$
is provable in $\ha$,
we have $\pa \vdash \Phi(x)$.
Then, by our assumption, we have $\ha \vdash \Phi(x)$, and hence, $\ha \vdash \forall x \Phi(x)$.
On the other hand, as shown in the proof of \cite[Proposition 3.1]{FK20-1}, $\neg \forall x \Phi(x) $ is provable in $\ha + {\rm CT_0}$ where ${\rm CT_0}$ is  the arithmetical form of Church's thesis from \cite[Section 3.2.14]{Tro73}.
Then we have $\ha +  {\rm CT_0} \vdash \perp$, which is a contradiction by \cite[Section 3.2.22]{Tro73}.
\end{proof}

Let $\T$ be semi-classical arithmetic.
By Theorems \ref{thm: PNFT}.\eqref{eq: item for Pi_k in PNFT} and \ref{thm: conservation theorems equivalent to SkLEM}, if $\T$ proves $\DNE{(\Pi_{k+1} \lor \Pi_{k+1})}$, then $\pa$ is $\U_{k+1}$-conservative (and hence, a-fortiori $(\Pi_{k+1} \lor \Pi_{k+1})$-conservative) over $\T$.
On the other hand, if $\pa$ is $(\Pi_{k+1} \lor \Pi_{k+1})$-conservative over $\T$, then $\T$ proves $\LEM{\Sigma_k}$ by Lemma \ref{lem: SkvPik-cons->Sk-LEM} and the fact that both of $\Sigma_k$ and $\Pi_k$ can be seen as sub-classes of $\Pi_{k+1}$.
Thus $\DNE{(\Pi_{k+1} \lor \Pi_{k+1})}$ implies the $\U_{k+1}$-conservativity, which implies the $(\Pi_{k+1} \lor \Pi_{k+1})$-conservativity, which implies $\LEM{\Sigma_k}$ and not vice versa.
For further studying the relation of the $\U_{k+1}$/$(\Pi_{k+1} \lor \Pi_{k+1})$-conservativity and semi-classical arithmetic, we introduce some extended classes of $\Pi_k$ and $\Sigma_k$.
\begin{definition}
\noindent
\begin{itemize}
\item
$\bigvee \Pi_k$ denotes the class consisting of disjunctions of formulas in $\Pi_k$. 
\item
A class $\EP_k$ is defined by the following clauses:
\begin{itemize}
    \item 
    $\vp\in \Pi_k$;
\item
If $\vp, \psi\in \EP_k$, then $\vp \lor \psi \in \EP_k$;
\item
If $\vp\in \EP_k$, then $\forall x \vp\in \EP_k$.
\end{itemize}
\item
$\ES_{k+1}$ denotes the class consisting of formulas of the form $\exists x_1, \dots, x_n \vp $ where $\vp\in \EP_k$.
\end{itemize}
\end{definition}

\begin{remark}\label{rem: class hierarchy between Pi_k and ESk+1}
$\Pi_k \subseteq \Pi_k \lor \Pi_k \subseteq \bigvee \Pi_k  \subseteq \EP_k \subseteq \ES_{k+1}$.
\end{remark}

\begin{lemma}
\label{lem: conjunction of EPk formulas}
For any $\ha$-formulas $\vp, \psi\in \EP_k$, there exists $\xi \in \EP_k$ such that $\FV{\xi}=\FV{\vp \land \psi}$ and $\ha \vdash \xi \lr \vp \land \psi$.
\end{lemma}
\begin{proof}
By induction on the sum of the complexity of $\vp $ and $ \psi$.

If both of $\vp$ and $ \psi$ are in $\Pi_k$, then we are done by \cite[Lemma 4.3.(2)]{FK20-1}.

Suppose $\psi:\equiv \psi_1\lor \psi_2$ where $\psi_1, \psi_2 \in \EP_k$.
By the induction hypothesis, there exist $\xi_1, \xi_2\in \EP_k$ such that
$\FV{\xi_1}=\FV{\vp \land \psi_1}$, $\FV{\xi_2}=\FV{\vp \land \psi_2}$, $\ha \vdash \xi_1 \lr \vp \land \psi_1$ and $\ha \vdash \xi_2 \lr \vp \land \psi_2$.
Then we have that $$\FV{ \xi_1 \lor \xi_2} = \FV{ \xi_1} \cup \FV{ \xi_2}=\FV{ \vp \land \psi_1} \cup \FV{ \vp \land \psi_2}= \FV{ \vp \land \psi}$$
and that $\ha$ proves
$$\xi_1 \lor \xi_2 \lr (\vp \land \psi_1)\lor (\vp \land \psi_2) \lr \vp \land (\psi_1 \lor \psi_2) \equiv \vp \land \psi.$$
Thus one can take $\xi_1 \lor \xi_2 \in \EP_k$ as a witness.

Suppose $\psi:\equiv \forall x \psi_1$ where $\psi_1 \in \EP_k$.
Without loss of generality, assume $x\notin \FV{\vp}$.
By the  induction hypothesis, there exists $\xi_1\in \EP_k$ such that $\FV{\xi_1}=\FV{\vp \land \psi_1}$ and $\ha \vdash \xi_1 \lr \vp \land \psi_1$.
Then we have
$$\FV{\forall x \xi_1} = \FV{\vp\land \psi_1}\setminus \{x\} = \FV{ \vp\land \forall x \psi_1}$$
and that $\ha$ proves
$$
\forall x \xi_1 \lr \forall x (\vp \land \psi_1) \lr \vp \land \forall x \psi_1.
$$
Thus one can take $\forall x \xi_1  \in \EP_k$ as a witness.
\end{proof}

In what follows, we use \cite[Lemma 4.5]{FK20-1} many times implicitly.
\begin{lemma}
\label{lem: U_k and EPk}
For a $\ha$-formula $\vp$, the following hold$:$
\begin{enumerate}
    \item 
    If $\vp \in \U_{k+1}^+$, then there exists $\vp'\in \EP_{k+1}$ such that $\FV{\vp} = \FV{\vp'}$, $\ha+\LEM{\Sigma_k} \vdash \vp' \to \vp$ and $\pa \vdash \vp \to\vp' ;$
    \item
    If $\vp \in \E_{k+1}^+$, then there exists $\vp'\in \EP_{k+1}$ such that $\FV{\vp} = \FV{\vp'}$, $\ha+\LEM{\Sigma_k} \vdash \vp' \to \neg \vp$ and $\pa \vdash \neg \vp \to\vp'$.
\end{enumerate}
\end{lemma}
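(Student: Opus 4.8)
The plan is to prove the two items by a single simultaneous induction, but first to replace the awkward base class $\F_k^+$ occurring in $\R_{k+1}$ and $\J_{k+1}$ by the prenex classes $\Sigma_k$ and $\Pi_k$. By Remark~\ref{rem: Rk'' and Jk''} we have $\U_{k+1}^+=\R''_{k+1}$ and $\E_{k+1}^+=\J''_{k+1}$ over $\ha+\LEM{\Sigma_k}$ (preserving free variables), where $\R''_{k+1}$ is generated from $\Sigma_k$ by the clauses $R\land R'$, $R\lor R'$, $\forall x R$, $J\to R$, and $\J''_{k+1}$ is generated from $\Pi_k$ by $J\land J'$, $J\lor J'$, $\exists x J$, $R\to J$. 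Hence it suffices to establish, by simultaneous induction on the structure of $\R''_{k+1}$ and $\J''_{k+1}$: (I) for $\vp\in\R''_{k+1}$ there is $\vp'\in\EP_{k+1}$ with $\FV{\vp'}=\FV{\vp}$, $\ha+\LEM{\Sigma_k}\vdash\vp'\to\vp$ and $\pa\vdash\vp\to\vp'$; and (II) for $\vp\in\J''_{k+1}$ there is $\vp'\in\EP_{k+1}$ with $\FV{\vp'}=\FV{\vp}$, $\ha+\LEM{\Sigma_k}\vdash\vp'\to\neg\vp$ and $\pa\vdash\neg\vp\to\vp'$. Transferring back along the equivalences of Remark~\ref{rem: Rk'' and Jk''} then yields the lemma in both directions.

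For the base cases, in (I) a formula $\vp\in\Sigma_k$ is turned into a $\Pi_{k+1}$-formula by prefixing a dummy universal quantifier, giving $\vp'\in\Pi_{k+1}\subseteq\EP_{k+1}$ with $\ha\vdash\vp'\lr\vp$, which suffices for both required implications. In (II) a formula $\vp\in\Pi_k$ is handled through its dual: by Remark~\ref{rem: duals}, $\vp^\perp\in\Sigma_k$, the implication $\vp^\perp\to\neg\vp$ holds already in $\ha$, and $\neg\vp\to\vp^\perp$ holds over $\ha+\DNE{\Sigma_k}$, which is available since $\LEM{\Sigma_k}$ implies $\DNE{\Sigma_k}$; prefixing a dummy universal quantifier to $\vp^\perp$ produces the desired $\vp'\in\Pi_{k+1}\subseteq\EP_{k+1}$.

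The inductive steps all follow one pattern: rewrite $\vp$ (resp. $\neg\vp$) by a de Morgan / prenexing identity as a positive combination of the formulas or negations attached to the immediate subformulas, approximate each subformula by the induction hypothesis, and recombine inside $\EP_{k+1}$ using its closure under $\lor$ and $\forall$ together with the closure under $\land$ furnished by Lemma~\ref{lem: conjunction of EPk formulas}. For instance, in (I) the case $\vp\equiv J\to R$ is treated by taking $\vp'\equiv J'\lor R'$: from $\ha+\LEM{\Sigma_k}\vdash J'\to\neg J$ and $\ha+\LEM{\Sigma_k}\vdash R'\to R$ one obtains $\vp'\to(J\to R)$ intuitionistically (via $\neg J\vdash J\to R$), while $\pa\vdash(J\to R)\to(\neg J\lor R)$ and $\pa\vdash(\neg J\lor R)\to(J'\lor R')$. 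Dually, in (II) the case $\vp\equiv R\to J$ uses a $\vp'$ equivalent to $R'\land J'$, exploiting $R\land\neg J\vdash\neg(R\to J)$ intuitionistically and $\neg(R\to J)\to R\land\neg J$ classically; the case $\vp\equiv\exists x J$ uses $\vp'\equiv\forall x J'$ via the intuitionistic equivalence $\neg\exists x J\lr\forall x\neg J$; and the conjunction/disjunction cases use $\neg(J_1\land J_2)\to\neg J_1\lor\neg J_2$ (classically) and $\neg(J_1\lor J_2)\lr\neg J_1\land\neg J_2$ (intuitionistically). Each step preserves free variables by construction.

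I expect the main obstacle to be the base case rather than the induction itself: the literal base class $\F_k^+$ of $\R_{k+1}/\J_{k+1}$ admits no clean prenex normal form obtainable from $\LEM{\Sigma_k}$ alone, since Theorem~\ref{thm: PNFT} supplies prenex forms only for $\U_k^+$ and $\E_k^+$, whose union $\F_k^+$ properly exceeds, while viewing $F\in\F_k^+$ as a $\U_{k+1}^+$-formula and extracting a $\Pi_{k+1}$-form would require $\DNE{(\Pi_{k+1}\lor\Pi_{k+1})}$, which $\LEM{\Sigma_k}$ does not provide. Reformulating via $\R''_{k+1}/\J''_{k+1}$, so that the base becomes $\Sigma_k/\Pi_k$, is exactly what makes the base cases tractable with only $\LEM{\Sigma_k}$; the remaining delicacy is merely to track which direction of each de Morgan/dual equivalence genuinely needs full classical logic ($\pa$) and which already survives in $\ha+\LEM{\Sigma_k}$.
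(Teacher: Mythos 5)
Your proof is correct and its content is essentially the paper's: the same simultaneous induction, the same de Morgan/dual recombinations, the same use of Lemma~\ref{lem: conjunction of EPk formulas} for conjunctions, and the same reliance on the prenex normal form theorem under $\LEM{\Sigma_k}$. The only difference is organizational: the paper inducts directly on the structure of $\vp\in\U_{k+1}^+/\E_{k+1}^+$ and invokes the prenex normal form theorem inside the two ``wrong-polarity quantifier'' cases ($\exists x\vp_1\in\U_{k+1}^+$ and $\forall x\vp_1\in\E_{k+1}^+$), whereas you concentrate that prenexing up front by passing to $\R''_{k+1}/\J''_{k+1}$ via Remark~\ref{rem: Rk'' and Jk''} --- a legitimate reshuffling that the paper itself employs (with $\R'_{k+1}/\J'_{k+1}$) in the proof of Lemma~\ref{lem: E_k+1 and ESk+1}.
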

\begin{proof}
We show items $1$ and $2$ by simultaneous induction on the structure of formulas.
We suppress the arguments on the condition on free variables when they are clear from the context.

If $\vp$ is prime, then items $1$ and $2$ are trivial since $\vp$ is decidable in $\ha$.
For the induction step, assume items $1$ and $2$ hold for $\vp_1$ and $\vp_2$.

Case of $\vp:\equiv \vp_1 \lor \vp_2$:
For item $1$, suppose $\vp_1 \lor \vp_2 \in \U_{k+1}^+$.
Then $\vp_1 , \vp_2 \in \U_{k+1}^+$.
By using the induction hypothesis, there exist $\vp_1', \vp_2'\in \EP_{k+1}$ such that
$\ha+\LEM{\Sigma_k} $ proves $\vp_1' \to \vp_1$ and $\vp_2' \to  \vp_2$ and $\pa $ proves $\vp_1 \to\vp_1'$ and $\vp_2 \to\vp_2'$.
Now $\vp_1' \lor \vp_2' \in \EP_{k+1}$ and $\ha +\LEM{\Sigma_k}$ proves
$$
\vp_1' \lor \vp_2'  \underset{\text{[I.H.] }\LEM{\Sigma_k}}{\lra} \vp_1 \lor \vp_2 .
$$
On the other hand, $\pa$ proves the converse.
For item $2$, suppose $\vp_1 \lor \vp_2\in \E_{k+1}^+$.
Then $\vp_1 , \vp_2 \in \E_{k+1}^+$.
By the induction hypothesis, there exist $\vp_1', \vp_2'\in \EP_{k+1}$ such that
$\ha+\LEM{\Sigma_k} $ proves $\vp_1' \to \neg \vp_1$ and $\vp_2' \to \neg \vp_2$ and $\pa $ proves $\neg \vp_1 \to\vp_1'$ and $\neg \vp_2 \to\vp_2'$.
By Lemma \ref{lem: conjunction of EPk formulas}, there exists $\vp'\in \EP_{k+1}$ such that $\FV{\vp'}=\FV{\vp_1' \land \vp_2'}$ and $\ha \vdash \vp' \lr \vp_1' \land \vp_2'$.
Then we have that $\ha+\LEM{\Sigma_k} $ proves
$$\vp' \lr \vp_1' \land \vp_2' \underset{\text{[I.H.] }\LEM{\Sigma_k}}{\lra} \neg \vp_1 \land \neg \vp_2 \lr \neg (\vp_1 \lor \vp_2)$$
and also $\pa $ proves the converse.

Case of $\vp:\equiv \vp_1 \land \vp_2$:
For item $1$, suppose $\vp_1 \land \vp_2 \in \U_{k+1}^+$.
Then $\vp_1 , \vp_2 \in \U_{k+1}^+$.
By using the induction hypothesis and Lemma \ref{lem: conjunction of EPk formulas}, one can take a witness for $\vp_1 \land \vp_2$ in a straightforward way.
Item $2$ follows from the induction hypothesis as in the case of $\vp:\equiv \vp_1 \lor \vp_2$: $\vp_1' \lor \vp_2' \in \EP_{k+1}$ is the witness since $\ha +\LEM{\Sigma_k}$ proves
$$\vp_1' \lor \vp_2'  \underset{\text{[I.H.] }\LEM{\Sigma_k}}{\lra} \neg \vp_1 \lor \neg \vp_2 \to \neg (\vp_1 \land \vp_2)$$
and $\pa$ proves the converse.

Case of $\vp:\equiv \vp_1 \to \vp_2$:
For item $1$, suppose $\vp_1 \to \vp_2 \in \U_{k+1}^+$.
Then $\vp_1 \in \E_{k+1}^+$ and $\vp_2 \in \U_{k+1}^+$.
By the induction hypothesis, there exist $\vp_1', \vp_2'\in \EP_{k+1}$ such that
$\ha+\LEM{\Sigma_k} $ proves $\vp_1' \to \neg \vp_1$ and $\vp_2' \to  \vp_2$ and $\pa $ proves $\neg \vp_1 \to\vp_1'$ and $ \vp_2 \to \vp_2'$.
Now $\vp_1' \lor \vp_2' \in \EP_{k+1}$ and $\ha +\LEM{\Sigma_k}$ proves
$$
\vp_1' \lor \vp_2'  \underset{\text{[I.H.] }\LEM{\Sigma_k}}{\lra} \neg \vp_1 \lor \vp_2 \to (\vp_1 \to \vp_2). 
$$
On the other hand, $\pa$ proves the converse.
For item $2$, suppose $\vp_1 \to \vp_2 \in \E_{k+1}^+$.
Then $\vp_1 \in \U_{k+1}^+$ and $\vp_2 \in \E_{k+1}^+$.
By the induction hypothesis, there exist $\vp_1', \vp_2'\in \EP_{k+1}$ such that
$\ha+\LEM{\Sigma_k} $ proves $\vp_1' \to \vp_1$ and $\vp_2' \to \neg \vp_2$ and $\pa $ proves $\vp_1 \to \vp_1'$ and $ \neg \vp_2 \to \vp_2'$.
By Lemma \ref{lem: conjunction of EPk formulas}, there exists $\vp'\in \EP_{k+1}$ such that $\FV{\vp'}=\FV{\vp_1' \land \vp_2'}$ and $\ha \vdash \vp' \lr \vp_1' \land \vp_2'$.
Then we have that $\ha +\LEM{\Sigma_k}$ proves
$$
\vp' \lr \vp_1' \land \vp_2' \underset{\text{[I.H.] }\LEM{\Sigma_k}}{\lra} \vp_1 \land \neg \vp_2 \to \neg (\vp_1 \to \vp_2) 
$$
and also that $\pa$ proves the converse.

Case of $\vp:\equiv \exists x \vp_1$:
For item $1$, suppose $\exists x \vp_1 \in \U_{k+1}^+$.
Then $\exists x \vp_1 \in \E_{k}^+$.
By Remark \ref{rem: Sk-LEM -> Sk-DNE, Uk-DNS and PkvPk-DNE}, there exists $\vp'\in \Sigma_k$ such that $\FV{\vp'}=\FV{\vp}$ and $\ha + \LEM{\Sigma_k} \vdash \vp' \lr \vp$.
Since $\Sigma_k$ can be seen as a subclass of $\Pi_{k+1}$, we are done.
For item $2$, suppose $\exists x \vp_1 \in \E_{k+1}^+$.
Then $\vp_1 \in \E_{k+1}^+$.
By the induction hypothesis, there exists $\vp_1' \in \EP_{k+1}$ such that
$\FV{\vp_1'}=\FV{\vp_1} $, $\ha+\LEM{\Sigma_k} \vdash \vp_1' \to \neg  \vp_1$ and $\pa \vdash \neg \vp_1 \to \vp_1'$.
Now $\forall x \vp_1'\in \EP_{k+1}$ and $\FV{\forall x\vp_1'}=\FV{\exists x \vp_1}$.
Then we have that $\ha +\LEM{\Sigma_k}$ proves
$$
\forall x \vp_1'  \underset{\text{[I.H.] }\LEM{\Sigma_k}}{\lra} \forall x \neg \vp_1
\lr \neg \exists x \vp_1
$$
and also that $\pa$ proves the converse.

Case of $\vp:\equiv \forall x \vp_1$:
For item $1$, suppose $\forall x \vp_1 \in \U_{k+1}^+$.
Then $ \vp_1 \in \U_{k+1}^+$.
By the induction hypothesis, there exists $\vp_1' \in \EP_{k+1}$ such that
$\FV{\vp_1'}=\FV{\vp_1} $, $\ha+\LEM{\Sigma_k} \vdash \vp_1' \to  \vp_1$ and $\pa \vdash \vp_1 \to \vp_1'$.
It is straightforward to see that $\forall x \vp_1'\in \EP_{k+1}$ is a witness for $\forall x \vp_1 \in \U_{k+1}^+$.
For item $2$, suppose $\forall x \vp_1 \in \E_{k+1}^+$.
Then $\forall x \vp_1 \in \U_{k}^+$.
By Remark \ref{rem: Sk-LEM -> Sk-DNE, Uk-DNS and PkvPk-DNE}, there exists $\vp'\in \Pi_k$ such that $\FV{\vp'}=\FV{\vp}$ and $\ha + \LEM{\Sigma_k} \vdash \vp' \lr \vp$.
Since $\neg \vp'$ is equivalent to some $\vp'' \in \Sigma_k$ in the presence of $\DNE{\Sigma_k}$ (cf. Remark \ref{rem: duals}),
we are done.
\end{proof}

\begin{lemma}
\label{lem: EPk-cons => Ukp-cons}
Let
$\T$ be a theory containing $\ha$ and $X$ be a set of $\ha$-sentences.
If $\pa + X$ is $\EP_{k+1}$-conservative over $\T +X $, then so is $\U_{k+1}$-conservative.
\end{lemma}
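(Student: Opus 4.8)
The plan is to reduce $\U_{k+1}$-conservativity to the assumed $\EP_{k+1}$-conservativity by means of the $\EP_{k+1}$-approximation furnished by Lemma \ref{lem: U_k and EPk}. Fix an arbitrary $\vp \in \U_{k+1}$ (so in particular $\vp \in \U_{k+1}^+$) and suppose $\pa + X \vdash \vp$; the goal is $\T + X \vdash \vp$. By the first clause of Lemma \ref{lem: U_k and EPk} there is a formula $\vp' \in \EP_{k+1}$ with $\FV{\vp'} = \FV{\vp}$, $\ha + \LEM{\Sigma_k} \vdash \vp' \to \vp$ and $\pa \vdash \vp \to \vp'$. From $\pa + X \vdash \vp$ and $\pa \vdash \vp \to \vp'$ we get $\pa + X \vdash \vp'$, so the hypothesis that $\pa + X$ is $\EP_{k+1}$-conservative over $\T + X$ gives $\T + X \vdash \vp'$. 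What remains is to pass back from $\vp'$ to $\vp$: by the first clause above it suffices to have $\T + X \vdash \LEM{\Sigma_k}$, for then $\T + X \vdash \vp' \to \vp$ and hence $\T + X \vdash \vp$.

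To secure $\LEM{\Sigma_k}$ inside $\T + X$, I would exploit that $\EP_{k+1}$ already contains the disjunctions needed to run the reversal argument. Indeed, every $\Sigma_k$- and every $\Pi_k$-formula is equivalent over $\ha$ (after prefixing a dummy quantifier) to a $\Pi_{k+1}$-formula with the same free variables, so Remark \ref{rem: class hierarchy between Pi_k and ESk+1} (applied with $k+1$ in place of $k$) yields $\Sigma_k \lor \Pi_k \subseteq \Pi_{k+1} \lor \Pi_{k+1} \subseteq \EP_{k+1}$. Hence the assumed $\EP_{k+1}$-conservativity entails that $\pa + X$ is $(\Sigma_k \lor \Pi_k)$-conservative over $\T + X$. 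Rerunning the short dual-formula argument of Lemma \ref{lem: SkvPik-cons->Sk-LEM}: for any $\xi \in \Sigma_k$ with dual $\xi^\perp \in \Pi_k$ we have $\pa \vdash \xi \lor \xi^\perp$, so $\pa + X \vdash \xi \lor \xi^\perp$, and $(\Sigma_k \lor \Pi_k)$-conservativity gives $\T + X \vdash \xi \lor \xi^\perp$, whence $\T + X \vdash \xi \lor \neg \xi$. Since $\xi$ was arbitrary, $\T + X \vdash \LEM{\Sigma_k}$, which completes the argument outlined in the first paragraph.

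The one point requiring care is precisely this derivation of $\LEM{\Sigma_k}$: although Theorem \ref{thm: conservation theorems equivalent to SkLEM} equates $(\Sigma_k \lor \Pi_k)$-conservativity with $\T + X \vdash \LEM{\Sigma_k}$, that theorem presupposes $X \subseteq \Q_{k+1}$, whereas here $X$ is an arbitrary set of $\ha$-sentences. I therefore avoid citing it and instead reproduce the two-line argument of Lemma \ref{lem: SkvPik-cons->Sk-LEM} directly with $\pa + X$ and $\T + X$ in place of $\pa$ and $\T$; this is legitimate because that argument uses nothing about the classical theory beyond its proving $\xi \lor \xi^\perp$. Apart from this, the proof is a routine chaining of the two implications of Lemma \ref{lem: U_k and EPk} with the given conservativity, so I expect no substantial obstacle.
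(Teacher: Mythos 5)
Your proof is correct and follows essentially the same route as the paper's: approximate $\vp\in\U_{k+1}$ by $\vp'\in\EP_{k+1}$ via Lemma \ref{lem: U_k and EPk}, apply the assumed conservativity to $\vp'$, and recover $\LEM{\Sigma_k}$ in $\T+X$ by rerunning the dual-formula argument of Lemma \ref{lem: SkvPik-cons->Sk-LEM} inside $\EP_{k+1}$. The paper leaves that last step as ``as in the proof of Lemma \ref{lem: SkvPik-cons->Sk-LEM}''; your explicit justification via the inclusion $\Sigma_k\lor\Pi_k\subseteq\Pi_{k+1}\lor\Pi_{k+1}\subseteq\EP_{k+1}$, and your remark that Theorem \ref{thm: conservation theorems equivalent to SkLEM} cannot be cited because $X$ is unrestricted here, are exactly the right way to fill it in.
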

\begin{proof}
Let $\vp\in \U_{k+1}$.
Suppose $\pa + X \vdash \vp$.
By Lemma \ref{lem: U_k and EPk}, there exists $\vp'\in \EP_{k+1}$ such that $\FV{\vp} = \FV{\vp'}$, $\ha+\LEM{\Sigma_k} \vdash \vp' \to \vp$ and $\pa \vdash \vp \to\vp'$.
Then $\pa + X \vdash \vp'$.
By our assumption, we have $\T + X \vdash \vp'$.
As in the proof of Lemma \ref{lem: SkvPik-cons->Sk-LEM}, one can show $\T + X \vdash \LEM{\Sigma_k}$ by using the $\EP_{k+1}$-conservativity.
Then $\T + X \vdash \vp$ follows.
\end{proof}

\begin{theorem}
\label{thm: equivalents of Uk+1p-cons}
Let 
$\T$ be semi-classical arithmetic
and $X$ be a set of $\ha$-sentences in $\Q_{k+1}$.
Then the following are pairwise equivalent$:$
\begin{enumerate}
    \item
    \label{item: PA+X is Uk+1-cons. over T+X}
$\pa +X$ is $\U_{k+1}$-conservative over $\T +X ;$
    \item 
        \label{item: PA+X is EPk+1-cons. over T+X}
$\pa +X$ is $\EP_{k+1}$-conservative over $\T +X ;$
\item
    \label{item: T+X satisfies EPk+1-DNER}
$\T +X$ is closed under $\DNER{\EP_{k+1}} ;$ 
\item
    \label{item: T+X satisfies EPk+1-CDR}
$\T +X$ is closed under $\CDR{\EP_{k+1}} ;$
\item
    \label{item: T+X satisfies Uk+1-DNER}
$\T +X$ is closed under $\DNER{\U_{k+1}} ;$ 
\item
    \label{item: T+X satisfies Uk+1-CDR}
$\T +X$ is closed under $\CDR{\U_{k+1}} $.
\end{enumerate}
\end{theorem}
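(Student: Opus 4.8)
The plan is to prove the six conditions pairwise equivalent by establishing a cycle of implications, exploiting the fact that $\EP_{k+1}$ and $\U_{k+1}$ are interchangeable up to the provability of $\LEM{\Sigma_k}$ (which is itself forced by any of the conservativity assumptions). First I would observe that, by Remark~\ref{rem: class hierarchy between Pi_k and ESk+1}, $\Sigma_k\lor\Pi_k$ embeds into $\EP_{k+1}$ (and into $\U_{k+1}$ via the prenex normal form theorem), so each of the conservativity statements \eqref{item: PA+X is Uk+1-cons. over T+X} and \eqref{item: PA+X is EPk+1-cons. over T+X} already yields $\T+X\vdash\LEM{\Sigma_k}$ by the argument of Lemma~\ref{lem: SkvPik-cons->Sk-LEM}. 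This background fact is what licenses passing freely between $\EP_{k+1}$ and $\U_{k+1}$ using Lemma~\ref{lem: U_k and EPk}.

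The core equivalence \eqref{item: PA+X is Uk+1-cons. over T+X}~$\Leftrightarrow$~\eqref{item: PA+X is EPk+1-cons. over T+X} is handled directly: the implication \eqref{item: PA+X is EPk+1-cons. over T+X}~$\to$~\eqref{item: PA+X is Uk+1-cons. over T+X} is exactly Lemma~\ref{lem: EPk-cons => Ukp-cons}, and for the converse I would use Lemma~\ref{lem: U_k and EPk}\,(item~$1$): given $\vp'\in\EP_{k+1}$ with $\pa+X\vdash\vp'$, I note that $\vp'\in\U_{k+1}^+$ in the classical setting so that (after extracting $\LEM{\Sigma_k}$ and applying Lemma~\ref{lem: U_k and EPk}) one obtains an equivalent $\U_{k+1}$-formula on which the assumed $\U_{k+1}$-conservativity applies. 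The standard direction linking conservativity to closure under rules gives \eqref{item: PA+X is EPk+1-cons. over T+X}~$\to$~\eqref{item: T+X satisfies EPk+1-DNER} and \eqref{item: PA+X is EPk+1-cons. over T+X}~$\to$~\eqref{item: T+X satisfies EPk+1-CDR} essentially for free: if $\T+X\vdash\neg\neg\vp$ (resp.\ $\T+X\vdash\forall x(\vp\lor\psi)$) then $\pa+X$ proves $\vp$ (resp.\ $\vp\lor\forall x\psi$), which lies in $\EP_{k+1}$, so conservativity pulls it back to $\T+X$. The analogous rules \eqref{item: T+X satisfies Uk+1-DNER} and \eqref{item: T+X satisfies Uk+1-CDR} for $\U_{k+1}$ follow similarly, and are tied back to the $\EP_{k+1}$-rules again through Lemma~\ref{lem: U_k and EPk}.

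To close the cycle I would show that the rules imply conservativity. The natural route is to combine closure under $\DNER{\EP_{k+1}}$ (or $\CDR{\EP_{k+1}}$) with the relativized Ishihara-style machinery: one first uses closure under the relevant rule to derive $\LEM{\Sigma_k}$ in $\T+X$ (mimicking Lemmata~\ref{lem: Sk+1-DNER => Sk-LEM} and \ref{lem: Pk+1-CDR => Sk-LEM}, since $\Sigma_{k+1}$ and the $\CD{\Sigma_k}$-instances live inside $\EP_{k+1}$ up to $\ha$-equivalence), and then applies the $\pa$-to-$\ha^{\ph}$ translation together with Lemma~\ref{Kurahashi Lemma: A* <-> A v *} and the substitution Lemma~\ref{lem: Substitution}: for $\vp\in\EP_{k+1}$ with $\pa+X\vdash\vp$, the $\ph$-translation yields $\ha^{\ph}+X\vdash\vp^{\ph}$, and since the $\EP_{k+1}$ shape is built from $\Pi_k$-formulas by $\lor$ and $\forall$, Lemma~\ref{Kurahashi Lemma: A* <-> A v *} gives $\ha^{\ph}+\LEM{\Sigma_k}\vdash\vp^{\ph}\lr\vp\lor\ph$, so substituting $\vp$ for $\ph$ and discharging the trivial disjunct produces $\T+X\vdash\vp$.

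The main obstacle I anticipate is the passage from the rule-closure conditions back to full conservativity, specifically handling the disjunctive/universal structure of $\EP_{k+1}$-formulas under the $\ph$-translation uniformly. Unlike the prenex case treated in Theorem~\ref{thm: conservativity on semi-classical arithmetic}, here $\vp\lor\psi$ with $\vp,\psi\in\Pi_k$ is not itself prenex, so one must verify that Lemma~\ref{Kurahashi Lemma: A* <-> A v *}-style equivalences propagate correctly through the $\EP_{k+1}$-clauses (this is essentially why $\EP_k$ was isolated and why Lemma~\ref{lem: conjunction of EPk formulas} is needed to keep the class closed under the operations forced by negating $\E_{k+1}$-formulas). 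Getting the bookkeeping on free variables and the interaction with $\CD{\Sigma_k}$ right in deriving $\CDR{\EP_{k+1}}\Rightarrow\LEM{\Sigma_k}$ will require care, but no genuinely new idea beyond the techniques already assembled.
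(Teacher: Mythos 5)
Your treatment of the easy directions is fine: $(2)\to(1)$ is Lemma \ref{lem: EPk-cons => Ukp-cons}, $(1)\to(2)$ is immediate from $\EP_{k+1}\subseteq\U_{k+1}$ (you do not even need Lemma \ref{lem: U_k and EPk} there), and conservativity trivially yields closure under the rules. The gap is in the one direction that carries all the content: passing from closure under $\CDR{\EP_{k+1}}$ (or $\DNER{\EP_{k+1}}$) back to $\EP_{k+1}$-conservativity. Your plan is to extract $\LEM{\Sigma_k}$ from the rule and then run the $\ph$-translation, invoking Lemma \ref{Kurahashi Lemma: A* <-> A v *} to get $\ha^{\ph}+\LEM{\Sigma_k}\vdash\vp^{\ph}\lr\vp\lor\ph$ for $\vp\in\EP_{k+1}$. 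This cannot work. First, $\EP_{k+1}$ is generated from $\Pi_{k+1}$-formulas (not $\Pi_k$-formulas) by $\lor$ and $\forall$, and Lemma \ref{Kurahashi Lemma: A* <-> A v *}.\eqref{item: A v * for Pi_{k+1}} for $\Pi_{k+1}$ requires $\LEM{\Sigma_{k+1}}$, which you do not have. Second, and decisively: if $\LEM{\Sigma_k}$ alone sufficed to prove $\vp^{\ph}\lr\vp\lor\ph$ for every $\vp\in\EP_{k+1}$, then your substitution step would show that $\pa$ is $\EP_{k+1}$-conservative over $\ha+\LEM{\Sigma_k}$; for $k=0$ this says $\pa$ is $(\Pi_1\lor\Pi_1)$-conservative over $\ha$, contradicting Proposition \ref{prop: PA is not Pi1vPi1-conservative over HA}. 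Concretely, propagating the equivalence through the clause $\forall x\,V$ of $\EP_{k+1}$ needs $\forall x(\vp\lor\ph)\to\forall x\vp\lor\ph$, which is precisely a $\CD$-type principle for $\Pi_{k+1}$-formulas; your translation-based route gives the \emph{rule} no role beyond extracting $\LEM{\Sigma_k}$, so it has nowhere to supply this missing strength.

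The paper's proof of $(4)\to(2)$ is structured around exactly this point. It proves, by induction on the sum of complexities, that $\pa+X\vdash\vp_1\lor\dots\lor\vp_n$ implies $\T+X\vdash\vp_1\lor\dots\lor\vp_n$ for $\vp_1,\dots,\vp_n\in\EP_{k+1}$. In the base case (all $\vp_i\equiv\forall x_i\,\vp_i'$ with $\vp_i'\in\Sigma_k$) one first strips the outer universal quantifiers, applies the already-established $\E_{k+1}$-conservativity over $\ha+\LEM{\Sigma_k}$ (Corollary \ref{cor: PA is Ek+1-cons. over HA+SkLEM}) to the $\Sigma_k$-disjunction, and then uses the rule $\CDR{\EP_{k+1}}$ $n$ times to reattach the quantifiers; the inductive cases for $\lor$ and $\forall$ likewise invoke $\CDR{\EP_{k+1}}$ at each step. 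In other words, the rule is an active inference tool used repeatedly inside the induction, not merely a source of $\LEM{\Sigma_k}$. To repair your proposal you would need to replace the $\ph$-translation argument for this direction with an induction of that shape.
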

\begin{proof}
Since $\EP_{k+1} \subseteq \U_{k+1}$, the equivalence between \eqref{item: PA+X is Uk+1-cons. over T+X} and \eqref{item: PA+X is EPk+1-cons. over T+X} follows immediately from Lemma \ref{lem: EPk-cons => Ukp-cons}.

\noindent
$(\ref{item: PA+X is EPk+1-cons. over T+X}\to \ref{item: T+X satisfies EPk+1-DNER}):$
Let $\vp\in \EP_{k+1}$ and assume $\T + X \vdash \neg \neg \vp$.
Since $\T + X \subseteq \pa + X$, we have $\pa + X \vdash  \vp$.
By $\eqref{item: PA+X is EPk+1-cons. over T+X}$, we have $\T + X \vdash  \vp$.

\noindent
$(\ref{item: T+X satisfies EPk+1-DNER} \to \ref{item: T+X satisfies EPk+1-CDR}):$
Let $\vp, \psi(x)\in \EP_{k+1}$ and $x \notin \FV{\vp}$.
Assume $\T +X \vdash \forall x (\vp \lor \psi(x))$.
Since $\ha$ proves  $\neg \neg (\vp \lor \neg \vp)$ and $(\vp \lor \neg \vp)\land \forall x (\vp \lor  \psi(x)) \to \vp \lor \forall x \psi(x)$, we have $\T +X \vdash \neg \neg (\vp \lor \forall x \psi(x))$.
Since $\vp \lor \forall x \psi(x) \in \EP_{k+1}$, by $\DNER{\EP_{k+1}}$, we have  $\T +X \vdash \vp \lor \forall x \psi(x)$.

\noindent
$(\ref{item: T+X satisfies EPk+1-CDR} \to \ref{item: PA+X is EPk+1-cons. over T+X}):$
Assume that $\T +X$ is closed under $\CDR{\EP_{k+1}} $.
By Lemma \ref{lem: Pk+1-CDR => Sk-LEM}, we have $\T+X \vdash \LEM{\Sigma_k}$.
We show that $\pa + X \vdash \vp_1 \lor \dots \lor \vp_n$ implies $\T + X \vdash \vp_1 \lor \dots \lor \vp_n$ for any $\vp_1, \dots, \vp_n \in \EP_{k+1}$ by induction on the sum of the complexity of  $\vp_1, \dots, \vp_n \in \EP_{k+1}$.

First, suppose that all of $\vp_1, \dots, \vp_n$ are in $\Pi_{k+1}$.
Let $\vp_i:\equiv \forall x_i \vp_{i}'$ with $\vp_i'\in \Sigma_k$ for each $i\in \{ 1,\dots, n\}$.
Assume $\pa +X \vdash \vp_1 \lor \dots \lor \vp_n$.
Then $\pa +X \vdash \vp_1' \lor \dots \lor \vp_n'$.
Since $\T +X \vdash \LEM{\Sigma_k}$ and $X\subseteq \Q_{k+1}$, by Corollary \ref{cor: PA is Ek+1-cons. over HA+SkLEM}, 
we have $\T +X \vdash \vp_1' \lor \dots \lor \vp_n'$.
Then $\T +X \vdash  \forall x_1 (\vp_1' \lor \dots \lor \vp_n')$ follows.
By $\CDR{\EP_{k+1}}$, we have $\T +X \vdash  \forall x_1 \vp_1' \lor \vp_2' \lor \dots \lor \vp_n'$.
Iterating this procedure for more $n-1$ times, we have $\T +X \vdash  \forall x_1 \vp_1' \lor \dots \lor \forall x_n \vp_n'$.

Secondly, suppose $\vp_1, \dots, \vp_n \in \EP_{k+1}$ and $\vp_n:\equiv \vp_n' \lor \vp_n''$ with $\vp_n', \vp_n''\in \EP_{k+1}$.
Without loss of generality, let $n >1$.
Assume $\pa +X \vdash \vp_1 \lor \dots  \lor \vp_{n-1}  \lor \vp_n$, equivalently, $\pa +X \vdash \vp_1 \lor \dots\lor \vp_{n-1} \lor \vp_n' \lor \vp_n''$.
By the induction hypothesis, we have $\T +X \vdash \vp_1 \lor \dots \lor \vp_{n-1} \lor \vp_n' \lor \vp_n''$, equivalently, $\T +X \vdash \vp_1 \lor \dots  \lor \vp_{n-1}  \lor \vp_n$.

Finally, suppose $\vp_1, \dots, \vp_n \in \EP_{k+1}$ and $\vp_n: \equiv \forall x_n \vp_n'$ with $\vp_n' \in \EP_{k+1}$.
Without loss of generality, let $n >1$.
Assume $\pa +X \vdash \vp_1 \lor \dots  \lor \vp_{n-1}  \lor \vp_n$.
Then $\pa +X \vdash \vp_1 \lor \dots \lor \vp_{n-1} \lor \vp_n'$ follows.
By the induction hypothesis, we have $\T +X \vdash \vp_1 \lor \dots \lor \vp_{n-1} \lor \vp_n'$, and hence, $\T +X \vdash \forall x_n( \vp_1 \lor \dots\lor \vp_{n-1} \lor \vp_n')$.
By $\CDR{\EP_{k+1}}$, we have $\T +X \vdash \vp_1 \lor \dots \lor \vp_{n-1} \lor \vp_n$.

The implications $(\ref{item: PA+X is Uk+1-cons. over T+X}\to \ref{item: T+X satisfies Uk+1-DNER})$ and $(\ref{item: T+X satisfies Uk+1-DNER} \to \ref{item: T+X satisfies Uk+1-CDR})$ are shown as for
$(\ref{item: PA+X is EPk+1-cons. over T+X}\to \ref{item: T+X satisfies EPk+1-DNER})$ and $(\ref{item: T+X satisfies EPk+1-DNER} \to \ref{item: T+X satisfies EPk+1-CDR})$ respectively.
In addition, $(\ref{item: T+X satisfies Uk+1-CDR} \to \ref{item: T+X satisfies EPk+1-CDR})$ is trivial.
\end{proof}

Next, we characterize the $(\Pi_{k+1}\lor \Pi_{k+1})$-conservativity by several rules.

\begin{lemma}
\label{lem: PkvPk-DNER => Pk-CDR}
Let
$\T$ be a theory containing $\ha$.
If $\T$ is closed under $\DNER{(\Pi_k \lor \Pi_k)}$, then so is  $\CDR{\Pi_{k}}$.
\end{lemma}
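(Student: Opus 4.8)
The plan is to reduce the constant-domain rule $\CDR{\Pi_k}$ to the hypothesis $\DNER{(\Pi_k \lor \Pi_k)}$ through a single intuitionistically provable double-negation. Fix $\vp, \psi \in \Pi_k$ with $x \notin \FV{\vp}$ and suppose $\T \vdash \forall x (\vp \lor \psi)$; I must produce $\T \vdash \vp \lor \forall x \psi$. The first observation is that $\forall x \psi$ is again in $\Pi_k$ (prefixing a universal quantifier to a $\Pi_k$-formula stays in $\Pi_k$), so that $\vp \lor \forall x \psi$ lies in $\Pi_k \lor \Pi_k$. Hence, by the assumed closure under $\DNER{(\Pi_k \lor \Pi_k)}$, it suffices to establish $\T \vdash \neg\neg(\vp \lor \forall x \psi)$.

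The key step is the purely intuitionistic implication $\ha \vdash \forall x (\vp \lor \psi) \to \neg\neg(\vp \lor \forall x \psi)$, valid whenever $x \notin \FV{\vp}$. To see this, reason in $\ha$: assume $\forall x (\vp \lor \psi)$ and, toward a contradiction, $\neg(\vp \lor \forall x \psi)$. The latter yields $\neg \vp$ (since $\vp$ implies $\vp \lor \forall x \psi$). Now for an arbitrary $x$ the hypothesis gives $\vp \lor \psi$, and $\neg \vp$ forces $\psi$; as $x$ was arbitrary and $x \notin \FV{\vp}$, we obtain $\forall x \psi$, whence $\vp \lor \forall x \psi$, contradicting our assumption. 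This establishes $\neg\neg(\vp \lor \forall x \psi)$.

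Combining the two steps finishes the argument: from $\T \vdash \forall x (\vp \lor \psi)$ and the intuitionistic implication we get $\T \vdash \neg\neg(\vp \lor \forall x \psi)$, and then $\DNER{(\Pi_k \lor \Pi_k)}$ delivers $\T \vdash \vp \lor \forall x \psi$, as required. I do not anticipate any real obstacle here: the only points needing care are the bookkeeping that $\vp \lor \forall x \psi$ genuinely belongs to $\Pi_k \lor \Pi_k$ so that the rule applies, and the (routine) verification that the double-negation above is derivable already over $\ha$, without appeal to any classical principle.
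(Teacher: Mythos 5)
Your argument is correct and is essentially the paper's own: the paper likewise derives $\neg\neg(\vp\lor\forall x\,\psi)$ intuitionistically from $\forall x(\vp\lor\psi)$ (there via $\neg\neg(\vp\lor\neg\vp)$ together with $(\vp\lor\neg\vp)\land\forall x(\vp\lor\psi)\to\vp\lor\forall x\,\psi$, which is the same reductio you carry out directly) and then applies the rule $\DNER{(\Pi_k\lor\Pi_k)}$, using that $\forall x\,\psi$ stays in $\Pi_k$. The only caveat is the degenerate case $k=0$, where $\forall x\,\psi\notin\Pi_0$ and your bookkeeping step fails, but there $\CDR{\Pi_0}$ already holds over $\ha$ by decidability of quantifier-free formulas, so nothing is lost.
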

\begin{proof}
The proof of $(\ref{item: T+X satisfies EPk+1-DNER} \to \ref{item: T+X satisfies EPk+1-CDR})$ of Theorem \ref{thm: equivalents of Uk+1p-cons} works.
\end{proof}

\begin{lemma}
\label{lem: Sk-DMLDR <=> PkvPk-DNER}
Let  $\T$ be a theory containing $\ha$.
Then  $\T$ is closed under $\DMLDR{\Sigma_{k}}$ if and only if $\T$ is closed under $\DNER{\left(\Pi_{k} \lor \Pi_{k}\right)}$.
\end{lemma}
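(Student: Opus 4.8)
The plan is to prove both directions by dualizing the argument of Lemma~\ref{lem: Pk-DMLDR <=> Sk-DNER}, transporting everything through the duals $\vp^\perp$ of Definition~\ref{def: duals of prenex formulas} together with the facts collected in Remark~\ref{rem: duals}. The structural parallel is that $\DMLDR{\Sigma_k}$ plays the role that $\DMLDR{\Pi_k}$ played there, while the disjunctive rule $\DNER{(\Pi_k\lor\Pi_k)}$ replaces the single-formula rule $\DNER{\Sigma_k}$.

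For the ``only if'' direction I would assume $\T$ is closed under $\DMLDR{\Sigma_k}$ and start from $\T\vdash\neg\neg(\vp\lor\psi)$ with $\vp,\psi\in\Pi_k$. The first step rewrites $\neg\neg(\vp\lor\psi)$ as the intuitionistically equivalent $\neg(\neg\vp\land\neg\psi)$, and then uses that $\vp^\perp\to\neg\vp$ and $\psi^\perp\to\neg\psi$ hold intuitionistically (Remark~\ref{rem: duals}) to pass to $\T\vdash\neg(\vp^\perp\land\psi^\perp)$. Since $\vp^\perp,\psi^\perp\in\Sigma_k$, applying $\DMLDR{\Sigma_k}$ yields $\T\vdash(\vp^\perp)^\perp\lor(\psi^\perp)^\perp$, which is $\T\vdash\vp\lor\psi$ because $(\vp^\perp)^\perp$ is $\ha$-equivalent to $\vp$. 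This direction is routine and needs no extra classical strength.

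The ``if'' direction is where the work lies. I would assume $\T$ is closed under $\DNER{(\Pi_k\lor\Pi_k)}$ and start from $\T\vdash\neg(\vp\land\psi)$ with $\vp,\psi\in\Sigma_k$, aiming at $\T\vdash\vp^\perp\lor\psi^\perp$ with $\vp^\perp,\psi^\perp\in\Pi_k$. The natural route rewrites $\neg(\vp\land\psi)$ as the intuitionistically equivalent $\neg(\neg\neg\vp\land\neg\neg\psi)$ and then replaces each $\neg\neg\vp$ by $\neg\vp^\perp$, turning the goal into $\T\vdash\neg(\neg\vp^\perp\land\neg\psi^\perp)$, i.e.\ $\T\vdash\neg\neg(\vp^\perp\lor\psi^\perp)$, after which $\DNER{(\Pi_k\lor\Pi_k)}$ finishes. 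The main obstacle is that the equivalence $\neg\neg\vp\leftrightarrow\neg\vp^\perp$ (Remark~\ref{rem: duals}) is available only in the presence of $\DNE{\Sigma_{k-1}}$, which is not part of the hypothesis.

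I would overcome this exactly as the dual argument does, by extracting the needed fragment of classical logic from the closure hypothesis itself. Concretely, I expect to chain Lemma~\ref{lem: PkvPk-DNER => Pk-CDR} (so $\T$ is closed under $\CDR{\Pi_k}$, hence under $\CDR{\Sigma_{k-1}}$ since $\Sigma_{k-1}\subseteq\Pi_k$) with Lemma~\ref{lem: Pk+1-CDR => Sk-LEM} applied at index $k-1$ (so $\T\vdash\LEM{\Sigma_{k-1}}$, whence $\DNE{\Sigma_{k-1}}$). With $\DNE{\Sigma_{k-1}}$ in hand the replacement above is licensed and the argument closes. The only point to watch is the degenerate index $k=0$, where $\Sigma_{-1}$ is empty, $\DNE{\Sigma_{-1}}$ is vacuous, and both schemes reduce to statements about decidable formulas, so the equivalence holds trivially.
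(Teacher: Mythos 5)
Your proposal is correct and follows essentially the same route as the paper: the ``only if'' direction is the routine dualization via $\vp^\perp\to\neg\vp$, and for the ``if'' direction the paper likewise extracts $\LEM{\Sigma_{k-1}}$ (hence $\DNE{\Sigma_{k-1}}$) from closure under $\DNER{(\Pi_k\lor\Pi_k)}$ by chaining Lemma~\ref{lem: PkvPk-DNER => Pk-CDR} with Lemma~\ref{lem: Pk+1-CDR => Sk-LEM} before applying the dual rewriting and the rule.
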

\begin{proof}
One can show the ``only if'' direction as in the proof of  that in Lemma \ref{lem: Pk-DMLDR <=> Sk-DNER}.
For the converse direction, again by the corresponding proof in  Lemma \ref{lem: Pk-DMLDR <=> Sk-DNER}, it suffices to show that if $\T$ is closed under $\DNER{\left(\Pi_{k} \lor \Pi_{k}\right)}$, then $T $ proves $\LEM{\Sigma_{k-1}}$.
The latter is the case by Lemmata \ref{lem: PkvPk-DNER => Pk-CDR} and \ref{lem: Pk+1-CDR => Sk-LEM}.
\end{proof}

\begin{theorem}
\label{thm: equivalents of (Pk+1 v Pk+1)-cons}
Let
$\T$ be semi-classical arithmetic
and $X$ be a set of $\ha$-sentences in $\Q_{k+1}$.
Then the following are pairwise equivalent$:$
\begin{enumerate}
    \item
    \label{item: PA+X is (Pk+1 v Pk+1)-cons. over T+X}
$\pa +X$ is $\left(\Pi_{k+1} \lor \Pi_{k+1} \right)$-conservative over $\T +X ;$
\item
    \label{item: T+X satisfies Pk+1-DNER}
$\T +X$ is closed under $\DNER{ (\Pi_{k+1} \lor \Pi_{k+1})} ;$ 
\item
    \label{item: T+X satisfies Pk+1-CDR}
$\T +X$ is closed under $\CDR{ \Pi_{k+1}} ;$
\item
    \label{item: T+X satisfies Sk+1-DMLDR}
$\T +X$ is closed under $\DMLDR{ \Sigma_{k+1}} ;$
\item
    \label{item: T+X satisfies Sk+1-DMLR and T+X |- Sk-DNE}
$\T +X$ is closed under $\DMLR{ \Sigma_{k+1}}$ and $T+X$ proves $\DNE{\Sigma_{k}}$.
\end{enumerate}
\end{theorem}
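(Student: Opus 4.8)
The plan is to prove the theorem by running the cycle of implications $\eqref{item: PA+X is (Pk+1 v Pk+1)-cons. over T+X} \to \eqref{item: T+X satisfies Pk+1-DNER} \to \eqref{item: T+X satisfies Pk+1-CDR} \to \eqref{item: PA+X is (Pk+1 v Pk+1)-cons. over T+X}$, then to attach $\eqref{item: T+X satisfies Sk+1-DMLDR}$ to this cycle via Lemma \ref{lem: Sk-DMLDR <=> PkvPk-DNER} (applied at level $k+1$), and finally to prove $\eqref{item: T+X satisfies Sk+1-DMLDR} \lr \eqref{item: T+X satisfies Sk+1-DMLR and T+X |- Sk-DNE}$ using the dual principles of Remark \ref{rem: duals}. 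The whole argument parallels the proof of Theorem \ref{thm: equivalents of Uk+1p-cons}, with the class $\EP_{k+1}$ there replaced by $\Pi_{k+1} \lor \Pi_{k+1}$ here.

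For the easy steps, $\eqref{item: PA+X is (Pk+1 v Pk+1)-cons. over T+X} \to \eqref{item: T+X satisfies Pk+1-DNER}$ is the routine passage from conservativity to a double-negation rule, exactly as in $(\ref{item: PA+X is EPk+1-cons. over T+X} \to \ref{item: T+X satisfies EPk+1-DNER})$ of Theorem \ref{thm: equivalents of Uk+1p-cons}: if $\T + X \vdash \neg \neg \vp$ for $\vp \in \Pi_{k+1} \lor \Pi_{k+1}$, then $\pa + X \vdash \vp$ since $\pa$ is classical, whence $\T + X \vdash \vp$ by conservativity. The implication $\eqref{item: T+X satisfies Pk+1-DNER} \to \eqref{item: T+X satisfies Pk+1-CDR}$ is precisely Lemma \ref{lem: PkvPk-DNER => Pk-CDR}, and the equivalence $\eqref{item: T+X satisfies Pk+1-DNER} \lr \eqref{item: T+X satisfies Sk+1-DMLDR}$ is Lemma \ref{lem: Sk-DMLDR <=> PkvPk-DNER}.

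The technical heart is $\eqref{item: T+X satisfies Pk+1-CDR} \to \eqref{item: PA+X is (Pk+1 v Pk+1)-cons. over T+X}$. First I would observe that, since $\Sigma_k \subseteq \Pi_{k+1}$, closure under $\CDR{\Pi_{k+1}}$ yields closure under $\CDR{\Sigma_k}$, so Lemma \ref{lem: Pk+1-CDR => Sk-LEM} gives $\T + X \vdash \LEM{\Sigma_k}$. Now suppose $\pa + X \vdash \vp_1 \lor \vp_2$ with $\vp_i \equiv \forall x_i \vp_i'$, $\vp_i' \in \Sigma_k$, and $x_1, x_2$ chosen fresh and distinct. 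Instantiating the two universal quantifiers gives $\pa + X \vdash \vp_1' \lor \vp_2'$, and since $\vp_1' \lor \vp_2' \in \Sigma_k \lor \Sigma_k \subseteq \E_{k+1}$ while $\T + X \vdash \LEM{\Sigma_k}$ and $X \subseteq \Q_{k+1}$, Corollary \ref{cor: PA is Ek+1-cons. over HA+SkLEM} yields $\T + X \vdash \vp_1' \lor \vp_2'$. It then remains to reintroduce the quantifiers: from $\T + X \vdash \forall x_1 (\vp_1' \lor \vp_2')$ and $x_1 \notin \FV{\vp_2'}$, $\CDR{\Pi_{k+1}}$ gives $\T + X \vdash \forall x_1 \vp_1' \lor \vp_2'$, and after universally generalizing on $x_2$ a second application of $\CDR{\Pi_{k+1}}$ (now with $\forall x_1 \vp_1' \in \Pi_{k+1}$ and $x_2 \notin \FV{\forall x_1 \vp_1'}$) yields $\T + X \vdash \vp_1 \lor \vp_2$. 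I expect this to be the main obstacle, the delicate points being the bookkeeping of free and bound variables so that the side conditions of $\CDR{\Pi_{k+1}}$ are met, and the verification that $\vp_1' \lor \vp_2'$ indeed falls under Corollary \ref{cor: PA is Ek+1-cons. over HA+SkLEM}.

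Finally, for $\eqref{item: T+X satisfies Sk+1-DMLDR} \lr \eqref{item: T+X satisfies Sk+1-DMLR and T+X |- Sk-DNE}$: assuming $\eqref{item: T+X satisfies Sk+1-DMLDR}$, the rule $\DMLR{\Sigma_{k+1}}$ follows at once because $\vp^\perp \to \neg \vp$ and $\psi^\perp \to \neg \psi$ intuitionistically (Remark \ref{rem: duals}), while $\DNE{\Sigma_k}$ follows from $\LEM{\Sigma_k}$, which is available since $\eqref{item: T+X satisfies Sk+1-DMLDR}$ is equivalent to $\eqref{item: T+X satisfies Pk+1-CDR}$ through the cycle. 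Conversely, assuming $\eqref{item: T+X satisfies Sk+1-DMLR and T+X |- Sk-DNE}$, if $\T + X \vdash \neg (\vp \land \psi)$ with $\vp, \psi \in \Sigma_{k+1}$, then $\DMLR{\Sigma_{k+1}}$ gives $\T + X \vdash \neg \vp \lor \neg \psi$, and in the presence of $\DNE{\Sigma_k}$ we have $\neg \vp \to \vp^\perp$ and $\neg \psi \to \psi^\perp$ (Remark \ref{rem: duals}, the converse direction for $\Sigma_{k+1}$ being equivalent to $\DNE{\Sigma_k}$), whence $\T + X \vdash \vp^\perp \lor \psi^\perp$, i.e.\ $\DMLDR{\Sigma_{k+1}}$.
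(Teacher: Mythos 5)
Your proposal is correct, and its overall skeleton — the cycle $\eqref{item: PA+X is (Pk+1 v Pk+1)-cons. over T+X}\to\eqref{item: T+X satisfies Pk+1-DNER}\to\eqref{item: T+X satisfies Pk+1-CDR}\to\eqref{item: PA+X is (Pk+1 v Pk+1)-cons. over T+X}$, with \eqref{item: T+X satisfies Sk+1-DMLDR} attached via Lemma \ref{lem: Sk-DMLDR <=> PkvPk-DNER} and \eqref{item: T+X satisfies Sk+1-DMLR and T+X |- Sk-DNE} handled through Remark \ref{rem: duals} — coincides with the paper's. The one place where you genuinely diverge is the implication $\eqref{item: T+X satisfies Pk+1-CDR}\to\eqref{item: PA+X is (Pk+1 v Pk+1)-cons. over T+X}$. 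The paper takes a detour through duals: from $\pa+X\vdash \forall x_1\psi_1\lor\forall x_2\psi_2$ it passes to $\neg(\exists x_1\psi_1^\perp\land\exists x_2\psi_2^\perp)$, rewrites this as a $\Pi_{k+1}$-formula $\xi$, applies the $\V_{k+1}$-conservation result (Corollary \ref{cor: Conservation result with X}) to $\xi$, and then unwinds the negations using $\DNE{\Sigma_k}$ before applying $\CDR{\Pi_{k+1}}$ twice. You instead instantiate the universal quantifiers directly, observe that $\psi_1\lor\psi_2$ (with $\psi_i\in\Sigma_k$) lies in $\E_{k+1}$ up to $\ha$-equivalence, and invoke the $\E_{k+1}$-conservation result (Corollary \ref{cor: PA is Ek+1-cons. over HA+SkLEM}) before the same two applications of $\CDR{\Pi_{k+1}}$. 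This is exactly the technique the paper itself uses in the base case of $(\ref{item: T+X satisfies EPk+1-CDR}\to\ref{item: PA+X is EPk+1-cons. over T+X})$ of Theorem \ref{thm: equivalents of Uk+1p-cons}, so it is certainly sound here; it is arguably cleaner than the paper's dual-based argument, since it avoids the back-and-forth between a formula and its dual and the attendant uses of $\DNE{\Sigma_k}$. What the paper's route buys in exchange is a self-contained illustration of how the $\Pi_{k+1}$-conservativity and the dual machinery interact, but nothing is lost logically by your shortcut. All the side conditions you flag (freshness of $x_1,x_2$, $x\notin\FV{\vp}$ in $\CDR{\Pi_{k+1}}$, membership of $\psi_1\lor\psi_2$ in $\E_{k+1}$) do check out.
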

\begin{proof}
One can show $(\ref{item: PA+X is (Pk+1 v Pk+1)-cons. over T+X} \to \ref{item: T+X satisfies Pk+1-DNER})$ as in the proof of $(\ref{item: PA+X is EPk+1-cons. over T+X}\to \ref{item: T+X satisfies EPk+1-DNER} )$ of Theorem \ref{thm: equivalents of Uk+1p-cons}.
The implication $(\ref{item: T+X satisfies Pk+1-DNER} \to \ref{item: T+X satisfies Pk+1-CDR})$ is by Lemma \ref{lem: PkvPk-DNER => Pk-CDR}.

We show $(\ref{item: T+X satisfies Pk+1-CDR} \to \ref{item: PA+X is (Pk+1 v Pk+1)-cons. over T+X} )$.
Assume that $\T +X$ is closed under $\CDR{ \Pi_{k+1}} $.
By Lemma \ref{lem: Pk+1-CDR => Sk-LEM}, we have $\T + X \vdash \LEM{\Sigma_k}$.
Let $\vp_1:\equiv \forall x_1 \psi_1$ and  $\vp_2: \equiv \forall x_2 \psi_2$ with $\psi_1, \psi_2 \in \Sigma_{k}$.
Suppose $\pa + X \vdash \forall x_1\psi_1 \lor \forall x_2\psi_2$.
Then $\pa + X \vdash  \neg \left( \exists x_1 \psi_1^\perp \land \exists x_2 \psi_2^\perp  \right)$.
Since $\exists x_1 \psi_1^\perp \land \exists _2 \psi_2^\perp$ is equivalent to a formula in $\Sigma_{k+1}$ (cf. \cite[Lemma 4.3.(2)]{FK20-1}),
by Remark \ref{rem: duals},
there exists $\xi\in \Pi_{k+1}$ such that $\FV{\xi} =\FV{\forall x_1 \psi_1 \lor \forall x \psi_2}$ and $\ha + \DNE{\Sigma_k} \vdash \xi \lr \neg \left( \exists x_1 \psi_1^\perp \land \exists x_2 \psi_2^\perp  \right)$.
Then we have $\pa + X \vdash \xi$.
Since $X\subseteq \Q_{k+1}$, by Corollary \ref{cor: Conservation result with X}, we have  $\ha + X + \LEM{\Sigma_{k}} \vdash \xi$.
Since $\DNE{\Sigma_k}$ is derivable from $\LEM{\Sigma_k}$, we have  that $T + X $ proves  $\neg \left( \exists x_1 \psi_1^\perp \land \exists x_2 \psi_2^\perp  \right)$, equivalently, $\forall x_1, x_2 \neg \left( \psi_1^\perp \land \psi_2^\perp  \right)$.
Since $\T  + X \vdash \DNE{\Sigma_k}$, again by Remark \ref{rem: duals}, we have that $T + X $ proves 
$\forall x_1, x_2 \neg \left( \neg \psi_1\land \neg \psi_2  \right)$, equivalently, $ \forall x_1, x_2 \neg \neg (\psi_1 \lor \psi_2)$.
Since $\psi_1 \lor \psi_2$ is equivalent to a formula in $\Sigma_k$ (cf. \cite[Lemma 4.4]{FK20-1}),
$\T + X \vdash \forall x_1, x_2 (\psi_1 \lor \psi_2)$ follows.
By using $\CDR{\Pi_{k+1}}$ twice, we have $\T +X \vdash \forall x_1\psi_1 \lor \forall x_2\psi_2$.

The equivalence $(\ref{item: T+X satisfies Pk+1-DNER} \lr \ref{item: T+X satisfies Sk+1-DMLDR})$ is by Lemma \ref{lem: Sk-DMLDR <=> PkvPk-DNER}.
The implication $(\ref{item: T+X satisfies Sk+1-DMLR and T+X |- Sk-DNE} \to \ref{item: T+X satisfies Sk+1-DMLDR} )$ is by the fact that for $\vp\in \Sigma_{k+1}$, $\vp^\perp$ is derived from $\neg \vp$ in the presence of $\DNE{\Sigma_k}$ (cf. Remark \ref{rem: duals}).
The implication $( \ref{item: T+X satisfies Pk+1-CDR} \, \& \, \ref{item: T+X satisfies Sk+1-DMLDR}  \to \ref{item: T+X satisfies Sk+1-DMLR and T+X |- Sk-DNE})$ is by
Lemma \ref{lem: Pk+1-CDR => Sk-LEM} (note that $\LEM{\Sigma_k}$ implies $\DNE{\Sigma_k}$).
\end{proof}

\begin{remark}
\label{rem: equivalents of VPk+1-cons}
From the perspective of Remark \ref{rem: class hierarchy between Pi_k and ESk+1}, it is natural to ask the status of the $\bigvee \Pi_{k+1}$-conservativity.
As in the proof of Theorem \ref{thm: equivalents of (Pk+1 v Pk+1)-cons}, one can show the following equivalence:
\begin{enumerate}
    \item
    \label{item: PA+X is VPk+1-cons. over T+X}
$\pa +X$ is $\bigvee \Pi_{k+1}$-conservative over $\T +X ;$
\item
    \label{item: T+X satisfies VPk+1-DNER}
For any $\vp_1 , \dots, \vp_n\in \Pi_{k+1}$, if $\T + X \vdash \neg\neg (\vp_1 \lor \dots \lor  \vp_n)$, then $\T + X \vdash \vp_1 \lor \dots \lor  \vp_n;$
\item
    \label{item: T+X satisfies VPk+1-CDR}
For any $\vp_1 , \dots, \vp_n\in \Pi_{k+1}$ such that $x \notin \FV{\vp_1 \lor \dots \lor \vp_{n-1}}$, if $\T + X \vdash \forall x (\vp_1 \lor \dots \lor \vp_{n-1} \lor \vp_n)$, then $\T + X \vdash  \vp_1 \lor \dots \lor \vp_{n-1} \lor \forall x \vp_n;$
\item
For any $\vp_1 , \dots, \vp_n\in \Sigma_{k+1}$, if $\T + X \vdash \neg (\vp_1 \land \dots \land  \vp_n)$, then $\T + X \vdash \vp_1^\perp \lor \dots \lor  \vp_n^\perp ;$
\item
$\T +X$ proves $\DNE{\Sigma_k}$ and for any $\vp_1 , \dots, \vp_n\in \Sigma_{k+1}$, if $\T + X \vdash \neg (\vp_1 \land \dots \land  \vp_n)$, then $\T + X \vdash \neg \vp_1 \lor \dots \lor  \neg \vp_n;$
\end{enumerate}
where $X\subseteq \Q_{k+1}$.
This characterization suggests that the $\bigvee \Pi_{k+1}$-conservativity lies strictly between the $\U_{k+1}$-conservativity and the $\left(\Pi_{k+1}\lor \Pi_{k+1}\right)$-conservativity, but we do not have the proof of the strictness.
\end{remark}

\begin{remark}
From the comparison between \cite[Corollary 7.6]{FK20-2} and the equivalences in Theorem \ref{thm: equivalents of (Pk+1 v Pk+1)-cons}, it is natural to ask whether the (contrapositive) collection rule restricted to formulas in $\Pi_{k+1}$ is also equivalent to the items in Theorem \ref{thm: equivalents of (Pk+1 v Pk+1)-cons}.
This question is still open.
\end{remark}

\section{Conservation theorems for the classes of sentences}
\label{sec: CONS for classes of sentences}
In the study of fragments of $\pa$, the conservativity for classes of sentences has been studied extensively e.g. in \cite[Section 2]{KP88}.
The following proposition states that  the conservativity for a class of formulas is equivalent to that restricted only to sentences if the class is closed under taking a universal closure:
\begin{proposition}
\label{prop: utcons. => cons. for forall closed classes}
Let $\Gamma$ be a class of $\ha$-formulas such that $\Gamma$ is closed under taking a universal closure.
For any theories $\T$ and $\T'$ containing $\ha$ in the language of $\ha$, if $\T' $ is conservative over $\T$ for any sentences in $\Gamma$, then $\T' $ is $\Gamma$-conservative over $\T$.
\end{proposition}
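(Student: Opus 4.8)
The plan is to prove the contrapositive at the level of individual formulas: given an arbitrary $\varphi \in \Gamma$, I want to reduce the question ``$\T' \vdash \varphi$ implies $\T \vdash \varphi$'' to the corresponding question about a sentence in $\Gamma$, which is exactly the universal closure of $\varphi$. The key observation is that for any formula $\varphi$ with free variables $x_1, \dots, x_n$, the universal closure $\forall x_1 \cdots x_n\, \varphi$ is a sentence, and by hypothesis it lies in $\Gamma$ since $\Gamma$ is closed under taking a universal closure. This sentence is the bridge between the formula-level statement and the sentence-level conservativity we are allowed to assume.

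First I would fix $\varphi \in \Gamma$ and suppose $\T' \vdash \varphi$. Since both $\T$ and $\T'$ contain $\ha$ and hence are closed under the generalization rule of intuitionistic first-order logic, from $\T' \vdash \varphi$ I obtain $\T' \vdash \forall x_1 \cdots x_n\, \varphi$, where $\{x_1, \dots, x_n\} = \FV{\varphi}$. Now $\forall x_1 \cdots x_n\, \varphi$ is a sentence lying in $\Gamma$ by the closure hypothesis, so the assumed sentence-level conservativity of $\T'$ over $\T$ applies and yields $\T \vdash \forall x_1 \cdots x_n\, \varphi$. Finally, since $\T$ is closed under the specialization (universal instantiation) rule, I recover $\T \vdash \varphi$ by instantiating the quantifiers back to the free variables $x_1, \dots, x_n$. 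This completes the argument, and since $\varphi \in \Gamma$ was arbitrary, $\T'$ is $\Gamma$-conservative over $\T$.

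This proof is essentially a bookkeeping argument about the interaction between free variables and the generalization/instantiation rules, so I do not expect any genuine obstacle. The one point requiring a small amount of care is ensuring the equivalence is exact: the universal closure must both be a \emph{sentence} (so that the sentence-level hypothesis applies) and remain in the class $\Gamma$ (which is precisely guaranteed by the closure-under-universal-closure assumption, the only hypothesis on $\Gamma$ we use). One should also note that the argument is uniform in $\varphi$ and uses nothing about $\Gamma$ beyond this closure property, nor anything about $\T$ and $\T'$ beyond their containing $\ha$; in particular the result is purely logical and requires no appeal to the earlier translation machinery.
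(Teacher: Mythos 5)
Your proposal is correct and follows exactly the paper's own argument: pass from $\T' \vdash \vp$ to $\T' \vdash \wt{\vp}$ by generalization, apply the sentence-level conservativity to the universal closure (which is a sentence in $\Gamma$ by hypothesis), and instantiate back to recover $\T \vdash \vp$. No issues.
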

\begin{proof}
Let $\vp \in \Gamma$.
Assume $\T' \vdash \vp$.
Then we have $\T' \vdash \wt{\vp}$ where $\wt{\vp}$ is the universal closure of $\vp$.
Since $\wt{\vp}$ is a sentence in $\Gamma$, by our assumption, we have  $\T \vdash \wt{\vp}$, and hence,  $\T \vdash \vp$.
\end{proof}
Therefore, for classes as $\Pi_{k}, \U_{k}, \EP_{k}$ etc., the strength of the conservativity does not vary even if we restrict them only to sentences.
On the other hand, since $\Sigma_{k}, \E_{k}, \F_{k}$ etc. are not closed under taking a universal closure, this is not the case for such classes.
In what follows, we explore the relation on the notion that $\pa$ is $\Gamma$-conservative over $\T$ for semi-classical arithmetic $\T$ and the class $\Gamma$ of sentences.

\begin{definition}
\label{def: class of sentences}
For a class $\Gamma$ of $\ha$-formulas, $ \ut{\Gamma}$ denotes the class of $\ha$-sentences in $\Gamma$.
\end{definition}

\subsection{Conservation theorems for $\Sigma_k$ sentences and $\E_k$ sentences}
\label{sec: Cons. theorems for utS_k and utEk}
For the $\ut{\Sigma_{k}}$-conservativity, we have the following:
\begin{proposition}
\label{prop: utSk+1-cons <=> utSk+1-DNER}
Let
$\T$ be semi-classical arithmetic containing $\LEM{\Sigma_{k-1}}$, and $X$ be a set of $\ha$-sentences in $\Q_{k}$.
Then $\pa +X$ is $\ut{\Sigma_{k+1}}$-conservative over $\T+X $
if and only if
$\T+X$ is closed under $\DNER{\ut{\Sigma_{k+1}}}$.
\end{proposition}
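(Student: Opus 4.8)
The plan is to treat the two directions separately: the forward implication is immediate from the definitions, while the reverse one rests on a negative translation argument that, crucially, costs only $\LEM{\Sigma_{k-1}}$ rather than $\LEM{\Sigma_k}$ because we substitute $\perp$ for the placeholder $\ph$. For the ``only if'' direction, suppose $\pa + X$ is $\ut{\Sigma_{k+1}}$-conservative over $\T + X$ and let $\vp \in \ut{\Sigma_{k+1}}$ with $\T + X \vdash \neg\neg\vp$. Since $\T + X \subseteq \pa + X$ and $\pa$ proves $\mathrm{DNE}$, we get $\pa + X \vdash \vp$, and then $\T + X \vdash \vp$ by conservativity; hence $\T + X$ is closed under $\DNER{\ut{\Sigma_{k+1}}}$.

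For the ``if'' direction, assume $\T + X$ is closed under $\DNER{\ut{\Sigma_{k+1}}}$ and let $\vp \equiv \exists \vec{x}\, \vp_1 \in \ut{\Sigma_{k+1}}$ with $\vp_1 \in \Pi_k$ satisfy $\pa + X \vdash \vp$. By the closure assumption it suffices to establish $\T + X \vdash \neg\neg\vp$. Since $X$ consists of sentences and $\Q_k$ is closed under conjunction, fix a finite conjunction $\chi \in \Q_k$ of members of $X$ with $\pa \vdash \chi \to \vp$. Proposition~\ref{Soundness of *-translation}.\eqref{item: PA |- A => HA |- A*} gives $\ha^{\ph} \vdash \chi^{\ph} \to \vp^{\ph}$, and substituting $\ph := \perp$ through Lemma~\ref{lem: Substitution} yields $\ha \vdash \chi^N \to \vp^N$, where I write $\theta^N$ for $\theta^{\ph}[\perp/\ph]$.

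Two ingredients then close the gap, each consuming only $\LEM{\Sigma_{k-1}}$. First, since $\chi \in \Q_k$, the instance of Lemma~\ref{lem: For A in Qk+1, HA + SigmakLEM |- A -> A*} one level down gives $\ha^{\ph} + \LEM{\Sigma_{k-1}} \vdash \chi \to \chi^{\ph}$; substituting $\perp$ and using $\T + X \vdash \chi$ together with $\ha + \LEM{\Sigma_{k-1}} \subseteq \T$ produces $\T + X \vdash \chi^N$, and hence $\T + X \vdash \vp^N$. Second, unwinding the translation of the existential block and repeatedly applying the intuitionistic equivalence $\neg\neg\exists x\, \neg\neg C \llr \neg\neg\exists x\, C$ gives $\ha \vdash \vp^N \llr \neg\neg \exists \vec{x}\, \vp_1^N$, so the remaining task is to prove $\T + X \vdash \vp_1^N \to \vp_1$ for $\vp_1 \in \Pi_k$.

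The crux, which I expect to be the only genuinely delicate point, is the following one-level-down refinement of Lemma~\ref{Kurahashi Lemma: A* <-> A v *}, proved by simultaneous induction on $k$: for $\psi \in \Pi_k$ one has $\ha + \LEM{\Sigma_{k-1}} \vdash \psi^N \llr \psi$, and for $\psi \in \Sigma_k$ one has $\ha + \LEM{\Sigma_{k-1}} \vdash \psi^N \llr \neg\neg\psi$. The gain of one level comes precisely from the substitution $\ph := \perp$: the step $\forall x(\psi_1 \lor \ph) \to \forall x \psi_1 \lor \ph$, which in Lemma~\ref{Kurahashi Lemma: A* <-> A v *} forces the use of $\LEM{\Sigma_k}$, degenerates to the triviality $\forall x \psi_1 \to \forall x \psi_1$, so that the universal case of the $\Pi_k$-clause needs only $\DNE{\Sigma_{k-1}}$ (to pass from $\forall x\, \neg\neg\psi_1$ to $\forall x\, \psi_1$ via the $\Sigma_{k-1}$-clause) and the existential case of the $\Sigma_k$-clause appeals only to the $\Pi_{k-1}$-clause. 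Instantiating the $\Pi_k$-clause at $\psi := \vp_1$ yields $\T + X \vdash \vp_1^N \to \vp_1$, whence $\T + X \vdash \neg\neg\exists\vec{x}\, \vp_1^N \to \neg\neg\vp$ and therefore $\T + X \vdash \neg\neg\vp$; a final application of $\DNER{\ut{\Sigma_{k+1}}}$ gives $\T + X \vdash \vp$. The main obstacle is thus the careful bookkeeping of which fragment of LEM each clause of this refined induction consumes, ensuring that nothing exceeds $\LEM{\Sigma_{k-1}}$.
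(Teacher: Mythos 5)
Your proof is correct and follows essentially the same route as the paper: the forward direction is immediate, and the reverse direction runs the $\ph$-translation, handles $X \subseteq \Q_k$ via Lemma \ref{lem: For A in Qk+1, HA + SigmakLEM |- A -> A*} one level down, and exploits the fact that substituting $\perp$ for $\ph$ trivializes the $\forall$-distribution step so that only $\LEM{\Sigma_{k-1}}$ is consumed on the $\Pi_k$ matrix. The only (harmless) difference in packaging is that the paper applies Lemma \ref{Kurahashi Lemma: A* <-> A v *}.\eqref{item: A v * for Sigma_{k+1}} to the $\Sigma_{k-1}$ kernel of a normalized $\exists x \forall y\, \psi$ \emph{before} substituting $\perp$, whereas you substitute first and then re-derive the resulting equivalences ($\psi^N \lr \psi$ for $\Pi_k$, $\psi^N \lr \neg\neg\psi$ for $\Sigma_k$) by a fresh induction.
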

\begin{proof}
We first show the ``only if'' direction.
Let $\vp\in \ut{\Sigma_{k+1}}$.
Assume $\T +X \vdash \neg \neg \vp$.
Then $\pa + X \vdash \vp$.
Since $\pa +X$ is now $\ut{\Sigma_{k+1}}$-conservative over $\T+X$, we have $\T + X \vdash \vp$.

In the following, we show the converse direction.
Without loss of generality, assume $k>0$.
Let $\exists x \forall y \,\psi \in \ut{\Sigma_{k+1}} $ with $\psi$ in $\Sigma_{k-1}$.
Assume $\pa +X \vdash \exists x \forall y \psi$.
By Proposition \ref{Soundness of *-translation}.\eqref{item: PA |- A => HA |- A*}, we have $\ha^{\ph} + X^{\ph}\vdash \neg_{\ph}\neg_{\ph} \exists x  \forall y \psi^{\ph}$, and hence, $\ha^{\ph} + \LEM{\Sigma_{k-1}} + X \vdash \neg_{\ph}\neg_{\ph} \exists x  \forall y \psi^{\ph}$ by Lemma \ref{lem: For A in Qk+1, HA + SigmakLEM |- A -> A*}.
Using Lemma \ref{Kurahashi Lemma: A* <-> A v *}.\eqref{item: A v * for Sigma_{k+1}}, we have
$\ha^{\ph} + \LEM{\Sigma_{k-1}} + X \vdash \neg_{\ph}\neg_{\ph} \exists x  \forall y \left(\psi\lor \ph\right)$.
By substituting $\ph$ with $\perp$ (cf. Lemma \ref{lem: Substitution}), we have $\ha + \LEM{\Sigma_{k-1}} + X \vdash
\neg\neg \exists x \forall y  \psi$.
Since $\T$ is semi-classical arithmetic containing $\LEM{\Sigma_{k-1}}$, we have
$\T +X \vdash \neg\neg \exists x \forall y  \psi$.
By $\DNER{\ut{\Sigma_{k+1}}}$, $\T +X \vdash \exists x \forall y  \psi$ follows.
\end{proof}

Proposition \ref{prop: utSk+1-cons <=> utSk+1-DNER} is a counterpart of the equivalence between \eqref{item: CONS(Sk+1)} and \eqref{item: Sk+1DNER} in Theorem \ref{thm: conservation theorems equivalent to SkLEM} for the case of sentences.
In what follows, we deal with the $\ut{\E_{k+1}}$-conservativity.
In particular, we show that the $\ut{\E_{k+1}}$-conservativity can be reduced to $\ut{\ES_{k+1}}$-conservativity.

\begin{lemma}
\label{lem: conjunction and disjunction of ESk+1 formulas}
For $\ha$-formulas $\vp_1, \vp_2\in \ES_{k+1}$, there exist $\psi, \xi\in \ES_{k+1}$ such that $\FV{\psi} =\FV{\vp_1 \land \vp_2} = \FV{\vp_1 \lor \vp_2} = \FV{\xi}$ and $\ha$ proves $\psi \lr \vp_1 \land \vp_2$ and   $\xi \lr \vp_1 \lor \vp_2$.
\end{lemma}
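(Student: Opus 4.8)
The plan is to write each $\vp_i$ in prenex-of-existentials form $\vp_i \equiv \exists \vec{x}_i\, \vp_i'$ with $\vp_i' \in \EP_k$, after first renaming the bound variables so that the blocks $\vec{x}_1$ and $\vec{x}_2$ are pairwise distinct and avoid all free variables of $\vp_1,\vp_2$. Then I would pull both existential blocks to the front simultaneously, using the standard intuitionistically (indeed $\ha$-)provable quantifier equivalences $(\exists x\, A)\circ B \lr \exists x (A\circ B)$ and $A \circ (\exists x\, B) \lr \exists x(A\circ B)$ for $\circ\in\{\land,\lor\}$, which are available here precisely because of the variable conditions arranged by the renaming (and because the domain is inhabited in $\ha$). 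This yields
\[
\vp_1 \lor \vp_2 \;\lr\; \exists \vec{x}_1 \exists \vec{x}_2\, (\vp_1' \lor \vp_2'), \qquad \vp_1 \land \vp_2 \;\lr\; \exists \vec{x}_1 \exists \vec{x}_2\, (\vp_1' \land \vp_2')
\]
over $\ha$.

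For the disjunction, the matrix $\vp_1' \lor \vp_2'$ already lies in $\EP_k$, since $\EP_k$ is closed under $\lor$ by definition; hence $\xi :\equiv \exists \vec{x}_1 \exists \vec{x}_2 (\vp_1' \lor \vp_2') \in \ES_{k+1}$ is the desired witness. For the conjunction, $\vp_1' \land \vp_2'$ need not literally be in $\EP_k$, since $\EP_k$ is not built up using $\land$; this is where I would invoke Lemma \ref{lem: conjunction of EPk formulas} to obtain $\chi \in \EP_k$ with $\FV{\chi} = \FV{\vp_1' \land \vp_2'}$ and $\ha \vdash \chi \lr \vp_1' \land \vp_2'$, and then take $\psi :\equiv \exists \vec{x}_1 \exists \vec{x}_2\, \chi \in \ES_{k+1}$.

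The free-variable equalities are then routine bookkeeping: the renaming removes any capture, so prepending the existential blocks does not change the set of free variables, giving $\FV{\xi} = \FV{\vp_1' \lor \vp_2'} = \FV{\vp_1 \lor \vp_2}$ and, using $\FV{\chi} = \FV{\vp_1' \land \vp_2'}$ from the lemma, $\FV{\psi} = \FV{\vp_1 \land \vp_2}$. The only point requiring care --- and the main (minor) obstacle --- is to perform the bound-variable renaming before prenexing so that the quantifier-shifting equivalences apply, and to remember that closure of $\EP_k$ under conjunction holds only up to $\ha$-provable equivalence, so that Lemma \ref{lem: conjunction of EPk formulas} must be used for the conjunctive case rather than a direct syntactic appeal.
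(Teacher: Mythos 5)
Your proposal is correct and follows essentially the same route as the paper's proof: rename the existential blocks apart, prenex both blocks, take the disjunction of the matrices directly (using closure of $\EP_k$ under $\lor$), and invoke Lemma \ref{lem: conjunction of EPk formulas} for the conjunctive matrix. The points you flag as requiring care (prior renaming of bound variables, and that closure of $\EP_k$ under $\land$ holds only up to $\ha$-provable equivalence) are exactly the points the paper handles the same way.
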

\begin{proof}
Let $\vp_1:\equiv \exists x_1, \dots, x_n \vp_1'$ and $\vp_2:\equiv \exists y_1, \dots, y_m \vp_2'$ with $\vp_1', \vp_2' \in \EP_{k}$.
Without loss of generality, assume $x_1, \dots, x_n \notin \FV{\vp_2'} $ and  $y_1, \dots, y_m \notin \FV{\vp_1'}$.

By Lemma \ref{lem: conjunction of EPk formulas}, there exists $\psi' \in \EP_{k}$ such that $\FV{\psi'}=\FV{\vp_1' \land \vp_2'}$ and $\ha \vdash \psi' \lr \vp_1' \land \vp_2'$.
Put $\psi :\equiv \exists  x_1, \dots, x_n,  y_1, \dots, y_m  \, \psi'$, which is in $ \ES_{k+1}$.
Then it is trivial that $\FV{\psi} = \FV{\vp_1 \land \vp_2}$ and $\ha \vdash \psi \lr \vp_1 \land \vp_2$. 

Put $\xi:\equiv \exists  x_1, \dots, x_n,  y_1, \dots, y_m  \left(\vp_1' \lor \vp_2'\right)$, which is in $ \ES_{k+1}$.
Since $\xi$ is equivalent to $\exists  x_1, \dots, x_n  \vp_1' \lor \exists  y_1, \dots, y_m \vp_2'$ over $\ha$, we have that $\FV{\xi} = \FV{\vp_1 \lor \vp_2}$ and $\ha \vdash \xi \lr \vp_1 \lor \vp_2$.
\end{proof}

\begin{lemma}
\label{lem: E_k+1 and ESk+1}
For a $\ha$-formula $\vp$, the following hold$:$
\begin{enumerate}
    \item 
    If $\vp \in \U_{k+1}^+$, then there exists $\vp'\in \ES_{k+1}$ such that $\FV{\vp} = \FV{\vp'}$, $\ha+\LEM{\Sigma_{k-1}} \vdash \vp' \to \neg \vp$ and $\pa \vdash \neg \vp \to\vp'$.
    \item
    If $\vp \in \E_{k+1}^+$, then there exists $\vp'\in \ES_{k+1}$ such that $\FV{\vp} = \FV{\vp'}$, $\ha+\LEM{\Sigma_{k-1}} \vdash \vp' \to  \vp$ and $\pa \vdash \vp \to\vp'$.
\end{enumerate}
\end{lemma}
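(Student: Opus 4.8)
The plan is to prove both items simultaneously by induction on the structure of $\vp$, in close parallel with the proof of Lemma~\ref{lem: U_k and EPk}, but now producing witnesses in $\ES_{k+1}$ --- the existential (intuitionistic) analogue of $\Sigma_{k+1}$ --- rather than in $\EP_{k+1}$. The underlying heuristic is that $\neg\vp$ is classically $\Sigma_{k+1}$ when $\vp\in\U_{k+1}^+$, while $\vp$ itself is classically $\Sigma_{k+1}$ when $\vp\in\E_{k+1}^+$; thus both items amount to approximating a $\Sigma_{k+1}$-shaped formula by an $\ES_{k+1}$-formula, from below over $\ha+\LEM{\Sigma_{k-1}}$ and from above over $\pa$. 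Note that the roles of items~1 and~2 are interchanged relative to Lemma~\ref{lem: U_k and EPk}, reflecting the passage from the $\Pi$-side to the $\Sigma$-side. In the base case $\vp$ is prime, hence decidable in $\ha$, so both $\vp$ and $\neg\vp$ have quantifier-free equivalents lying in $\Pi_k\subseteq\EP_k\subseteq\ES_{k+1}$, and the witnesses are immediate; the free-variable conditions $\FV{\vp}=\FV{\vp'}$ are tracked exactly as in Lemma~\ref{lem: U_k and EPk}.

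For the propositional connectives I would apply the induction hypothesis to the immediate subformulas --- using \cite[Lemma~4.5]{FK20-1} to see they lie in the required classes --- and then combine the resulting $\ES_{k+1}$-witnesses into $\ES_{k+1}$-conjunctions and $\ES_{k+1}$-disjunctions via Lemma~\ref{lem: conjunction and disjunction of ESk+1 formulas}. Which combination to take is dictated by the classical de Morgan laws and the classical reading of $\to$ as $\neg\vp_1\lor\vp_2$; for instance, for item~1 applied to $\vp_1\land\vp_2$ one approximates $\neg\vp_1\lor\neg\vp_2$ by an $\ES_{k+1}$-disjunction. In each case the $\ha+\LEM{\Sigma_{k-1}}$-direction is supplied by the induction hypotheses and the $\pa$-direction by the corresponding classical equivalence. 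These cases are routine and mirror Lemma~\ref{lem: U_k and EPk} up to the stated dualization.

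The quantifier cases split into two \emph{recursive} cases and two \emph{collapsing} cases, exactly as in Lemma~\ref{lem: U_k and EPk}. In the recursive cases --- $\forall x\vp_1\in\U_{k+1}^+$ for item~1 and $\exists x\vp_1\in\E_{k+1}^+$ for item~2 --- the immediate subformula $\vp_1$ remains in the same class, so the induction hypothesis yields $\vp_1'\in\ES_{k+1}$ and I would take $\exists x\vp_1'$ as the witness. This is again in $\ES_{k+1}$ since the leading $\exists$ is absorbed into the existential block, and it approximates $\neg\forall x\vp_1$ (resp.\ $\exists x\vp_1$), using that $\exists x\neg\vp_1\to\neg\forall x\vp_1$ already holds intuitionistically. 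The collapsing cases are $\exists x\vp_1\in\U_{k+1}^+$ for item~1 and $\forall x\vp_1\in\E_{k+1}^+$ for item~2; here the whole formula drops into $\E_k^+$, respectively $\U_k^+$, and I would dispatch them by invoking Lemma~\ref{lem: U_k and EPk} itself \emph{one level lower}, i.e.\ with $k$ replaced by $k-1$, which furnishes a witness already in $\EP_k\subseteq\ES_{k+1}$ and requires only $\LEM{\Sigma_{k-1}}$. When $k=0$ these two cases are vacuous, since a leading $\exists$ cannot occur in $\U_1^+$ and a leading $\forall$ cannot occur in $\E_1^+$, so the out-of-range index $k-1=-1$ causes no harm.

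I expect the collapsing cases to be the only genuine obstacle, and the crucial point is to resist prenex normalization. Rewriting $\forall x\vp_1\in\U_k^+$ as an equivalent $\Pi_k$-formula and then using $\Pi_k\subseteq\ES_{k+1}$ would require the implication \emph{from} the prenex form back to the original, i.e.\ pushing universal quantifiers across disjunctions; this is a constant-domain phenomenon that already fails over $\ha$ when $k=1$ and in general needs $\LEM{\Sigma_k}$ rather than $\LEM{\Sigma_{k-1}}$. Appealing to Lemma~\ref{lem: U_k and EPk} at level $k-1$ sidesteps this precisely because that lemma produces an $\EP_k$-witness --- built from a $\Pi_k$-base under $\forall$ and $\lor$ --- rather than a genuine prenex one, and it is this bookkeeping that pins the hypothesis down to exactly $\LEM{\Sigma_{k-1}}$.
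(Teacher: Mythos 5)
Your proposal is correct and is essentially the paper's own argument: the same reduction of the collapsing quantifier cases (and base cases) to Lemma~\ref{lem: U_k and EPk} one level lower, the same use of Lemma~\ref{lem: conjunction and disjunction of ESk+1 formulas} with the classical de Morgan dualities for the connectives, and the same absorption of the leading $\exists$ in the recursive cases. The only (immaterial) difference is that the paper packages the structural induction as an induction on the generating clauses of the classes $\R'_{k+1}$ and $\J'_{k+1}$ from Remark~\ref{rem: Rk' and Jk'}, whose base case is all of $\E_k^+\cup\U_k^+$ at once.
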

\begin{proof}
Note that $\U_{k+1}^+ = \R_{k+1}'$ and $\E_{k+1}^+ =\J_{k+1}'$ where $\R_{k+1}'$ and $\J_{k+1}'$ are the classes defined in Remark \ref{rem: Rk' and Jk'}.
Then it suffices to show items $1$ and $2$ where $\U_{k+1}^+$ and $\E_{k+1}^+$ are replaced by $\R_{k+1}'$ and $\J_{k+1}'$ respectively.
In the following, we show the assertions by induction on the constructions of $\R_{k+1}'$ and $\J_{k+1}'$.

For $\vp\in \E_k^+ \subseteq \R'_{k+1}$, by Lemma \ref{lem: U_k and EPk}, there exists $\vp'\in \EP_k (\subseteq \ES_{k+1})$  such that $\ha +\LEM{\Sigma_{k-1}}  \vdash \vp' \to \neg \vp$ and $\pa \vdash \neg\vp \to \vp'$. 
For $\vp\in \U_k^+ \subseteq \J'_{k+1}$, by Lemma \ref{lem: U_k and EPk}, there exists $\vp'\in \EP_k (\subseteq \ES_{k+1})$  such that $\ha +\LEM{\Sigma_{k-1}}  \vdash \vp' \to  \vp$ and $\pa \vdash \vp \to \vp'$. 
For the induction step, let $\vp_1, \vp_2\in \R_{k+1}'$ and $\psi_1, \psi_2\in \J_{k+1}'$ and  $\vp_1', \vp_2',\psi_1',\psi_2' \in \ES_{k+1}$ satisfy $\FV{\vp_1}=\FV{\vp_1'}$, $\FV{\vp_2}=\FV{\vp_2'}$, $\FV{\psi_1}=\FV{\psi_1'}$, $\FV{\psi_2}=\FV{\psi_2'}$ and that
$\ha +\LEM{\Sigma_{k-1}} $ proves  $ \vp_1' \to \neg \vp_1$, $ \vp_2' \to \neg \vp_2$, $ \psi_1' \to \psi_1$, $ \psi_2' \to \psi_2$ and $\pa$ proves $\neg\vp_1 \to \vp_1'$, $\neg\vp_2 \to \vp_2'$, $\psi_1 \to \psi_1'$, $\psi_2 \to \psi_2'$. 
By Lemma \ref{lem: conjunction and disjunction of ESk+1 formulas}, for any conjunction and disjunction of $\vp_1', \vp_2',\psi_1',\psi_2' \in \ES_{k+1}$, there exists an equivalent (over $\ha$) $\xi \in \ES_{k+1}$ which preserves the free variables.
For $\vp:\equiv \vp_1 \lor \vp_2 \in \R_{k+1}'$, take $\vp'\in \ES_{k+1}$ as an equivalent of $\vp_1' \land \vp_2'$.
For $\vp:\equiv \psi_1 \lor \psi_2 \in \J_{k+1}'$, take $\vp' \in \ES_{k+1}$ as an equivalent of $\psi_1' \lor \psi_2'$.
For $\vp:\equiv \vp_1 \land \vp_2 \in \R_{k+1}'$, take $\vp' \in \ES_{k+1}$ as an equivalent of $ \vp_1' \lor \vp_2'$.
For $\vp:\equiv \psi_1 \land \psi_2 \in \J_{k+1}'$, take $\vp'\in \ES_{k+1}$ as an equivalent of $\psi_1' \land \psi_2'$.
For $\vp:\equiv \psi_1 \to \vp_2 \in \R_{k+1}'$, take $\vp'\in \ES_{k+1}$ as an equivalent of $\psi_1' \land \vp_2'$.
For $\vp:\equiv \vp_1 \to \psi_2 \in \J_{k+1}'$, take $\vp'\in \ES_{k+1}$ as an equivalent of $\vp_1' \lor \psi_2'$.
For $\vp:\equiv \forall x \vp_1  \in \R_{k+1}'$, take $\vp':\equiv \exists x \vp_1'\in \ES_{k+1}$.
For $\vp:\equiv \exists x \psi_1  \in \J_{k+1}'$, take $\vp' :\equiv \exists x \psi_1' \in \ES_{k+1}$.
We leave the routine verification for the reader.
\end{proof}

\begin{corollary}
\label{cor: E_k+1 and Sk+1}
For a $\ha$-formula $\vp$, the following hold$:$
\begin{enumerate}
    \item 
    \label{item: Uk+1 in cor: E_k+1 and Sk+1}
    If $\vp \in \U_{k+1}^+$, then there exists $\vp'\in \Sigma_{k+1}$ such that $\FV{\vp} = \FV{\vp'}$, $\ha+\DNE{(\Pi_k \lor \Pi_k)} \vdash \vp' \to \neg \vp$ and $\pa \vdash \neg \vp \to\vp'$.
    \item
    If $\vp \in \E_{k+1}^+$, then there exists $\vp'\in \Sigma_{k+1}$ such that $\FV{\vp} = \FV{\vp'}$, $\ha+\DNE{(\Pi_k \lor \Pi_k)} \vdash \vp' \to  \vp$ and $\pa \vdash \vp \to\vp'$.
\end{enumerate}
\end{corollary}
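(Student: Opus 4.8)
The plan is to reduce the corollary to Lemma \ref{lem: E_k+1 and ESk+1} by converting the $\ES_{k+1}$ witness it provides into an equivalent $\Sigma_{k+1}$ formula via the prenex normal form theorem. So, given $\vp \in \U_{k+1}^+$ (resp.\ $\vp \in \E_{k+1}^+$), I would first invoke Lemma \ref{lem: E_k+1 and ESk+1} to obtain $\vp_0 :\equiv \exists \vec{x}\,\chi \in \ES_{k+1}$ with $\chi \in \EP_k$, $\FV{\vp_0}=\FV{\vp}$, $\ha+\LEM{\Sigma_{k-1}} \vdash \vp_0 \to \neg\vp$ (resp.\ $\vp_0 \to \vp$) and $\pa \vdash \neg\vp \to \vp_0$ (resp.\ $\vp \to \vp_0$). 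Next I would observe that the matrix $\chi$ lies in $\U_k^+$: indeed $\Pi_k \subseteq \U_k^+$ and $\U_k^+$ is closed under $\lor$ and $\forall$ (by the closure properties in \cite[Lemma 4.5]{FK20-1}), so the inductive generation of $\EP_k$ stays inside $\U_k^+$. Hence Theorem \ref{thm: PNFT}.\eqref{eq: item for Pi_k in PNFT} applies to $\chi$ and yields $\chi' \in \Pi_k$ with $\FV{\chi'}=\FV{\chi}$ and $\ha+\DNE{(\Pi_k\lor\Pi_k)} \vdash \chi \lr \chi'$. Prepending the existential block, $\vp' :\equiv \exists \vec{x}\,\chi' \in \Sigma_{k+1}$ satisfies $\FV{\vp'}=\FV{\vp_0}=\FV{\vp}$ and $\ha+\DNE{(\Pi_k\lor\Pi_k)} \vdash \vp_0 \lr \vp'$.

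It then remains to transfer the two implications from $\vp_0$ to $\vp'$. The $\pa$-direction is immediate, since $\pa$ extends $\ha+\DNE{(\Pi_k\lor\Pi_k)}$ and hence proves $\vp_0 \lr \vp'$, so $\pa \vdash \neg\vp \to \vp'$ (resp.\ $\vp \to \vp'$). The obstacle is the intuitionistic side: Lemma \ref{lem: E_k+1 and ESk+1} only delivers $\vp_0 \to \neg\vp$ (resp.\ $\vp_0 \to \vp$) over $\ha+\LEM{\Sigma_{k-1}}$, whereas the corollary demands it over the possibly weaker base $\ha+\DNE{(\Pi_k\lor\Pi_k)}$. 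Thus the crux is the sublemma that $\ha+\DNE{(\Pi_k\lor\Pi_k)} \vdash \LEM{\Sigma_{k-1}}$, which I regard as the main point of the proof.

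I would establish this sublemma directly. Instantiating the disjunction by a single formula shows that $\ha+\DNE{(\Pi_k\lor\Pi_k)}$ proves $\DNE{\Pi_k}$, and since $\Sigma_{k-1}\subseteq \Pi_k$ modulo $\ha$-equivalence, it proves $\DNE{\Sigma_{k-1}}$, in particular $\DNE{\Sigma_{k-2}}$. Now fix $\xi \in \Sigma_{k-1}$ with dual $\xi^\perp \in \Pi_{k-1}\subseteq \Pi_k$. By Remark \ref{rem: duals}, the presence of $\DNE{\Sigma_{k-2}}$ gives $\neg\xi \lr \xi^\perp$, so the intuitionistic theorem $\neg\neg(\xi \lor \neg\xi)$ becomes $\neg\neg(\xi \lor \xi^\perp)$; as $\xi \lor \xi^\perp \in \Pi_k \lor \Pi_k$, applying $\DNE{(\Pi_k\lor\Pi_k)}$ yields $\xi \lor \xi^\perp$, hence $\xi \lor \neg\xi$ (one may alternatively mirror the induction on $m\le k-1$ used in Lemmata \ref{lem: Sk+1-DNER => Sk-LEM} and \ref{lem: Pk+1-CDR => Sk-LEM}). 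With the sublemma in hand, $\ha+\DNE{(\Pi_k\lor\Pi_k)}$ proves both $\vp' \lr \vp_0$ and $\vp_0 \to \neg\vp$ (resp.\ $\vp_0 \to \vp$), hence $\vp' \to \neg\vp$ (resp.\ $\vp' \to \vp$), completing both items; the degenerate small-$k$ cases (where $\LEM{\Sigma_{-1}}$ is vacuous and $\LEM{\Sigma_0}$ already holds in $\ha$) are immediate.
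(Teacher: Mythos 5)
Your proof is correct and follows essentially the same route as the paper: both pass through Lemma \ref{lem: E_k+1 and ESk+1} to get an $\ES_{k+1}$ witness, apply Theorem \ref{thm: PNFT}.\eqref{eq: item for Pi_k in PNFT} to its $\EP_k \subseteq \U_k^+$ matrix to land in $\Sigma_{k+1}$, and conclude via the fact that $\ha+\DNE{(\Pi_k\lor\Pi_k)}$ proves $\LEM{\Sigma_{k-1}}$. The only difference is that you explicitly verify this last implication (correctly), whereas the paper simply asserts it.
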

\begin{proof}
Since $\vp'\in \ES_{k+1}$ is of the form $\exists x \vp_1'$ where $\vp_1'\in \EP_k \subseteq \U_k^+$, by Theorem \ref{thm: PNFT}.\eqref{eq: item for Pi_k in PNFT}, there exists $\psi\in \Sigma_{k+1}$ such that $\FV{\vp'}=\FV{\psi}$ and $\ha + \DNE{(\Pi_k \lor \Pi_k)} \vdash \vp' \lr \psi$.
Since $\ha + \DNE{(\Pi_k \lor \Pi_k)}$ proves $\LEM{\Sigma_{k-1}}$, our corollary follows from Lemma \ref{lem: E_k+1 and ESk+1}.
\end{proof}

\begin{theorem}
\label{thm: Ek+1p cons. <=> ESk+1-cons}
Let
$\T$ be semi-classical arithmetic 
and $X$ be a set of $\ha$-sentences.
Then $\pa + X$ is $\ut{\E_{k+1}}$-conservative over $\T + X$ if and only if $\pa + X$ is $\ut{\ES_{k+1}}$-conservative over $\T + X$.
\end{theorem}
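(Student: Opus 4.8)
The plan is to prove the two implications separately, exploiting the fact that $\ES_{k+1}$ is, up to $\ha$-provable equivalence preserving free variables, a \emph{subclass} of $\E_{k+1}$; thus the ``only if'' direction is essentially formal, while the ``if'' direction carries the real content.

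First I would record the inclusion $\ut{\ES_{k+1}} \subseteq \ut{\E_{k+1}}$ modulo $\ha$-equivalence. A routine induction using \cite[Lemma 4.5]{FK20-1} shows $\EP_k \subseteq \U_k^+$: the base case $\Pi_k \subseteq \U_k^+$ is clear, and $\U_k^+$ is closed under $\lor$ and under $\forall$. Consequently any $\ES_{k+1}$-formula $\exists x_1, \dots, x_n\, \vp$ with $\vp \in \EP_k \subseteq \U_k^+ \subseteq \F_k^+$ lies in $\J_{k+1} = \E_{k+1}^+$ by Proposition \ref{prop: Ukp=Rk, Ekp=Jk}, using the clauses $\F_k^+ \subseteq \J_{k+1}$ and $\exists x J \in \J_{k+1}$ of Definition \ref{def: Rk and Jk}. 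Via Remark \ref{rem: On the non-commutativity of Ek, Uk and Fk} this yields $\ES_{k+1} \subseteq \E_{k+1}$ modulo $\ha$-equivalence, and since the equivalence preserves free variables, sentences are sent to sentences. The ``only if'' direction is then immediate: $\ut{\E_{k+1}}$-conservativity is conservativity for a larger class of sentences, so a sentence $\sigma \in \ut{\ES_{k+1}}$ with $\pa + X \vdash \sigma$ is $\ha$-equivalent to some sentence $\sigma' \in \ut{\E_{k+1}}$, and $\pa + X \vdash \sigma'$ gives $\T + X \vdash \sigma'$, hence $\T + X \vdash \sigma$.

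For the ``if'' direction, I would assume $\pa + X$ is $\ut{\ES_{k+1}}$-conservative over $\T + X$ and first bootstrap $\T + X \vdash \LEM{\Sigma_{k-1}}$. Given $\vp \in \Sigma_{k-1}$ with free variables $\vec y$, the sentence $\forall \vec y\,(\vp \lor \vp^\perp)$ lies in $\EP_k \subseteq \ES_{k+1}$ (since $\Sigma_{k-1}, \Pi_{k-1} \subseteq \Pi_k \subseteq \EP_k$ and $\EP_k$ is closed under $\lor$ and $\forall$, cf. Remark \ref{rem: class hierarchy between Pi_k and ESk+1}) and is provable in $\pa$ by Remark \ref{rem: duals}. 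Hence $\pa + X \vdash \forall \vec y\,(\vp \lor \vp^\perp)$, and by conservativity $\T + X \vdash \forall \vec y\,(\vp \lor \vp^\perp)$, which after instantiation and $\vp^\perp \to \neg \vp$ gives $\T + X \vdash \vp \lor \neg \vp$. With $\LEM{\Sigma_{k-1}}$ in hand, take any $\vp \in \ut{\E_{k+1}}$ with $\pa + X \vdash \vp$. By Lemma \ref{lem: E_k+1 and ESk+1}.(2) there is $\vp' \in \ES_{k+1}$, which is a sentence since $\FV{\vp'} = \FV{\vp} = \emptyset$, with $\pa \vdash \vp \to \vp'$ and $\ha + \LEM{\Sigma_{k-1}} \vdash \vp' \to \vp$. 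Then $\pa + X \vdash \vp'$, so $\T + X \vdash \vp'$ by hypothesis, and since $\T + X \vdash \LEM{\Sigma_{k-1}}$ we conclude $\T + X \vdash \vp$.

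I expect the main obstacle to be precisely this ``if'' direction, and within it the transfer of the implication $\vp' \to \vp$ supplied by Lemma \ref{lem: E_k+1 and ESk+1}: that implication is only available over $\ha + \LEM{\Sigma_{k-1}}$, so one cannot pass from $\T + X \vdash \vp'$ to $\T + X \vdash \vp$ without first deriving $\LEM{\Sigma_{k-1}}$ inside $\T + X$ from the bare hypothesis of $\ut{\ES_{k+1}}$-conservativity. The delicate point is checking that the witnessing instances $\forall \vec y\,(\vp \lor \vp^\perp)$ genuinely belong to $\ut{\ES_{k+1}}$, which rests on the class inclusions $\Sigma_{k-1},\Pi_{k-1} \subseteq \Pi_k \subseteq \EP_k \subseteq \ES_{k+1}$ and the closure of $\EP_k$ under $\lor$ and $\forall$; once this book-keeping is settled, both steps go through without needing the restriction $X \subseteq \Q_{k+1}$.
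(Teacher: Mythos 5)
Your proposal is correct and follows essentially the same route as the paper: the ``only if'' direction via the inclusion $\ut{\ES_{k+1}}\subseteq\ut{\E_{k+1}}$, and the ``if'' direction via Lemma \ref{lem: E_k+1 and ESk+1} together with bootstrapping $\LEM{\Sigma_{k-1}}$ in $\T+X$ from the conservativity hypothesis (the paper routes this last step through Proposition \ref{prop: utcons. => cons. for forall closed classes} and the argument of Lemma \ref{lem: SkvPik-cons->Sk-LEM}, which is exactly your inlined universal-closure-then-instantiate argument). Your observation that no restriction $X\subseteq\Q_{k+1}$ is needed also matches the theorem as stated.
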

\begin{proof}
The ``only if'' direction is trivial since $\ut{\ES_{k+1}}\subseteq \ut{\E_{k+1}}$.
We show the converse direction.
Let $\vp\in \ut{\E_{k+1}}$.
Assume $\pa + X \vdash \vp$.
By Lemma \ref{lem: E_k+1 and ESk+1}, there exists $\vp'\in \ut{\ES_{k+1}}$ such that  $\ha + \LEM{\Sigma_{k-1}}
\vdash \vp' \to\vp$ and $\pa \vdash \vp \to \vp'$.
Then $\pa + X \vdash \vp'$.
By our assumption, we have $\T + X \vdash \vp'$.
On the other hand, as in the proof of Lemma \ref{lem: SkvPik-cons->Sk-LEM}, one can show  $\T  +X \vdash \LEM{\Sigma_{k-1}}$ (note that $\ut{\EP_{k}}$ can be seen as a sub-class of $\ut{\ES_{k+1}}$ and the $\ut{\EP_{k}}$-conservativity implies the $\EP_{k}$-conservativity by Proposition \ref{prop: utcons. => cons. for forall closed classes}).
Then we have $\T+X \vdash \vp$.
\end{proof}

\subsection{Conservation theorem for $\F_k$ sentences}
Next, we characterize the $\ut{\F_{k}}$-conservativity.
To investigate the class $\F_{k}$, it is convenient to consider the following class:
\begin{definition}
Let $\B_k^+$ be the class of formulas which are constructed from formulas in $\E_k^+ \cup \U_k^+$ by using logical connectives $\land, \lor$ and $\to$.
Let $\LEM{\B_k^+}$ be ${\rm LEM}$ restricted to formulas in $\B_k^+$.
\end{definition}

\begin{proposition}
\label{prop: SkLEM<->Bk+LEM}
$\ha \vdash \LEM{\Sigma_k} \lr  \LEM{\B_k^+}$.
\end{proposition}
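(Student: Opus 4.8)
The plan is to read the assertion $\ha \vdash \LEM{\Sigma_k} \lr \LEM{\B_k^+}$ as an equivalence of schemes over $\ha$ and to prove the two directions separately, with essentially all the work concentrated in showing that $\ha + \LEM{\Sigma_k}$ proves every instance of $\LEM{\B_k^+}$. For the trivial direction I would note that every $\Sigma_k$-formula already lies in $\E_k^+$ (its degree is at most $k$, and any maximal alternation path begins with $+$), hence in $\B_k^+$; therefore each instance of $\LEM{\Sigma_k}$ is literally an instance of $\LEM{\B_k^+}$, and $\ha + \LEM{\B_k^+} \vdash \LEM{\Sigma_k}$ follows at once.

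For the main direction I would argue by induction on the way a formula $\vp \in \B_k^+$ is built from the base class $\E_k^+ \cup \U_k^+$ using $\land, \lor, \to$, proving $\ha + \LEM{\Sigma_k} \vdash \vp \lor \neg \vp$. The induction step is purely propositional and needs no classical axioms at all: decidability is preserved by $\land, \lor$ and $\to$ in intuitionistic logic, which is a routine three-case analysis for each connective starting from $A \lor \neg A$ and $B \lor \neg B$. Hence everything reduces to the two base cases. For $\vp \in \E_k^+$, Theorem \ref{thm: PNFT}.\eqref{eq: item for Sigma_k in PNFT} together with Remark \ref{rem: Sk-LEM -> Sk-DNE, Uk-DNS and PkvPk-DNE} (which tells us that $\LEM{\Sigma_k}$ proves $\DNE{\Sigma_k}$ and $\DNS{\U_k}$) yields a $\vp' \in \Sigma_k$ with $\ha + \LEM{\Sigma_k} \vdash \vp \lr \vp'$; then $\LEM{\Sigma_k}$ gives $\vp' \lor \neg \vp'$, and hence $\vp \lor \neg \vp$. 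For $\vp \in \U_k^+$, the same theorem in the form \eqref{eq: item for Pi_k in PNFT}, together with the fact that $\LEM{\Sigma_k}$ proves $\DNE{(\Pi_k \lor \Pi_k)}$, produces a $\Pi_k$-formula $\vp'$ equivalent to $\vp$ over $\ha + \LEM{\Sigma_k}$.

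The main obstacle is then the decidability of $\Pi_k$-formulas under $\LEM{\Sigma_k}$, i.e.\ $\ha + \LEM{\Sigma_k} \vdash \LEM{\Pi_k}$, which I would establish through the dual machinery of Remark \ref{rem: duals}. For $\vp' \in \Pi_k$ we have $(\vp')^\perp \in \Sigma_k$, and $\LEM{\Sigma_k}$ supplies $(\vp')^\perp \lor \neg (\vp')^\perp$. In the first disjunct, $(\vp')^\perp \to \neg \vp'$ gives $\neg \vp'$. In the second disjunct, $\neg (\vp')^\perp$ implies $\neg \neg \vp'$ in the presence of $\DNE{\Sigma_{k-1}}$ (available since $\LEM{\Sigma_{k-1}} \subseteq \LEM{\Sigma_k}$), and $\neg \neg \vp' \to \vp'$ holds because $\DNE{\Pi_k}$ is derivable from $\DNE{\Sigma_{k-1}}$: writing $\vp' \equiv \forall x\, \psi$ with $\psi \in \Sigma_{k-1}$, one has $\neg \neg \forall x\, \psi \to \forall x\, \neg \neg \psi \to \forall x\, \psi$. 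This yields $\vp'$ in the second disjunct, so $\vp' \lor \neg \vp'$ and hence $\vp \lor \neg \vp$, completing the base case for $\U_k^+$ and with it the whole induction.

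Thus the only genuinely delicate point is the $\Pi_k$-decidability lemma; everything else is either a direct appeal to the prenex normal form theorem in $\ha + \LEM{\Sigma_k}$ or the trivial intuitionistic closure of decidability under propositional connectives. An alternative to invoking Theorem \ref{thm: PNFT} for the base cases would be to prove LEM for $\E_k^+$ and $\U_k^+$ directly by structural induction via the inductive presentations of these classes (cf.\ Remark \ref{rem: Rk'' and Jk''}), but the prenex route is shorter and reuses results already in place.
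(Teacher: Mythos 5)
Your proposal is correct and follows essentially the same route as the paper's proof: the trivial direction via $\Sigma_k \subseteq \E_k^+$, and the converse by using Remark \ref{rem: Sk-LEM -> Sk-DNE, Uk-DNS and PkvPk-DNE} and Theorem \ref{thm: PNFT} to replace the base formulas of $\B_k^+$ by formulas in $\Sigma_k \cup \Pi_k$, followed by the intuitionistic closure of decidability under $\land, \lor, \to$. The only difference is that you spell out the step the paper leaves as ``straightforward,'' namely the derivation of $\LEM{\Pi_k}$ from $\LEM{\Sigma_k}$ via duals and $\DNE{\Sigma_{k-1}}$, which is a correct and welcome elaboration.
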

\begin{proof}
First, $\ha + \LEM{\B_k^+} \vdash \LEM{\Sigma_k} $ is trivial since $\Sigma_k \subseteq \E_k^+$.
We show the converse direction.
By Remark \ref{rem: Sk-LEM -> Sk-DNE, Uk-DNS and PkvPk-DNE}, inside $\ha+\LEM{\Sigma_k}$, one may assume that $\vp \in \B_k^+$ is constructed from formulas in $\Sigma_k \cup \Pi_k$ by using logical connectives $\land, \lor$ and $\to$.
Then we have $\ha + \LEM{\Sigma_k} \vdash \LEM{\B_k^+}$ in a straightforward way.
\end{proof}

\begin{proposition}
\label{prop: B_k+=F_k+}
$\B_k^+ = \F_k$.
\end{proposition}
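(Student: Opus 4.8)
The plan is to establish the two inclusions $\B_k^+\subseteq\F_k$ and $\F_k\subseteq\B_k^+$ separately, both by induction. Throughout I use the identification $\F_k=\F_k^+$ from Remark \ref{rem: On the non-commutativity of Ek, Uk and Fk}, so it suffices to show $\B_k^+=\F_k^+$, i.e.\ that membership in $\B_k^+$ coincides with having degree $\le k$. The connective cases are pure alternation-path bookkeeping, while the genuine content sits in the quantifier cases of the second inclusion.

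For $\B_k^+\subseteq\F_k^+$ I would argue by induction on the construction of $\B_k^+$. The generating formulas lie in $\E_k^+\cup\U_k^+\subseteq\F_k^+$ by Definition \ref{def: Classes}, so they have degree $\le k$. For the induction step it suffices to note that each of $\land,\lor,\to$ preserves the bound $\degree{\cdot}\le k$: since $\alt{\vp_1\land\vp_2}=\alt{\vp_1\lor\vp_2}=\alt{\vp_1}\cup\alt{\vp_2}$ and $\alt{\vp_1\to\vp_2}=\{s^{\perp}\mid s\in\alt{\vp_1}\}\cup\alt{\vp_2}$ with $l(s^{\perp})=l(s)$, in all three cases $\degree{\vp_1\circ\vp_2}=\max(\degree{\vp_1},\degree{\vp_2})$. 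Hence every formula in $\B_k^+$ has degree $\le k$, giving $\B_k^+\subseteq\F_k^+$.

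For $\F_k^+\subseteq\B_k^+$ I would induct on the structure of a formula $\vp$ with $\degree{\vp}\le k$. If $\vp$ is quantifier-free, then $\vp\in\F_0\subseteq\E_k^+\subseteq\B_k^+$. If $\vp\equiv\vp_1\circ\vp_2$ for $\circ\in\{\land,\lor,\to\}$, then by the displayed computation $\degree{\vp_i}\le\degree{\vp}\le k$, so the induction hypothesis gives $\vp_1,\vp_2\in\B_k^+$, and closure of $\B_k^+$ under $\circ$ yields $\vp\in\B_k^+$. The crucial cases are the quantifiers, and here the key observation is that, directly from the clause for $\forall$ in the definition of $\alt{\cdot}$, \emph{every} path in $\alt{\forall x\vp_1}$ begins with $-$: such a path is either some $s\in\alt{\vp_1}$ with $i(s)\equiv-$, or of the form $-s$. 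Consequently, if $\degree{\forall x\vp_1}=k$ then all its maximal (length $k$) paths begin with $-$, so $\forall x\vp_1\in\U_k$; and if $\degree{\forall x\vp_1}<k$ then $\forall x\vp_1\in\F_i$ for some $i<k$; either way $\forall x\vp_1\in\U_k^+\subseteq\B_k^+$. Dually, every path in $\alt{\exists x\vp_1}$ begins with $+$, whence $\exists x\vp_1\in\E_k^+\subseteq\B_k^+$.

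The main obstacle — or rather the single place needing an idea beyond routine bookkeeping — is exactly these quantifier cases: a quantified formula cannot be split through a propositional connective, so the structural induction would stall unless one recognizes that $\forall x\vp_1$ (resp.\ $\exists x\vp_1$) as a \emph{whole} already belongs to $\U_k^+$ (resp.\ $\E_k^+$). The sign-uniformity of the outermost quantifier in the alternation-path calculus is what supplies this and closes the induction. Combining the two inclusions with $\F_k=\F_k^+$ yields $\B_k^+=\F_k$.
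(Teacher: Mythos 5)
Your proof is correct and follows essentially the same route as the paper's: reduce to $\B_k^+=\F_k^+$ via Remark \ref{rem: On the non-commutativity of Ek, Uk and Fk}, obtain $\B_k^+\subseteq\F_k^+$ from closure of $\F_k^+$ under the connectives, and prove the converse by structural induction, handling the quantifier cases by observing that $\forall x\vp_1\in\F_k^+$ forces $\forall x\vp_1\in\U_k^+$ (dually for $\exists$ and $\E_k^+$). You merely make explicit the alternation-path bookkeeping that the paper compresses into ``by the definition.''
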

\begin{proof}
Since $\F_k^+ = \F_k$ (cf. Remark \ref{rem: On the non-commutativity of Ek, Uk and Fk}), it suffices to show $\B_k^+ = \F_k^+$.

First, $\B_k^+ \subseteq \F_k^+$ is trivial since $\E_k^+ \subseteq \F_k^+$, $\U_k^+ \subseteq \F_k^+$ and the fact that $\F_k^+$ is closed under $\land, \lor$ and $\to$.

We show that $\vp\in \F_k^+$ implies $\vp\in \B_k^+$ for all $\ha$-formulas $\vp$ by induction on the structure of formulas.
If $\vp$ is prime, since $\vp\in \B_k^+$, then we are done.
For the induction step, assume that it holds for $\vp_1$ and $\vp_2$.
If $\vp_1 \land \vp_2 \in \F_k^+$, then $\vp_1, \vp_2 \in \F_k^+$ follows.
By the induction hypothesis, we have $\vp_1, \vp_2 \in \B_k^+$, and hence, $\vp_1 \land \vp_2 \in \B_k^+$.
The cases of $\vp_1 \lor \vp_2$ and $\vp_1 \to \vp_2 $ are similar. 
If $\forall x \vp_1\in \F_k^+$, by the definition, we have $\forall x \vp_1 \in \U_k^+$, and hence, $\forall x \vp_1 \in \B_k^+$.
The case of $\exists  x \vp_1\in \F_k^+$ is similar.
\end{proof}

\begin{corollary}[cf. {\cite[Corollary 2.8.(i)]{ABHK04}}]
\label{cor: SkLEM<->Fk+LEM}
$\ha \vdash \LEM{\Sigma_k} \lr  \LEM{\F_k}$.
\end{corollary}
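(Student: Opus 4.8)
The plan is to chain together the two immediately preceding results. Proposition \ref{prop: SkLEM<->Bk+LEM} already supplies $\ha \vdash \LEM{\Sigma_k} \lr \LEM{\B_k^+}$, and Proposition \ref{prop: B_k+=F_k+} supplies the class equality $\B_k^+ = \F_k$. So the entire content of the corollary is really the observation that the law-of-excluded-middle scheme attached to a class of formulas is invariant under the equivalence relation $=$ on classes defined in the Framework section, namely provable equivalence over $\ha$ with matching free variables.

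First I would make that invariance explicit as a one-line sublemma: if $\Gamma$ and $\Gamma'$ are classes of $\ha$-formulas with $\Gamma = \Gamma'$, then $\ha \vdash \LEM{\Gamma} \lr \LEM{\Gamma'}$. For the direction $\ha + \LEM{\Gamma} \vdash \LEM{\Gamma'}$, take an arbitrary $\vp' \in \Gamma'$; since $\Gamma' \subseteq \Gamma$ there is some $\vp \in \Gamma$ with $\FV{\vp} = \FV{\vp'}$ and $\ha \vdash \vp \lr \vp'$, so from the instance $\vp \lor \neg \vp$ of $\LEM{\Gamma}$ and this equivalence we get $\ha + \LEM{\Gamma} \vdash \vp' \lor \neg \vp'$. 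The converse direction $\ha + \LEM{\Gamma'} \vdash \LEM{\Gamma}$ is entirely symmetric, using $\Gamma \subseteq \Gamma'$.

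Then I would instantiate this sublemma with $\Gamma :\equiv \B_k^+$ and $\Gamma' :\equiv \F_k$, which by Proposition \ref{prop: B_k+=F_k+} yields $\ha \vdash \LEM{\B_k^+} \lr \LEM{\F_k}$, and finally combine this with Proposition \ref{prop: SkLEM<->Bk+LEM} to conclude $\ha \vdash \LEM{\Sigma_k} \lr \LEM{\F_k}$. I do not expect any genuine obstacle: all the substantive work has been carried out in Propositions \ref{prop: SkLEM<->Bk+LEM} and \ref{prop: B_k+=F_k+}, and the only point deserving a sentence of care is precisely the invariance of $\LEM{-}$ under the class-equivalence convention, after which the corollary is immediate.
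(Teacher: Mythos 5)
Your proposal is correct and matches the paper's own proof, which simply cites Propositions \ref{prop: SkLEM<->Bk+LEM} and \ref{prop: B_k+=F_k+} and calls the conclusion immediate. The only addition you make is to spell out the (true and routine) invariance of $\LEM{-}$ under the paper's convention that class equality means provable equivalence over $\ha$ with matching free variables, which is exactly the step the paper leaves implicit.
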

\begin{proof}
Immediate from Propositions \ref{prop: SkLEM<->Bk+LEM} and \ref{prop: B_k+=F_k+}.
\end{proof}

\begin{remark}
By using Proposition \ref{prop: B_k+=F_k+} and Theorem \ref{thm: PNFT},
one can show the following:
If $\vp \in \F_k$, then $\ha^{\ph} + \LEM{\Sigma_k} \vdash \vp^{\ph} \leftrightarrow \vp \lor {\ph}$.
This is an extension of Lemma \ref{Kurahashi Lemma: A* <-> A v *}.
\end{remark}

\begin{lemma}
\label{lem: Bkp and EPk v Sk}
For all $\vp \in \B_k^+$, there exist $\vp'$ and $\vp''$ which are constructed from formulas in $\EP_k \bigcup \Sigma_k$ by using $\land$ and $\lor$ only, and satisfy $\FV{\vp'}=\FV{\vp''}=\FV{\vp}$, $\ha +\LEM{\Sigma_{k-1}}$ proves $\vp' \to \vp$ and $\vp'' \to \neg \vp$, and $\pa$ proves $\vp \to \vp'$ and $\neg \vp \to \vp''$.
\end{lemma}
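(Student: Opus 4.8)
The plan is to prove the statement by induction on the generation of $\vp$ as a formula in $\B_k^+$, constructing the two approximants $\vp'$ and $\vp''$ simultaneously. Here $\vp'$ is to be read as a classically-equivalent inner approximation of $\vp$ (satisfying $\ha+\LEM{\Sigma_{k-1}}\vdash \vp'\to\vp$ and $\pa\vdash\vp\to\vp'$), and $\vp''$ as the corresponding inner approximation of $\neg\vp$ (satisfying $\ha+\LEM{\Sigma_{k-1}}\vdash\vp''\to\neg\vp$ and $\pa\vdash\neg\vp\to\vp''$). Throughout, the free-variable conditions are visibly preserved since $\land$ and $\lor$ take the union of free variables.

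For the base case $\vp\in\E_k^+\cup\U_k^+$, I would combine the two one-sided approximation results already available, with the index of Lemma \ref{lem: U_k and EPk} and Corollary \ref{cor: E_k+1 and Sk+1} shifted down by one (so that $\LEM{\Sigma_k}$ becomes $\LEM{\Sigma_{k-1}}$ and $\DNE{(\Pi_k\lor\Pi_k)}$ becomes $\DNE{(\Pi_{k-1}\lor\Pi_{k-1})}$). When $\vp\in\U_k^+$, the first item of Lemma \ref{lem: U_k and EPk} supplies $\vp'\in\EP_k$ with $\ha+\LEM{\Sigma_{k-1}}\vdash\vp'\to\vp$ and $\pa\vdash\vp\to\vp'$, while the first item of Corollary \ref{cor: E_k+1 and Sk+1} supplies $\vp''\in\Sigma_k$ with $\ha+\DNE{(\Pi_{k-1}\lor\Pi_{k-1})}\vdash\vp''\to\neg\vp$ and $\pa\vdash\neg\vp\to\vp''$; by Remark \ref{rem: Sk-LEM -> Sk-DNE, Uk-DNS and PkvPk-DNE} the latter implication is in fact available already in $\ha+\LEM{\Sigma_{k-1}}$. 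Symmetrically, when $\vp\in\E_k^+$, the second item of Corollary \ref{cor: E_k+1 and Sk+1} yields $\vp'\in\Sigma_k$ and the second item of Lemma \ref{lem: U_k and EPk} yields $\vp''\in\EP_k$. In every case $\vp'$ and $\vp''$ lie in $\EP_k\cup\Sigma_k$, hence are trivially of the required form.

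For the induction step I would use that the classically-valid rewritings of negation across the connectives each have an intuitionistically-valid half. For $\vp\equiv\vp_1\land\vp_2$ I set $\vp':\equiv\vp_1'\land\vp_2'$ and $\vp'':\equiv\vp_1''\lor\vp_2''$; for $\vp\equiv\vp_1\lor\vp_2$, $\vp':\equiv\vp_1'\lor\vp_2'$ and $\vp'':\equiv\vp_1''\land\vp_2''$; and for $\vp\equiv\vp_1\to\vp_2$, $\vp':\equiv\vp_1''\lor\vp_2'$ and $\vp'':\equiv\vp_1'\land\vp_2''$, placing the negative approximant of $\vp_1$ in the positive slot and conversely. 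In each case the $\ha+\LEM{\Sigma_{k-1}}$-direction follows from the induction hypothesis together with an intuitionistically valid implication (such as $\neg\vp_1\lor\neg\vp_2\to\neg(\vp_1\land\vp_2)$, or $\neg\vp_1\lor\vp_2\to(\vp_1\to\vp_2)$, or $\vp_1\land\neg\vp_2\to\neg(\vp_1\to\vp_2)$), whereas the $\pa$-direction invokes the full classical De Morgan law and the material reading of implication together with the induction hypothesis. Since the reservoir of building blocks is closed under $\land$ and $\lor$, the resulting $\vp',\vp''$ remain of the required shape.

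The only genuine delicacy is the base case: in contrast to Lemma \ref{lem: U_k and EPk}, which produces a single-sided approximant, here one needs an inner approximant of $\vp$ and of $\neg\vp$ at the same time, and these are drawn from two different sources—an $\EP_k$ formula on one side (Lemma \ref{lem: U_k and EPk}) and a $\Sigma_k$ formula on the other (Corollary \ref{cor: E_k+1 and Sk+1}). This is precisely why the target class is phrased as formulas built from $\EP_k\cup\Sigma_k$ rather than from $\EP_k$ alone: a $\Sigma_k$ formula need not be equivalent to an $\EP_k$ formula, so both reservoirs are genuinely required. Once the correct pairing of positive and negative approximants is fixed, the inductive steps are routine propositional bookkeeping.
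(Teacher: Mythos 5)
Your proposal is correct and follows essentially the same route as the paper's own proof: the base case combines Lemma \ref{lem: U_k and EPk} and Corollary \ref{cor: E_k+1 and Sk+1} with the index shifted down by one (using Remark \ref{rem: Sk-LEM -> Sk-DNE, Uk-DNS and PkvPk-DNE} to absorb $\DNE{(\Pi_{k-1}\lor\Pi_{k-1})}$ into $\LEM{\Sigma_{k-1}}$), and the induction step uses exactly the same pairing of positive and negative approximants ($\vp_1'\land\vp_2'$ with $\vp_1''\lor\vp_2''$, $\vp_1'\lor\vp_2'$ with $\vp_1''\land\vp_2''$, and $\vp_1''\lor\vp_2'$ with $\vp_1'\land\vp_2''$). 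Your write-up in fact spells out the verification that the paper leaves to the reader.
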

\begin{proof}
By induction on the construction of $\B_k^+$.

For the base case, first assume $\vp\in \U_{k}^+$.
By Lemma \ref{lem: U_k and EPk}, there exists $\vp'\in \EP_k$ such that $\FV{\vp}=\FV{\vp'}$, $\ha + \LEM{\Sigma_{k-1}}\vdash \vp' \to \vp$ and $\pa \vdash \vp \to \vp'$.
By Corollary \ref{cor: E_k+1 and Sk+1}, there exists $\vp''\in \Sigma_k$ such that $\FV{\vp}=\FV{\vp''}$, $\ha + \LEM{\Sigma_{k-1}}\vdash \vp'' \to \neg \vp$ (cf. Remark \ref{rem: Sk-LEM -> Sk-DNE, Uk-DNS and PkvPk-DNE}) and $\pa \vdash \neg \vp \to \vp''$.
Next assume $\vp\in \E_{k}^+$.
By  Corollary \ref{cor: E_k+1 and Sk+1}, there exists $\vp'\in \Sigma_k$ such that $\FV{\vp}=\FV{\vp'}$, $\ha + \LEM{\Sigma_{k-1}}\vdash \vp' \to \vp$ and $\pa \vdash \vp \to \vp'$.
By Lemma \ref{lem: U_k and EPk}, there exists $\vp''\in \EP_k$ such that $\FV{\vp}=\FV{\vp''}$, $\ha + \LEM{\Sigma_{k-1}}\vdash \vp'' \to  \neg \vp$ and $\pa \vdash \neg\vp \to \vp''$.

For the induction step, let $\vp_1, \vp_2\in \B_k^+$ and $\vp_1', \vp_1'', \vp_2', \vp_2''$ constructed from formulas in $\EP_k \bigcup \Sigma_k$ by using $\land$ and $\lor$ only satisfy the following:
$\FV{\vp_1'}=\FV{\vp_1''}=\FV{\vp_1}$, $\FV{\vp_2'}=\FV{\vp_2''}=\FV{\vp_2}$, $\ha +\LEM{\Sigma_{k-1}}$ proves $\vp_1' \to \vp_1$, $\vp_2' \to \vp_2$, $\vp_1'' \to \neg \vp_1$, $\vp_2'' \to \neg \vp_2$ and $\pa$ proves $\vp_1 \to \vp_1'$,  $\vp_2 \to \vp_2'$, $\neg \vp_1 \to \vp_1''$, $\neg \vp_2 \to \vp_2''$.
For $\vp:\equiv \vp_1 \land \vp_2$, take $\vp':\equiv \vp_1'\land \vp_2'$ and $\vp'':\equiv \vp_1'' \lor \vp_2'' $.
For $\vp:\equiv \vp_1 \lor \vp_2$, take $\vp':\equiv \vp_1'\lor \vp_2'$ and $\vp'':\equiv \vp_1'' \land \vp_2'' $.
For $\vp:\equiv \vp_1 \to \vp_2$, take $\vp':\equiv \vp_1'' \lor \vp_2'$ and $\vp'':\equiv \vp_1' \land \vp_2'' $.
We leave the routine verification for the reader.
\end{proof}

\begin{theorem}
\label{thm: Fkp cons. <=> (Sk v EPk)-cons.}
Let
$\T$ be semi-classical arithmetic
and $X$ be a set of $\ha$-sentences.
Then $\pa + X$ is $\ut{\F_{k}}$-conservative over $\T + X$ if and only if $\pa + X$ is $\left(\ut{\Sigma_{k}}\lor \ut{\EP_{k}}\right)$-conservative over $\T + X$.
\end{theorem}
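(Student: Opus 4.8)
The plan is to prove the two directions separately; the ``only if'' direction is immediate, while the ``if'' direction carries all the weight and splits into a bootstrapping claim plus an application of Lemma~\ref{lem: Bkp and EPk v Sk}. For ``only if'' I would simply observe that, modulo $\ha$-provable equivalence, $\ut{\Sigma_k}\lor\ut{\EP_k}$ is a subclass of $\ut{\F_k}$: if $S\in\ut{\Sigma_k}$ and $E\in\ut{\EP_k}$, then $S\in\E_k^+$ and $E\in\U_k^+$ (recall $\EP_k\subseteq\U_k^+$), so $S\lor E\in\F_k^+=\F_k$ by closure of $\F_k^+$ under $\lor$ together with Proposition~\ref{prop: B_k+=F_k+} and Remark~\ref{rem: On the non-commutativity of Ek, Uk and Fk}; being a disjunction of sentences it is a sentence. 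Hence $\ut{\F_k}$-conservativity trivially specializes to $(\ut{\Sigma_k}\lor\ut{\EP_k})$-conservativity.

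For the ``if'' direction, assume $\pa+X$ is $(\ut{\Sigma_k}\lor\ut{\EP_k})$-conservative over $\T+X$. Writing $S\equiv S\lor\perp$ with $\perp\in\Pi_k\subseteq\EP_k$ and $E\equiv\perp\lor E$ with $\perp\in\Sigma_0\subseteq\Sigma_k$, the classes $\ut{\Sigma_k}$ and $\ut{\EP_k}$ are themselves (modulo $\ha$) contained in $\ut{\Sigma_k}\lor\ut{\EP_k}$, so conservativity holds for them as well. Using this I would first derive $\T+X\vdash\LEM{\Sigma_{k-1}}$: for $\xi\in\Sigma_{k-1}$ the sentence $\forall\vec z\,(\xi\lor\xi^\perp)$ lies in $\ut{\EP_k}$ (since $\xi,\xi^\perp\in\Pi_k$, whence $\xi\lor\xi^\perp\in\Pi_k\lor\Pi_k\subseteq\EP_k$, then close under $\forall$) and is provable in $\pa$ by Remark~\ref{rem: duals}; conservativity yields $\T+X\vdash\forall\vec z\,(\xi\lor\xi^\perp)$, and since $\xi^\perp\to\neg\xi$ holds intuitionistically (Remark~\ref{rem: duals}) this gives $\LEM{\Sigma_{k-1}}$.

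The heart of the argument is the bootstrapping claim that conservativity for $\ut{\Sigma_k}\lor\ut{\EP_k}$ upgrades to conservativity for \emph{every} sentence $\chi$ built from $\EP_k\cup\Sigma_k$ sentences using only $\land$ and $\lor$. I would prove it by putting $\chi$ into conjunctive normal form $\bigwedge_j D_j$; the crucial point is that this conversion is intuitionistically valid (distributivity of $\land$ and $\lor$ holds over $\ha$), so $\ha\vdash\chi\lr\bigwedge_j D_j$. Each $D_j$ is a disjunction of $\EP_k$- and $\Sigma_k$-sentences; grouping the $\EP_k$-disjuncts into one $E_j\in\EP_k$ (closure of $\EP_k$ under $\lor$) and the $\Sigma_k$-disjuncts into one $S_j\in\Sigma_k$ (closure of $\Sigma_k$ under $\lor$), using $\perp$ as the empty disjunction, gives $\ha\vdash D_j\lr S_j\lor E_j$ with $S_j\lor E_j\in\ut{\Sigma_k}\lor\ut{\EP_k}$. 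If $\pa+X\vdash\chi$, then $\pa+X\vdash D_j$ for each $j$, so the hypothesis gives $\T+X\vdash D_j$, whence $\T+X\vdash\bigwedge_j D_j$ and $\T+X\vdash\chi$.

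To finish, given $\vp\in\ut{\F_k}$ with $\pa+X\vdash\vp$, I would use $\F_k=\B_k^+$ (Proposition~\ref{prop: B_k+=F_k+}) and Lemma~\ref{lem: Bkp and EPk v Sk} to obtain a sentence $\vp'$ constructed from $\EP_k\cup\Sigma_k$ by $\land,\lor$ with $\pa\vdash\vp\to\vp'$ and $\ha+\LEM{\Sigma_{k-1}}\vdash\vp'\to\vp$; every atom of $\vp'$ is automatically a sentence, since $\vp'$ is closed and neither $\land$ nor $\lor$ binds variables. Then $\pa+X\vdash\vp'$, the bootstrapping claim gives $\T+X\vdash\vp'$, and together with $\T+X\vdash\LEM{\Sigma_{k-1}}$ we conclude $\T+X\vdash\vp$. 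The main obstacle is precisely this bootstrapping step: the essential, and perhaps not entirely obvious, ingredient that makes single-disjunction conservativity suffice for arbitrary positive $(\land,\lor)$-combinations is that passage to conjunctive normal form is intuitionistically sound, so the conjuncts can be treated independently without any loss over $\T+X$; a secondary delicate point is extracting $\LEM{\Sigma_{k-1}}$, needed to apply Lemma~\ref{lem: Bkp and EPk v Sk}, from the disjunction-conservativity hypothesis alone.
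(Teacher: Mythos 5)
Your proposal is correct and follows essentially the same route as the paper's proof: reduce $\vp\in\ut{\F_k}$ via Proposition \ref{prop: B_k+=F_k+} and Lemma \ref{lem: Bkp and EPk v Sk} to a positive $(\land,\lor)$-combination of $\ut{\Sigma_k}\cup\ut{\EP_k}$ sentences, pass to conjunctive normal form, group each conjunct into a single $\psi\lor\xi$ with $\psi\in\ut{\Sigma_k}$ and $\xi\in\ut{\EP_k}$, apply the hypothesis conjunct by conjunct, and recover $\vp$ using $\LEM{\Sigma_{k-1}}$ (which the paper extracts via the induced $\EP_k$-conservativity and Lemma \ref{lem: SkvPik-cons->Sk-LEM}, amounting to the same $\xi\lor\xi^\perp$ argument you give directly).
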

\begin{proof} 
The ``only if'' direction is trivial since $\ut{\Sigma_{k}}\lor \ut{\EP_{k}} \subseteq \ut{\F_{k}}$.
We show the converse direction.
Let $\vp\in \ut{\F_{k}}$.
Assume $\pa + X \vdash \vp$.
By Lemma \ref{lem: Bkp and EPk v Sk} and Proposition \ref{prop: B_k+=F_k+}, there exist $\vp'$ which is constructed from formulas in $\ut{\Sigma_{k}}\bigcup \ut{\EP_{k}}$ by using $\land$ and $\lor$ only, and satisfy $\ha +\LEM{\Sigma_{k-1}}\vdash \vp' \to \vp$ and $\pa\vdash \vp \to \vp'$.
Without loss of generality, one may assume that $\vp'$ is of conjunctive normal form such that each conjunct is a disjunction of sentences in $\ut{\Sigma_{k}}\bigcup \ut{\EP_{k}}$.
Since disjunction of sentences in $\ut{\Sigma_{k}}$ is equivalent to a sentence in $\ut{\Sigma_{k}}$ over $\ha$ and $\ut{\EP_{k}}$ is closed under $\lor$, each conjunct can be assumed to be of the form $\psi \lor \xi$ where $\psi\in \ut{\Sigma_k}$ and $\xi\in \ut{\EP_k}$.
Let $\vp':\equiv \bigwedge_{1\leq i\leq n} \left( \psi_i \lor \xi_i \right)$ where $\psi_i\in \ut{\Sigma_k}$ and $\xi_i\in \ut{\EP_k}$.
Since $\pa + X \vdash \vp'$,  by the $\left(\ut{\Sigma_{k}}\lor \ut{\EP_{k}}\right)$-conservativity, we have that $\T + X$ proves $ \psi_i \lor \xi_i$ for each $i$.
Then we have $\T + X \vdash \vp'$.
Since $\pa + X$ is now $\EP_{k}$-conservative over $\T + X$ (cf. Proposition \ref{prop: utcons. => cons. for forall closed classes}),
as in the proof of Lemma \ref{lem: SkvPik-cons->Sk-LEM},
we have $\T + X \vdash \LEM{\Sigma_{k-1}}$.
Then $\T + X \vdash \vp$ follows.
\end{proof}

In what follows, by further investigating the $\left(\ut{\Sigma_{k}}\lor \ut{\EP_{k}}\right)$-conservativity in Theorem \ref{thm: Fkp cons. <=> (Sk v EPk)-cons.}, we  give a characterization of the $\ut{\F_{k}}$-conservativity by axiom schemata.
\begin{definition}
\label{def: DNEC and DNSC}
Let $\Gamma$ be a class of $\ha$-formulas.
We introduce the following axiom schemata: 
\begin{itemize}
 \item
$\DNEC{\Gamma}:\, \wt{\neg\neg \vp}\to \wt{\vp};$
\item
$\DNSC{\Gamma}:\, \wt{\neg\neg \vp}\to \neg \neg \wt{\vp};$
\end{itemize}
where $\vp\in \Gamma$ and $\wt{\neg\neg \vp}$ and $\wt{\vp}$ are universal closures of $\neg \neg \vp$ and $\vp$ respectively.
\end{definition}

\begin{proposition}
\label{prop: decomposition of Gamma-DNEC}
Let $\Gamma$ be a class of $\ha$-formulas such that $\Gamma$ is closed under taking a universal closure.
Then $\DNEC{\Gamma}$ is equivalent to $\DNSC{\Gamma} + \DNE{\ut{\Gamma}}$ over $\ha$.
\end{proposition}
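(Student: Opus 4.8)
The plan is to prove the two directions of the stated equivalence separately, both by elementary intuitionistic reasoning carried out instance-by-instance inside $\ha$. Throughout I would use only two trivial facts: that $\wt{\chi}\to\neg\neg\wt{\chi}$ is intuitionistic, and that for a \emph{sentence} $\psi$ the closures $\wt{\psi}$ and $\wt{\neg\neg\psi}$ are literally $\psi$ and $\neg\neg\psi$.

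For the forward direction, assume $\DNEC{\Gamma}$ and fix $\vp\in\Gamma$. The instance $\wt{\neg\neg\vp}\to\wt{\vp}$ of $\DNEC{\Gamma}$, composed with the trivial implication $\wt{\vp}\to\neg\neg\wt{\vp}$, yields $\wt{\neg\neg\vp}\to\neg\neg\wt{\vp}$, which is exactly the corresponding instance of $\DNSC{\Gamma}$. For $\DNE{\ut{\Gamma}}$, I would take any sentence $\psi\in\ut{\Gamma}$; since $\psi\in\Gamma$ and $\psi$ has no free variables, the instance of $\DNEC{\Gamma}$ for $\psi$ reads $\wt{\neg\neg\psi}\to\wt{\psi}$, which is literally $\neg\neg\psi\to\psi$. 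Note that the closure hypothesis on $\Gamma$ is not needed here, only the automatic inclusion $\ut{\Gamma}\subseteq\Gamma$.

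For the converse direction, assume $\DNSC{\Gamma}+\DNE{\ut{\Gamma}}$ and fix $\vp\in\Gamma$. By $\DNSC{\Gamma}$ we have $\wt{\neg\neg\vp}\to\neg\neg\wt{\vp}$. This is the step where the hypothesis on $\Gamma$ enters: since $\Gamma$ is closed under taking a universal closure, $\wt{\vp}\in\Gamma$, and being a sentence it lies in $\ut{\Gamma}$; hence the instance $\neg\neg\wt{\vp}\to\wt{\vp}$ of $\DNE{\ut{\Gamma}}$ is available. Composing the two implications gives $\wt{\neg\neg\vp}\to\wt{\vp}$, i.e.\ the instance of $\DNEC{\Gamma}$ for $\vp$.

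I do not expect a genuine obstacle in this argument. The only point requiring care is the distinction between $\wt{\neg\neg\vp}$, the universal closure of the double negation, and $\neg\neg\wt{\vp}$, the double negation of the closure; commuting the universal closure past the double negation is precisely the content isolated by $\DNSC{\Gamma}$, and this is what makes the decomposition clean. The two small bookkeeping checks to keep explicit are that $\wt{\vp}\in\ut{\Gamma}$ (using closure of $\Gamma$ under universal closure) and that the closure operators act as identities on sentences.
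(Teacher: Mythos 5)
Your proposal is correct and follows essentially the same route as the paper's proof: the forward direction is trivial, and the converse composes the $\DNSC{\Gamma}$ instance with the $\DNE{\ut{\Gamma}}$ instance for $\wt{\vp}$, using the closure of $\Gamma$ under universal closure exactly where the paper does. The extra bookkeeping you make explicit (that the closure operators act as identities on sentences) is a harmless elaboration of the same argument.
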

\begin{proof}
It is trivial that $\DNEC{\Gamma}$ implies  $\DNSC{\Gamma}$ and also $\DNE{\ut{\Gamma}}$.
We show $\ha + \DNSC{\Gamma} + \DNE{\ut{\Gamma}} \vdash \DNEC{\Gamma} $.
Let $\vp \in \Gamma$.
By $\DNSC{\Gamma}$, $\wt{\neg \neg \vp}$ implies $\neg \neg \wt{\vp}$.
Since $\wt{\vp}$ is now in $\Gamma$, by $\DNE{\ut{\Gamma}}$, $\neg \neg \wt{\vp}$ implies $\wt{\vp}$.
Thus we have $\ha + \DNSC{\Gamma} + \DNE{\ut{\Gamma}} \vdash \wt{\neg \neg \vp} \to \wt{\vp}.$
\end{proof}

\begin{lemma}
\label{lem: an equivalent of (utSk v utEPk)-cons.} 
Let
$\T$ be a theory containing $\ha$ and satisfying the deduction theorem, and $X$ be a set of $\ha$-sentences in $\Q_k$.
If $\T + X $ proves $\LEM{\ut{\Sigma_k}}$ and $\T +X$ is closed under $\DNER{\EP_k}$ with assumptions of sentences in $\Pi_k:$\\[5pt]
$\T +X \vdash \psi \to \neg \neg \vp$ implies $\T +X \vdash \psi \to \vp$ for all $\psi\in \ut{\Pi_k}$ and $\vp\in \EP_k$,\\[5pt]
then $\pa+X$ is $\left(\ut{\Sigma_{k}}\lor \ut{\EP_{k}}\right)$-conservative over $\T + X$.
\end{lemma}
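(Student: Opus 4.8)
The plan is to reduce the claim to its two‑sentence instance and then run a case analysis driven by $\LEM{\ut{\Sigma_k}}$. Fix $\psi\in\ut{\Sigma_k}$ and $\xi\in\ut{\EP_k}$ with $\pa+X\vdash\psi\lor\xi$; I must derive $\T+X\vdash\psi\lor\xi$. Working in $\T+X$, the hypothesis $\LEM{\ut{\Sigma_k}}$ supplies $\psi\lor\neg\psi$, and the case $\psi$ is immediate. The whole difficulty is concentrated in the case $\neg\psi$, where I have to manufacture $\xi$. Since $\psi\in\Sigma_k$ is \emph{not} an $\EP_k$-formula, I cannot treat $\psi\lor\xi$ itself as an object of $\EP_k$-conservation; instead I pass to the dual $\psi^\perp\in\Pi_k$, which will play the role of a $\Pi_k$-sentence side hypothesis.

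The engine of the proof is Theorem~\ref{thm: equivalents of Uk+1p-cons} applied with index $k-1$. First I would observe that the displayed hypothesis, together with the deduction theorem, says precisely that both $\T+X$ and $\T+(X\cup\{\psi^\perp\})$ are closed under $\DNER{\EP_k}$: for the former take a provable $\Pi_k$-sentence as the antecedent, and for the latter take $\psi^\perp$ as the antecedent and move it across $\vdash$ by the deduction theorem. Since $\psi^\perp$ is a sentence in $\Pi_k\subseteq\Q_k$, the set $X\cup\{\psi^\perp\}$ is still a set of $\Q_k$-sentences, so Theorem~\ref{thm: equivalents of Uk+1p-cons} applies to each, yielding that $\pa+X$ is $\EP_k$-conservative over $\T+X$ and that $\pa+(X\cup\{\psi^\perp\})$ is $\EP_k$-conservative over $\T+(X\cup\{\psi^\perp\})$.

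From these two conservation facts I extract the two ingredients for the assembly. From the first, exactly as in the proof of Lemma~\ref{lem: SkvPik-cons->Sk-LEM}, one gets $\T+X\vdash\LEM{\Sigma_{k-1}}$: for $\rho\in\Sigma_{k-1}$ we have $\pa\vdash\rho\lor\rho^\perp$, and since $\Sigma_{k-1}\subseteq\Pi_k$ (a dummy universal quantifier) we have $\rho\lor\rho^\perp\in\Pi_k\lor\Pi_k\subseteq\EP_k$, so $\EP_k$-conservativity gives $\T+X\vdash\rho\lor\rho^\perp$, hence $\rho\lor\neg\rho$. From the second, note that $\psi^\perp$ implies $\neg\psi$ intuitionistically (Remark~\ref{rem: duals}), so $\pa+(X\cup\{\psi^\perp\})\vdash\xi$ by disjunction elimination on $\psi\lor\xi$; as $\xi\in\EP_k$, $\EP_k$-conservativity gives $\T+(X\cup\{\psi^\perp\})\vdash\xi$, that is, $\T+X\vdash\psi^\perp\to\xi$. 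Now assemble inside $\T+X$: we have $\psi\lor\neg\psi$, and in the case $\neg\psi$, since $\T+X$ proves $\LEM{\Sigma_{k-1}}$ and hence $\DNE{\Sigma_{k-1}}$, Remark~\ref{rem: duals} gives $\neg\psi\to\psi^\perp$; combined with $\psi^\perp\to\xi$ this yields $\xi$, so $\psi\lor\xi$ in either case. (For $k=0$ the scheme $\LEM{\Sigma_{-1}}$ is vacuous and $\psi$ is decidable, so the argument degenerates trivially.)

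I expect the main obstacle to be the bookkeeping that makes Theorem~\ref{thm: equivalents of Uk+1p-cons} available at the right level, resting on two observations: (i) that $\Sigma_{k-1}\subseteq\EP_k$, which is what lets closure under $\DNER{\EP_k}$ deliver the full scheme $\LEM{\Sigma_{k-1}}$ — and thereby $\DNE{\Sigma_{k-1}}$, the bridge turning $\neg\psi$ into $\psi^\perp$; and (ii) that the side hypothesis $\psi^\perp$ can be absorbed into the axiom set without leaving $\Q_k$, so that the displayed ``DNER with $\Pi_k$-assumptions'' hypothesis is literally a $\DNER{\EP_k}$-closure statement for the enlarged theory $\T+(X\cup\{\psi^\perp\})$. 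Checking that $\T$ is semi-classical (so that the conservativity statements are meaningful) and that the deduction theorem is invoked only where assumed then completes the verification.
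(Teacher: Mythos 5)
Your proposal is correct and follows essentially the same route as the paper's proof: absorb $\psi^\perp$ into the axiom set via the deduction theorem, invoke Theorem~\ref{thm: equivalents of Uk+1p-cons} to get $\EP_k$-conservativity of $\pa+X+\psi^\perp$ over $\T+X+\psi^\perp$ (hence $\T+X\vdash\psi^\perp\to\xi$), derive $\LEM{\Sigma_{k-1}}$ to turn $\neg\psi$ into $\psi^\perp$, and finish with $\LEM{\ut{\Sigma_k}}$. The only cosmetic difference is that you obtain $\LEM{\Sigma_{k-1}}$ from $\EP_k$-conservativity via the argument of Lemma~\ref{lem: SkvPik-cons->Sk-LEM}, while the paper routes through closure under $\CDR{\EP_k}$ and Lemma~\ref{lem: Pk+1-CDR => Sk-LEM}; both are equivalent items of the same theorem.
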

\begin{proof}
Let $\vp \in \ut{\Sigma_k}$ and $\psi\in \ut{\EP_k}$.
Assume $\pa + X \vdash \vp \lor \psi$.
Since $\T$ satisfies the deduction theorem and $\vp^\perp \in \ut{\Pi_k}$, by our second assumption, we have that $\T + X + \vp^\perp$ is closed under $\DNER{\EP_k}$.
Since $\vp^\perp \in \Q_k$, by
Theorem \ref{thm: equivalents of Uk+1p-cons},
we have that $\pa +X+\vp^\perp$ is $\EP_k$-conservative over $\T + X + \vp^\perp$.
Since $\pa + X + \vp^\perp \vdash \psi$, we have $\T + X + \vp^\perp \vdash \psi$, and hence,
\begin{equation}
\label{eq: dvp -> psi}
    \T + X  \vdash \vp^\perp \to \psi
\end{equation}
by the deduction theorem.
In addition, by our second assumption and Theorem \ref{thm: equivalents of Uk+1p-cons}, we have that $\T + X$ is closed under $\CDR{\EP_k}$, and hence, $\T+ X \vdash \LEM{\Sigma_{k-1}}$ by Lemma \ref{lem: Pk+1-CDR => Sk-LEM}.
Then, by Remark \ref{rem: duals}, we have $\T+X \vdash \neg \vp \to \vp^\perp$, and hence, $\T + X  \vdash \neg \vp \to \psi$ by \eqref{eq: dvp -> psi}.
On the other hand, by our first assumption,
we have
 $   \T+X \vdash \vp \lor \neg \vp$.
Then $\T+X \vdash \vp \lor \psi$ follows.
\end{proof}

\begin{theorem}
\label{thm: axiom schemata equivalent to Fkp-conservativity}
Let
$\T$ be semi-classical arithmetic
satisfying the deduction theorem
and $X$ be a set of $\ha$-sentences in $\Q_{k}$.
Then the following are pairwise equivalent$:$
\begin{enumerate}
    \item
    \label{item: PA+X is Fkp-cons. over T+X}
    $\pa + X$ is $\ut{\F_{k}}$-conservative over $\T + X;$
\item
\label{item: T+X |- utFkp-LEM + Ukp-DNSC}
    $\T + X $ proves $\LEM{\ut{\F_k}}$ and $\DNSC{\U_k};$
    \item
\label{item: T+X |- utFkp-LEM + Ukp-DNEC}
    $\T + X $ proves $\LEM{\ut{\Sigma_k}}$ and $\DNEC{\U_k};$
\item
\label{item: T+X |- utSk-LEM + EPk-DNEC}
    $\T + X $ proves $\LEM{\ut{\Sigma_k}}$ and $\DNEC{\EP_k}$.
\end{enumerate}
\end{theorem}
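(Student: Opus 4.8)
The plan is to prove the four items pairwise equivalent by running the cycle $(\ref{item: PA+X is Fkp-cons. over T+X}) \Rightarrow (\ref{item: T+X |- utFkp-LEM + Ukp-DNSC}) \Rightarrow (\ref{item: T+X |- utFkp-LEM + Ukp-DNEC}) \Rightarrow (\ref{item: T+X |- utSk-LEM + EPk-DNEC}) \Rightarrow (\ref{item: PA+X is Fkp-cons. over T+X})$. The outer two arcs pass through conservativity and are handled by Theorem \ref{thm: Fkp cons. <=> (Sk v EPk)-cons.} and Lemma \ref{lem: an equivalent of (utSk v utEPk)-cons.}, while the inner arcs are purely manipulations of the axiom schemata, driven by Proposition \ref{prop: decomposition of Gamma-DNEC} and the closure properties of the classes (cf. \cite[Lemma 4.5]{FK20-1}).

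For $(\ref{item: PA+X is Fkp-cons. over T+X}) \Rightarrow (\ref{item: T+X |- utFkp-LEM + Ukp-DNSC})$ I would read both conjuncts off the conservativity. For $\LEM{\ut{\F_k}}$: if $\sigma\in\ut{\F_k}$ then $\sigma\lor\neg\sigma$ is again a sentence of degree $\le k$, hence in $\ut{\F_k}$, and $\pa+X\vdash\sigma\lor\neg\sigma$; conservativity transfers this to $\T+X$. For $\DNSC{\U_k}$: the point is that for $\vp\in\U_k$ the sentence $\wt{\neg\neg\vp}\to\neg\neg\wt{\vp}$ again lies in $\F_k$. Indeed, double negation and universal quantification do not leave $\U_k^+$, so both $\wt{\neg\neg\vp}$ and $\neg\neg\wt{\vp}$ are in $\U_k^+$, whence their implication has degree $\le k$ and thus lies in $\F_k$. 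Since $\pa$ proves this instance, conservativity yields $\T+X\vdash\DNSC{\U_k}$. The one slightly delicate point here is the degree bookkeeping that places this implication in $\F_k$.

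The two middle arcs are schema-level. For $(\ref{item: T+X |- utFkp-LEM + Ukp-DNSC}) \Rightarrow (\ref{item: T+X |- utFkp-LEM + Ukp-DNEC})$: $\LEM{\ut{\Sigma_k}}$ is immediate from $\LEM{\ut{\F_k}}$ since $\ut{\Sigma_k}\subseteq\ut{\F_k}$; moreover $\ut{\U_k}\subseteq\ut{\F_k}$, so $\LEM{\ut{\F_k}}$ yields $\DNE{\ut{\U_k}}$, which together with the available $\DNSC{\U_k}$ gives $\DNEC{\U_k}$ by Proposition \ref{prop: decomposition of Gamma-DNEC} (note $\U_k$ is closed under universal closure). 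For $(\ref{item: T+X |- utFkp-LEM + Ukp-DNEC}) \Rightarrow (\ref{item: T+X |- utSk-LEM + EPk-DNEC})$: $\LEM{\ut{\Sigma_k}}$ is shared, and $\DNEC{\EP_k}$ follows from $\DNEC{\U_k}$ because $\EP_k\subseteq\U_k$ modulo $\ha$-equivalence with free variables preserved (Remark \ref{rem: On the non-commutativity of Ek, Uk and Fk}), so the corresponding universal closures match and the $\DNEC$ instances carry over.

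Finally, for $(\ref{item: T+X |- utSk-LEM + EPk-DNEC}) \Rightarrow (\ref{item: PA+X is Fkp-cons. over T+X})$ I would check that $\DNEC{\EP_k}$ supplies exactly the rule hypothesis of Lemma \ref{lem: an equivalent of (utSk v utEPk)-cons.}. Given $\T+X\vdash\psi\to\neg\neg\vp$ with $\psi\in\ut{\Pi_k}$ and $\vp\in\EP_k$ having free variables $\vec{y}$, generalizing over $\vec{y}$ (which do not occur in $\psi$) gives $\T+X\vdash\psi\to\wt{\neg\neg\vp}$; applying $\DNEC{\EP_k}$ yields $\T+X\vdash\psi\to\wt{\vp}$, and instantiation gives $\T+X\vdash\psi\to\vp$. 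Together with $\LEM{\ut{\Sigma_k}}$, Lemma \ref{lem: an equivalent of (utSk v utEPk)-cons.} then makes $\pa+X$ be $\left(\ut{\Sigma_k}\lor\ut{\EP_k}\right)$-conservative over $\T+X$, and Theorem \ref{thm: Fkp cons. <=> (Sk v EPk)-cons.} converts this into the $\ut{\F_k}$-conservativity of item (\ref{item: PA+X is Fkp-cons. over T+X}). I expect this last arc to be the main obstacle: the careful matching of the closure rule ``$\DNER{\EP_k}$ with $\Pi_k$-assumptions'' in Lemma \ref{lem: an equivalent of (utSk v utEPk)-cons.} to the single axiom $\DNEC{\EP_k}$ is where the interplay between universal closures, free variables, and the side condition $\psi\in\ut{\Pi_k}$ has to be managed precisely.
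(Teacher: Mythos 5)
Your proposal is correct and follows essentially the same route as the paper: the same cycle $(1)\Rightarrow(2)\Rightarrow(3)\Rightarrow(4)\Rightarrow(1)$, with $(1)\Rightarrow(2)$ read off from conservativity via membership of the relevant instances in $\ut{\F_k}$, the middle arcs handled by Proposition \ref{prop: decomposition of Gamma-DNEC} and the inclusions $\ut{\Sigma_k},\ut{\U_k}\subseteq\ut{\F_k}$ and $\EP_k\subseteq\U_k$, and $(4)\Rightarrow(1)$ obtained by deriving the closure rule of Lemma \ref{lem: an equivalent of (utSk v utEPk)-cons.} from $\DNEC{\EP_k}$ via the deduction theorem and then invoking Theorem \ref{thm: Fkp cons. <=> (Sk v EPk)-cons.}. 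The only difference is that you spell out the degree bookkeeping placing $\wt{\neg\neg\vp}\to\neg\neg\wt{\vp}$ in $\F_k$, which the paper leaves implicit.
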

\begin{proof}
$(\ref{item: PA+X is Fkp-cons. over T+X}\to \ref{item: T+X |- utFkp-LEM + Ukp-DNSC}):$
Let $\vp\in \ut{\F_k}$.
Then $\vp \lor \neg \vp\in \ut{\F_k}$.
Since $\pa \vdash \vp \lor \neg \vp$, we have $\T + X \vdash \vp \lor \neg \vp$ by \eqref{item: PA+X is Fkp-cons. over T+X}.
Let $\psi \in \U_k$.
Then $\wt{\neg \neg \psi}\to \neg\neg \wt{\psi} \in \ut{\F_k}$.
Since $\pa \vdash \wt{\neg \neg \psi}\to \neg\neg \wt{\psi}$, we have $\T + X \vdash \wt{\neg \neg \psi}\to \neg\neg \wt{\psi}$ by \eqref{item: PA+X is Fkp-cons. over T+X}.

\noindent
$(\ref{item: T+X |- utFkp-LEM + Ukp-DNSC} \to \ref{item: T+X |- utFkp-LEM + Ukp-DNEC}):$
It suffices to show $\DNEC{\U_k}$ by using $\LEM{\ut{\F_k}}$ and $\DNSC{\U_k}$.
Since $\ut{\U_k} \subseteq \ut{\F_k}$ and $\LEM{\ut{\U_k}}$ implies $\DNE{\ut{\U_k}}$, by Proposition \ref{prop: decomposition of Gamma-DNEC}, we are done.

\noindent
$(\ref{item: T+X |- utFkp-LEM + Ukp-DNEC} \to \ref{item: T+X |- utSk-LEM + EPk-DNEC}):$ Trivial.

\noindent
$( \ref{item: T+X |- utSk-LEM + EPk-DNEC}\to \ref{item: PA+X is Fkp-cons. over T+X}):$
By Theorem \ref{thm: Fkp cons. <=> (Sk v EPk)-cons.}
and Lemma \ref{lem: an equivalent of (utSk v utEPk)-cons.}, it suffices for \eqref{item: PA+X is Fkp-cons. over T+X} to show that 
$\T +X$ is closed under $\DNER{\EP_k}$ with assumptions of $\Pi_k$ sentences.
Let $\psi\in \ut{\Pi_k}$ and $\vp\in \EP_k$.
Assume $\T +X \vdash  \psi \to \neg \neg \vp$.
Then $\T + X + \psi \vdash \wt{\neg \neg \vp}$.
Since $\T + X$ proves $\DNEC{\EP_k}$ now, we have $\T + X + \psi \vdash \wt{\vp}$, and hence, $\T + X + \psi \vdash \vp$.
Since $\T$ satisfies the deduction theorem, $\T + X \vdash \psi \to \vp$ follows.
\end{proof}

\begin{remark}
$\DNSC{\U_k}$ in Theorem \ref{thm: axiom schemata equivalent to Fkp-conservativity}.\eqref{item: T+X |- utFkp-LEM + Ukp-DNSC} is equivalent over $\ha$ to the closed fragment of $\DNS{\U_k}$:
$$
\neg \neg \forall x \vp \to \forall x \neg\neg \vp,$$
where $\vp\in \U_k$ such that $\FV{\vp}=\{x\}$.
\end{remark}

In the following, we show that $\LEM{\ut{\F_k}}$ and $\DNSC{\U_k}$ in Theorem \ref{thm: axiom schemata equivalent to Fkp-conservativity}.\eqref{item: T+X |- utFkp-LEM + Ukp-DNSC} are independent over $\ha$.

\begin{proposition}
$\ha + \LEM{\ut{\Gamma}} \nvdash \DNSC{(\Pi_1 \lor \Pi_1)}$ for any class $\Gamma$ of $\ha$-formulas. 
\end{proposition}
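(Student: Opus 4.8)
The plan is to refute a single instance of $\DNSC{(\Pi_1\lor\Pi_1)}$. Since $\LEM{\ut{\Gamma}}$ is, for every $\Gamma$, contained in the scheme $\sigma\lor\neg\sigma$ ranging over \emph{all} $\ha$-sentences $\sigma$, it suffices to prove the statement in the strongest case, namely that $\ha$ augmented with $\mathrm{LEM}$ for all $\ha$-sentences does not prove $\DNSC{(\Pi_1\lor\Pi_1)}$; the general case then follows a fortiori. For the instance I would take the $\Pi_1\lor\Pi_1$ formula $\Phi(x)$ from \eqref{eq: Psi(x)} in the proof of Proposition \ref{prop: PA is not Pi1vPi1-conservative over HA}. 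Writing $\Phi(x)\equiv \neg P \lor \neg Q$ with $P:\equiv \exists u(\mathrm{T}(x,x,u)\land \mathrm U(u)=0)$ and $Q:\equiv \exists u(\mathrm T(x,x,u)\land \mathrm U(u)\neq 0)$, the $\ha$-provable disjointness $\neg(P\land Q)$ yields $\neg\neg(\neg P\lor \neg Q)$ by pure intuitionistic logic (assume $\neg(\neg P\lor \neg Q)$, deduce $\neg\neg P\land\neg\neg Q$, hence $\neg\neg(P\land Q)$, hence $\perp$). Thus $\ha\vdash \forall x\,\neg\neg\Phi(x)$. Since $\FV{\Phi}=\{x\}$, the universal closures in Definition \ref{def: DNEC and DNSC} are $\wt{\neg\neg\Phi}\equiv \forall x\,\neg\neg\Phi(x)$ and $\wt{\Phi}\equiv\forall x\,\Phi(x)$, so the corresponding instance of $\DNSC{(\Pi_1\lor\Pi_1)}$ is $\forall x\,\neg\neg\Phi(x)\to \neg\neg\forall x\,\Phi(x)$, whose antecedent is $\ha$-provable. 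Consequently, proving this instance is equivalent (over $\ha$) to proving $\neg\neg\forall x\,\Phi(x)$, and the task reduces to showing that $\ha$ plus sentential $\mathrm{LEM}$ does not prove $\neg\neg\forall x\,\Phi(x)$.

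The heart of the argument is a soft consistency step. From the proof of Proposition \ref{prop: PA is not Pi1vPi1-conservative over HA} (via \cite{FK20-1,Tro73}) one has $\ha + \CTz \vdash \neg\forall x\,\Phi(x)$, and $\ha+\CTz$ is consistent; hence $\ha + \neg\forall x\,\Phi(x)$ is consistent. I would then isolate the following lemma: adding $\mathrm{LEM}$ for sentences never destroys consistency over $\ha$. Indeed, if some consistent theory $S\supseteq \ha$ became inconsistent after adjoining all instances $\sigma\lor\neg\sigma$, then by compactness $S\vdash \neg\bigwedge_{i\leq n}(\sigma_i\lor\neg\sigma_i)$ for finitely many sentences $\sigma_i$; but $\ha\vdash \neg\neg(\sigma_i\lor\neg\sigma_i)$ for each $i$, and $\neg\neg$ commutes with finite conjunction, so $\ha\vdash \neg\neg\bigwedge_{i\leq n}(\sigma_i\lor\neg\sigma_i)$, contradicting $S\supseteq \ha$. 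Applying this with $S:=\ha+\neg\forall x\,\Phi(x)$ shows that $\ha + \neg\forall x\,\Phi(x)$ together with sentential $\mathrm{LEM}$ is consistent. Therefore $\ha$ plus sentential $\mathrm{LEM}$ cannot prove $\neg\neg\forall x\,\Phi(x)$ (otherwise, adjoining $\neg\forall x\,\Phi(x)$ would yield $\perp$), and hence $\ha + \LEM{\ut{\Gamma}}\nvdash \DNSC{(\Pi_1\lor\Pi_1)}$ for every class $\Gamma$.

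I do not expect a genuine obstacle here: the two potentially delicate points are purely bookkeeping. The first is the verification that $\ha\vdash \forall x\,\neg\neg\Phi(x)$ and that the universal closures of Definition \ref{def: DNEC and DNSC} are exactly $\forall x\,\neg\neg\Phi$ and $\forall x\,\Phi$, which collapses the chosen $\DNSC$-instance to $\neg\neg\forall x\,\Phi$. The second is the consistency lemma, whose only ingredient is the intuitionistic validity of $\neg\neg(\sigma\lor\neg\sigma)$; this is what makes the reduction to the already-established non-derivability over $\ha$ (from the consistency of $\ha+\CTz$) go through uniformly in $\Gamma$.
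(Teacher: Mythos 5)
Your proof is correct and follows essentially the same route as the paper: the same instance $\Phi(x)$, the observation that $\ha\vdash\forall x\,\neg\neg\Phi(x)$ reduces the chosen $\DNSC{(\Pi_1\lor\Pi_1)}$ instance to $\neg\neg\forall x\,\Phi(x)$, and the refutation via the consistency of $\ha+\CTz$. Your explicit consistency-preservation lemma for sentential $\mathrm{LEM}$ is just a self-contained unpacking of the step the paper delegates to \cite[Lemma 4.1]{FK20-1} (axioms with $\ha$-provable double negations cannot help derive a negated formula), so the two arguments coincide in substance.
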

\begin{proof}
Suppose $\ha + \LEM{\ut{\Gamma}} \vdash \DNSC{(\Pi_1 \lor \Pi_1)}$.
As in the proof of Proposition \ref{prop: PA is not Pi1vPi1-conservative over HA}, let $\Psi(x) \in \Pi_1 \lor \Pi_1$ be the formula \eqref{eq: Psi(x)}.
Since $\ha \vdash \forall x \neg \neg \Psi(x)$, we have
$\ha + \LEM{\ut{\Gamma}} \vdash \neg \neg \forall x \Psi(x)$.
Since the double negation of each instance of $\LEM{\ut{\Gamma}}$ is provable in $\ha$, by (the proof of) \cite[Lemma 4.1]{FK20-1}, we have
$\ha \vdash  \neg \neg \forall x \Psi(x)$.
This is a contradiction as shown in the proof of Proposition \ref{prop: PA is not Pi1vPi1-conservative over HA}.
\end{proof}

\begin{proposition}
\label{prop: HA+GDNSC /|- utS1-LEM}
$\ha +\FDNS \nvdash \LEM{\ut{\Sigma_1}}$ where $\FDNS$ is the axiom scheme of the double-negation-shift  $\forall x (\forall y \neg \neg \vp(x,y)\to \neg \neg \forall y \vp(x, y)) $.
\end{proposition}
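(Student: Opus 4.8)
The plan is to exploit the disjunction property. The crucial observation is that every instance of $\FDNS$ is, after taking universal closures, a \emph{Harrop} formula. Writing a generic instance as $\forall x\,(\forall y\,\neg\neg\vp(x,y)\to\neg\neg\forall y\,\vp(x,y))$, note that $\neg\neg\forall y\,\vp$ is Harrop: both $\forall y\,\vp\to\perp$ and $(\forall y\,\vp\to\perp)\to\perp$ are implications with the Harrop conclusion $\perp$, hence Harrop, irrespective of whether $\vp$ contains $\lor$ or $\exists$. An implication whose conclusion is Harrop is again Harrop, and $\forall x$ (together with the universal closures over the remaining parameters) preserves Harrop-ness. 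Thus $\ha+\FDNS$ is axiomatized over $\ha$ by a set of Harrop sentences.

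With this in hand I would set up the (formalized) Kleene/Aczel slash $\mid$ for $T:=\ha+\FDNS$ (see, e.g., \cite{Tro73}) and verify the slash-soundness $T\vdash\phi\Rightarrow T\mid\phi$. This reduces to checking $T\mid\gamma$ for each non-logical axiom $\gamma$ of $T$. For the axioms of $\ha$ this is the classical computation underlying the disjunction property of $\ha$. For a Harrop axiom $\gamma$ one invokes the standard lemma $T\vdash\gamma\Rightarrow T\mid\gamma$ (proved by induction on the construction of $\gamma$, using that $T\mid\psi\Rightarrow T\vdash\psi$); in treating the implication clause for the $\FDNS$ axioms one uses $\FDNS$ itself to pass from $T\vdash\forall y\,\neg\neg\vp$ to $T\vdash\neg\neg\forall y\,\vp$, which is legitimate precisely because we are verifying $T\mid\gamma$ for axioms $\gamma$ of $T$. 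Soundness of the slash then yields that $T$ enjoys the disjunction property: $T\vdash A\lor B$ implies $T\vdash A$ or $T\vdash B$.

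Finally I would select an independent $\Sigma_1$ sentence. Since $T\subseteq\pa$, the theory $T$ is consistent, recursively axiomatized and $\Sigma_1$-sound; let $\sigma\in\ut{\Sigma_1}$ be the canonical $\Sigma_1$ sentence expressing the inconsistency of $T$. Then $T\nvdash\sigma$ by $\Sigma_1$-soundness (as $\sigma$ is false), and $T\nvdash\neg\sigma$ by G\"odel's second incompleteness theorem. If $\ha+\FDNS$ proved $\LEM{\ut{\Sigma_1}}$, it would in particular prove $\sigma\lor\neg\sigma$, and then by the disjunction property it would prove $\sigma$ or $\neg\sigma$, contradicting the choice of $\sigma$. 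Hence $\ha+\FDNS\nvdash\LEM{\ut{\Sigma_1}}$.

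The main obstacle is the first step: recognizing that the $\FDNS$ instances are Harrop, so that the disjunction property is inherited from $\ha$ despite the genuine non-constructive content of $\FDNS$ (note in particular that, in contrast to the preceding proposition, the double negations of $\FDNS$ instances are \emph{not} provable in $\ha$, so the Lemma-4.1 collapse used there is unavailable). Once the Harrop observation is secured, the remainder is the standard slash argument combined with routine incompleteness-theorem bookkeeping.
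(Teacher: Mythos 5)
Your proof is correct and follows essentially the same route as the paper: both establish the disjunction property of $\ha+\FDNS$ by observing that the $\FDNS$ instances have a self-slashed form (you say Harrop, the paper says intuitionistically equivalent to negated sentences; both are covered by the Troelstra slash lemmas the paper cites) and then apply that property to an independent $\ut{\Sigma_1}$ sentence. The only cosmetic difference is your choice of witness --- the inconsistency statement of $\ha+\FDNS$ itself, handled via G\"odel's second incompleteness theorem and $\Sigma_1$-soundness --- versus the paper's dual of a $\Pi_1$ sentence independent of $\pa$; both work.
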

\begin{proof}
Let $\vp$ be a sentence in $\Pi_1$ such that $\pa\nvdash \vp$ and $\pa \nvdash \neg \vp$ (e.g. the G\"odel sentence for G\"odel's first incompleteness theorem).
Since each instance of $\FDNS$ is intuitionistically equivalent to a negated sentence (cf. \cite[Remark 2.8]{FK20-1}), by \cite[Theorem 3.1.4 and Lemma 3.1.6]{Tro73}, we have that $\ha + \FDNS$ has the disjunction property.
Suppose $\ha + \FDNS \vdash \vp^\perp \lor \neg \vp^\perp$ (where  $\vp^\perp \in \ut{\Sigma_1}$).
Then, by the disjunction property, we have $\ha + \FDNS \vdash \vp^\perp$ or $\ha + \FDNS \vdash \neg \vp^\perp$, and hence, $\pa \vdash \neg \vp$ or $\pa \vdash \vp$.
This is a contradiction.
\end{proof}

\begin{remark}
By using the disjunction property of $\ha+\FDNS$ as in the proof of Proposition \ref{prop: HA+GDNSC /|- utS1-LEM}, one can extend Proposition \ref{prop: PA is not Pi1vPi1-conservative over HA} to that $\pa$ is not $\left( \Pi_1 \lor \Pi_1\right)$-conservative over $\ha +\FDNS$:
Suppose that $\pa$ is $\left( \Pi_1 \lor \Pi_1\right)$-conservative over $\ha +\FDNS$.
Then, by (the proof of) Theorem \ref{thm: equivalents of (Pk+1 v Pk+1)-cons}, $\ha +\FDNS$ is closed under $\DMLDR{\Sigma_1}$.
Let $\vp$ and $\psi$ be sentences in $\Sigma_1$ such that $\ha$ proves
$$
\vp \lr \exists x \left(\Prf{x}{\vp^\perp} \land \forall y\leq x \neg \Prf{y}{\psi^\perp} \right)
$$
and
$$
\psi \lr \exists y \left(\Prf{y}{\psi^\perp} \land \forall x<y \neg \Prf{x}{\vp^\perp} \right)
$$
where $\Prf{z}{\xi}$ denotes a proof predicate asserting that $z$ is a code of the proof $\xi$ in $ \ha +\FDNS$ (cf. \cite[Chapter 2]{Boolos93}).
Since $\ha \vdash \neg (\vp \land \psi)$, by using $\DMLDR{\Sigma_1}$, we have $\ha +\FDNS \vdash \vp^\perp \lor \psi^\perp $.
Since $\ha +\FDNS$ has the disjunction property, we have that $\ha +\FDNS \vdash \vp^\perp$ or $\ha +\FDNS \vdash \psi^\perp$.
However, in both cases, we have a contradiction by our choice of $\vp$ and $\psi$.
\end{remark}

Next, we show that $\DNEC{\U_k}$, $\DNEC{\EP_k}$  in Theorem \ref{thm: axiom schemata equivalent to Fkp-conservativity} and 
the rule in Lemma \ref{lem: an equivalent of (utSk v utEPk)-cons.} are pairwise equivalent.
\begin{proposition}
\label{prop: equivalents of Ukp-DNEC and EPk-DNEC}
Let 
$\T$ be semi-classical arithmetic
satisfying the deduction theorem
and $X$ be a set of $\ha$-sentences in $\Q_{k}$.
Then the following are pairwise equivalent:
\begin{enumerate}
    \item 
\label{item: T+X |- Ukp-DNEC}
    $\T+X \vdash \DNEC{\U_k};$
    \item
    \label{item: T+X |- EPk-DNEC}
    $\T+X \vdash \DNEC{\EP_k};$
    \item
    \label{item: EPk-DNER with Pk assumptions}
    $\T+X$ is closed under $\DNER{\EP_k}$ with assumptions of sentences in $\Pi_k;$
    \item
    \label{item: Ukp-cons. with Pi_k sentences}
    For any $\psi\in \ut{\Pi_k}$, $\pa +X+\psi$ is $\U_k$-conservative over $\T +X+\psi ;$
    \item
        \label{item: Ukp-DNER with Ukp assumptions}
     $\T+X$ is closed under $\DNER{\U_k}$ with assumptions of sentences in $\U_k;$
\item
    \label{item: Ukp-DNER with any assumptions}
     $\T+X$ is closed under $\DNER{\U_k}$ with assumptions of any sentences.
\end{enumerate}
\end{proposition}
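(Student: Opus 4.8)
The plan is to run the cycle \eqref{item: T+X |- Ukp-DNEC} $\Rightarrow$ \eqref{item: T+X |- EPk-DNEC} $\Rightarrow$ \eqref{item: EPk-DNER with Pk assumptions} $\Rightarrow$ \eqref{item: Ukp-cons. with Pi_k sentences} $\Rightarrow$ \eqref{item: Ukp-DNER with Ukp assumptions} $\Rightarrow$ \eqref{item: T+X |- Ukp-DNEC}, and to attach item \eqref{item: Ukp-DNER with any assumptions} by the two cheap arrows \eqref{item: T+X |- Ukp-DNEC} $\Rightarrow$ \eqref{item: Ukp-DNER with any assumptions} and \eqref{item: Ukp-DNER with any assumptions} $\Rightarrow$ \eqref{item: Ukp-DNER with Ukp assumptions}. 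I may assume $k\geq 1$, since for $k=0$ one has $\U_0=\EP_0=\Pi_0$ and everything is degenerate. The easy arrows all rest on the deduction theorem together with the observations that $\neg\neg\vp\in\U_k^+$, hence $\wt{\neg\neg\vp}\in\ut{\U_k}$, for $\vp\in\U_k$, and that $\EP_k\subseteq\U_k$. Thus \eqref{item: T+X |- Ukp-DNEC} $\Rightarrow$ \eqref{item: T+X |- EPk-DNEC} is immediate, and for \eqref{item: T+X |- EPk-DNEC} $\Rightarrow$ \eqref{item: EPk-DNER with Pk assumptions}, \eqref{item: Ukp-DNER with Ukp assumptions} $\Rightarrow$ \eqref{item: T+X |- Ukp-DNEC}, \eqref{item: T+X |- Ukp-DNEC} $\Rightarrow$ \eqref{item: Ukp-DNER with any assumptions} and \eqref{item: Ukp-DNER with any assumptions} $\Rightarrow$ \eqref{item: Ukp-DNER with Ukp assumptions} one simply moves the universal closure in and out and applies the relevant double-negation scheme or rule to $\wt{\neg\neg\vp}$, exactly as in the proof of Theorem \ref{thm: axiom schemata equivalent to Fkp-conservativity}. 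The arrow \eqref{item: EPk-DNER with Pk assumptions} $\Rightarrow$ \eqref{item: Ukp-cons. with Pi_k sentences} is the one already available: for $\psi\in\ut{\Pi_k}$ the hypothesis says $\T+X+\psi$ is closed under $\DNER{\EP_k}$, and since $X\cup\{\psi\}\subseteq\Q_k$, Theorem \ref{thm: equivalents of Uk+1p-cons} converts this into $\U_k$-conservativity of $\pa+X+\psi$ over $\T+X+\psi$.

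The heart of the matter is \eqref{item: Ukp-cons. with Pi_k sentences} $\Rightarrow$ \eqref{item: Ukp-DNER with Ukp assumptions}, and the key step I would isolate as a lemma is that \emph{under} \eqref{item: Ukp-cons. with Pi_k sentences} \emph{every $\U_k$-sentence is provably equivalent over $\T+X$ to a $\Pi_k$-sentence}. First, with the trivial parameter, \eqref{item: Ukp-cons. with Pi_k sentences} gives $\U_k$-conservativity of $\pa+X$ over $\T+X$; as each instance $\xi\lor\xi^\perp$ of $\LEM{\Sigma_{k-1}}$ lies in $\U_k$ (because $\Sigma_{k-1},\Pi_{k-1}\subseteq\F_{k-1}\subseteq\U_k^+$) and is provable in $\pa$, we obtain $\T+X\vdash\LEM{\Sigma_{k-1}}$ as in Lemma \ref{lem: SkvPik-cons->Sk-LEM}. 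Now fix $\tau\in\ut{\U_k}$. By Corollary \ref{cor: E_k+1 and Sk+1} there is $\tau'\in\Sigma_k$ with $\ha+\DNE{(\Pi_{k-1}\lor\Pi_{k-1})}\vdash\tau'\to\neg\tau$ and $\pa\vdash\neg\tau\to\tau'$; set $\tau_\rho:=(\tau')^\perp\in\Pi_k$, so that (using $\LEM{\Sigma_{k-1}}$ and Remark \ref{rem: duals}) $\T+X\vdash\tau\to\tau_\rho$ while $\pa\vdash\tau_\rho\to\tau$. The non-trivial half $\T+X\vdash\tau_\rho\to\tau$ is exactly where \eqref{item: Ukp-cons. with Pi_k sentences} is used with the genuine parameter $\tau_\rho\in\ut{\Pi_k}$: from $\pa+X+\tau_\rho\vdash\tau$ and $\tau\in\U_k$, conservativity delivers $\T+X+\tau_\rho\vdash\tau$. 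Hence $\T+X\vdash\tau\lr\tau_\rho$.

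With this collapse in hand, \eqref{item: Ukp-cons. with Pi_k sentences} $\Rightarrow$ \eqref{item: Ukp-DNER with Ukp assumptions} is direct: given $\theta\in\ut{\U_k}$ choose $\theta_\rho\in\ut{\Pi_k}$ with $\T+X\vdash\theta\lr\theta_\rho$, so $\T+X+\theta$ and $\T+X+\theta_\rho$ are the same theory; if $\T+X+\theta\vdash\neg\neg\vp$ with $\vp\in\U_k$, then $\pa+X+\theta_\rho\vdash\vp$, and $\U_k$-conservativity at the admissible parameter $\theta_\rho$ returns $\T+X+\theta\vdash\vp$. I expect the lemma above to be the real obstacle: over a merely semi-classical $\T$ carrying only $\LEM{\Sigma_{k-1}}$ a $\U_k$-sentence is in general \emph{not} provably $\Pi_k$ (that would require $\DNE{(\Pi_k\lor\Pi_k)}$), so $\U_k$-assumptions cannot be fed into the $\Pi_k$-parametrized conservativity by brute force; it is the conservativity hypothesis \eqref{item: Ukp-cons. with Pi_k sentences} itself, applied to the classically-available upper approximation $\tau_\rho$, that supplies the otherwise-missing implication $\tau_\rho\to\tau$ and forces the collapse. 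Once this is secured, the remaining pieces are routine, including the arbitrary-assumption rule \eqref{item: Ukp-DNER with any assumptions}, which follows from the uniform scheme \eqref{item: T+X |- Ukp-DNEC} by generalising $\neg\neg\vp$ to $\wt{\neg\neg\vp}$ and then eliminating the double negation.
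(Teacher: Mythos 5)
Your proof is correct and follows essentially the same route as the paper's: the same cycle $\eqref{item: T+X |- Ukp-DNEC}\Rightarrow\eqref{item: T+X |- EPk-DNEC}\Rightarrow\eqref{item: EPk-DNER with Pk assumptions}\Rightarrow\eqref{item: Ukp-cons. with Pi_k sentences}\Rightarrow\eqref{item: Ukp-DNER with Ukp assumptions}\Rightarrow\eqref{item: T+X |- Ukp-DNEC}$ with $\eqref{item: Ukp-DNER with any assumptions}$ attached via $\eqref{item: T+X |- Ukp-DNEC}$, and the crux $\eqref{item: Ukp-cons. with Pi_k sentences}\Rightarrow\eqref{item: Ukp-DNER with Ukp assumptions}$ handled exactly as in the paper by replacing the $\U_k$-assumption $\tau$ with the $\Pi_k$-sentence $(\tau')^\perp$ coming from Corollary \ref{cor: E_k+1 and Sk+1} together with $\LEM{\Sigma_{k-1}}$ (your second application of \eqref{item: Ukp-cons. with Pi_k sentences} to secure the converse $\T+X\vdash\tau_\rho\to\tau$ is harmless but not needed, since only $\pa\vdash\tau_\rho\to\tau$ and $\T+X\vdash\tau\to\tau_\rho$ enter the argument). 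The one inaccuracy is the side remark that $\EP_0=\Pi_0$ --- in fact $\EP_0$ is closed under $\forall$ and hence contains $\Pi_1$ --- but this affects neither your argument nor the paper's, both of which in effect treat $k\geq 1$.
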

\begin{proof}
The implications $(\ref{item: T+X |- Ukp-DNEC} \to \ref{item: T+X |- EPk-DNEC})$ and $(\ref{item: Ukp-DNER with any assumptions} \to \ref{item: Ukp-DNER with Ukp assumptions})$ are trivial.

\noindent
$(\ref{item: T+X |- EPk-DNEC} \to  \ref{item: EPk-DNER with Pk assumptions}):$ By the proof of $( \ref{item: T+X |- utSk-LEM + EPk-DNEC}\to \ref{item: PA+X is Fkp-cons. over T+X})$ in Theorem \ref{thm: axiom schemata equivalent to Fkp-conservativity}.

\noindent
$(\ref{item: EPk-DNER with Pk assumptions}\to \ref{item: Ukp-cons. with Pi_k sentences}):$
Fix $\psi\in \ut{\Pi_k}$.
Let $\vp\in \U_k$.
Assume $\pa + X +\psi \vdash \vp$.
Since $X \cup \{\psi \} \subseteq \Q_{k}$, by
Theorem \ref{thm: equivalents of Uk+1p-cons},
we have $\T + X +\psi \vdash \vp$.

\noindent
$(\ref{item: Ukp-cons. with Pi_k sentences}\to \ref{item: Ukp-DNER with Ukp assumptions}):$
Assume $\T +X \vdash \psi \to \neg \neg \vp$ where $\psi\in \ut{\U_k}$ and $\vp\in \U_k$.
By Corollary \ref{cor: E_k+1 and Sk+1}.\eqref{item: Uk+1 in cor: E_k+1 and Sk+1}, there exists $\psi'\in \ut{\Sigma_k}$ such that $\ha + \LEM{\Sigma_{k-1}} \vdash \psi' \to \neg \psi $ (cf. Remark \ref{rem: Sk-LEM -> Sk-DNE, Uk-DNS and PkvPk-DNE}) and $\pa \vdash \neg \psi \to \psi'$.
Let $\psi'':\equiv \left( \psi' \right)^\perp$.
By Remark \ref{rem: duals}, we have $\psi'' \in \ut{\Pi_k}$,  $\ha + \LEM{\Sigma_{k-1}} \vdash \neg \neg \psi \to \psi''$
and $\pa \vdash \psi'' \to \psi$.
Then we have  now $\pa + X + \psi'' \vdash \vp$.
By our assumption, $\T + X + \psi'' \vdash \vp$ follows.
Since $T$ satisfies the deduction theorem, we have $\T + X \vdash \psi'' \to \vp$.
On the other hand, by (the proof of) Lemma \ref{lem: SkvPik-cons->Sk-LEM} and our assumption, we have $\T+ X \vdash \LEM{\Sigma_{k-1}}$.
Then $\T+ X \vdash \psi \to \vp$ follows.

\noindent
$(\ref{item: Ukp-DNER with Ukp assumptions}\to \ref{item: T+X |- Ukp-DNEC}):$
Let $\vp\in \U_k$.
Note that $\wt{\neg \neg \vp} \in \ut{\U_k}$.
Since $\T+X+ \wt{\neg \neg \vp} \vdash \neg \neg \vp $, by the deduction theorem, we have $ \T+X \vdash \wt{\neg \neg \vp} \to \neg \neg \vp$.
By our assumption, we have $ \T+X \vdash \wt{\neg \neg \vp} \to  \vp$, and hence, $ \T+X \vdash \wt{\neg \neg \vp} \to \wt{\vp}$.

\noindent
$(\ref{item: T+X |- Ukp-DNEC} \to \ref{item: Ukp-DNER with any assumptions}):$
Assume $\T +X \vdash \psi \to \neg \neg \vp$ where $\psi$ is a sentence and $\vp\in \U_k$. 
Then we have $\T+X +\psi \vdash \wt{\neg \neg \vp}$.
By our assumption, we have  $\T+X +\psi \vdash \wt{\vp}$, and hence, $\T+X +\psi \vdash \vp$.
Since $T$ satisfies the deduction theorem,  $\T+X \vdash  \psi \to \vp$ follows.
\end{proof}

\begin{corollary}\label{cor: U_k-DNEC -> U_k-CONS}
Let $X$ be a set of $\ha$-sentences in $\Q_k$.
Then $\pa +X$ is $\U_k$-conservative over $\ha + X +  \DNEC{\U_k}$.
\end{corollary}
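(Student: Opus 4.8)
The plan is to read this off directly from Proposition \ref{prop: equivalents of Ukp-DNEC and EPk-DNEC} by instantiating it at the single theory $\T := \ha + \DNEC{\U_k}$. First I would confirm that this choice of $\T$ satisfies the standing hypotheses of that proposition. It contains $\ha$ by construction, and it is contained in $\pa$ because $\pa$ proves full double-negation-elimination and hence every instance $\wt{\neg\neg \vp}\to \wt{\vp}$ of $\DNEC{\U_k}$; thus $\T$ is semi-classical arithmetic. Being an ordinary first-order theory, it also satisfies the deduction theorem. Since $X$ consists of $\ha$-sentences in $\Q_k$, all the hypotheses of Proposition \ref{prop: equivalents of Ukp-DNEC and EPk-DNEC} are met.

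Next, observe that for this $\T$ item \eqref{item: T+X |- Ukp-DNEC} of the proposition holds trivially, as $\T + X \vdash \DNEC{\U_k}$ by the very definition of $\T$. By the pairwise equivalence established in that proposition, item \eqref{item: Ukp-cons. with Pi_k sentences} therefore also holds: for every $\psi \in \ut{\Pi_k}$, $\pa + X + \psi$ is $\U_k$-conservative over $\T + X + \psi$.

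Finally, I would discharge the auxiliary assumption $\psi$ by choosing it to be trivially provable. The sentence $0 = 0$ is quantifier-free, hence lies in $\ut{\Pi_0} \subseteq \ut{\Pi_k}$, and is provable already in $\ha$. Taking $\psi :\equiv (0 = 0)$ in item \eqref{item: Ukp-cons. with Pi_k sentences} collapses $\pa + X + \psi$ to $\pa + X$ and $\T + X + \psi$ to $\ha + X + \DNEC{\U_k}$, yielding exactly the asserted $\U_k$-conservativity. There is essentially no obstacle here: the whole content is carried by Proposition \ref{prop: equivalents of Ukp-DNEC and EPk-DNEC}, and the only points one must check are the inclusion $\ha + \DNEC{\U_k} \subseteq \pa$ and the existence of an $\ha$-provable sentence in $\ut{\Pi_k}$, both of which are immediate.
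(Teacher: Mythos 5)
Your proof is correct and is exactly the argument the paper intends: the corollary is stated without proof precisely because it is the instantiation of Proposition \ref{prop: equivalents of Ukp-DNEC and EPk-DNEC} at $\T := \ha + \DNEC{\U_k}$, using the implication from item \eqref{item: T+X |- Ukp-DNEC} to item \eqref{item: Ukp-cons. with Pi_k sentences} and discharging the auxiliary $\Pi_k$ sentence with a trivially provable one. Your checks that this $\T$ is semi-classical and satisfies the deduction theorem (its axioms being sentences) are the right ones to make explicit.
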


\section{Interrelations between conservation theorems and logical principles}
\label{sec: Summary}
The $\ut{\E_{k+1}}$-conservativity implies both of $\ut{\Sigma_{k+1}}$-conservativity and $\ut{\F_k}$-conservativity.
In what follows, we investigate the relation among them.

\begin{proposition}
Let
$\T$ be semi-classical arithmetic
and $X$ be a set of $\ha$-sentences.
If $\pa +X$ is $\ut{\Sigma_{k+1}}$-conservative over $\T+X$ and $\T+X$ proves $\DNE{(\Pi_k \lor \Pi_k)}$, then $\pa +X$ is $\ut{\E_{k+1}}$-conservative over $\T+X$.
\end{proposition}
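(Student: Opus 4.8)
The plan is to reduce the desired $\ut{\E_{k+1}}$-conservativity to the assumed $\ut{\Sigma_{k+1}}$-conservativity by replacing the given $\E_{k+1}$ sentence with a prenex $\Sigma_{k+1}$ approximation. The workhorse is the second item of Corollary \ref{cor: E_k+1 and Sk+1}, which already packages the interaction between $\E_{k+1}^+$ and $\Sigma_{k+1}$ modulo $\DNE{(\Pi_k \lor \Pi_k)}$.

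First I would fix $\vp \in \ut{\E_{k+1}}$ and assume $\pa + X \vdash \vp$. Since $\E_{k+1} \subseteq \E_{k+1}^+$, the second item of Corollary \ref{cor: E_k+1 and Sk+1} supplies a formula $\vp' \in \Sigma_{k+1}$ with $\FV{\vp'} = \FV{\vp}$ (hence a $\Sigma_{k+1}$ \emph{sentence}, as $\vp$ is closed, so that $\vp' \in \ut{\Sigma_{k+1}}$), such that $\pa \vdash \vp \to \vp'$ and $\ha + \DNE{(\Pi_k \lor \Pi_k)} \vdash \vp' \to \vp$. From $\pa + X \vdash \vp$ together with $\pa \vdash \vp \to \vp'$ I obtain $\pa + X \vdash \vp'$. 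Because $\vp' \in \ut{\Sigma_{k+1}}$, the assumed $\ut{\Sigma_{k+1}}$-conservativity of $\pa + X$ over $\T + X$ yields $\T + X \vdash \vp'$. Finally, since $\T + X$ proves $\DNE{(\Pi_k \lor \Pi_k)}$ by hypothesis, the implication $\vp' \to \vp$ is available inside $\T + X$, so $\T + X \vdash \vp$, which is exactly what the $\ut{\E_{k+1}}$-conservativity demands.

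The genuinely substantial content has already been absorbed into Corollary \ref{cor: E_k+1 and Sk+1} and, behind it, into the prenex normal form Theorem \ref{thm: PNFT} and the $\ES_{k+1}$-machinery of Lemma \ref{lem: E_k+1 and ESk+1}; given those, the proposition is a short deduction. The one point requiring care is the asymmetry in where the two directions of $\vp \leftrightarrow \vp'$ live: the direction $\vp \to \vp'$ is needed only over $\pa$ (to feed the conservativity hypothesis), whereas the direction $\vp' \to \vp$ must be recovered inside the weak theory $\T + X$. This is precisely why the hypothesis $\T + X \vdash \DNE{(\Pi_k \lor \Pi_k)}$ is indispensable and cannot simply be dropped, and it is the step I would watch most closely when checking the argument.

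An alternative and essentially equivalent route would first invoke Theorem \ref{thm: Ek+1p cons. <=> ESk+1-cons} to reduce the problem to $\ut{\ES_{k+1}}$-conservativity, and then apply Theorem \ref{thm: PNFT}.\eqref{eq: item for Pi_k in PNFT} to the $\EP_k$-matrix of a given $\ES_{k+1}$ sentence to produce the $\Sigma_{k+1}$ witness; the transfer across the $\ut{\Sigma_{k+1}}$-conservativity and the return via $\DNE{(\Pi_k \lor \Pi_k)}$ inside $\T + X$ proceed as above. I would nonetheless prefer the direct route through Corollary \ref{cor: E_k+1 and Sk+1}, since it avoids the detour through the $\ut{\ES_{k+1}}$ reformulation.
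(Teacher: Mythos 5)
Your proposal is correct and is essentially the paper's argument: the paper takes exactly the ``alternative route'' you describe at the end, first invoking Theorem \ref{thm: Ek+1p cons. <=> ESk+1-cons} to reduce to $\ut{\ES_{k+1}}$-conservativity and then applying Theorem \ref{thm: PNFT}.\eqref{eq: item for Pi_k in PNFT} to the $\EP_k$-matrix, while your preferred route simply reaches the same $\Sigma_{k+1}$ approximation in one step via Corollary \ref{cor: E_k+1 and Sk+1}, which already bundles Lemma \ref{lem: E_k+1 and ESk+1} with the prenex normal form theorem. Both versions hinge on the same key point you flag, namely that $\pa \vdash \vp \to \vp'$ feeds the conservativity hypothesis while $\vp' \to \vp$ is recovered inside $\T + X$ from $\DNE{(\Pi_k \lor \Pi_k)}$, so your deduction is sound as written.
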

\begin{proof}
By Theorem \ref{thm: Ek+1p cons. <=> ESk+1-cons}, it suffices to show $\ut{\ES_{k+1}}$-conservativity instead of the $\ut{\E_{k+1}}$-conservativity.
Let $\vp :\equiv \exists x_1,\dots , x_n\, \psi \in \ut{\ES_{k+1}}$ with $\psi \in \EP_k$.
Assume $\pa + X\vdash \vp$.
By Theorem \ref{thm: PNFT}.\eqref{eq: item for Pi_k in PNFT}, there exists $\psi' \in \Pi_k$ such that $\FV{\psi}=\FV{\psi'}$ and $\ha + \DNE{(\Pi_k \lor \Pi_k)} \vdash \psi'\lr \psi$.
Now we have $\pa + X \vdash  \exists x_1,\dots , x_n\, \psi'$.
Since $\exists x_1,\dots , x_n\, \psi' \in \ut{\Sigma_{k+1}}$, by our first assumption, we have that $\T +X \vdash \exists x_1,\dots , x_n\, \psi'$.
By our second assumption, $\T+ X \vdash \vp$ follows.
\end{proof}

\begin{proposition}
\label{prop: utSk+1-cons => utSk-LEM and Sk-1-LEM}
Let 
$\T$ be a theory containing $\ha$.
If $\pa$ is $\ut{\Sigma_{k+1}}$-conservative over $\T$, then $\T$ proves $\LEM{\ut{\Sigma_k}}$ and also $\LEM{\Sigma_{k-2}}$. 
\end{proposition}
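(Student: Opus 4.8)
The plan is to treat the two conclusions separately, in both cases exploiting that $\pa$ proves $\vp \lor \vp^\perp$ for every prenex formula $\vp$ (Remark \ref{rem: duals}) and then transporting this instance of excluded middle down to $\T$ via the assumed $\ut{\Sigma_{k+1}}$-conservativity. The whole argument is a ``sentence version'' of Lemma \ref{lem: SkvPik-cons->Sk-LEM}; the only delicate point is the quantifier-complexity bookkeeping that forces the drop from $\Sigma_k$ to $\Sigma_{k-2}$ once free variables are allowed.

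For $\LEM{\ut{\Sigma_k}}$, I would fix a sentence $\vp \in \ut{\Sigma_k}$ and form its dual $\vp^\perp \in \ut{\Pi_k}$. Since $\Sigma_k \subseteq \Sigma_{k+1}$ and $\Pi_k \subseteq \Sigma_{k+1}$, and $\Sigma_{k+1}$ is closed under $\lor$ over $\ha$ (cf.\ \cite[Lemma 4.4]{FK20-1}), the disjunction $\vp \lor \vp^\perp$ is equivalent over $\ha$ to a sentence in $\Sigma_{k+1}$. As $\pa \vdash \vp \lor \vp^\perp$, the $\ut{\Sigma_{k+1}}$-conservativity yields $\T \vdash \vp \lor \vp^\perp$; since $\vp^\perp$ implies $\neg \vp$ intuitionistically (Remark \ref{rem: duals}), we obtain $\T \vdash \vp \lor \neg \vp$.

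For $\LEM{\Sigma_{k-2}}$ (now allowing free variables) I would fix $\vp \in \Sigma_{k-2}$ with free variables $\vec x$, so that $\vp^\perp \in \Pi_{k-2}$. Because conservativity is available only for sentences, I pass to the universal closure: from $\pa \vdash \vp \lor \vp^\perp$ we get $\pa \vdash \forall \vec x\,(\vp \lor \vp^\perp)$. Here both $\vp$ and $\vp^\perp$ lie in $\Sigma_{k-1}$ (using $\Sigma_{k-2}, \Pi_{k-2} \subseteq \Sigma_{k-1}$), so $\vp \lor \vp^\perp \in \Sigma_{k-1}$ and hence $\forall \vec x\,(\vp \lor \vp^\perp) \in \Pi_k \subseteq \Sigma_{k+1}$ as a sentence. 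Applying $\ut{\Sigma_{k+1}}$-conservativity gives $\T \vdash \forall \vec x\,(\vp \lor \vp^\perp)$, and instantiating $\vec x$ followed by $\vp^\perp \to \neg \vp$ yields $\T \vdash \vp \lor \neg \vp$, i.e.\ $\LEM{\Sigma_{k-2}}$.

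The main (though mild) obstacle is precisely this complexity accounting: taking the universal closure of $\vp \lor \vp^\perp$ raises the prenex class, and the only way to keep the result inside $\Sigma_{k+1}$ (so that the sentence-level conservativity applies) is to start from a matrix of complexity at most $\Sigma_{k-1}$, that is, from $\vp \in \Sigma_{k-2}$. This explains the two-level gap between the sentence case ($\Sigma_k$) and the free-variable case ($\Sigma_{k-2}$); for $k \le 1$ the second conclusion is vacuous since $\Sigma_{k-2}$ is then empty.
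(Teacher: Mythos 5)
Your proof is correct and follows essentially the same route as the paper: the $\LEM{\ut{\Sigma_k}}$ half is identical, and for $\LEM{\Sigma_{k-2}}$ you simply inline what the paper obtains by citing Proposition \ref{prop: utcons. => cons. for forall closed classes} and Theorem \ref{thm: conservation theorems equivalent to SkLEM}, namely that the universal closure of $\vp\lor\vp^\perp$ lands in $\ut{\Pi_k}\subseteq\ut{\Sigma_{k+1}}$. Your closing remark correctly identifies why the index drops by two, and the complexity bookkeeping (closure of $\Sigma_{k-1}$ under $\lor$ modulo $\ha$-equivalence, then prefixing $\forall\vec x$) checks out.
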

\begin{proof}
Assume that $\pa$ is $\ut{\Sigma_{k+1}}$-conservative over $\T$.
Then  $\pa$ is $\Pi_k$-conservative over $\T$ (cf. Proposition \ref{prop: utcons. => cons. for forall closed classes}), and hence, $\T$ proves $\LEM{\Sigma_{k-2}}$ by (the proof of) Theorem \ref{thm: conservation theorems equivalent to SkLEM}.
Let $\vp\in \ut{\Sigma_k}$.
Then $\vp^\perp \in \ut{\Pi_k}$.
Since $\ut{\Sigma_k}$ and $\ut{\Pi_k}$ can be seen as sub-classes of $\ut{\Sigma_{k+1}}$ and $\ut{\Sigma_{k+1}}$ is closed under $\lor$ (in the sense of \cite[Lemma 4.4]{FK20-1}), one may assume $\vp \lor \vp^\perp \in \ut{\Sigma_{k+1}}$.
Since $\pa \vdash \vp \lor \vp^\perp $, by our assumption, we have $\T \vdash \vp \lor \vp^\perp$, and hence, $\T \vdash \vp \lor \neg \vp$.
\end{proof}

\begin{corollary}
Let
$\T$ be semi-classical arithmetic
satisfying the deduction theorem
and $X$ be a set of $\ha$-sentences in $\Q_{k}$.
If $\pa +X $ is $\ut{\Sigma_{k+1}}$-conservative over $\T+X$ and $\T$ proves $\DNEC{\U_k}$, then $\pa+X$ is $\ut{\F_k}$-conservative over $\T+X$.
\end{corollary}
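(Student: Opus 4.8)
The plan is to reduce the claim to the characterization of $\ut{\F_k}$-conservativity given in Theorem~\ref{thm: axiom schemata equivalent to Fkp-conservativity}. Since $\T$ is semi-classical arithmetic satisfying the deduction theorem and $X$ is a set of $\ha$-sentences in $\Q_k$, the hypotheses of that theorem are satisfied. Hence, by the equivalence of its items \eqref{item: PA+X is Fkp-cons. over T+X} and \eqref{item: T+X |- utFkp-LEM + Ukp-DNEC}, it suffices to verify that $\T + X$ proves both $\LEM{\ut{\Sigma_k}}$ and $\DNEC{\U_k}$.

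One of these is free: $\DNEC{\U_k}$ is provable in $\T$ by hypothesis, so a-fortiori $\T + X \vdash \DNEC{\U_k}$. It therefore remains only to derive $\LEM{\ut{\Sigma_k}}$ from the assumed $\ut{\Sigma_{k+1}}$-conservativity of $\pa + X$ over $\T + X$.

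For this I would reuse the argument of Proposition~\ref{prop: utSk+1-cons => utSk-LEM and Sk-1-LEM} in the relativized setting. Given $\vp \in \ut{\Sigma_k}$, the dual $\vp^\perp$ lies in $\ut{\Pi_k}$, and since $\ut{\Sigma_{k+1}}$ is closed under $\lor$ one may regard $\vp \lor \vp^\perp$ as a sentence in $\ut{\Sigma_{k+1}}$. As $\pa \vdash \vp \lor \vp^\perp$ (cf.\ Remark~\ref{rem: duals}), we have $\pa + X \vdash \vp \lor \vp^\perp$; the $\ut{\Sigma_{k+1}}$-conservativity then yields $\T + X \vdash \vp \lor \vp^\perp$, and hence $\T + X \vdash \vp \lor \neg \vp$. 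This gives $\T + X \vdash \LEM{\ut{\Sigma_k}}$, completing the verification of item \eqref{item: T+X |- utFkp-LEM + Ukp-DNEC}.

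I do not expect any real obstacle. The only point requiring care is that Proposition~\ref{prop: utSk+1-cons => utSk-LEM and Sk-1-LEM} is stated for $\pa$ over $\T$ rather than for $\pa + X$ over $\T + X$; but its proof uses only the provability of $\vp \lor \vp^\perp$ in the stronger theory together with the membership of this disjunction in the conservative class, so it transfers without change, and one may cite it as ``by (the proof of) Proposition~\ref{prop: utSk+1-cons => utSk-LEM and Sk-1-LEM}''.
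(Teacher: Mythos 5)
Your proposal is correct and follows essentially the same route as the paper, whose proof is simply ``Immediate by Theorem \ref{thm: axiom schemata equivalent to Fkp-conservativity} and Proposition \ref{prop: utSk+1-cons => utSk-LEM and Sk-1-LEM}.'' Your extra remark about relativizing Proposition \ref{prop: utSk+1-cons => utSk-LEM and Sk-1-LEM} to $\pa+X$ over $\T+X$ is a fair point of care, and your observation that the argument transfers verbatim is accurate.
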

\begin{proof}
Immediate by Theorem \ref{thm: axiom schemata equivalent to Fkp-conservativity} and Proposition \ref{prop: utSk+1-cons => utSk-LEM and Sk-1-LEM}.
\end{proof}

\begin{remark}
\label{rem: On PkvPk-DNE, Ukp-DNEC, Ukp-cons and utFpk-cons}
By using Theorem \ref{thm: PNFT}.\eqref{eq: item for Pi_k in PNFT}, one can show that $\DNE{(\Pi_k \lor \Pi_k)}$ implies $\DNEC{\U_k}$ in a straightforward way.
On the other hand, $\DNEC{\U_k}$ implies the $\U_k$-conservativity by Corollary \ref{cor: U_k-DNEC -> U_k-CONS}.
In contrast, $\DNE{(\Pi_k \lor \Pi_k)}$ does not imply $\ut{\F_k}$-conservativity since the latter is characterized by $\LEM{\ut{\Sigma_k}} + \DNEC{\U_k}$ (cf. Theorem \ref{thm: axiom schemata equivalent to Fkp-conservativity}) and $\DNE{(\Pi_k \lor \Pi_k)}$
does not imply $\LEM{\ut{\Sigma_k}}$ (see \cite{FINSY20}).
\end{remark}

\begin{remark}
It is straightforward to see that if a theory $\T$ containing $\ha$  proves $\DNEC{(\Pi_k \lor \Pi_k)}$, then $\T$ is closed under $\DNER{(\Pi_k\lor \Pi_k)}$.
Thus $\DNEC{(\Pi_k \lor \Pi_k)}$ implies the $(\Pi_k\lor \Pi_k)$-conservativity (cf. Theorem \ref{thm: equivalents of (Pk+1 v Pk+1)-cons}).
On the other hand, $\DNEC{(\Pi_k \lor \Pi_k)}$ is a fragment of $\DNEC{\U_k}$.
\end{remark}

\begin{proposition}
\label{prop: utSk+1-LEM + Sk-1LEM => utSk+1-cons.}
Let
$X$ be a set of $\ha$-sentences in $\Q_k$.
Then $\pa+X$ is $\ut{\Sigma_{k+1}}$-conservative over $\ha +X+ \DNE{\ut{\Sigma_{k+1}}} +\LEM{\Sigma_{k-1}}$.
\end{proposition}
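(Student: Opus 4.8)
The plan is to observe that the theory in question already proves the double negation of every $\ut{\Sigma_{k+1}}$-consequence of $\pa+X$, and then to discharge the double negation by $\DNE{\ut{\Sigma_{k+1}}}$. More precisely, I would fix $\vp\in\ut{\Sigma_{k+1}}$, assume $\pa+X\vdash\vp$, and aim at $\ha+X+\DNE{\ut{\Sigma_{k+1}}}+\LEM{\Sigma_{k-1}}\vdash\vp$. The whole argument reduces to the intermediate claim that $\ha+X+\LEM{\Sigma_{k-1}}\vdash\neg\neg\vp$, which is precisely the computation carried out in the converse direction of the proof of Proposition~\ref{prop: utSk+1-cons <=> utSk+1-DNER}; I would reproduce it here for completeness.

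For that intermediate claim (assuming $k>0$ and writing $\vp\equiv\exists x\,\forall y\,\psi$ with $\psi\in\Sigma_{k-1}$), I would proceed through the $\ph$-translation machinery. From $\pa+X\vdash\vp$, Proposition~\ref{Soundness of *-translation}.\eqref{item: PA |- A => HA |- A*} gives $\ha^{\ph}+X^{\ph}\vdash\vp^{\ph}$, that is, $\ha^{\ph}+X^{\ph}\vdash\neg_{\ph}\neg_{\ph}\exists x\,\forall y\,\psi^{\ph}$. Since $X\subseteq\Q_k$, Lemma~\ref{lem: For A in Qk+1, HA + SigmakLEM |- A -> A*} lets me replace the hypotheses $X^{\ph}$ by $X$ at the cost of $\LEM{\Sigma_{k-1}}$, yielding $\ha^{\ph}+\LEM{\Sigma_{k-1}}+X\vdash\neg_{\ph}\neg_{\ph}\exists x\,\forall y\,\psi^{\ph}$. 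Applying Lemma~\ref{Kurahashi Lemma: A* <-> A v *}.\eqref{item: A v * for Sigma_{k+1}} to $\psi\in\Sigma_{k-1}$ rewrites $\psi^{\ph}$ as $\psi\lor\ph$, so that $\ha^{\ph}+\LEM{\Sigma_{k-1}}+X\vdash\neg_{\ph}\neg_{\ph}\exists x\,\forall y\,(\psi\lor\ph)$. Finally, substituting $\perp$ for the place holder $\ph$ via Lemma~\ref{lem: Substitution} collapses $\neg_{\ph}\neg_{\ph}$ into ordinary $\neg\neg$ and $\psi\lor\perp$ into $\psi$, giving $\ha+\LEM{\Sigma_{k-1}}+X\vdash\neg\neg\vp$ as desired.

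With the intermediate claim in hand, the proof closes in one step: since $\vp$ is a sentence in $\Sigma_{k+1}$, the scheme $\DNE{\ut{\Sigma_{k+1}}}$ applies to it, so $\ha+X+\DNE{\ut{\Sigma_{k+1}}}+\LEM{\Sigma_{k-1}}$ proves $\neg\neg\vp\to\vp$, and hence $\vp$.

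I do not anticipate a deep obstacle, since the statement is essentially a repackaging of the machinery already developed for Proposition~\ref{prop: utSk+1-cons <=> utSk+1-DNER}: the one point requiring a little care is the base case $k=0$, where $\LEM{\Sigma_{-1}}$ is vacuous and $\Sigma_{k-1}$ is unavailable, so the matrix $\psi$ is merely quantifier-free and one invokes Lemma~\ref{lem: HA|- Aqf* <-> Aqf v * } in place of Lemma~\ref{Kurahashi Lemma: A* <-> A v *}; that case is strictly easier and uses no instance of $\LEM$ at all.
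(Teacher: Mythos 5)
Your proposal is correct and is essentially the paper's own argument: the paper proves this proposition in one line by observing that $\ha+X+\DNE{\ut{\Sigma_{k+1}}}+\LEM{\Sigma_{k-1}}$ contains $\LEM{\Sigma_{k-1}}$ and is closed under $\DNER{\ut{\Sigma_{k+1}}}$ and then invoking Proposition~\ref{prop: utSk+1-cons <=> utSk+1-DNER}, whose ``if'' direction is exactly the $\ph$-translation computation you reproduce. Unfolding that computation inline (including your remark on the $k=0$ case) changes nothing of substance.
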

\begin{proof}
Since $\ha +X+ \DNE{\ut{\Sigma_{k+1}}} +\LEM{\Sigma_{k-1}}$ contains $\LEM{\Sigma_{k-1}}$ and is closed under $\DNER{\ut{\Sigma_{k+1}}}$, by Proposition \ref{prop: utSk+1-cons <=> utSk+1-DNER}, we are done.
\end{proof}

\begin{remark}
Propositions \ref{prop: utSk+1-LEM + Sk-1LEM => utSk+1-cons.} and \ref{prop: utSk+1-cons => utSk-LEM and Sk-1-LEM} reveal that the $\ut{\Sigma_{k+1}}$-conservativity lies between $\LEM{\ut{\Sigma_{k+1}}} +\LEM{\Sigma_{k-1}}$ and $\LEM{\ut{\Sigma_{k}}} +\LEM{\Sigma_{k-2}}$.
This seems to be another view of the status of the $\ut{\Sigma_{k+1}}$-conservativity.
\end{remark}

Our results on the relation between conservation theorems and logical principles are summarized in Figure \ref{fig: Summary} where $\Gamma$-CONS denotes the $\Gamma$-conservativity for class $\Gamma$ of $\ha$-formulas.
Figure \ref{fig: Summary} reveals that the logical principle $\DNEC{\U_k}$, which has been first studied in the current paper (cf. Definition \ref{def: DNEC and DNSC}), is closely related to the conservation theorems.
For the comprehensive information on the arithmetical hierarchy of logical principles including $\LEM{\Sigma_k}$ and $\DNE{(\Pi_{k} \lor \Pi_{k})}$, we refer the reader to \cite{FK20-2}.
For the underivability, we know only that $\LEM{\Sigma_{k-1}}$ does not imply $(\Pi_k \lor \Pi_k)$-CONS (cf. Proposition \ref{prop: PA is not Pi1vPi1-conservative over HA}) and that $\DNE{(\Pi_k\lor \Pi_k)}$ does not imply $\ut{\F_k}$-CONS (cf. Remark \ref{rem: On PkvPk-DNE, Ukp-DNEC, Ukp-cons and utFpk-cons}).
In addition, for $\Gamma \in \{ \Sigma_k, \Pi_k, \Pi_k \lor \Pi_k, \E_k, \F_k, \U_k, \ut{\Sigma_k} \}$, we have characterized $\Gamma$-CONS by some fragment of the double-negation-elimination rule ${\rm DNE\text{-}R}$.
On the other hand, we have not achieved that for $\ut{\E_k}$ and $\ut{\F_k}$.
\begin{figure}[ht]
\centering
\begin{tikzpicture}[scale=0.75]
\node (Sk-1-LEM) at (-6.2,0) {$\LEM{\Sigma_{k-1}}$};
\node (Pk+1-cons) at (-3,0) {$\Pi_{k+1}$-CONS};
\node (Sk-cons) at (0,0) {$\Sigma_{k}$-CONS};
\node (Ek-cons) at (3,0) {$\E_{k}$-CONS};
\node (Fk-1-cons) at (6,0) {$\F_{k-1}$-CONS};
\node (PkvPk-cons) at (0,1.1) {$(\Pi_k \lor \Pi_k)$-CONS};
\node (PkvPk-DNEC) at (-6,2.2) {$\DNEC{(\Pi_k \lor \Pi_k)}$};
\node (Uk-cons) at (0,2.2) {$\U_{k}$-CONS};
\node (Uk-DNEC) at (0,3.25) {$\DNEC{\U_k}$};
\node (PkvPk-DNE + utSk-LEM) at (-6, 5.2) {$\DNE{(\Pi_k\lor\Pi_k)} + \LEM{\ut{\Sigma_{k}}}$};
\node (PkvPk-DNE) at (-6, 3.8) {$\DNE{(\Pi_k\lor\Pi_k)}$};
\node (utFk-cons) at (0,4.4) {$\ut{\F_{k}}$-CONS};
\node (Uk-DNEC and utS-LEM) at (4,4.4) {$\DNEC{\U_k}+ \LEM{\ut{\Sigma_{k}}}$};
\node (Uk-DNEC and utSk+1-cons) at (0,5.6) {$\DNEC{\U_k}$ \& $\ut{\Sigma_{k+1}}$-CONS};
\node (utEk+1-cons) at (0,6.8) {$\ut{\E_{k+1}}$-CONS};
\node (PkvPk-DNE and utSk+1-cons) at (0,8) {$\DNE{(\Pi_k \lor \Pi_k)}$ \& $\ut{\Sigma_{k+1}}$-CONS};
\node (Sk-LEM) at (-6.3,9.2) {$\LEM{\Sigma_{k}}$};
\node (*1) at (-2.7, 7.7) {};
\node (Pk+2-cons) at (-3.3, 9.2) {$\Pi_{k+2}$-CONS};
\node (Sk+1-cons) at (0,9.2) {$\Sigma_{k+1}$-CONS};
\node (Ek+1-cons) at (3.2,9.2) {$\E_{k+1}$-CONS};
\node (Fk-cons) at (6.2,9.2) {$\F_{k}$-CONS};

\draw [<->] (Sk-1-LEM)--(Pk+1-cons);
\draw [<->] (Sk-cons)--(Pk+1-cons);
\draw [<->] (Sk-cons)--(Ek-cons);
\draw [<->] (Fk-1-cons)--(Ek-cons);
\draw [<-] (Sk-cons)--(PkvPk-cons);
\draw [<-] (PkvPk-cons)--(Uk-cons);
\draw [<-] (Uk-cons)--(Uk-DNEC);
\draw [<-] (Uk-DNEC)--(utFk-cons);
\draw [<-] (utFk-cons)--(Uk-DNEC and utSk+1-cons);
\draw [<-] (Uk-DNEC and utSk+1-cons)--(utEk+1-cons);
\draw [<-] (utEk+1-cons)--(PkvPk-DNE and utSk+1-cons);
\draw [<-] (PkvPk-DNE and utSk+1-cons)--(Sk+1-cons);
\draw [<->] (Sk-LEM)--(Pk+2-cons);
\draw [<->] (Sk+1-cons)--(Pk+2-cons);
\draw [<->] (Sk+1-cons)--(Ek+1-cons);
\draw [<->] (Fk-cons)--(Ek+1-cons);
\draw [<-] (PkvPk-DNE + utSk-LEM)--(*1);
\draw [->] (PkvPk-DNE + utSk-LEM)--(utFk-cons);
\draw [->] (PkvPk-DNE + utSk-LEM)--(PkvPk-DNE);
\draw [<-] (Uk-DNEC)--(PkvPk-DNE);
\draw [<-] (PkvPk-DNEC)--(Uk-DNEC);
\draw [->] (PkvPk-DNEC)--(PkvPk-cons);
\draw [<->] (Uk-DNEC and utS-LEM)--(utFk-cons);

\end{tikzpicture}
 \caption{Conservation theorems in the arithmetical hierarchy of logical principles}
    \label{fig: Summary}
\end{figure}
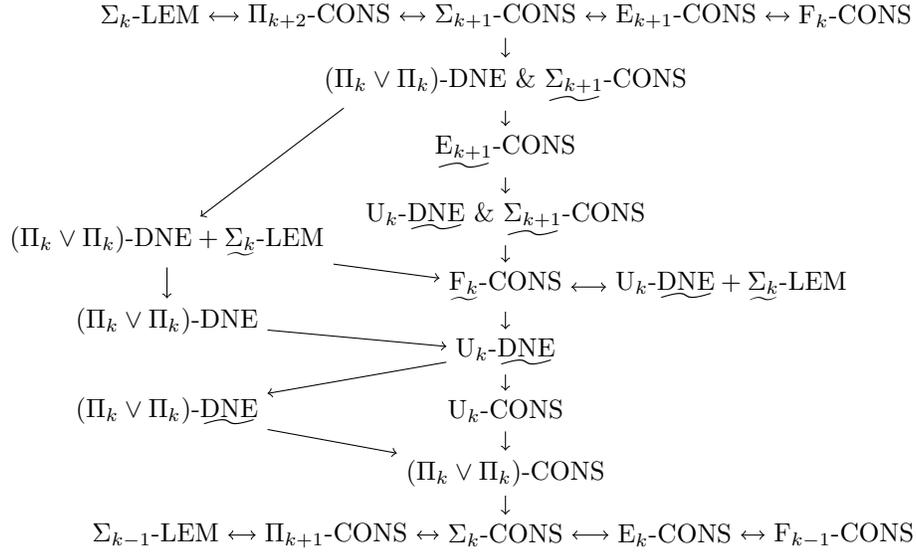

\section{Appendix: A relativized soundness theorem of the Friedman A-translation for $\ha+\LEM{\Sigma_k}$}
\label{section: Soundness of A-Translation}
We provide a detailed proof of a relativized soundness theorem of the Friedman A-translation \cite{Fri78} for $\ha+\LEM{\Sigma_k}$ (see Theorem \ref{A relativized soundness theorem of *-translation}).
In fact, this result was suggested already in \cite[Section 4.4]{HN02} and the detailed proof for $k=1$ can be found
in \cite[Lemma 3.1]{KS14}.
The authors, however, couldn't find the proof for arbitrary natural number $k$ anywhere, which is the reason why we present the detailed proof here.
For the relativized soundness theorem,
we use a variant of of Lemma \ref{Kurahashi Lemma: A* <-> A v *} with respect to the Friedman A-translation.

We first recall the definition of the Friedman A-translation.
In this section, we use symbol $\PH$ for place holder instead of $\ph$ in the previous sections.
\begin{definition}[A-translation \cite{Fri78}]
\label{def: A-translation}
For a $\ha$-formula $\vp$, we define $\vp^{\PH} $ as a formula obtained from $\vp$ by replacing all the prime formulas $\vp_{\rm p}$ in $\vp$ with $\vp_{\rm p} \lor {\PH} $ (of course, $\vp^{\PH} $ is officially defined by induction on the logical structure of $\vp$).
In particular, $\perp^{\PH} :\equiv  \left(\perp \lor \, {\PH} \right)$, which is equivalent to  ${\PH} $ over $\ha^{\PH} $ ($\ha$ in the language with a place holder $\PH$).
In what follows, $\neg_{\PH}\, \vp$ denotes $\vp \to {\PH} $.
Note that $\FV{\vp} =\FV{\vp^{\PH}}$ for all $\ha$-formulas $\vp$.
\end{definition}

The following is a variant of Lemma \ref{Kurahashi Lemma: A* <-> A v *} with respect to the Friedman A-translation.
\begin{lemma}
\label{lem: Kurahashi Original Lemma: A* <-> A v *}
For a formula $\vp$ of $\ha$,
the following hold$:$
\begin{enumerate}
    \item
    \label{item: A v * for Pi_{k+1} for KOL}
    If $\vp\in \Pi_{k}$, $\ha^{\PH} + \LEM{\Sigma_{k}} \vdash \vp^{\PH} \lr \vp \lor {\PH} ;$
    \item
    \label{item: A v * for Sigma_{k+1} for KOL}
    If $\vp\in \Sigma_{k}$, $\ha^{\PH} + \LEM{\Sigma_{k-1}} \vdash \vp^{\PH} \lr \vp \lor {\PH}.$
\end{enumerate}
\end{lemma}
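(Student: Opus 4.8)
The plan is to prove both items simultaneously by induction on $k$, mirroring the structure of the proof of Lemma \ref{Kurahashi Lemma: A* <-> A v *} but exploiting a crucial feature of the Friedman A-translation: it commutes with $\lor$ and $\exists$, whereas the $\ph$-translation inserts a $\neg_{\ph}\neg_{\ph}$ prefix in those two clauses. This is precisely what will let the $\Sigma_k$ case get by with $\LEM{\Sigma_{k-1}}$ rather than $\LEM{\Sigma_k}$.

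For the base case $k=0$, note that $\LEM{\Sigma_0}$ is provable in $\ha$ and $\LEM{\Sigma_{-1}}$ is vacuous, so both items collapse to the single claim $\ha^{\PH} \vdash \QF{\vp}^{\PH} \lr \QF{\vp} \lor \PH$ for quantifier-free $\QF{\vp}$. I would prove this by induction on the structure of quantifier-free formulas, as in Lemma \ref{lem: HA|- Aqf* <-> Aqf v * }. The prime and $\lor$ cases are immediate (since $(\vp_1 \lor \vp_2)^{\PH} \equiv \vp_1^{\PH} \lor \vp_2^{\PH}$, only associativity and commutativity of $\lor$ are needed, with none of the double-negation gymnastics of the $\ph$-translation), the $\land$ case uses the intuitionistically valid distributivity of $\land$ over $\lor$, and the only delicate case is $\QF{\vp} \equiv \vp_1 \to \vp_2$: there I would use the I.H. to reduce $(\vp_1 \to \vp_2)^{\PH}$ to $(\vp_1 \lor \PH) \to (\vp_2 \lor \PH)$ and then argue by cases on $(\vp_2 \lor \neg \vp_1) \lor (\vp_1 \land \neg \vp_2)$, exactly as in Lemma \ref{lem: HA|- Aqf* <-> Aqf v * }, using the decidability of the quantifier-free $\vp_1, \vp_2$.

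For the inductive step I would treat the $\Sigma_{k+1}$ case first, since this is where the A-translation buys a level. Writing $\vp \equiv \exists x \vp_1$ with $\vp_1 \in \Pi_k$, I have $\vp^{\PH} \equiv \exists x \vp_1^{\PH}$, and the I.H. for the first item gives $\ha^{\PH} + \LEM{\Sigma_k} \vdash \vp_1^{\PH} \lr \vp_1 \lor \PH$, so $\vp^{\PH} \lr \exists x (\vp_1 \lor \PH)$. Since $\exists x (\vp_1 \lor \PH) \lr \exists x \vp_1 \lor \PH$ holds purely intuitionistically ($\PH$ has no free $x$), the second item follows over $\ha^{\PH} + \LEM{\Sigma_k}$ with no extra principle, confirming the advertised $\LEM{\Sigma_{(k+1)-1}} = \LEM{\Sigma_k}$. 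For the first item, with $\vp \equiv \forall x \vp_1$ and $\vp_1 \in \Sigma_k$, the I.H. for the second item gives $\vp_1^{\PH} \lr \vp_1 \lor \PH$ over $\ha^{\PH} + \LEM{\Sigma_{k-1}}$, hence $\vp^{\PH} \lr \forall x (\vp_1 \lor \PH)$, and it remains to prove $\forall x (\vp_1 \lor \PH) \to \forall x \vp_1 \lor \PH$. I would establish this exactly as in Lemma \ref{Kurahashi Lemma: A* <-> A v *}: by Remark \ref{rem: duals}, $\neg \vp_1$ is equivalent to a $\Pi_k$ formula under $\DNE{\Sigma_{k-1}}$, so $\exists x \neg \vp_1$ is $\Sigma_{k+1}$, and $\LEM{\Sigma_{k+1}}$ yields $\exists x \neg \vp_1 \lor \neg \exists x \neg \vp_1$; the first disjunct produces $\PH$ from the hypothesis, while the second gives $\forall x \neg\neg \vp_1$ and hence $\forall x \vp_1$ via $\DNE{\Sigma_k}$ (available since $\LEM{\Sigma_{k+1}} \supseteq \LEM{\Sigma_k}$).

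The main point to get right is the bookkeeping of the $\LEM$ levels across the simultaneous induction, and in particular recognizing that the asymmetry between the two items is genuine rather than an artifact: the existential clause of the A-translation distributes over disjunction intuitionistically, so the $\Sigma$ case costs nothing beyond the I.H., whereas the universal clause forces the full constant-domain argument and thus the higher fragment $\LEM{\Sigma_{k+1}}$. I do not expect a serious obstacle beyond verifying that each invocation of the I.H. stays within its advertised fragment — in particular that the first-item step may freely absorb the weaker $\LEM{\Sigma_{k-1}}$ coming from the I.H. into the ambient $\LEM{\Sigma_{k+1}}$.
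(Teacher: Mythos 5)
Your proposal is correct and follows essentially the same route as the paper: a simultaneous induction on $k$ in which the $\Sigma$-case is free because the A-translation commutes with $\exists$ and $\lor$ (so only the I.H.'s $\LEM{\Sigma_k}$ is needed), while the $\Pi$-case reuses the constant-domain argument of Lemma \ref{Kurahashi Lemma: A* <-> A v *} under $\LEM{\Sigma_{k+1}}$. Your base case and the bookkeeping of the $\LEM$ levels also match what the paper leaves as ``routine inspection.''
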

\begin{proof}
By simultaneous induction on $k$.
The base case is verified by a routine inspection.
Assume items \ref{item: A v * for Pi_{k+1} for KOL} and \ref{item: A v * for Sigma_{k+1} for KOL} for $k$ to show those for $k+1$.
The first item for $k+1$ is shown by using the second item for $k$ as in the proof of Lemma \ref{Kurahashi Lemma: A* <-> A v *}.
For the second item, let $\vp:\equiv \exists x \vp_1$ where $\vp_1 \in \Pi_k$.
Then we have that  $\ha +  \LEM{\Sigma_{k}}$ proves
$$
\vp^{\PH} \equiv \exists x \left( {\vp_1}^{\PH} \right) \underset{\text{[I.H.] }\LEM{\Sigma_{k}}}{\llr} 
 \exists x(\vp_1 \lor \PH) \llr \vp \lor \PH.
$$
\end{proof}

\begin{theorem}
\label{A relativized soundness theorem of *-translation}
If $\ha + \LEM{\Sigma_k} \vdash \vp$, then $\ha^{\PH} + \LEM{\Sigma_k} \vdash \vp^{\PH}$.
\end{theorem}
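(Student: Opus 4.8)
The plan is to argue by induction on the length of the derivation of $\vp$ in $\ha + \LEM{\Sigma_k}$. The structural fact that makes this tractable is that the A-translation commutes with every logical connective and quantifier by its definition (it only alters prime formulas), so that $(\psi \circ \chi)^{\PH} \equiv \psi^{\PH} \circ \chi^{\PH}$, $(\forall x \psi)^{\PH} \equiv \forall x \psi^{\PH}$, etc. Consequently modus ponens, generalization, and the remaining rules of intuitionistic predicate logic are automatically preserved under $(\cdot)^{\PH}$, and the whole problem reduces to checking that the A-translation of each \emph{axiom} of $\ha + \LEM{\Sigma_k}$ is derivable in $\ha^{\PH} + \LEM{\Sigma_k}$.

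For the purely logical axioms, I would observe that $P \mapsto P \lor \PH$ is a uniform substitution of the nullary placeholder into the atomic formulas, and intuitionistic predicate logic is closed under such substitutions; hence the A-translation of an intuitionistically valid formula is again valid. For the non-logical axioms of $\ha$, the equality axioms and the defining equations of the primitive recursive function symbols are (universal closures of) atomic formulas or implications between atomic formulas; since $\ha^{\PH}$ proves $\perp \to \PH$ and $Q \to (Q \lor \PH)$, each translated instance follows in $\ha^{\PH}$ from the corresponding untranslated $\ha$-theorem (e.g. from $\ha \vdash x = y \to Sx = Sy$ one gets $(x = y \lor \PH) \to (Sx = Sy \lor \PH)$). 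The induction scheme is handled by the key remark that the A-translation of the induction axiom for $\vp$ is literally the induction axiom for the $\ha^{\PH}$-formula $\vp^{\PH}$, which is available in $\ha^{\PH}$.

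The only genuinely new case is an instance $\xi \lor \neg \xi$ of $\LEM{\Sigma_k}$ with $\xi \in \Sigma_k$. Since the translation commutes with $\lor$ and with $\to$, and $\perp^{\PH}$ is equivalent to $\PH$ over $\ha^{\PH}$, its A-translation is equivalent to $\xi^{\PH} \lor \neg_{\PH} \xi^{\PH}$. I would first invoke Lemma \ref{lem: Kurahashi Original Lemma: A* <-> A v *}.\eqref{item: A v * for Sigma_{k+1} for KOL} — applicable because $\LEM{\Sigma_k}$ proves $\LEM{\Sigma_{k-1}}$ — to replace $\xi^{\PH}$ by $\xi \lor \PH$ up to provable equivalence over $\ha^{\PH} + \LEM{\Sigma_k}$. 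As $\neg_{\PH}(\xi \lor \PH)$ is intuitionistically equivalent to $\neg_{\PH}\xi$, it then suffices to derive $(\xi \lor \PH) \lor \neg_{\PH}\xi$; and this follows from the genuine $\ha$-language instance $\xi \lor \neg \xi$ of $\LEM{\Sigma_k}$, using that $\neg \xi$ yields $\xi \to \PH$ via $\perp \to \PH$.

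I expect the main obstacle to be exactly this $\LEM{\Sigma_k}$ case, since it is where the relativized equivalence $\vp^{\PH} \lr \vp \lor \PH$ of Lemma \ref{lem: Kurahashi Original Lemma: A* <-> A v *} must be combined with careful bookkeeping of the two distinct negations $\neg$ and $\neg_{\PH}$; by contrast, the logical and arithmetic axioms, though numerous, are routine verifications. It is worth emphasizing that, unlike Ishihara's $\ph$-translation, the A-translation weakens atoms only by a disjunct, which is precisely what keeps the treatment of the arithmetic axioms and of induction so transparent.
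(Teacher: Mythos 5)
Your proposal is correct and follows essentially the same route as the paper: induction on the derivation, reduction of the routine logical/arithmetical axioms and rules to the standard soundness of the A-translation (the paper simply cites the proof of Friedman's Lemma 2 for this part), and treatment of each $\LEM{\Sigma_k}$ instance by using Lemma \ref{lem: Kurahashi Original Lemma: A* <-> A v *} to replace $\xi^{\PH}$ by $\xi \lor \PH$ and then deriving the translated instance from the genuine one. The only cosmetic difference is that you apply item \eqref{item: A v * for Sigma_{k+1} for KOL} directly to the $\Sigma_k$ formula, whereas the paper applies item \eqref{item: A v * for Pi_{k+1} for KOL} to its $\Pi_{k-1}$ matrix; these amount to the same computation.
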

\begin{proof}
By induction on the length of the proof of $\vp$ in $\ha + \LEM{\Sigma_k}$.
By (the proof of) \cite[Lemma 2]{Fri78}, it suffices to show $\ha^{\PH} +\LEM{\Sigma_k} \vdash  \vp^{\PH}$ for each instance $\vp$ of $\LEM{\Sigma_k}$.
Fix $\vp: \equiv \exists x \vp_1 \lor \neg \exists x \vp_1$ with $\vp_1 \in \Pi_{k-1}$.
By Lemma \ref{lem: Kurahashi Original Lemma: A* <-> A v *}.\eqref{item: A v * for Pi_{k+1} for KOL}, $\ha^{\PH} + \LEM{\Sigma_{k-1}}$ proves
$$\begin{array}{ccl}
\vp^{\PH} &\llr & \exists x \left( {\vp_1}^{\PH}\right) \lor \neg_{\PH} \exists x  \left( {\vp_1}^{\PH} \right)
\\
&\underset{\LEM{\Sigma_{k-1}}}{\llr}  &
\exists x \left( \vp_1 \lor \PH \right) \lor \neg_{\PH} \exists x  \left( \vp_1\lor \PH \right)\\
& \llr & \exists x \left( \vp_1 \lor \PH \right) \lor \neg_{\PH} \exists x \vp_1,
\end{array}$$
which is derived from $\exists x \vp_1 \lor \neg \exists x \vp_1$ over $\ha^{\PH}$.
Thus $\ha^{\PH} +\LEM{\Sigma_k} $ proves $\vp^{\PH}$.
\end{proof}

By the relativized soundness theorem of the Friedman A-translation combined with the usual negative translation, one can show Proposition \ref{prop: PA is Pi_k+2-cons. over HA + Sigma_k-LEM}
as follows:
\begin{proof}[Proof Sketch of Proposition \ref{prop: PA is Pi_k+2-cons. over HA + Sigma_k-LEM}]
Assume $\pa \vdash \forall x  \exists y \vp$ where $\vp\in \Pi_{k}$.
By using Kuroda's negative translation (cf. \cite[Proposition 6.4]{FK20-1}), we have $\ha \vdash \forall x \neg\neg \exists y \vp_N$ where $\vp_N$ is defined as in \cite[Definition 6.1]{FK20-1}.
Since $\ha+\LEM{\Sigma_{k-1}} $ proves $\DNE{\Sigma_{k-1}}$, we have $\ha+\LEM{\Sigma_{k-1}} \vdash \neg \neg \exists y \vp$ (cf. \cite[Lemma 6.5.(2)]{FK20-1}).
By Theorem \ref{A relativized soundness theorem of *-translation}, we have  $\ha^{\PH}+\LEM{\Sigma_{k-1}} \vdash \neg_{\PH} \neg_{\PH} \exists y \vp^{\PH}$, and hence,
$\ha^{\PH}+\LEM{\Sigma_k} \vdash \neg_{\PH} \neg_{\PH} \exists y \vp$ by Lemma \ref{lem: Kurahashi Original Lemma: A* <-> A v *}.\eqref{item: A v * for Pi_{k+1} for KOL}.
By substituting $\PH$ with $\exists y \vp$ (cf. Lemma \ref{lem: Substitution}), we have that $\ha +\LEM{\Sigma_k} $ proves $ \exists y \vp$, and hence, $\forall x \exists y \vp$.
\end{proof}

The proof of \cite[Theorem 3.5.5]{ConstMathI} (due to Visser) shows that any theory $\T$ which contains $\ha$ and is sound for the Friedman A-translation is closed under the independence-of-premise rule:
\begin{center}
$\T \vdash \neg \vp \to \exists x \psi $ implies $\T \vdash \exists x \left( \neg \vp\to\ \psi \right)$
\end{center}
where $x\notin \FV{\neg \vp}$.
Then, by using Theorem \ref{A relativized soundness theorem of *-translation}, we also have the following:
\begin{theorem}
$\ha + \LEM{\Sigma_{k}}$ is closed under the independence-of-premise rule.
\end{theorem}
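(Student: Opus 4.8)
The plan is to deduce the theorem from the general meta-principle recorded just above the statement: by (the proof of) \cite[Theorem 3.5.5]{ConstMathI}, due to Visser, every theory that contains $\ha$ and is sound for the Friedman A-translation is closed under the independence-of-premise rule. Thus I would simply instantiate this principle with $\T := \ha + \LEM{\Sigma_k}$, so that the entire argument reduces to verifying its two hypotheses.

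The first hypothesis, $\ha \subseteq \ha + \LEM{\Sigma_k}$, is immediate. The second, soundness for the A-translation, asks precisely that $\ha + \LEM{\Sigma_k} \vdash \vp$ imply $\ha^{\PH} + \LEM{\Sigma_k} \vdash \vp^{\PH}$ for every $\ha$-formula $\vp$; this is exactly Theorem \ref{A relativized soundness theorem of *-translation}. Consequently the theorem follows at once, with Theorem \ref{A relativized soundness theorem of *-translation} supplying the only ingredient that is not already available in the purely intuitionistic setting.

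For a self-contained version one would unfold Visser's argument as follows. Given $\ha + \LEM{\Sigma_k} \vdash \neg \vp \to \exists x \psi$ with $x \notin \FV{\vp}$, apply Theorem \ref{A relativized soundness theorem of *-translation} to obtain $\ha^{\PH} + \LEM{\Sigma_k} \vdash (\vp^{\PH} \to \PH) \to \exists x \psi^{\PH}$, since $(\neg \vp)^{\PH} \equiv \neg_{\PH} \vp^{\PH}$ and the A-translation commutes with $\exists$. One then substitutes the place holder $\PH$ by the target formula $\exists x (\neg \vp \to \psi)$ via Lemma \ref{lem: Substitution} (whose free-variable side condition is met because $x$ is bound in the substituted formula), and uses the monotonicity fact that the place holder implies its own translation, $\ha^{\PH} \vdash \PH \to \chi^{\PH}$ (a standard property of the A-translation, cf. \cite{Fri78}), to collapse the translated sub-formulas.

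The main obstacle lies exactly in this last collapse: after the substitution the antecedent and consequent are the A-translates $\vp^{\PH}$ and $\psi^{\PH}$ with $\PH$ replaced by $\exists x(\neg\vp\to\psi)$, and recovering $\neg\vp$ and $\exists x(\neg\vp\to\psi)$ cannot be achieved by naive monotonicity once $\vp$ or $\psi$ has negative occurrences of prime formulas, since the clean equivalence $\chi^{\PH} \lr \chi \lor \PH$ is available only for quantifier-free $\chi$ (cf.\ Lemma \ref{lem: Kurahashi Original Lemma: A* <-> A v *}). This delicate bookkeeping is the combinatorial heart of Visser's proof; I would therefore route the argument through the citation of \cite[Theorem 3.5.5]{ConstMathI}, letting Theorem \ref{A relativized soundness theorem of *-translation} carry the whole of the semi-classical novelty.
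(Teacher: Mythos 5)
Your proposal is correct and follows exactly the paper's own route: the paper likewise derives the theorem by combining Visser's observation (the proof of \cite[Theorem 3.5.5]{ConstMathI}, that any theory containing $\ha$ and sound for the Friedman A-translation is closed under the independence-of-premise rule) with the relativized soundness theorem (Theorem \ref{A relativized soundness theorem of *-translation}) applied to $\T := \ha + \LEM{\Sigma_k}$. Your additional sketch of how Visser's argument unfolds, and your decision to defer the delicate collapse step to the cited proof, matches the level of detail the paper itself provides.
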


\section*{Acknowledgements}
The first author was supported by JSPS KAKENHI Grant Numbers JP19J01239 and JP20K14354, and the second author by JP19K14586.

\bibliographystyle{abbrv}
\bibliography{bibliography.bib}
\end{document}